\newtheorem{theorem}{Theorem}[section]
\newtheorem{definition}[theorem]{Definition}   
\newtheorem{remark}[theorem]{Remark}
\newtheorem{proposition}[theorem]{Proposition}
\newtheorem{lemma}[theorem]{Lemma}
\newtheorem{problem}[theorem]{Problem}
\newtheorem{assumption}[theorem]{Assumption} 
\newcommand{\vertiii}[1]{{\left\vert\kern-0.25ex\left\vert\kern-0.25ex\left\vert #1 
    \right\vert\kern-0.25ex\right\vert\kern-0.25ex\right\vert}}
\newcommand{\dbtilde}[1]{\accentset{\approx}{#1}}        
\newcommand{\vect}[1]{\boldsymbol{\mathbf{#1}}} 
\newcommand{\sfTheta}{\mathsf{\Theta}}
\newcommand{\barbigOmega}{\overline{\Omega}}
\newcommand{\baromega}{\overline{\omega}}
\newcommand{\staromega}{\omega^{\star}}
\newcommand{\barstaromega}{\baromega^{\star}}
\newcommand{\staru}{u^{\star}}
\newcommand{\un}{{u}^{(n)}}
\newcommand{\uk}{{u}^{(m)}}
\newcommand{\uprime}{u^{\prime}} 
\newcommand{\udot}{\delta{u}} 
\newcommand{\udote}{\delta{u}_{\rho}} 
\newcommand{\wdot}{\delta{w}} 
\newcommand{\uddot}{\delta^{2}{u}} 
\newcommand{\wddot}{\delta^{2}{w}} 
\newcommand{\nn}{\vect{n}} 
\newcommand{\VV}{\vect{\theta}} 
\newcommand{\Vn}{{\theta}_{n}}
\newcommand{\bigdO}{\partial \Omega} 
\newcommand{\domega}{\partial \omega} 
\newcommand{\delmu}{\delta\mu} 
\newcommand{\dmu}{\nu} 
\newcommand{\ddmu}{[\nu,\nu]} 
\newcommand{\muout}{\mu_{0}} 
\newcommand{\muin}{\mu_{1}} 
\newcommand{\mue}{\mu_{\rho}} 
\newcommand{\etaa}{\eta_{a}}
\newcommand{\cb}{c_{b}}
\newcommand{\cf}{c_{f}}
\newcommand{\adset}{B}
\newcommand{\epszed}{\rho_{0}}
\newcommand{\regu}{R}
\newcommand{\Je}{J_{\rho}}
\newcommand{\Jeprime}{J_{\rho}^{\prime}}
\newcommand{\Jedprime}{J_{\rho}^{\prime\prime}}
\newcommand{\op}[1]{\operatorname{#1}}  
\newcommand{\dive}{\operatorname{div}}  
\newcommand{\dn}[1]{\partial_{\nn}{#1}}    
\newcommand{\intO}[1]{\int_{\Omega}{#1}{\, {d} x}}
\newcommand{\intG}[1]{\int_{\bigdO}{#1}{\, {d}{s}}}  
\newcommand{\intdomega}[1]{\int_{\partial\omega}{#1}{\, {d}{s}}}  
\newcommand{\intOt}[1]{\int_{\Omega_{t}}{#1}{\, {d} x_{t}}}  
\newcommand{\intGt}[1]{\int_{\partial\Omega_{t}}{#1}{\, {d}{s}_{t}}}
\newcommand{\abs}[1]{\left\vert{#1}\right\vert}
\newcommand{\norm}[1]{\left\|{#1}\right\|} 
\newcommand{\indO}[2]{\left\langle{#1},{#2}\right\rangle_{\bigdO}}
\newcommand{\inO}[2]{\left({#1},{#2}\right)_{\Omega}} 
\newcommand{\jump}[1]{\left[{#1}\right]_{\pm}}
\newcommand{\dett}{I_{t}}
\newcommand{\At}{A_{t}}
\newcommand{\bt}{b_{t}}
\newcommand{\wt}{w^{t}}
\newcommand{\ut}{u^{t}}
\newcommand{\zt}{z^{t}}
\newcommand{\alphat}{\alpha^{t}}
\newcommand{\mut}{\mu^{t}}
\newcommand{\dzero}{\delta_{\circ}}
\newcommand{\alert}[1]{{\color{black}{#1}}} 
\newcommand{\mua}{{\mu}} 
\title{Shape optimization for piecewise parameter identification in inverse diffusion problems with a single boundary measurement}
\author{%
Manabu Machida\thanks{Department of Informatics, Faculty of Engineering, Kindai University, Higashi-Hiroshima 739-2116, Japan. \texttt{machida@hiro.kindai.ac.jp}} \and
Hirofumi Notsu\thanks{Faculty of Mathematics and Physics, Institute of Science and Engineering, Kanazawa University, Kakumamachi, Kanazawa 920-1192, Japan. \texttt{notsu@se.kanazawa-u.ac.jp}} \and
Julius Fergy Tiongson Rabago\footnotemark[2]\thanks{Faculty of Mathematics and Physics, Institute of Science and Engineering, Kanazawa University, Kakumamachi, Kanazawa 920-1192, Japan. \texttt{jfrabago@gmail.com}}
}
\date{} 
\begin{document}




\maketitle

\let\thefootnote\relax
\footnotetext{MSC2020: Primary: 35R30, 35R35 Secondary: 49Q10, 65K10, 35J25.} 

\begin{abstract}
This paper proposes a unified shape and coefficient optimization approach for inverse problems governed by diffusion equations.
The associated forward problem is considered with a Robin boundary condition, physically motivated in diffuse optical tomography to model partial reflection of light at tissue boundaries.
The main objective is the recovery of the piecewise-defined absorption coefficient together with its underlying interface from a single boundary measurement.
To this end, a shape-based reconstruction approach is formulated in which the interface is introduced as a geometric unknown governing the piecewise structure of the absorption coefficient.
While classical approaches rely on the Fr\'{e}chet derivative with respect to spatially varying parameters, the Eulerian derivative with respect to the interface is additionally exploited.
This leads to a unified framework for the simultaneous recovery of the coefficient and the geometry under the single-measurement setting. Numerical experiments demonstrate the effectiveness of the proposed method, even for complex and non-convex interfaces.
\end{abstract}

\bigskip

\tableofcontents

\section{\alert{Introduction}}\label{sec:Introduction}

In this study, we investigate an inverse problem for the steady-state diffusion equation arising in diffuse optical tomography (DOT).
Our main objective is to reconstruct an unknown piecewise-defined absorption coefficient together with its associated interface from a single boundary measurement.
To this end, we employ a unified shape and coefficient optimization framework combining Eulerian shape derivatives for geometric variations of the interface with Fr\'echet derivatives for parametric variations of the absorption coefficient.

\subsection{{Problem setting and background}}
The governing model is given by the steady-state diffusion equation in a bounded Lipschitz domain $ \Omega \subset \mathbb{R}^d $, $ d \in \{2,3\} $:
\begin{equation}\label{eq:main}
\left\{
\begin{aligned}
  -\dive{\left( {\alpha(x)} \nabla u(x) \right)} + \mua(x) u(x) &= f(x), \quad x \in \Omega,\\
  {\alpha(x)} \dn{u(x)} + \frac{1}{\zeta} u(x) &= 0, \quad x \in \bigdO.
\end{aligned}
\right.
\end{equation}
Here, $\zeta > 0$ and $\dn{}$ denotes the directional derivative with respect to the outward unit vector $\nn$ normal to $\bigdO$.
Furthermore, $\alpha$ is the diffusion coefficient while $\mua$ is the absorption coefficient, and $f > 0$ is the source term.

Equation \eqref{eq:main} governs light propagation in biological tissue and arises as the steady-state model in diffuse optical tomography (DOT) \cite{GroenhuisFerwerdaBosch1983, Schweigeretal1995, Arridge1999, Nickelletal2000}. In DOT, the coefficients of the diffusion equation are determined from boundary measurements \cite{Jiang2011}, where $u$ represents the diffuse fluence rate (energy density up to a constant factor). We refer to \cite{Arridge1999, GibsonHebdenArridge2005} for comprehensive overviews of optical tomography, to the review by Durduran et al.~\cite{Durduranetal2010} for applications in tissue monitoring, and to \cite{Asprietal2024} for recent mathematical and numerical developments in inverse problems for DOT. Beyond optical imaging, \eqref{eq:main} also appears in geophysical applications such as reflection seismology, particularly in formulations based on time-harmonic scalar wave models (see, e.g., \cite{Potthast2006, YamanYakhnoPotthast2013}).

In the case of near-infrared light,
the parameter $\zeta$ appearing in the Robin boundary condition
originates from Fresnel reflection \cite{EganHilgeman1979}.
When light is perfectly absorbed at the boundary,
the appropriate boundary condition reduces to the Dirichlet condition.

In this work, we consider the reconstruction of the absorption coefficient under the assumption that the diffusion coefficient is known. The outgoing light intensity, denoted by $u = u(x)$ for $x \in \partial\Omega$, is measured on the exterior boundary $\partial\Omega$ of the domain; {although measurements on a subboundary are also possible, we restrict ourselves to the full boundary for simplicity. The goal is to recover the absorption coefficient from a single boundary measurement, a setting for which identifiability and stability results have been established for closely related inverse problems, including non-homogeneous Neumann boundary conditions \cite[Thms.~2.2, 3.2]{Harrach2012, Meftahi2021} as well as interface-only identification problems \cite[Thm.~3.1]{Bal2005}. For further identifiability results for coefficient reconstruction in elliptic problems, we refer to \cite{Meftahi2021,Bal2005} and the references therein.}

We assume that the absorption coefficient is piecewise defined, taking different values in distinct subregions of the domain.
This structure naturally places the inverse problem within the framework of inverse geometry problems.
Motivated by this observation, we propose a reconstruction approach based on shape optimization techniques and tools from shape calculus.
To the best of our knowledge, this study is the first to develop a unified shape and coefficient optimization framework for this reconstruction problem, extending existing shape-based inverse problem methodologies such as \cite{Arridgeetal2008}. While related recent studies have employed shape optimization for source identification \cite{WuGongGongZhangZhu2026}, the present work addresses a fundamentally distinct inverse coefficient reconstruction problem.

\subsection{{Reconstruction methods in DOT}}

Inverse problems for optical tomography are severely ill-posed and nonlinear. To solve such inverse problems in practice, iterative schemes and linearized approaches have been employed (e.g.,~\cite{Boasetal2001}). However, the former can easily become trapped in local minima, whereas the latter is valid only for small perturbations of the coefficients in the equation of interest. As an alternative, inverse series do not require linearization \cite{MoskowSchotland2019}. Given the severe ill-posedness of the problem, regularization is commonly incorporated into reconstruction methods. As discussed in \cite{Arridgeetal2008}, many iterative reconstruction methods achieve this by penalizing spatial variations or gradients of the parameters to be reconstructed. Although such regularization improves stability, it may also lead to overly smooth reconstructions in which sharp interfaces and discontinuities are blurred (see, e.g.,~\cite{Meftahi2021,ZhengChengGong2020}).

Since discontinuities arise at interfaces separating different tissue types
such as tumors, hematomas, organs, and regions containing contrast agents or
tracers, it is natural to assume a piecewise structure for optical parameters
such as absorption and scattering coefficients.

Interface- and shape-based reconstruction methods can potentially remove
the limitation of iterative schemes and linearized approaches, which
suppress discontinuities, because geometric information is incorporated
directly into the inverse problem formulation.

In this work, we develop a shape-based reconstruction framework. We
simultaneously recover the interface of inhomogeneity of a coefficient and
the value of the inhomogeneity.

\subsection{{Related works on shape-based reconstruction}}

From the viewpoint of shape optimization,
Bal and Ren~\cite{BalRen2006}
studied the reconstruction of singular surfaces using shape sensitivity analysis and level set techniques.
They compared this framework with the classical reconstruction of inclusions characterized by discontinuous diffusion coefficients,
showing that the velocity-field construction underlying the shape optimization procedure
remains applicable even when the coefficients are generalized beyond piecewise constants,
provided that discontinuities across interfaces are preserved.
Their work highlights the flexibility of shape optimization methods
for handling interfaces and geometrically structured coefficients.
Their framework primarily focused on interface reconstruction and shape evolution.

Shape-based reconstruction methods have also been investigated in DOT.
In particular, Arridge et al.~\cite{Arridgeetal2008}
compared voxel-based and shape-based reconstruction approaches
for recovering optical inclusions in scattering media.
Their work considered both explicit and implicit interface representations,
including explicit boundary parameterizations and level-set formulations; see also \cite{BelhachmiDhifMeftahi2023}.
The study demonstrated that shape-based reconstruction methods
can improve interface localization relative to conventional reconstruction techniques.

Although Arridge et al.~\cite{Arridgeetal2008}
discussed the possibility of simultaneously reconstructing shape and optical parameters,
they also emphasized that, in many practical applications,
reliable estimates of the optical coefficients are often available in most subregions,
so that only the parameters associated with unknown anomalies need to be reconstructed.
Consequently, their computational framework and numerical experiments
primarily focused on shape/interface recovery
under prescribed or partially known optical coefficients.
Moreover, their reconstruction procedures relied mainly on iterative linearization strategies,
such as Gauss--Newton methods combined with Levenberg--Marquardt regularization,
together with explicit geometric parametrizations and boundary element methods (BEM).

\subsection{{Contributions and comparison with previous works}}

Motivated by the above discussions, we consider a shape-based optimization framework for DOT in a single-measurement setting, incorporating geometric structure directly into a unified PDE-constrained inverse problem. In contrast to other reconstruction approaches (see e.g. \cite{Meftahi2021,ZhengChengGong2020}, and the references therein), the interface and the absorption coefficient are treated as coupled unknowns to be recovered simultaneously. Compared with related shape-based inverse problems in the literature (e.g., \cite{Arridgeetal2008,BalRen2006}), where numerical implementations typically focus on shape-only reconstruction, the present work considers a joint reconstruction of the interface and the absorption coefficient.

The reconstruction procedure is implemented within a finite element method (FEM) framework as a first-order iterative Lagrangian-based optimization scheme combining Fr\'{e}chet and Eulerian shape derivatives. As a result, the proposed approach yields a relatively sharper delineation of the boundary interface, rather than the blurred or diffuse interfaces commonly observed in conventional approaches (see, e.g., \cite{Meftahi2021,ZhengChengGong2020} as well as \cite[Sec.~2]{Arridgeetal2008}). Furthermore, the framework allows for the reconstruction of not only convex interfaces but also non-convex ones.

In addition to the methodological distinctions discussed above,
the proposed framework possesses several characteristic modeling features:
(i) the use of a homogeneous Robin boundary condition
in contrast to the commonly employed controllable non-homogeneous Neumann conditions;
(ii) a formulation outside the standard input/measurement setting of inverse problems,
since no sources are applied on the accessible boundary; and
(iii) the simultaneous recovery of both coefficient values and interfaces
within a single optimization framework.

These modeling choices naturally lead to a coupled variational structure in which both the geometric and parameter dependencies of the cost functional must be taken into account. Accordingly, we rigorously compute the total derivative of the associated cost functional $J$ with respect to the sub-domain $\omega$ and the absorption parameter $\mu$, given by
\[
J'({\omega}, \mu)[\VV, \nu]
=
\frac{\partial J}{\partial {\omega}}({\omega}, \mu)\VV
+
\frac{\partial J}{\partial \mu}({\omega}, \mu)\nu,
\]
under mild regularity assumptions on the unknown interface boundary
(refer to the succeeding sections for the precise meaning of the terms).
For the Eulerian derivative,
our main result is provided by Theorem~\ref{thm:state_shape_derivative_weak_assumptions}
(see also Theorem~\ref{thm:state_shape_derivative})
in subsection~\ref{subsec:rigorous_computation}.
For the optimality condition with respect to $\mu$,
the main result is given by Theorem~\ref{thm:optimality_result}
in subsection~\ref{subsec:first_order_optimality_condition}.

In addition to the theoretical developments,
we complement the proposed framework with extensive numerical experiments (Sections~\ref{subsec:constant_source}--\ref{subsec:point_sources_close_to_the_boundary})
covering configurations with constant and point sources,
non-circular and non-smooth interfaces,
and source locations near the boundary.
These experiments illustrate the robustness and practical performance
of the proposed reconstruction method
across a range of challenging geometric and limited-data settings.


\section{Recovery of the absorption coefficient}\label{sec:recovery_of_the_absorption_coefficient}

We begin by introducing some of the notation used throughout the paper.
The standard $L^{2}(\Omega)$-, $L^{\infty}(\Omega)$-, $H^{1}(\Omega)$-, $L^{2}(\bigdO)$-norms will be used frequently in this paper.
Throughout the paper, $c$ will denote a generic positive constant that may have a different value at different places.
We also occasionally use the symbol `$\lesssim$', which means that if $x \lesssim y$, then we can find some constant $c > 0$ such that $x \leqslant c y$.
Furthermore, $y \gtrsim x$ is defined as $x \lesssim y$.
{Lastly, for economy of space, we sometimes use the shorthand notation $\int_{\Omega_{\pm}} = \int_{\Omega_+} + \int_{\Omega_-}$, and write $\int_{\Omega}$ instead of explicitly splitting the integral, assuming the context makes the division clear. For instance, the integral $\int_{\Omega} \Psi(x) \, dx$ should be understood as $\int_{\Omega} \Psi(x) \, dx = \int_{\Omega_+} \Psi(x) \, dx + \int_{\Omega_-} \Psi(x) \, dx$, where $\Omega_+ = \Omega \setminus \overline{\omega}$ and $\Omega_- = \omega$.}

\subsection{Geometric and coefficient setting}
Let $\Omega \subset \mathbb{R}^d$, $d \in \{2,3\}$, be a bounded open Lipschitz domain such that
$\mathbb{R}^d \setminus \overline{\Omega}$ is connected.
Let $\omega \Subset \Omega$ be an open subdomain with piecewise smooth boundary $\partial \omega$, satisfying the same connected-complement property, i.e.,
$\mathbb{R}^d \setminus \overline{\omega}$ is connected.

Let $f : \Omega \to \mathbb{R}$ be non-constant, unless stated otherwise.
We assume $\alpha = \alpha_0 \chi_{\Omega \setminus \overline{\omega}} + \alpha_1 \chi_{\omega}$, $\alpha_0, \alpha_1 > 0$ (also denoted $\alpha_{+}, \alpha_{-}$, respectively), or, when unspecified, $\alpha \equiv \text{constant} > 0$.
Let $\mu_{\min}, \mu_{\max} \in \mathbb{R}_{+}$ and define
\[
	L^{\infty}_{+}(\Omega) \coloneqq  \left\{ \eta \in L^{\infty}(\Omega) \;\middle|\; \exists \eta_{0} > 0  \text{ such that } \eta \ge \eta_{0} \text{ a.e. in } \Omega \right\}.
\]
The admissible set is
\begin{equation}\label{eq:admissible_set_A}
\mathcal{A}
\coloneqq  \left\{ \mu \in L^{\infty}_{+}(\Omega) \;\middle|\;
\mu_{\min} \le \mu \le \mu_{\max} \right\},
\end{equation}
and we denote its interior by $\mathcal{A}^{\circ} \coloneqq  \left\{ \mu \in L^{\infty}_{+}(\Omega) \;\middle|\;  \mu_{\min} < \mu < \mu_{\max} \right\}$.

\begin{assumption}\label{assumption:weak_assumptions}
Unless stated otherwise, the following hold:
\begin{itemize}
    \item $\omega \in \mathcal{O}_{\circ}^{1}$ is a $C^{1,1}$ subdomain of $\Omega$,
	where $\mathcal{O}_{\circ}^{1}$ is defined precisely in \eqref{eq:admissible_set_of_subdomains};
    \item $\mu = \mu_{+} \chi_{\Omega_{+}} + \mu_{-} \chi_{\Omega_{-}}$, with
    $\mu_{\pm} \in L^{\infty}_{+}(\Omega) \cap W^{1,\infty}(\Omega_{\pm})$;
    \item $\alpha = \alpha_{+} \chi_{\Omega_{+}} + \alpha_{-} \chi_{\Omega_{-}}$, with
    $\alpha_{\pm} \in L^{\infty}_{+}(\Omega) \cap W^{1,\infty}(\Omega_{\pm})$;
    \item $f \in L^{2}(\Omega) \cap H^{1}(\Omega_{\pm})$.
\end{itemize}
\end{assumption}
With the above assumptions in place, we seek to recover $\mua$ from boundary measurements $h$ on $\partial\Omega$.
The inverse problem is then formulated as follows.
\begin{problem}[Absorption coefficient recovery]\label{prob:absorption_coefficient_recovery}
Given $\alpha, \zeta \in \mathbb{R}_{+}$ and sufficiently regular function $f : \Omega \to \mathbb{R}$, $f\neq 0$, find $\mu \in \mathcal{A}$ {from boundary measurements
\[
h = u(\mu)|_{\partial\Omega},
\]
where $u = u(\mu) \in H^1(\Omega)$ is the solution of \eqref{eq:main}.}
\end{problem}
%
%
We note that boundary measurements alone cannot uniquely determine a general coefficient, making the problem ill-posed.
This implies that existence, uniqueness, and stability of solutions are not guaranteed \cite{Hadamard1923}.
Regularization methods, such as Truncated Singular Value Decomposition (TSVD) \cite{Isakov2006, KaipioSomersalo2005}, iterative regularization \cite{AlberRyazantseva2006,BakushinskyKokurin2004,KaipioSomersalo2005}, and Tikhonov regularization \cite{BakushinskyKokurin2004,EnglKunischNeubauer1989,Isakov2006,TikhonovArsenin1977}, are commonly used to address this issue.

In this paper, we use Tikhonov regularization. {Stability and convergence with respect to noisy data can be established within the classical framework \cite{ChaventKunisch1994, EnglKunischNeubauer1989, EnglHankeNeubauer1996}. The arguments are standard and can be carried out in a similar manner as in \cite{ZhengChengGong2020}; hence they are omitted for brevity.} The Tikhonov-regularized formulation leads to the following minimization problem:
\begin{equation}\label{eq:minimization_mu}
	\mu_{\rho}^{\star}
	= \op{argmin}_{\mu \in {\mathcal{A}}} \left\{ J(\mu) + \regu(\mu,\rho) \right\},
\end{equation}
where
\[
	J(\mu) \coloneqq  \frac{1}{2} \norm{u(\mu) - h}_{L^{2}(\bigdO)}^{2}
	\qquad \text{and} \qquad
	\regu(\mu,\rho) \coloneqq  \frac{\rho}{2} \norm{\mu}^{2}_{L^{2}(\Omega)}.
\]
Here, $\rho > 0$ is a Tikhonov regularization parameter that controls the relative influence of the data misfit and the regularization term.
This regularization is employed to mitigate the ill-posedness of the inverse coefficient problem and to promote stable reconstructions.

In this study, we will depart from the use of multiple measurements, which require solving several partial differential equations, and instead employ shape optimization techniques to recover the absorption coefficient from a single measurement.
We focus on recovering the absorption coefficient when it is given by $\mua = \mu_{0} \chi_{\Omega\setminus\baromega} + \mu_{1} \chi_{\omega}$,
where $\mu_{0}, \mu_{1} \in \mathbb{R}_{+}$ and $\chi_{\omega}$ denote the characteristic function of $ \omega \Subset \Omega $.

In the following subsections, we present several preliminary results, with proofs in Appendix~\ref{appx:proofs}.
The main result is given in Theorem~\ref{thm:optimality_result}.
\subsection{\small Well-posedness of the state and continuity of the coefficient-to-solution map}
Let us define the following sesquilinear and linear forms for
$u,v \in V\coloneqq H^{1}(\Omega)$:
\begin{equation}\label{eq:bilinear_and_linear_forms}
\left\{
\begin{aligned}
	a(u,v) &= \intO{( {\alpha} \nabla{u} \cdot \nabla{v} + {\mu} {u}{v})}
				+ \frac{1}{\zeta} \intG{{u}{v}} ,\\
	l(v) &= \intO{fv}.
\end{aligned}
\right.
\end{equation}
For later reference, we write the bilinear form as $a(\mu;\, \cdot, \cdot)$ to explicitly indicate its dependence on the reaction coefficient $\mu$.

The weak formulation of \eqref{eq:main} can be stated as follows:
\begin{problem}\label{prob:weak_form_optimal_tomography}
	Find $u \in {V}$ such that $a(u,v) = l(v)$, for all $v \in V$.
\end{problem}
We have the following well-posedness result with respect to Problem~\ref{prob:weak_form_optimal_tomography}.
\begin{lemma}\label{lem:wellposedness_weak_formulation}
	Let ${\alpha}, \zeta \in \mathbb{R}_{+}$, ${\mu}\in {\mathcal{A}}$, and $f\in H^{-1}(\Omega)$.
	Then, there exists a unique weak solution $u \in V$ to Problem~\ref{prob:weak_form_optimal_tomography}.
\end{lemma}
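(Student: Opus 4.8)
The plan is to invoke the Lax--Milgram theorem on the Hilbert space $V = H^{1}(\Omega)$, so I would verify four properties: bilinearity of $a$, its boundedness, its coercivity, and the boundedness of the linear functional $l$. Bilinearity is immediate from the definition in \eqref{eq:bilinear_and_linear_forms}, and $l$ is bounded since, for $f \in H^{-1}(\Omega)$, one has $\abs{l(v)} = \abs{\langle f, v\rangle} \le \norm{f}_{H^{-1}(\Omega)} \norm{v}_{V}$.

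For boundedness of $a$ I would estimate each of the three terms separately. The volume terms are controlled by Cauchy--Schwarz together with $\alpha, \mu \in L^{\infty}(\Omega)$, giving a bound of the form $\norm{\alpha}_{L^{\infty}}\norm{\nabla u}_{L^{2}}\norm{\nabla v}_{L^{2}} + \norm{\mu}_{L^{\infty}}\norm{u}_{L^{2}}\norm{v}_{L^{2}}$. For the boundary term I would use the continuity of the trace operator $\gamma : H^{1}(\Omega) \to L^{2}(\bigdO)$, which yields $\norm{u}_{L^{2}(\bigdO)} \lesssim \norm{u}_{V}$ and hence $\tfrac{1}{\zeta}\abs{\intG{uv}} \lesssim \norm{u}_{V} \norm{v}_{V}$. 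Summing gives $\abs{a(u,v)} \le C \norm{u}_{V} \norm{v}_{V}$ for a constant $C$ depending on $\alpha$, $\mu_{\max}$, $\zeta$, and the domain.

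The coercivity is where the structure of the admissible set pays off. Testing with $v = u$ gives
\[
a(u,u) = \intO{(\alpha \abs{\nabla u}^{2} + \mu u^{2})} + \frac{1}{\zeta}\intG{u^{2}}.
\]
Since $\tfrac{1}{\zeta} > 0$, the boundary term is nonnegative and may simply be discarded. Because $\mu \in \mathcal{A}$ satisfies $\mu \ge \mu_{\min} > 0$ almost everywhere and $\alpha > 0$ is constant, I would bound
\[
a(u,u) \ge \alpha \norm{\nabla u}_{L^{2}(\Omega)}^{2} + \mu_{\min}\norm{u}_{L^{2}(\Omega)}^{2} \ge \min(\alpha, \mu_{\min})\norm{u}_{V}^{2}.
\]
It is worth emphasizing that the strictly positive lower bound on $\mu$ supplies full $L^{2}$-control, so coercivity follows directly without recourse to a Poincar\'{e}-type inequality or to the boundary term.

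With boundedness, coercivity, and the boundedness of $l$ established, the Lax--Milgram theorem yields a unique $u \in V$ satisfying $a(u,v) = l(v)$ for all $v \in V$. I do not anticipate a genuine obstacle here; the only point requiring care is the trace estimate invoked for the boundary term in the continuity bound, which is where the Lipschitz regularity of $\bigdO$ enters.
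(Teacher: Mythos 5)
Your proposal is correct and follows essentially the same route as the paper: Lax--Milgram on $V=H^{1}(\Omega)$, with continuity of $a$ via Cauchy--Schwarz and the trace theorem, coercivity from $\alpha>0$ and $\mu\ge\mu_{\min}>0$ after discarding the nonnegative boundary term, and boundedness of $l$ from $f\in H^{-1}(\Omega)$. The paper merely records the same three inequalities (with the constants $\max\{\alpha,\mu_{\max},\zeta^{-1}\}$ and $\min\{\alpha,\mu_{\min}\}$) and omits the details; your explicit appeal to the trace estimate for the boundary term is, if anything, slightly more careful.
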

The existence of a weak solution to Problem~\ref{prob:weak_form_optimal_tomography} remains guaranteed when $\alpha \in L^{\infty}_{+}(\Omega)$.
Based on Lemma~\ref{lem:wellposedness_weak_formulation}, for each ${\mu} \in {\mathcal{A}}$, Problem~\ref{prob:weak_form_optimal_tomography} is well-posed.
We define the functional $F \coloneqq  F(\mu)$ as follows:
\begin{equation}\label{eq:mapping_F}
	F: \mathcal{A} \longrightarrow V, \quad \mathcal{A} \ni {\mu} \longmapsto F({\mu}) = u(\mu) \in V,
\end{equation}
where $u = u(\mu)$ solves Problem~\ref{prob:weak_form_optimal_tomography}.
We drop $\mu$ when there is no confusion.
\begin{proposition}\label{prop:boundedness_of_F}
	The map $F$ is continuous in {$\mathcal{A}$} and
	\[
		\norm{F(\tilde{\mu})-F(\dbtilde{\mu})}_{V} \lesssim \norm{\tilde{\mu} - \dbtilde{\mu}}_{L^{\infty}(\Omega)}.
	\]
\end{proposition}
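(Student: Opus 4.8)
The plan is to exploit the \emph{uniform} coercivity of the bilinear form over the admissible set $\mathcal{A}$ together with a subtraction argument carried out directly on the weak formulations. Fix $\tilde{\mu}, \dbtilde{\mu} \in \mathcal{A}$ and write $\eu = F(\tilde{\mu})$, $\dbtilde{u} = F(\dbtilde{\mu})$ for the corresponding weak solutions, so that $a(\tilde{\mu};\, \eu, v) = l(v)$ and $a(\dbtilde{\mu};\, \dbtilde{u}, v) = l(v)$ hold for every $v \in V$. Setting $w := \eu - \dbtilde{u}$ and subtracting the two identities, I would isolate the difference of the reaction terms by adding and subtracting the cross term $\tilde{\mu}\dbtilde{u}$, writing $\tilde{\mu}\eu - \dbtilde{\mu}\dbtilde{u} = \tilde{\mu}(\eu - \dbtilde{u}) + (\tilde{\mu} - \dbtilde{\mu})\dbtilde{u}$. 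Since $\alpha$ and $\zeta$ are held fixed, the diffusion and boundary contributions recombine linearly, yielding the error equation
\[
	a(\tilde{\mu};\, w, v) = -\intO{(\tilde{\mu} - \dbtilde{\mu})\,\dbtilde{u}\, v}
	\qquad \text{for all } v \in V.
\]

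Next I would test this identity with $v = w$. On the left-hand side, because every $\mu \in \mathcal{A}$ satisfies $\mu \ge \mu_{\min} > 0$ and $\alpha$ is bounded below by a positive constant, the form is coercive with a constant $c_{0} > 0$ that is \emph{independent of} $\mu$; hence $a(\tilde{\mu};\, w, w) \ge c_{0} \norm{w}_{V}^{2}$. On the right-hand side, Hölder's inequality gives
\[
	\abs{\intO{(\tilde{\mu} - \dbtilde{\mu})\,\dbtilde{u}\, w}}
	\le \norm{\tilde{\mu} - \dbtilde{\mu}}_{L^{\infty}(\Omega)}\,\norm{\dbtilde{u}}_{L^{2}(\Omega)}\,\norm{w}_{L^{2}(\Omega)}
	\le \norm{\tilde{\mu} - \dbtilde{\mu}}_{L^{\infty}(\Omega)}\,\norm{\dbtilde{u}}_{V}\,\norm{w}_{V}.
\]
Dividing through by $\norm{w}_{V}$ then leaves $\norm{w}_{V} \le c_{0}^{-1}\norm{\dbtilde{u}}_{V}\,\norm{\tilde{\mu} - \dbtilde{\mu}}_{L^{\infty}(\Omega)}$.

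It remains to bound $\norm{\dbtilde{u}}_{V}$ uniformly over $\mathcal{A}$. Testing the weak formulation for $\dbtilde{u}$ against itself and invoking the same uniform coercivity, I obtain $c_{0} \norm{\dbtilde{u}}_{V}^{2} \le l(\dbtilde{u}) \le \norm{f}_{H^{-1}(\Omega)}\norm{\dbtilde{u}}_{V}$, so that $\norm{\dbtilde{u}}_{V} \le c_{0}^{-1}\norm{f}_{H^{-1}(\Omega)}$, a bound that does not depend on the particular coefficient. Substituting this back produces the claimed Lipschitz estimate $\norm{F(\tilde{\mu}) - F(\dbtilde{\mu})}_{V} \lesssim \norm{\tilde{\mu} - \dbtilde{\mu}}_{L^{\infty}(\Omega)}$, from which continuity of $F$ on $\mathcal{A}$ follows at once.

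The argument is essentially routine; the one point deserving genuine care is the \emph{uniformity} of the coercivity constant across the entire admissible set, rather than a bound that degenerates as $\mu$ varies. This is exactly what the lower bound $\mu_{\min} > 0$ built into $\mathcal{A}$ guarantees, and it is what ensures that the implicit constant in $\lesssim$ depends only on $\alpha$, $\mu_{\min}$, $\zeta$, and $\norm{f}_{H^{-1}(\Omega)}$, but never on $\tilde{\mu}$ or $\dbtilde{\mu}$ themselves.
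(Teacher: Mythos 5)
Your argument is correct and follows essentially the same route as the paper's own proof: both derive the error equation $a(\tilde{\mu};\,w,v) = -((\tilde{\mu}-\dbtilde{\mu})\dbtilde{u},v)_{\Omega}$, test with $v=w$, and combine the uniform coercivity constant $\min\{\alpha,\mu_{\min}\}$ with the a priori bound $\norm{\dbtilde{u}}_{V}\leqslant \cb\cf$ from Lemma~\ref{lem:wellposedness_weak_formulation}. No gaps; your emphasis on the $\mu$-independence of the coercivity constant is exactly the point the paper relies on.
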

{The mapping \eqref{eq:mapping_F} is differentiable with respect to $\mu$ for all $\mu$ belonging to the interior ${\mathcal{A}^{\circ}}$.}

We make the following key assumption.
\begin{assumption}\label{key_assumption}
    The admissible set $\adset$ is a finite-dimensional, closed convex subset of $\mathcal{A}$.
\end{assumption}
For example, $\adset$ is the set of piecewise constant functions defined on a fixed partition of $\Omega$.

{Let $\mu \in {\mathcal{A}^{\circ}}$.
Then, for any $\nu \in {\adset}$ such that $\mu + \nu \in {\mathcal{A}}$, $F(\mu + \nu) \in V$ is well-defined.} Let ${\wdot} = F(\mu + \nu) - F(\mu)$.
By the definition of $F$, ${\wdot} = {\wdot}(x)$ satisfies the equation
\begin{equation}\label{eq:strong_form_of_delta_u}
    \left\{
    \begin{aligned}
      -\dive{\left( {\alpha} \nabla{\wdot} \right)} + (\mu + \nu) {\wdot} &= - \nu u, \quad \text{in }  \Omega,\\
      {\alpha} \dn{{\wdot}} + \frac{1}{\zeta} {\wdot} &= 0, \quad \text{on }  \bigdO.
    \end{aligned}
    \right.
\end{equation}
In variational form, ${\wdot}={\wdot}(x) \in V$ satisfies
\begin{equation}\label{eq:weak_form_del_u}
	{a(\mu + \nu; {\wdot},v)} = -(\nu u, v)_{\Omega}, \quad \forall v \in V.
\end{equation}
We now state the following proposition.

\begin{proposition}\label{prop:state_derivative}
    For any {$\mu \in \mathcal{A}^{\circ}$ and $\nu \in {\adset}$ with $\mu + \nu \in \mathcal{A}$} , $F(\mu)$ is differentiable with respect to $\mu$, and the sensitivity $\udot \coloneqq  \udot(\mu)[\nu] = DF(\mu)\nu$ uniquely satisfies the equation
        	\begin{equation}\label{eq:sensitivity_equation}
        \left\{
        \begin{aligned}
        -\dive{\left( {\alpha} \nabla{\udot}\right)} + \mu \udot &= - \nu u  , \quad x \in \Omega,\\
        {\alpha} \dn{\udot} + \frac{1}{\zeta} \udot &= 0, \quad x \in \bigdO,
        \end{aligned}
        \right.
        	\end{equation}	
     with the variational form
        	\begin{equation}\label{eq:sensitivity_equation_variational_form}
        		{a(\mu;\udot,v)} = -(\nu u, v)_{\Omega}, \quad \forall v \in V,
        \end{equation}
    where $u = F(\mu)$.
    Furthermore, there exists a constant $c>0$ such that\footnote{{Let $X$ and $Y$ be Banach spaces. We denote by $\mathscr{B}(X;Y)$ the space of all bounded linear operators from $X$ to $Y$, equipped with the operator norm $\norm{T}_{\mathscr{B}(X;Y)} \coloneqq  \displaystyle\sup_{\norm{\psi}_X \leqslant 1} \norm{T \psi}_Y$, $T \in \mathscr{B}(X;Y)$.}}
    \begin{equation*}
        \norm{DF(\mu)}_{\mathscr{B}(\mathcal{A};V)} \leqslant c.
    \end{equation*}
\end{proposition}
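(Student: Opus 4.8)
The plan is to produce the candidate derivative directly, verify that it is the unique solution of the sensitivity problem, and then confirm by a residual estimate that it is genuinely the Fréchet derivative of $F$. Fix $\mu \in \mathcal{A}^{\circ}$ and write $u = F(\mu)$. For each $\nu \in \adset$ I would consider the map sending $\nu$ to the unique $\udot \in V$ solving $\fergy{a(\mu;\udot,v)} = -\inO{\nu u}{v}$ for all $v \in V$, which is exactly \eqref{eq:sensitivity_equation_variational_form}. Well-posedness and uniqueness of $\udot$ follow verbatim from Lemma~\ref{lem:wellposedness_weak_formulation}, since the bilinear form is the same $a(\mu;\cdot,\cdot)$ and $v \mapsto -\inO{\nu u}{v}$ is a bounded linear functional on $V$, so Lax--Milgram applies. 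Linearity of $\nu \mapsto \udot$ is immediate because the right-hand side depends linearly on $\nu$ while the operator $a(\mu;\cdot,\cdot)$ is fixed. To obtain the operator-norm bound I would test with $v = \udot$ and use coercivity of $a(\mu;\cdot,\cdot)$ with a constant $c_{0} > 0$ that is uniform over $\mathcal{A}$ (it depends only on the positive lower bound of $\alpha$ and on $\mu_{\min} \le \mu$), giving $c_{0}\norm{\udot}_{V}^{2} \le \norm{\nu}_{L^{\infty}(\Omega)}\norm{u}_{V}\norm{\udot}_{V}$, hence $\norm{\udot}_{V} \le c_{0}^{-1}\norm{u}_{V}\norm{\nu}_{L^{\infty}(\Omega)}$. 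Since $\norm{u}_{V} = \norm{F(\mu)}_{V} \le c_{0}^{-1}\norm{f}_{H^{-1}(\Omega)}$ is bounded independently of $\mu \in \mathcal{A}$, the linear map $\nu \mapsto \udot$ is bounded with norm controlled uniformly in $\mu$, which is the asserted estimate for $\norm{DF(\mu)}_{\mathscr{B}(\mathcal{A};V)}$ once this map is identified with $DF(\mu)$.

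For the identification I would set $\wdot = F(\mu+\nu) - F(\mu)$ and $r = \wdot - \udot$, recalling that $\wdot$ satisfies \eqref{eq:weak_form_del_u}, namely $\fergy{a(\mu+\nu;\wdot,v)} = -\inO{\nu u}{v}$. The key algebraic step is the splitting $a(\mu+\nu;\cdot,\cdot) = a(\mu;\cdot,\cdot) + \inO{\nu\,\cdot}{\cdot}$, which follows at once from \eqref{eq:bilinear_and_linear_forms}. Subtracting \eqref{eq:sensitivity_equation_variational_form} from \eqref{eq:weak_form_del_u} and applying this splitting gives, for all $v \in V$,
\begin{equation*}
	a(\mu; r, v) = -\inO{\nu \wdot}{v}.
\end{equation*}
Testing with $v = r$ and using coercivity yields $c_{0}\norm{r}_{V}^{2} \le \norm{\nu}_{L^{\infty}(\Omega)}\norm{\wdot}_{V}\norm{r}_{V}$, so that $\norm{r}_{V} \le c_{0}^{-1}\norm{\nu}_{L^{\infty}(\Omega)}\norm{\wdot}_{V}$.

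Finally I would invoke Proposition~\ref{prop:boundedness_of_F} to bound $\norm{\wdot}_{V} = \norm{F(\mu+\nu)-F(\mu)}_{V} \lesssim \norm{\nu}_{L^{\infty}(\Omega)}$. Combining the last two estimates gives $\norm{F(\mu+\nu)-F(\mu)-\udot}_{V} = \norm{r}_{V} \lesssim \norm{\nu}_{L^{\infty}(\Omega)}^{2}$, which is $o(\norm{\nu}_{L^{\infty}(\Omega)})$ as $\nu \to 0$ within the finite-dimensional set $\adset$ (the admissibility $\mu+\nu \in \mathcal{A}$ holding for $\nu$ small because $\mu \in \mathcal{A}^{\circ}$). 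This proves Fréchet differentiability at $\mu$ with $DF(\mu)\nu = \udot$, completing the argument.

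The step I expect to be the main obstacle is isolating the higher-order remainder: the whole argument rests on recognizing that $r$ solves a problem governed by the same coercive form $a(\mu;\cdot,\cdot)$ but with right-hand side $-\inO{\nu\wdot}{v}$, which is quadratically small precisely because $\wdot$ is already first order in $\nu$ by the Lipschitz estimate of Proposition~\ref{prop:boundedness_of_F}. The remaining ingredients---Lax--Milgram, coercivity, and Cauchy--Schwarz---are routine, so the proof is essentially a bootstrap on the continuity bound established earlier.
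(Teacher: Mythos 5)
Your proposal is correct and follows essentially the same route as the paper's proof: define $\udot$ via Lax--Milgram, subtract the sensitivity equation from the equation for $\wdot$ using the splitting $a(\mu+\nu;\cdot,\cdot)=a(\mu;\cdot,\cdot)+(\nu\,\cdot,\cdot)_{\Omega}$ to obtain $a(\mu;\wdot-\udot,v)=-(\nu\wdot,v)_{\Omega}$, then bootstrap the Lipschitz bound $\norm{\wdot}_{V}\lesssim\norm{\nu}_{L^{\infty}(\Omega)}$ into a quadratic remainder, and finally test \eqref{eq:sensitivity_equation_variational_form} with $v=\udot$ for the uniform operator bound. The only cosmetic difference is that you make the bilinear-form splitting explicit where the paper states the subtracted identity directly.
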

For future needs, we compute the second-order derivative of $F$.
For this purpose, let us denote $\wddot = DF(\mu + \nu_{1})\nu_{2} - DF(\mu)\nu_{2}$,
where $DF(\mu + \nu_{1})\nu_{2}$ and $DF(\mu)\nu_{2}$ are the derivatives of the map $F$ at points $\mu + \nu_{1}$ and $\mu$, respectively.
In view of Proposition~\ref{prop:state_derivative}, we can easily deduce that $\udot = \udot(\mu) = DF(\mu)\nu_{2}$ satisfies the following
\[
        \left\{
        \begin{aligned}
          - \dive{ \left({\alpha} \nabla{\udot} \right)} + \mu \udot &= - \nu_{2} u, \quad \text{in } \Omega,\\
          {\alpha} \dn{\udot} + \frac{1}{\zeta} \udot &= 0, \quad \text{on } \bigdO,
        \end{aligned}
        \right.
\]
where $u = u(\mu) = F(\mu)$.
A straightforward computation shows that
\[
        \left\{
        \begin{aligned}
          -\dive{\left( {\alpha} \nabla{\wddot} \right)} + (\mu + \nu_{1}) \wddot &= - \nu_{2} {u}(\mu+\nu_{1}) + \nu_{2} {u}(\mu) - \nu_{1} \udot(\mu), \quad \text{in } \Omega,\\
          {\alpha} \dn{\wddot} + \frac{1}{\zeta} \wddot &= 0, \quad \text{on } \bigdO.
        \end{aligned}
        \right.
\]
In variational form, we have ${\wddot}={\wddot}(x) \in V$ satisfies
\begin{equation}\label{eq:weak_form_del_del_u}
\begin{aligned}
	{a(\mu + \nu_{1}; {\wddot},v)}
		&=  -(\nu_{2} [ F(\mu+\nu_{1}) - F(\mu) ]  , v)_{\Omega} - ( \nu_{1} DF(\mu) \nu_{2} , v)_{\Omega},
\end{aligned}
\end{equation}
for all $v \in V$.

The above series of computations lead us to the next proposition.
\begin{proposition}\label{eq:state_second_derivative}
	For any $\mu \in {\mathcal{A}^{\circ}}$, $F(\mu)$ is twice-differentiable with respect to $\mu$, and $\uddot  \coloneqq  \uddot(\mu)[\nu_{1}, \nu_{2}]= D^{2}F(\mu)[\nu_{1},\nu_{2}] \in V$ uniquely satisfies the variational equation
	\begin{equation}\label{eq:second_order_sensitivity_equation_variational_form}
        		{a(\mu;\uddot,v)} = -(\nu_{2} DF(\mu)\nu_{1}  , v)_{\Omega} - ( \nu_{1} DF(\mu) \nu_{2} , v)_{\Omega} , \quad \forall v \in V.
        \end{equation}
        with $u = F(\mu)$.
        Additionally, there exists a constant $c>0$ such that $DF(\mu)$ satisfies the estimate
        \begin{equation}\label{eq:boundedness_of_Hessian}
            \norm{D^{2}F(\mu)}_{\mathscr{B}({\mathcal{A} \times \mathcal{A}};V)} \leqslant c.
        \end{equation}
\end{proposition}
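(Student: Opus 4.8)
The plan is to read the claim as the statement that the first-order sensitivity map $g(\mu) := DF(\mu)\nu_{2}$ is itself Fr\'echet differentiable in $\mu$, with directional derivative $\nu_{1} \mapsto \uddot$ solving \eqref{eq:second_order_sensitivity_equation_variational_form}. I would proceed in three stages: (i) solvability of the limit equation, (ii) convergence of the difference quotient $g(\mu+\nu_{1})-g(\mu)$ to $\uddot$, and (iii) the operator-norm bound \eqref{eq:boundedness_of_Hessian}. Throughout I would rely on the uniform coercivity and boundedness of $a(\mu;\cdot,\cdot)$ on $V$ already exploited in Lemma~\ref{lem:wellposedness_weak_formulation}, together with the first-order theory in Propositions~\ref{prop:boundedness_of_F} and~\ref{prop:state_derivative}.

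For stage (i), existence and uniqueness of $\uddot \in V$ solving \eqref{eq:second_order_sensitivity_equation_variational_form} follow from Lax--Milgram: $a(\mu;\cdot,\cdot)$ is bounded and coercive on $V$ uniformly in $\mu \in \fergy{\mathcal{A}}$ (since $\alpha \ge \alpha_{0} > 0$, $\mu \ge \mu_{\min} > 0$, and $1/\zeta > 0$), while the right-hand side is a continuous linear functional on $V$. Indeed, $DF(\mu)\nu_{i} \in V \hookrightarrow L^{2}(\Omega)$ by Proposition~\ref{prop:state_derivative} and $\nu_{j} \in \adset \subset L^{\infty}(\Omega)$, so the products $\nu_{2}\, DF(\mu)\nu_{1}$ and $\nu_{1}\, DF(\mu)\nu_{2}$ lie in $L^{2}(\Omega)$ and each $v \mapsto (\nu_{j} DF(\mu)\nu_{i}, v)_{\Omega}$ is continuous. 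This representation also exhibits $\uddot$ as bilinear in $(\nu_{1},\nu_{2})$, as required of a second derivative.

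Stage (ii) is the heart of the argument. Writing $e := \wddot - \uddot$, where $\wddot = g(\mu+\nu_{1})-g(\mu)$ satisfies \eqref{eq:weak_form_del_del_u}, I subtract \eqref{eq:second_order_sensitivity_equation_variational_form} from \eqref{eq:weak_form_del_del_u} and use the identity $a(\mu+\nu_{1};\wddot,v) - a(\mu;\wddot,v) = (\nu_{1}\wddot,v)_{\Omega}$ (the two forms differ only in the zeroth-order term) to obtain, for all $v \in V$,
\[
a(\mu; e, v) = -(\nu_{1} \wddot, v)_{\Omega} - \big(\nu_{2}\, [F(\mu+\nu_{1})-F(\mu)-DF(\mu)\nu_{1}], v\big)_{\Omega}.
\]
Testing with $v = e$ and using coercivity gives $\norm{e}_{V}^{2} \lesssim |(\nu_{1}\wddot,e)_{\Omega}| + |(\nu_{2} r, e)_{\Omega}|$, where $r := F(\mu+\nu_{1})-F(\mu)-DF(\mu)\nu_{1}$. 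The energy estimate applied to \eqref{eq:weak_form_del_del_u}, combined with the Lipschitz bound of Proposition~\ref{prop:boundedness_of_F} and the operator bound of Proposition~\ref{prop:state_derivative}, yields $\norm{\wddot}_{V} \lesssim \norm{\nu_{1}}_{L^{\infty}(\Omega)}$, so that $|(\nu_{1}\wddot,e)_{\Omega}| \lesssim \norm{\nu_{1}}_{L^{\infty}(\Omega)}^{2}\,\norm{e}_{V}$ is of quadratic (hence higher) order; meanwhile the Fr\'echet differentiability of $F$ gives $\norm{r}_{V} = o(\norm{\nu_{1}}_{L^{\infty}(\Omega)})$, so $|(\nu_{2} r, e)_{\Omega}| \lesssim \norm{\nu_{2}}_{L^{\infty}(\Omega)}\,\norm{r}_{V}\,\norm{e}_{V} = o(\norm{\nu_{1}}_{L^{\infty}(\Omega)})\,\norm{e}_{V}$. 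Dividing by $\norm{e}_{V}$ gives $\norm{e}_{V} = o(\norm{\nu_{1}}_{L^{\infty}(\Omega)})$, which is exactly the assertion that $g$ is differentiable at $\mu$ with $D^{2}F(\mu)[\nu_{1},\nu_{2}] = \uddot$.

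The main obstacle is the bookkeeping in the subtraction step: one must correctly attribute the mismatch between $a(\mu+\nu_{1};\cdot,\cdot)$ and $a(\mu;\cdot,\cdot)$ to the term $(\nu_{1}\wddot,v)_{\Omega}$ and recognize, through $\norm{\wddot}_{V} \lesssim \norm{\nu_{1}}_{L^{\infty}(\Omega)}$, that it is $O(\norm{\nu_{1}}^{2})$ and therefore negligible, while the genuinely first-order remainder is absorbed into $o(\norm{\nu_{1}})$ via the already-established differentiability of $F$. Finally, the bound \eqref{eq:boundedness_of_Hessian} follows immediately from stage (i): coercivity applied to \eqref{eq:second_order_sensitivity_equation_variational_form} gives $\norm{\uddot}_{V} \lesssim \norm{\nu_{2}}_{L^{\infty}(\Omega)}\norm{DF(\mu)\nu_{1}}_{V} + \norm{\nu_{1}}_{L^{\infty}(\Omega)}\norm{DF(\mu)\nu_{2}}_{V} \lesssim \norm{\nu_{1}}_{L^{\infty}(\Omega)}\norm{\nu_{2}}_{L^{\infty}(\Omega)}$ by the operator-norm estimate of Proposition~\ref{prop:state_derivative}, which is the claimed uniform bound on $D^{2}F(\mu)$.
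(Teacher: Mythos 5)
Your proposal is correct and follows essentially the same route as the paper's proof: Lax--Milgram for well-posedness of the limit equation, subtraction of \eqref{eq:second_order_sensitivity_equation_variational_form} from \eqref{eq:weak_form_del_del_u} with the zeroth-order identity $a(\mu+\nu_{1};\wddot,v)-a(\mu;\wddot,v)=(\nu_{1}\wddot,v)_{\Omega}$, an energy estimate on the error tested against itself, and coercivity for the bound \eqref{eq:boundedness_of_Hessian}. The only cosmetic difference is that the paper carries explicit constants $\cb^{3}\cf\norm{\nu_{1}}_{L^{\infty}(\Omega)}^{2}$ where you use $o(\norm{\nu_{1}}_{L^{\infty}(\Omega)})$, which suffices for differentiability.
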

%
%
%
%
\subsection{Regularized problem: well-posedness and {first-order optimality condition}}\label{subsec:first_order_optimality_condition}
We now examine the regularized version of Problem~\ref{prob:absorption_coefficient_recovery} within the optimization framework \eqref{eq:minimization_mu}.
For all $\mu \in L^{\infty}_{+}(\Omega)$, let $u(\mu) \in V$ denote the weak solution of \eqref{eq:main}, solving Problem~\ref{prob:weak_form_optimal_tomography} for the given $\mu$.
From \eqref{eq:minimization_mu}, we define
{
\begin{equation}\label{eq:regularized_cost_function}
	\Je(\mu) \coloneqq  J(\mu) + \regu(\mu,\rho) = \frac{1}{2} \norm{u(\mu) - h}_{L^{2}(\bigdO)}^{2} + \frac{\rho}{2} \norm{\mu}^{2}_{L^{2}(\Omega)}.
\end{equation}
}

We now define the following problem.
%
\begin{problem}\label{prob:minimization_of_mu}
	Find $\mue \in \adset \subset \mathcal{A}$ such that $\mue = \inf_{\mu \in \adset} \Je(\mu)$.
\end{problem}
In this section, we want to prove that the objective functional is convex under specific conditions, and its minimizer in $\adset$ is unique.
We start by noting that for any $\mu \in \adset$ and $\dmu \in \adset$ with $\mu + \dmu \in \adset$, formally, we have
\begin{align}
	\Jeprime(\mu)\dmu &= \indO{u(\mu) - h}{\udot} + \rho\inO{\mu}{\dmu},\label{eq:derivative_of_J}\\
	\Jedprime(\mu)\ddmu &= \norm{\udot}_{L^{2}(\bigdO)}^{2} + \indO{{u(\mu) - h}}{\uddot} + \rho\norm{\dmu}_{L^{2}(\Omega)}^{2},\label{eq:second_derivative_of_J}
\end{align}
where $\udot = DF(\mu)\dmu$ is the unique solution to the variational equation \eqref{eq:sensitivity_equation_variational_form} while $\uddot = D^{2}F(\mu)[\dmu,\dmu]$ uniquely solves the variational equation \eqref{eq:second_order_sensitivity_equation_variational_form}.

On a side note, we underline here that $\nu \in L^{\infty}(\Omega) \subset L^{2}(\Omega)$ and $\norm{\nu}_{L^{2}(\Omega)} \leqslant \abs{\Omega}^{1/2}\norm{\nu}_{L^{\infty}(\Omega)}$ (see \cite[Prop.~6.12, p.~186]{Folland1999} or \cite[Thm.~1.5.5(c), p.~46]{AtkinsonHan2009}).

\begin{proposition}[Strict convexity of $\Je$]\label{prop:strict_convexity_of_J}
	There exists a constant $\epszed > 0$ independent of $\mu \in \adset \subset \mathcal{A}$, such that for all $\rho > \epszed$, $\Je(\mu)$ is strictly convex.
\end{proposition}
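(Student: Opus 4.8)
The plan is to establish strict convexity by showing that the second derivative $\Jedprime(\mu)\ddmu$ is strictly positive for $\rho$ sufficiently large, since $\Je$ is a twice-differentiable functional on the convex set $\adset$ and strict positivity of the Hessian quadratic form is the standard sufficient condition. Concretely, I would examine the expression
\[
	\Jedprime(\mu)\ddmu = \norm{\udot}_{L^{2}(\bigdO)}^{2} + \indO{u(\mu) - h}{\uddot} + \rho\norm{\dmu}_{L^{2}(\Omega)}^{2}
\]
from \eqref{eq:second_derivative_of_J} and argue that, no matter the signs of the first two terms, the regularization term $\rho\norm{\dmu}_{L^{2}(\Omega)}^{2}$ dominates once $\rho$ exceeds a threshold $\epszed$ that is independent of $\mu \in \adset$.

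The key steps, in order, would be as follows. First, the term $\norm{\udot}_{L^{2}(\bigdO)}^{2}$ is manifestly nonnegative, so it can only help; I would simply drop it via a lower bound. Second, I would bound the potentially indefinite middle term $\abs{\indO{u(\mu)-h}{\uddot}}$ from above. By Cauchy--Schwarz on $\bigdO$ this is at most $\norm{u(\mu)-h}_{L^{2}(\bigdO)}\,\norm{\uddot}_{L^{2}(\bigdO)}$, and then the trace inequality gives $\norm{\uddot}_{L^{2}(\bigdO)} \lesssim \norm{\uddot}_{V}$. Third, I would invoke the Hessian bound \eqref{eq:boundedness_of_Hessian} from Proposition~\ref{eq:state_second_derivative}, namely $\norm{D^{2}F(\mu)}_{\mathscr{B}(\mathcal{A}\times\mathcal{A};V)} \leqslant c$, to conclude $\norm{\uddot}_{V} = \norm{D^{2}F(\mu)[\dmu,\dmu]}_{V} \leqslant c\,\norm{\dmu}_{\mathcal{A}}^{2}$. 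Here I would need the relation between the $L^{\infty}(\Omega)$ (equivalently $\mathcal{A}$) norm and the $L^{2}(\Omega)$ norm on the finite-dimensional space $\adset$: since $\adset$ is finite-dimensional (Assumption~\ref{key_assumption}), all norms on it are equivalent, so $\norm{\dmu}_{L^{\infty}(\Omega)} \leqslant C_{\adset}\,\norm{\dmu}_{L^{2}(\Omega)}$ for a constant $C_{\adset}$ depending only on $\adset$. This converts the upper bound into a multiple of $\norm{\dmu}_{L^{2}(\Omega)}^{2}$.

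Combining these, the middle term is bounded below by $-c'\norm{\dmu}_{L^{2}(\Omega)}^{2}$ for a constant $c'$ built from $c$, the trace constant, $C_{\adset}$, and $\sup_{\mu\in\adset}\norm{u(\mu)-h}_{L^{2}(\bigdO)}$; the last supremum is finite because $\adset$ is closed and bounded in a finite-dimensional space, hence compact, and $\mu \mapsto u(\mu)$ is continuous into $V$ by Proposition~\ref{prop:boundedness_of_F} (followed by the trace map). Therefore
\[
	\Jedprime(\mu)\ddmu \geqslant (\rho - c')\,\norm{\dmu}_{L^{2}(\Omega)}^{2},
\]
and setting $\epszed := c'$ yields strict positivity, and thus strict convexity, for all $\rho > \epszed$, with $\epszed$ independent of $\mu$.

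I expect the main obstacle to be securing the uniformity of the constant $\epszed$ over $\mu \in \adset$. The Hessian bound $c$ in \eqref{eq:boundedness_of_Hessian} is already uniform in $\mu$, but the factor $\norm{u(\mu)-h}_{L^{2}(\bigdO)}$ must be controlled uniformly, which is exactly where the finite dimensionality and closedness of $\adset$ (hence compactness of bounded subsets) together with the continuity of $F$ are essential; without Assumption~\ref{key_assumption} the norm equivalence $\norm{\cdot}_{L^{\infty}}\lesssim\norm{\cdot}_{L^{2}}$ on $\adset$ and the compactness argument would both fail. A secondary care point is ensuring the formal second-derivative formula \eqref{eq:second_derivative_of_J} is justified on $\adset$, but this is already supplied by Proposition~\ref{eq:state_second_derivative}.
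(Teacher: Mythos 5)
Your proposal is correct and follows essentially the same route as the paper: both drop the nonnegative first and third terms of $\Jedprime(\mu)\ddmu$, bound the middle term via Cauchy--Schwarz, the trace inequality, the uniform estimate on the second derivative of $F$, and the finite-dimensional norm equivalence $\norm{\cdot}_{L^{\infty}(\Omega)}\lesssim\norm{\cdot}_{L^{2}(\Omega)}$ on $\adset$ from Assumption~\ref{key_assumption}, and then take $\rho$ above the resulting threshold. The only cosmetic differences are that you retain the $-h$ in the data-misfit factor and control $\sup_{\mu\in\adset}\norm{u(\mu)-h}_{L^{2}(\bigdO)}$ by compactness of $\adset$ and continuity of $F$, whereas the paper uses the direct a priori bound $\norm{u}_{V}\leqslant\cb\cf$ to obtain the explicit constant $\epszed=\cb^{4}\cf^{2}$.
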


{We emphasize that, since $\rho$ is a regularization parameter, it cannot be chosen arbitrarily large in practice. While strict convexity can be guaranteed under a sufficiently strong regularization regime (see Appendix~\ref{subsubsec:strict_convexity_proof} for the proof of Proposition~\ref{prop:strict_convexity_of_J}), this condition is generally not satisfied in applications, particularly in the presence of noisy measurements.} Hence, the objective functional is not convex in general, and uniqueness of Problem~\ref{prob:minimization_of_mu} cannot be ensured.
Nevertheless, the numerical experiments in Section~\ref{sec:numerical_studies} suggest that the proposed method exhibits stable behaviour and produces reasonable reconstructions.

\begin{remark}
We emphasize that Assumption \eqref{key_assumption} is essential for the proof of Proposition~\ref{prop:strict_convexity_of_J}.
The inequality $\norm{\cdot}_{L^{\infty}(\Omega)} \lesssim \norm{\cdot}_{L^{2}(\Omega)}$ generally does not hold (cf. \cite[Prop.~6.12, p.~186]{Folland1999} or \cite[Thm.~1.5.5(c), p.~46]{AtkinsonHan2009}).
However, for some functions $\varphi$ on $\Omega$, the inequality $\norm{\varphi}_{L^{\infty}(\Omega)} \lesssim \norm{\varphi}_{L^{2}(\Omega)}$ does hold. Specifically, for a fixed $k$ and a bounded set $\Omega$, every $\varphi \in {P}_{k}(\Omega)$ (the set of polynomials of degree at most $k$ on $\Omega$) satisfies $\norm{\varphi}_{L^{\infty}(\Omega)} \leqslant c \norm{\varphi}_{L^{2}(\Omega)}$, where the constant $c>0$ depends on $k$.
\end{remark}
Next, we characterize the minimizer of the G\^{a}teaux differentiable convex functional $\Je$ (see \cite[Thm.~5.3.19, p.~233]{AtkinsonHan2009}), which is the main result of this section.
We provide a well-posedness result and the first optimality condition for the solution of Problem~\ref{prob:minimization_of_mu} as follows:
\begin{theorem}\label{thm:optimality_result}
    Let $B$ satisfy Assumption \ref{key_assumption}, and let $\rho > 0$ such that $\Je$ is strictly convex.
    Then, Problem~\ref{prob:minimization_of_mu} has a unique solution $\mue \in {\adset}$, which depends continuously on all data with $h \in H^{1/2}(\bigdO)$.
    Moreover, $\mue$ satisfies the inequality
    \begin{equation}\label{eq:first_order_optimality_condition}
        ( - {u}{p} + \rho \mue, \delmu - \mue)_{\Omega} \geqslant 0, \quad \forall \delmu \in {\adset},
    \end{equation}
    where $p = {p}(\mue) \in {V}$ is the solution to the following PDE system
    \begin{equation}\label{eq:first_main_adjoint}
    \left\{
    \begin{aligned}
      -\dive{\left( {\alpha} \nabla {p} \right)} + \mue {p} &= 0, \quad \text{in } \Omega,\\
      {\alpha} \dn{{p}} + \frac{1}{\zeta} {p} &= u_{\rho} - h, \quad \text{on } \bigdO,
    \end{aligned}
    \right.
    \end{equation}
    with $u_{\rho} = F(\mue)$, where $ u_{\rho} \in {V}$ uniquely solves \eqref{eq:main} with $\mua$ replaced by $\mue$.
\end{theorem}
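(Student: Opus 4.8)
\section*{Proof proposal}

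The plan is to dispatch the three assertions of Theorem~\ref{thm:optimality_result}---existence and uniqueness of $\mue$, the variational inequality \eqref{eq:first_order_optimality_condition}, and continuous dependence on the data---in turn, leaning on the differentiability and convexity results already in hand. For \emph{existence}, I would first note that by Assumption~\ref{key_assumption} the set $\adset$ is a finite-dimensional, closed, convex subset of $\mathcal{A}$; since every $\mu \in \mathcal{A}$ obeys $\mu_{\min}\le\mu\le\mu_{\max}$, the set $\adset$ is bounded, hence compact in its (equivalent) finite-dimensional topology. By Proposition~\ref{prop:boundedness_of_F} the map $F$ is continuous, so $\Je$ is continuous on $\adset$, and Weierstrass' theorem furnishes a minimizer. \emph{Uniqueness} is then immediate from the strict convexity guaranteed by Proposition~\ref{prop:strict_convexity_of_J} under the standing hypothesis $\rho>\epszed$.

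For the \emph{optimality condition}, since $\Je$ is G\^ateaux differentiable and convex and $\adset$ is convex, the minimizer is characterized by the variational inequality $\Jeprime(\mue)(\delmu-\mue)\ge 0$ for all $\delmu\in\adset$. Writing $\dmu=\delmu-\mue$ and $\udot=DF(\mue)\dmu$, I would introduce the adjoint state $p$ solving \eqref{eq:first_main_adjoint}, whose weak form, obtained by integrating by parts against the Robin data, is $a(\mue;p,v)=\indO{u_{\rho}-h}{v}$ for all $v\in V$. Testing the sensitivity equation \eqref{eq:sensitivity_equation_variational_form} with $v=p$ and the adjoint equation with $v=\udot$, and exploiting the symmetry of $a(\mue;\cdot,\cdot)$, I obtain $\indO{u_{\rho}-h}{\udot}=a(\mue;p,\udot)=a(\mue;\udot,p)=-\inO{\dmu\, u_{\rho}}{p}$. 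Substituting this identity into \eqref{eq:derivative_of_J} collapses the boundary term into the domain integral $-\inO{u_{\rho} p}{\dmu}$, so the variational inequality becomes precisely \eqref{eq:first_order_optimality_condition}. Well-posedness of the adjoint in $V$ follows from Lax--Milgram, since $h\in H^{1/2}(\bigdO)\hookrightarrow L^{2}(\bigdO)$ and the trace of $u_{\rho}$ lies in $L^{2}(\bigdO)$, making $v\mapsto\indO{u_{\rho}-h}{v}$ a bounded functional on $V$ by the trace inequality.

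For \emph{continuous dependence}, the idea is to upgrade strict to strong convexity: tracking the constants in the proof of Proposition~\ref{prop:strict_convexity_of_J} yields a modulus $m>0$ (of order $\rho-\epszed$) for which $\Jeprime(\mu_{1})(\mu_{1}-\mu_{2})-\Jeprime(\mu_{2})(\mu_{1}-\mu_{2})\ge m\norm{\mu_{1}-\mu_{2}}_{L^{2}(\Omega)}^{2}$ at fixed data. Given data $h_{1},h_{2}$ with respective minimizers $\mu_{1},\mu_{2}$, I would test each variational inequality with the other minimizer and add them, then split the resulting expression so that the $h$-independent part is controlled by strong monotonicity while the remainder is the data perturbation $\Jeprime(\mu_{1};h_{2})\dmu-\Jeprime(\mu_{1};h_{1})\dmu=\indO{h_{1}-h_{2}}{DF(\mu_{1})\dmu}$. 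Bounding this last term by the trace inequality together with the uniform estimate $\norm{DF(\mu)}_{\mathscr{B}(\mathcal{A};V)}\le c$ from Proposition~\ref{prop:state_derivative}---and using the norm equivalence on the finite-dimensional $\adset$ (Assumption~\ref{key_assumption}) to pass from the $L^{\infty}$ to the $L^{2}$ norm of $\mu_{1}-\mu_{2}$---produces $m\norm{\mu_{1}-\mu_{2}}_{L^{2}(\Omega)}^{2}\le c\,\norm{h_{1}-h_{2}}_{L^{2}(\bigdO)}\norm{\mu_{1}-\mu_{2}}_{L^{2}(\Omega)}$, hence Lipschitz dependence of $\mue$ on $h$; continuity in the remaining data $f,\alpha,\zeta$ follows by the same scheme.

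I expect the \emph{continuous-dependence estimate} to be the main obstacle. Existence and the variational inequality are routine consequences of the earlier propositions, and the adjoint computation is standard once the symmetry of $a$ is invoked; but the stability bound requires the \emph{quantitative} strong-convexity modulus $m>0$---not merely the qualitative strict convexity stated in Proposition~\ref{prop:strict_convexity_of_J}---and a careful pairing of the two optimality inequalities so that the nonlinear $\mu$-dependence of the adjoint states cancels, leaving only the term linear in $h$ to be controlled. Verifying that $m$ remains strictly positive (and locally uniform as $h$ ranges over a bounded set) is the delicate point, since the second derivative \eqref{eq:second_derivative_of_J} carries an $h$-dependent sign-indefinite contribution $\indO{u_{\rho}-h}{\uddot}$ that must be dominated by the regularization term $\rho\norm{\dmu}_{L^{2}(\Omega)}^{2}$.
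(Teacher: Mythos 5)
Your proposal is correct and follows essentially the same route as the paper: the heart of the argument is the identical adjoint computation (introduce $p$ solving \eqref{eq:first_main_adjoint}, test the sensitivity equation with $p$ and the adjoint with $\udot$, and use the symmetry of $a(\mue;\cdot,\cdot)$ to convert the boundary pairing $\indO{u_\rho-h}{\udot}$ into $-\inO{u_\rho p}{\dmu}$). The only difference is that the paper disposes of existence, uniqueness, and stability in one stroke by citing the standard convex-minimization theorem of Atkinson--Han, whereas you supply the compactness/Weierstrass argument and a strong-monotonicity stability estimate explicitly; your observation that genuine Lipschitz stability needs a quantitative convexity modulus $m>0$ (not just the strict convexity stated in the theorem's hypothesis) is a fair point that the paper's one-line appeal to the cited theorem glosses over.
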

\begin{proof}
    Let $ u_{\rho} \in {V}$ be the unique weak solution of \eqref{eq:main} with absorption coefficient $\mue \in {\adset}$.
    For convenience, we drop $\cdot_{\rho}$ in $\mue$ during the proof.
    By assumption, $B$ is a closed, convex set in the Hilbert space {$\mathcal{A}$}, and $\Je$ is strictly convex (Proposition~\ref{prop:strict_convexity_of_J}).
    Using the standard result for convex minimization problems \cite[Thm.~5.3.19, p.~233]{AtkinsonHan2009}, there exists a unique stable solution $\mu \coloneqq  \mue \in {\adset}$ to Problem~\ref{prob:minimization_of_mu}, characterized by the optimality condition
    \begin{equation}\label{eq:optimality_inequality}
    	\Jeprime(\mu)(\delmu - \mu) \geqslant 0, \quad \forall \delmu \in {\adset}.
    \end{equation}

    To verify this, we first calculate the derivative of $\Je$ with respect to $\mu$.
    Let $\dmu = {\delmu - \mu}$ and $\udote = DF(\mu)\nu$.
    Then, $\udote \in {V}$ solves \eqref{eq:sensitivity_equation_variational_form} with $\mu = \mue$. From \eqref{eq:derivative_of_J}, the inequality \eqref{eq:optimality_inequality} becomes $\indO{u(\mu) - h}{\udote} + \rho\inO{\mu}{\dmu} \geqslant 0$, for all $\delmu \in {\adset}$.

    Next, we eliminate $\udote$ by introducing the adjoint system {\eqref{eq:first_main_adjoint}} and {multiply} both sides of the first equation by $\udote$.
    {Then, integrating} over $\Omega$ and applying integration by parts gives $\indO{u(\mu) - h}{\udote} = {a(\mu; p, \udote)}$.

    Finally, taking $v = p \in {V}$ in \eqref{eq:sensitivity_equation_variational_form} yields ${a(\mu; \udote, p)} = -(\nu u, p)_{\Omega}$. Since ${a(\mu; \cdot, \cdot)}$ is symmetric, combining the equations results in the inequality $(-{u}{p} + \rho \mu, \dmu)_{\Omega} \geqslant 0$, which holds for all $\delmu \in {\adset}$, where $\dmu = {\delmu - \mu}$ and $\mu = \mue \in {\adset}$. This proves the proposition.
\end{proof}
%
%
%
%
%
%
\section{Shape recovery of the boundary interface}\label{sec:approximation}
The numerical solution of Problem~\ref{prob:minimization_of_mu} via a gradient-based descent method using \eqref{eq:derivative_of_J} fails to provide a satisfactory reconstruction of the absorption coefficient from a single measurement.
A single measurement is insufficient for reasonable recovery, requiring multiple measurements \cite{Meftahi2021}.
Additionally, domain integral-type {cost functionals}  are more effective than boundary integral-type ones for this recovery process \cite{ZhengChengGong2020, Meftahi2021}.
This study proposes the use of shape optimization techniques to improve the recovery of the absorption coefficient, while retaining the {cost functional} form in \eqref{eq:minimization_mu} and relying on a single measurement.

{We assume that $\muout$ is known.
Then, as} a result of the proposed strategy, which includes the regularization term $\regu$ for $\mu$, we minimize the regularized {objective functional}:
\begin{equation}\label{eq:parameter_and_shape_minimization}
	{{\min_{(\muin,\omega)} \Je(\muin,\omega)
		\equiv \min_{(\muin,\omega)} \left\{ J(\muin,\omega)+ \regu(\mu,\rho) \right\} }}
\end{equation}
where $J$ and $\regu$ are defined in \eqref{eq:regularized_cost_function}.
Hereinafter, we write {$J({\omega}) = J(\mu_{1}, \omega)$}, focusing on the variation of $J$ with respect to the sub-domain $\omega$.

Looking at \eqref{eq:parameter_and_shape_minimization}, we highlight that the {objective functional} $J$ depends not only on $\mu$ but also on $\partial\omega$ through the solution $u = u(\mu, \partial\omega)$ to \eqref{eq:main}.
The optimal solution $\omega^{\ast} = \omega^{\ast}(\mu)$, if it exists, depends on $\mu$ via the state equation \eqref{eq:main}.
\begin{remark}
The addition of $\regu$ in \eqref{eq:parameter_and_shape_minimization} addresses the ill-posedness of the minimization.
Regularization for both $\mu$ and $\partial\omega$ enhances stability and improves the approximation of the minimizer as a solution to the inverse problem of recovering $\mu$ and $\partial\omega$.
However, regularizing only $\mu$ is sufficient for stability in numerical approximations, as shown in Section~\ref{sec:numerical_studies}.
\end{remark}
We seek a minimizer of $J$ associated with Problem~\ref{prob:minimization_of_mu}, with $\mu$ prescribed in $\Omega \setminus \baromega$ and in $\omega$.
{The existence of an optimal solution is established in Appendix~\ref{subsec:existence_of_optimal_shape_solution}.}
In the numerical scheme, the state system \eqref{eq:main} is solved iteratively for fixed $\mu$ and $\omega$, followed by updates of both variables based on the derivatives of $J$ with respect to $\mu$ and $\omega$.
This approach, based on variational calculus, calculates the gradients of $J$ with respect to both $\mu$ and $\omega$ in \eqref{eq:main}, accounting for $\omega$ through $\mu$.
Introducing an adjoint variable associated with the measurement $h$, we derive a kernel representation of the total derivative, essential for gradient-based algorithms to minimize $J$.
This also shows the equivalence of the unique minimizer and the critical point of $J$.

We underline here that the proposed approach eliminates the need to compute solutions for the PDEs associated with various input data and multiple measurements.
However, in exchange, along with solving two PDE systems (corresponding to the state and adjoint state problems), we must also compute the solution of a vector-type Laplace equation.
This corresponds to the computation of the extended-regularized deformations field characterized by the so-called shape gradient of $J$; i.e., the variation of $J$ with respect to the region of interest $\omega$.

\subsection{Shape sensitivity analysis}\label{sec:shape_sensitivity_analysis}
The objective of this section is to calculate the {Eulerian derivative} of $J$ with respect to $\omega$ using the chain rule, assuming the {Eulerian derivative} of the state $u$ exists.

\subsubsection{Notations and some definitions}
Let us fix some notations.
We denote by $\nn$ the outward unit normal to $\partial\omega$ pointing into $\Omega \setminus \baromega$.
Thus, $\dn{u}_{-}$ (respectively, $\dn{u}_{+}$) is the normal derivative from the inside of $\omega$ (respectively, $\Omega \setminus \baromega$) at the interface $\partial\omega$, and $\jump{\cdot}$ denotes the jump across the same interface.

We fix a small number $\dzero > 0$ and define the subdomain $\Omega_{{\circ}} \Subset \Omega$ with ${{C}}^{\infty}$ boundary as follows:
\[
	\Omega_{{\circ}}  \coloneqq  \{ x\in \Omega \mid \operatorname{dist}(x,\bigdO) \geqslant \dzero\}.
\]
Let $k \in \mathbb{N}$.
We define $\mathcal{O}_{{\circ}}^{k}$ as the set of all simply connected subdomain $\omega$ with ${{C}}^{k,1}$ boundary $\partial\omega$ such that $\operatorname{dist}(x,\bigdO) > \dzero$ for all $x \in \omega$; i.e.,
%
%
\begin{equation}\label{eq:admissible_set_of_subdomains}
	\mathcal{O}_{{\circ}}^{k}\coloneqq  \left\{ \omega \Subset \Omega_{{\circ}}  \;\left|\;
\begin{aligned}
&  \operatorname{dist}(x,\bigdO) > \dzero, \forall x\in\omega, \Omega\setminus\baromega \ \text{is connected, }\\
&\text{and $\omega$ is of class ${{C}}^{k,1}$}
\end{aligned} \right. \right\}.
\end{equation}

We call $\mathcal{O}_{{\circ}}^{k}$ the set of admissible geometries or interface boundaries.
Notably, the inclusions are assumed to be bounded away from the accessible boundary $\bigdO$, and $\Omega \setminus \baromega$ is connected.
Hereinafter, we call $\Omega$ an \textit{admissible domain} if $\Omega$ contains a subdomain $\omega \in \mathcal{O}_{{\circ}}^{k}$.

The admissible set of interface boundaries is described by a particular class of perturbations of the domain $\Omega$.
We denote by $\VV$ a ${{C}}^{k,1}$ regular vector field with compact support in $\Omega_{{\circ}}$, and let ${{\sfTheta}^{k}}$ stands for the collection of all admissible deformation fields; i.e., we define
\begin{equation}\label{eq:admissible_deformation_fields}
	{{\sfTheta}^{k}} \coloneqq  \{\VV \in {{C}}^{k,1}(\mathbb{R}^{d}) \mid \operatorname{supp}(\VV) \subset \overline{\Omega}_{{\circ}}  \}.
\end{equation}
{For exactness, we assume that there exists $\delta > 0$ such that
$\{x \in \Omega \mid \operatorname{dist}(x,\partial\omega) > \delta/2\} \subset \Omega_{\circ} \subset \{x \in \Omega \mid \operatorname{dist}(x,\partial\omega) > \delta/3\}$, with $\delta$ chosen sufficiently small so that $\delta < 2\,\operatorname{dist}(\partial\omega,\partial\Omega)$.}\footnote{{This assumption ensures a uniform positive separation between the interface $\partial\omega$ and the interior region $\Omega_{\circ}$,
thereby preventing topological changes and guaranteeing the validity of the shape sensitivity analysis.
}}
For $\VV \in {{\sfTheta}^{k}}$, we let $\Vn \coloneqq  \langle \VV, \nn \rangle$ be its normal component.

Let us define $T_{t} : \overline{\Omega} \longmapsto \overline{\Omega}$ as the \textit{perturbation of the identity} $id$ {(the $d$-dimensional identity map)} given by $T_{t} = T_{t}({\VV}) = id + t \VV$, where $\VV \coloneqq  (\theta_{1}, \ldots, \theta_{d})^{\top} \in {{\sfTheta}^{k}}$ is a $t$-independent deformation field.
We define $\Omega_{t}\coloneqq T_{t}(\Omega)$, $\bigdO_{t}\coloneqq T_{t}(\bigdO) = \bigdO$, and $\omega_{t}\coloneqq T_{t}(\omega)$, i.e., $\partial\omega_{t}\coloneqq T_{t}(\partial\omega)$.
In addition, $\Omega_{0}$ is such that the interface is given by $\partial\omega_{0}$.

It can be shown that there exists a sufficiently small number $t_{0}>0$ such that for all $t \in \textsf{I}\coloneqq [0,t_{0})$, the transformation $T_{t}$ is a diffeomorphism from $\Omega \in {{C}}^{k,1}$ onto its image (see, e.g., \cite[Thm.~7]{BacaniPeichl2013} for $k=1$).
Hereinafter, we let $t_{0} \in \mathbb{R}_{+}$ be small enough so that ${[t \mapsto T_{t}]} \in {{C}}^{1}({\textsf{I}},{{C}}^{k,1}(\overline{\Omega})^{d})$ and $ {[t \mapsto T_{t}^{-1}]} \in {{{C}}^{1}}({\textsf{I}},{{C}}^{1}(\overline{\Omega})^{d})$ (cf. \cite{IKP2006,IKP2008}).
In fact, here we assume that, for all $t \in \textsf{I}$, $\dett \coloneqq  \det \,{D}T_t > 0$.
Accordingly, we define the set of all admissible perturbations of $\Omega$ denoted by $\mathcal{O}_{ad}^{k}$ as follows:
\begin{equation}\label{eq:admissible_domains}
	\mathcal{O}_{ad}^{k} =
	\left\{T_{t}({\VV})(\overline{\Omega}) \subset {\barbigOmega} \mid \Omega \in {{C}}^{k,1},
	\baromega \in \mathcal{O}_{{\circ}}^{k}, t \in \textsf{I}, \VV \in {{\sfTheta}^{k}} \right\}.
\end{equation}

It is important to note that the fixed boundary $\bigdO$ only needs Lipschitz continuity, not ${C}^{k,1}$ regularity.
However, for simplicity, we assume higher regularity for some $k \in \mathbb{N}$.
The numerical scheme developed in this study applies to domains $\Omega$ with Lipschitz regularity.
%
%

The following regularities hold (see, e.g., \cite{IKP2006,IKP2008} or \cite[Lem.~3.2, p.~111]{SokolowskiZolesio1992}):
\begin{equation}\label{eq:regular_maps}
\left\{
\begin{aligned}
	{[t \mapsto DT_{t}]} &\in {{C}}^{1}({\textsf{I}},{{C}}^{0,1}({\barbigOmega})^{d\times d}),
		&& {[t \mapsto (DT_{t})^{-\top}]} \in {{C}}^{1}({\textsf{I}},{{C}}({\barbigOmega})^{d\times d}),\\	
	{[t \mapsto \dett]} &\in {{C}}^{1}({\textsf{I}},{{C}}({\barbigOmega})),
		&&\ {[t \mapsto \dett]} \in {{C}}^{1}({\textsf{I}},{{C}}^{0,1}({\barbigOmega})),\\
	{[t \mapsto \At]} &\in {{C}}^{1}({\textsf{I}},{{C}}({\barbigOmega})^{d \times d}),
		&& {[t \mapsto \bt]} \in {{C}}^{1}({\textsf{I}},{{C}}(\partial{\omega})),
\end{aligned}
\right.
\end{equation}
where $\At \coloneqq  \dett({D}T_t^{-1})({D}T_t)^{-\top}$.
The derivatives of the maps $[t \mapsto \dett]$, $[t \mapsto \At]$, and $[t \mapsto b_{t}]$ are respectively given by
\begin{equation}\label{eq:derivatives_of_maps}
\left\{
\begin{aligned}
	\frac{d}{dt}\dett \big|_{t=0}
		&= \lim_{{t \to 0^{+}}} \frac{\dett - 1}{t} = {\operatorname{div}}\, \VV,\\
	\quad \frac{d}{dt}\At \big|_{t=0}
		& = \lim_{{t \to 0^{+}}} \frac{\At - id}{t}
		= ({\operatorname{div}}\VV)id -  {D}\VV - ({D}\VV)^\top =: A,
		\\
	\quad\frac{d}{dt}\bt \big|_{t=0}
		& =  \lim_{{t \to 0^{+}}} \frac{\bt - 1}{t}
		= {\operatorname{div}}_{\tau} \VV
		= {\operatorname{div}} \VV \big|_{{\partial\omega}} - ({D}\VV\nn)\cdot\nn,
\end{aligned}
\right.
\end{equation}
where ${\operatorname{div}}_{\tau} \VV$ denotes the tangential divergence of the vector $\VV$ on ${\partial\omega}$.
Furthermore, we assume that, for any $\alpha \in L^{\infty}_{+}(\Omega)$,
\begin{equation}\label{eq:bounds_At_and_Bt}
	\abs{\xi}^2 \lesssim \alpha\At\xi \cdot \xi \lesssim \abs{\xi}^2,
	\qquad \text{for all}\ \xi \in \mathbb{R}^{d}.
\end{equation}

The functional $J : \mathcal{O}_{ad}^{k} \to \mathbb{R}$ has a directional \textit{first-order} \textit{Eulerian derivative} at $\Omega$ in the direction of the field $\VV \in {{\sfTheta}^{k}}$ if the limit
\begin{equation*}
	\lim_{t \searrow0} \frac{J({\omega}_{t}) - J({\omega})}{t} =: {d}J({\omega})[\VV]
\end{equation*}
exists (see, e.g., \cite[Sec.~4.3.2, Eq.~(3.6), p.~172]{DelfourZolesio2011}).
The functional $J$ is said to be \textit{shape differentiable} at $\omega$ {if the limit exists for all $\VV \in {{\sfTheta}^{k}}$ and the} mapping $\VV \mapsto {d}J({\omega})[\VV]$ is both linear and continuous on ${{\sfTheta}^{k}}$.
In such a case, we call the resulting map as the \textit{shape gradient} of $J$.

In the following subsections, we denote the function $\varphi_{t} : \Omega_{t} \to \mathbb{R}$ on the reference domain $\Omega$ using $T_{t}$ as $\varphi^{t} \coloneqq  \varphi_{t} \circ T_{t}: \Omega \to \mathbb{R}$.
%
%
\subsubsection{{Lagrangian} and {Eulerian derivative} of the states}\label{subsec:material_and_shape_derivative_of_the_state}
The following proposition presents the first result of this section, describing the structure of the \textit{Lagrangian} and \textit{Lagrangian} derivatives of the state.
We stress that $H^{1}$ regularity of the state solution is insufficient to justify the existence of its {Eulerian derivative}.
In fact, higher regularity is required.
Therefore, we consider $C^{k,1}$ bounded domains, for some $k\in\mathbb{N}$ and use an elliptic regularity result to obtain $H^{k+1}$ (local) regularity for the state, which is sufficient to prove Theorem~\ref{thm:state_shape_derivative}.
If we only assume that $\Omega$ is a Lipschitz domain, then $u \in H^1(\Omega)$.
However, if $\Omega$ and $\omega$ are of class $C^{k,1}$, and if $\alpha$, $\mu$, and $f$ are sufficiently smooth,
the regularity of $u$ improves locally, with
$u_{+} \in H^{k+1}(\Omega_{\circ} \setminus \overline{\omega})$ and $u_{-} \in H^{k+1}(\omega)$,
where $u_{\pm} = u|_{\Omega_{\pm}}$, $\Omega_{+} = \Omega \setminus \overline{\Omega}_{-}$, and $\Omega_{-} = \omega \Subset \Omega_{\circ}$.

\begin{theorem}\label{thm:state_shape_derivative}
Let the assumptions of Proposition~\ref{thm:optimality_result}
be satisfied, and assume that $\Omega \in \mathcal{O}_{ad}^{k}$ and $\VV \in {{\sfTheta}^{k}}$, for some $k \in \mathbb{N}$.
Then, the state ${u} \in V$, has the material derivative $\dot{u} \in V$ satisfying the following variational equation
\begin{equation}\label{eq:material_derivative}
	\begin{aligned}
	&\int_{\Omega}{( {{\alpha}\nabla{\dot{u}} \cdot \nabla{v} + {{\mu}} {\dot{u}} {v})} } \, dx\\
			& =  -\int_{\Omega}{ \left( {\alpha} A \nabla{u} \cdot \nabla{v} + \operatorname{div}{\VV} {\mu} {u} {v} - \operatorname{div}{\VV} f {v} \right)} \, dx \\
			& \quad - \int_{\Omega}{ \left[ \nabla{{\alpha}} \cdot \VV (\nabla{u} \cdot \nabla{v}) + \nabla{{\mu}} \cdot \VV {u} {v} - \nabla{f}\cdot\VV {v} \right] } \, dx, \quad \forall v \in V.
	\end{aligned}
\end{equation}
{Furthermore, assuming the additional regularity conditions $\alpha \in W^{k, \infty}(\Omega_{\pm})$, $\mu \in W^{k-1, \infty}(\Omega_{\pm})$, and $f \in H^{k-1}(\Omega_{\pm})$, it follows that $u_{+} \in H^{k+1}(\Omega_{\circ} \setminus \overline{\Omega_{-}})$ and $u_{-} \in H^{k+1}(\Omega_{-})$.}
If $u$ satisfies the continuity conditions
	\begin{equation}\label{eq:continuity_equations_on_the_interface}
		\jump{u} = 0
		\quad \text{and} \quad
		\jump{ \alpha \dfrac{\partial{u}}{\partial{\nn}} } = 0\qquad \text{on $\domega$},
	\end{equation}
then $u$ is shape differentiable and its {Eulerian derivative} $\uprime(\Omega)[\VV] \in H^{1}(\Omega\setminus{\baromega}) \cup H^{1}(\omega)$ solves
        \begin{equation}\label{eq:state_shape_derivative}
        \left\{
        \begin{aligned}
          -\dive{\left( {\alpha} \nabla {\uprime} \right)} + \mue {\uprime} &= 0, &&\quad \text{in $\Omega \setminus \baromega$ and in $\omega$},\\
           \jump{ {\uprime} } &= -\Vn \jump{\dfrac{\partial{u}}{\partial\nn}}, &&\quad \text{on } \domega,\\
           \jump{\alpha\dfrac{\partial\uprime}{\partial\nn}} &= K(u)[\VV], &&\quad \text{on } \domega,\\
          {\alpha} \dn{{\uprime}} + \frac{1}{\zeta} {\uprime} &= 0, &&\quad \text{on } {\partial\Omega},
        \end{aligned}
        \right.
        \end{equation}
        where
        \[
        	K(u)[\VV] \coloneqq \nabla_{{\tau}} \cdot \left( \Vn \jump{\alpha} \nabla_{\tau} {u} \right) - \jump{\mu} {u} \Vn + \jump{f} \Vn.
        \]
\end{theorem}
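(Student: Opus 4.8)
The plan is to proceed by the classical \emph{transport-and-differentiate} strategy for shape derivatives of transmission problems, splitting the argument into (i) the material derivative, obtained on the fixed reference domain, and (ii) the passage to the Eulerian derivative via the relation $\uprime = \dot{u} - \nabla u \cdot \VV$ on each subdomain. First I would pull back the weak formulation on the perturbed domain $\Omega_t$ to $\Omega$ through $T_t$: writing $\ut := u_t \circ T_t$ and testing against $v = v_t \circ T_t \in V$, the change of variables $dx_t = \dett\,dx$ together with $\nabla_{x_t} = (DT_t)^{-\top}\nabla$ turns Problem~\ref{prob:weak_form_optimal_tomography} on $\Omega_t$ into
\begin{equation*}
\intO{\left( \alpha^{t} \At \nabla \ut \cdot \nabla v + \dett\, \mu^{t} \ut v \right)} + \frac{1}{\zeta}\intG{\ut v} = \intO{\dett\, f^{t} v}, \quad \forall v \in V,
\end{equation*}
where the Robin term on the fixed boundary $\bigdO$ is unchanged because $\VV \equiv 0$ there. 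Since the data are transported, $\alpha^{t} = \alpha \circ T_t$, $\mu^{t} = \mu \circ T_t$, $f^{t} = f \circ T_t$, and the piecewise structure is preserved by $T_t(\Omega_\pm) = \Omega_{t,\pm}$.

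Next I would establish that $t \mapsto \ut \in V$ is $C^1$ near $t=0$ and extract $\dot u := \tfrac{d}{dt}\ut|_{t=0}$. This follows from the implicit function theorem applied to the residual of the transported weak form, whose coercivity is uniform in $t$ by \eqref{eq:bounds_At_and_Bt} and whose coefficients depend $C^1$-smoothly on $t$ by \eqref{eq:regular_maps}. Differentiating the transported identity at $t=0$ with the product rule, and inserting the derivatives \eqref{eq:derivatives_of_maps} (namely $\dett' = \dive\VV$, $\At' = A$, $(\alpha^t)' = \nabla\alpha\cdot\VV$, and likewise for $\mu, f$), gives exactly the variational equation \eqref{eq:material_derivative} for $\dot u$, with the left-hand side read as the full bilinear form $a(\mu; \dot u, v)$ including the Robin contribution $\tfrac1\zeta\intG{\dot u v}$.

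For the Eulerian derivative I would first invoke the transmission elliptic regularity under the stated $W^{k,\infty}$ and $H^{k-1}$ hypotheses to obtain $u_\pm \in H^{k+1}$, which legitimizes taking traces of $\nabla u$ from either side of $\domega$ and defining $\uprime := \dot u - \nabla u \cdot \VV$ piecewise on $\Omega_\pm$. Because $\dot u \in V = H^1(\Omega)$ is continuous across $\domega$ while $\nabla u$ is not, one has $\jump{\uprime} = -\jump{\nabla u}\cdot\VV$; using $\jump{u}=0$, so that $\jump{\nabla_\tau u}=0$ and the gradient jump is purely normal, this reduces to $\jump{\uprime} = -\Vn\jump{\dn{u}}$, the second interface condition. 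Near $\bigdO$, where $\VV \equiv 0$, one has $\uprime = \dot u$, so the homogeneous Robin condition for $\dot u$ transfers verbatim to $\uprime$. Substituting $\dot u = \uprime + \nabla u\cdot\VV$ into \eqref{eq:material_derivative}, integrating by parts on each subdomain, and cancelling the transported terms against the strong state equation $-\dive(\alpha\nabla u)+\mu u = f$ leaves the homogeneous interior equation $-\dive(\alpha\nabla\uprime)+\mu\uprime = 0$ in $\Omega\setminus\baromega$ and in $\omega$.

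The remaining, and genuinely delicate, step is the flux-jump condition: the surviving interface integrals must be reorganized into $K(u)[\VV] = \nabla_\tau\cdot(\Vn\jump{\alpha}\nabla_\tau u) - \jump{\mu}u\Vn + \jump{f}\Vn$. The hard part is the tangential-divergence term, which arises from the variation of the tangential component of the flux together with the variation of the surface measure $\bt$ (whose derivative $\dive_{\tau}\VV$ appears in \eqref{eq:derivatives_of_maps}); isolating it requires the tangential Green's formula on $\domega$ and repeated use of the continuity relations \eqref{eq:continuity_equations_on_the_interface} to trade normal derivatives for tangential ones wherever the coefficients jump. I expect this bookkeeping of jumps and surface integration by parts, rather than any single estimate, to be the main obstacle; matching the collected interface terms against an arbitrary $v$ then yields $\jump{\alpha\dn{\uprime}} = K(u)[\VV]$ and completes the identification of \eqref{eq:state_shape_derivative}.
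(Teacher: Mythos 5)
Your proposal follows the same overall architecture as the paper's proof: pull back the perturbed weak form to the reference domain via $T_t$, obtain the material derivative $\dot u$, and then pass to the Eulerian derivative through $\uprime=\dot u-\nabla u\cdot\VV$ taken piecewise on $\Omega_\pm$, with the interface conditions read off by integration by parts against the strong equations. The one genuine methodological difference is in how you obtain $\dot u$: you invoke the implicit function theorem for the residual of the transported variational problem, using uniform coercivity from \eqref{eq:bounds_At_and_Bt} and the $C^1$-in-$t$ dependence of $\dett$, $\At$, and the transported data from \eqref{eq:regular_maps}; the paper instead works directly with the difference quotients $\zt=(\ut-u)/t$, proves their boundedness in $V$, extracts a weakly convergent subsequence, identifies the limit equation \eqref{eq:material_derivative_first_form}, and upgrades to strong convergence via the energy identity. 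Both routes are standard and yield \eqref{eq:material_derivative}; yours is shorter to state but shifts the burden onto verifying that $t\mapsto(\alpha^t,\mu^t,f^t)$ is $C^1$ into the relevant spaces, which Assumption~\ref{assumption:weak_assumptions} ($\alpha_\pm,\mu_\pm\in W^{1,\infty}(\Omega_\pm)$, $f\in H^1(\Omega_\pm)$) does supply. For the second step your plan matches the paper's: the jump $\jump{\uprime}=-\Vn\jump{\dn u}$ from continuity of $\dot u$ and $\jump{\nabla_\tau u}=0$, the Robin condition from $\VV\equiv0$ near $\bigdO$, the bulk equation by cancellation against the state equation, and the flux jump via tangential Stokes. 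One minor imprecision: the tangential-divergence term in $K(u)[\VV]$ does not come from the variation of the surface measure $\bt$ on $\domega$ (no surface integral over $\domega$ is being transported); it arises from applying the divergence theorem on $\Omega_+$ and $\Omega_-$ to the volume terms $\operatorname{div}(\alpha(\VV\cdot\nabla v)\nabla u-\alpha(\nabla u\cdot\nabla v)\VV)$ and then using the observation that $\bigl((\jump{\alpha\nabla u}\cdot\nn)\VV-\Vn\jump{\alpha\nabla u}\bigr)\cdot\nn=0$ on $\domega$ so that $\nabla v$ may be replaced by $\nabla_\tau v$ before the tangential Green formula is applied. This does not affect the viability of your argument, since the mechanism you ultimately invoke (tangential integration by parts on the collected interface terms, exploiting \eqref{eq:continuity_equations_on_the_interface}) is exactly what closes the proof in the paper.
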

{
\begin{remark}
The jump term $\jump{f}$ in \eqref{eq:state_shape_derivative} reflects the general
form of the Eulerian derivative when the forcing term may be discontinuous across the
interface $\partial\omega$.
In the present setting, the diffusion coefficient $\alpha$ and the forcing term $f$
are prescribed data. While $\alpha$ is allowed to be piecewise-defined in order to
model material heterogeneity across $\partial\omega$, the forcing term is assumed
to be independent of the geometry.
\end{remark}
}
A rigorous proof of Theorem~\ref{thm:state_shape_derivative} is given in Appendix~\ref{appx:material_and_shape_derivative}.
In equation \eqref{eq:state_shape_derivative}, the jump expression with the normal derivative of $u$ is given by:
\begin{align*} \jump{\alpha \dfrac{\partial u^{\prime}}{\partial \nn}}
	= \alpha_{+} \nabla u^{\prime}_{+} \cdot \nn - \alpha_{-} \nabla u^{\prime}_{-} \cdot \nn
	= \alpha_{+} \nabla u^{\prime}_{+} \cdot \nn_{+} + \alpha_{-} \nabla u^{\prime}_{-} \cdot \nn_{-},
\end{align*}
where $\nn = \nn_{+}$ is the outward unit normal vector to $\Omega$, and $\nn_{-} = -\nn$ is the inward unit normal vector.
For any function defined on $\Omega$, we denote its restrictions to $\Omega_{+}$ and $\Omega_{-}$ as $u_{+}$ and $u_{-}$, respectively, and drop the subscripts when no confusion arises.
The smoothness assumptions for the domain and deformation fields in {Theorem~\ref{thm:state_shape_derivative}} can be replaced by $C^{3,\lambda}$ for some $\lambda \in (0,1)$.

{The following proposition provides the boundary expression of the Eulerian derivative of the cost functional, which is used to construct the shape gradient in the numerical optimization algorithm. The derivation relies on the Hadamard--Zol\'{e}sio structure theorem and therefore requires sufficient regularity of the domain and interfaces. For this reason, the theoretical analysis in the present work is established for (at least) $C^{1,1}$-regular geometries. Nevertheless, the proposed numerical reconstruction algorithm, which is based on this theoretical result, can still be implemented on Lipschitz domains and yields reasonable numerical results in practice.
}
\begin{proposition}\label{prop:shape_derivative_of_the_cost_functional_J}
Let the assumptions of Theorem~\ref{thm:state_shape_derivative} hold.
Then, the least-squares misfit functional $J(\omega)  = \intG{\abs{u-h}^{2}}$ is differentiable with respect to the shape $\Omega \in \mathcal{O}_{ad}^{k}$ (i.e., with respect to $\omega$) in the direction of $\VV \in {{\sfTheta}^{k}}$, for some $k \in \mathbb{N}$, and its {Eulerian derivative} is given by
\begin{equation}\label{eq:shape_gradient}
    dJ(\omega)[\VV] = \intdomega{\left( {-} \jump{\alpha} \nabla_{\tau}{u} \cdot \nabla_{\tau}{p}
    	- \jump{\mu}{u}{p} + \jump{f}{p} \right)  \Vn},
\end{equation}
where $p \in V$ satisfies {\eqref{eq:first_main_adjoint}} subject to the continuity conditions
\begin{equation}\label{eq:continuity_equations_on_the_interface_for_the_adjoint}
	\jump{p} = 0
	\quad \text{and} \quad
	\jump{ \alpha \dfrac{\partial{p}}{\partial{\nn}} } = 0 \quad \text{on } \domega.
\end{equation}
Here, $u = F(\mu)$, and $u \in V$ uniquely solves \eqref{eq:main} and satisfies the continuity equations in \eqref{eq:continuity_equations_on_the_interface}.
\end{proposition}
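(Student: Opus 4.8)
The plan is to eliminate the Eulerian derivative of the state by means of the adjoint $p$ of \eqref{eq:first_main_adjoint}, exactly paralleling the duality argument used for the optimality condition in Theorem~\ref{thm:optimality_result}, but now keeping track of the interface $\partial\omega$. Since the accessible boundary is frozen under the perturbation ($\partial\Omega_{t}=\partial\Omega$) and the datum $h$ is fixed, the functional $J(\omega)=\intG{\abs{u-h}^{2}}$ depends on $\omega$ only through the trace of $u$ on $\partial\Omega$. First I would differentiate and invoke the shape differentiability of the state supplied by Theorem~\ref{thm:state_shape_derivative}, obtaining $dJ(\omega)[\VV]=\intG{(u-h)\,\uprime}$ (up to the fixed normalization of the misfit, consistent with the right-hand side of \eqref{eq:first_main_adjoint}), where $\uprime=\uprime(\Omega)[\VV]$ solves \eqref{eq:state_shape_derivative}. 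This trace is meaningful because the jump of $\uprime$ is supported on $\partial\omega$, which by construction is separated from $\partial\Omega$.

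The decisive structural point is that $\uprime$ is \emph{only piecewise} $H^{1}$: by \eqref{eq:state_shape_derivative} it carries the prescribed jump $\jump{\uprime}=-\Vn\jump{\dn{u}}$ across $\partial\omega$, so it is not an admissible test function in the weak form of the adjoint. I would therefore test \eqref{eq:first_main_adjoint} against $\uprime$ \emph{subdomain by subdomain}, applying Green's identity separately on $\Omega_{+}=\Omega\setminus\baromega$ and $\Omega_{-}=\omega$ and summing. Because both $\uprime$ and $p$ satisfy $-\dive(\alpha\nabla\,\cdot\,)+\mu(\cdot)=0$ on each piece, the symmetric volume contributions cancel, leaving an integral over $\partial\Omega$ and one over $\partial\omega$. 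On $\partial\Omega$, inserting the Robin condition for $\uprime$ together with $\alpha\dn{p}+\zeta^{-1}p=u-h$ makes the $\zeta^{-1}$ terms cancel and returns precisely $\intG{(u-h)\uprime}=dJ(\omega)[\VV]$. On $\partial\omega$, the continuity conditions \eqref{eq:continuity_equations_on_the_interface_for_the_adjoint} (so that $p$ and $\alpha\dn{p}$ are single-valued) let me replace products of traces by jumps, reducing the interface part to the two contributions governed by $\jump{\alpha\dn{\uprime}}$ and by $\jump{\uprime}$.

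It then remains to substitute the two interface conditions from \eqref{eq:state_shape_derivative}. For the flux-jump condition I insert $\jump{\alpha\dn{\uprime}}=K(u)[\VV]$ and integrate by parts \emph{tangentially} on the closed surface $\partial\omega$ (so no endpoint terms arise), moving the surface divergence in $K(u)[\VV]$ onto $p$; this produces the tangential-gradient term $-\jump{\alpha}\nabla_{\tau}u\cdot\nabla_{\tau}p$ together with the reaction and source contributions $-\jump{\mu}up$ and $+\jump{f}p$, all weighted by $\Vn$. For the Dirichlet-jump condition I use $\jump{\uprime}=-\Vn\jump{\dn{u}}$, which produces a residual normal-derivative interface term; decomposing $\nabla u$ and $\nabla p$ into normal and tangential parts and using the \emph{physical} transmission conditions $\jump{u}=0$ and $\jump{\alpha\dn{u}}=0$ of \eqref{eq:continuity_equations_on_the_interface} (whence $\nabla_{\tau}u,\nabla_{\tau}p$ are continuous and $\alpha\dn{u},\alpha\dn{p}$ single-valued), this normal term is recast in terms of the interfacial tangential data and recombined with the preceding tangential contribution. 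Collecting all pieces then yields \eqref{eq:shape_gradient}.

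The step I expect to be the main obstacle is exactly this interface bookkeeping. Because $\uprime\notin H^{1}(\Omega)$, one cannot simply appeal to the symmetric identity $a(\mu;p,\uprime)=\intG{(u-h)\uprime}$; instead every integration by parts must be carried out on $\Omega_{\pm}$ with all normal-orientation and jump conventions tracked consistently, and the delicate part is to show that the normal-derivative interface term generated by $\jump{\uprime}$ combines, through the transmission conditions on $u$, with the surface-divergence term in $K(u)[\VV]$ so that only the clean tangential coefficient $-\jump{\alpha}\nabla_{\tau}u\cdot\nabla_{\tau}p$ survives. Justifying these manipulations also requires the local elliptic regularity of $u$ (at least $H^{2}$ near $\partial\omega$) furnished by Theorem~\ref{thm:state_shape_derivative}, which guarantees that $\dn{u}$, $\nabla_{\tau}u$, and the surface divergence are well defined as traces on $\partial\omega$.
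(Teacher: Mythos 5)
Your proposal follows essentially the same route as the paper's proof: Hadamard's boundary differentiation formula gives $dJ(\omega)[\VV]=\intG{(u-h)\uprime}$, the adjoint $p$ of \eqref{eq:first_main_adjoint} is used to eliminate $\uprime$ via Green's identities performed on $\Omega_{+}$ and $\Omega_{-}$ separately, and the interface conditions of \eqref{eq:state_shape_derivative} are substituted and integrated by parts tangentially on the closed surface $\domega$ to produce \eqref{eq:shape_gradient}. Your version is in fact more explicit than the paper's about the key subtlety that $\uprime\notin H^{1}(\Omega)$, which the paper compresses into the assertion $a(\uprime,p)=a(p,\uprime)$.

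One detail of your bookkeeping is described inaccurately, though the argument survives. The residual interface term generated by the Dirichlet-jump condition $\jump{\uprime}=-\Vn\jump{\dn{u}}$ equals $-\jump{\alpha\,\dn{u}\,\dn{p}}\Vn$ (using that $\alpha\dn{p}$ is single-valued by \eqref{eq:continuity_equations_on_the_interface_for_the_adjoint}); it is \emph{not} recast into tangential data and absorbed into $-\jump{\alpha}\nabla_{\tau}u\cdot\nabla_{\tau}p$. It is precisely the term $\jump{\alpha\,\dn{u}\,\dn{p}}$ that survives in the general formula \eqref{eq:general_shape_gradient} of Theorem~\ref{thm:state_shape_derivative_weak_assumptions} and is discarded in \eqref{eq:shape_gradient} by invoking the jump-product identity of the remark preceding that theorem, under the continuity conditions \eqref{eq:continuity_equations_on_the_interface} and \eqref{eq:continuity_equations_on_the_interface_for_the_adjoint}. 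To complete your write-up you should dispose of this term by that argument (or retain it, as Theorem~\ref{thm:state_shape_derivative_weak_assumptions} does), rather than attempting to merge it with the tangential contribution.
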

Before proving the proposition, we note that Assumption~\ref{assumption:weak_assumptions} ensures the well-posedness of \eqref{eq:state_shape_derivative} and \eqref{eq:first_main_adjoint} by the Lax-Milgram lemma.
\begin{proof}[Proof of Proposition~\ref{prop:shape_derivative_of_the_cost_functional_J}]
	Let the assumptions of the proposition be satisfied.
	Then, we have sufficient regularity of the domain and the state to apply Hadamard's boundary differentiation formula (see, e.g., \cite[Thm.~4.3, Ch.~9, p.~486]{DelfourZolesio2011}, \cite[Prop.~5.4.18, Ch.~5.4, p.~225]{HenrotPierre2018}, or \cite[Lem.~3.3, Eq.~(3.44), p.~112]{SokolowskiZolesio1992}).
	That is, we have $dJ({\omega})[\VV] = \intG{(u-h)\uprime}$.
    We remove $\uprime$ from the integral expression above using the adjoint method;
    that is, we introduce $p$ as the solution of the adjoint problem \eqref{eq:first_main_adjoint} and multiply the main equation with $\uprime \in V$, apply integration by parts, and use the boundary conditions to obtain the equation $a(\uprime, p) = \intdomega{ K(u)[\VV] {p} }$.
    Similarly, by applying the same steps to the adjoint system \eqref{eq:first_main_adjoint} with $\uprime$ as the multiplier, we obtain $ a(p, \uprime) = \intG{ (u-h)\uprime }$.
    Observe that -- under the continuity {conditions \eqref{eq:state_shape_derivative} and \eqref{eq:continuity_equations_on_the_interface_for_the_adjoint}} -- we have $a(\uprime, p) = a(p, \uprime)$.
    Thus,
	\begin{align*}
		dJ({\omega})[\VV]
		= \intG{(u-h)\uprime}
		&= \intdomega{ { \left( \operatorname{div}_{\tau}{( \Vn \jump{\alpha} \nabla_{\tau}{u} )} {p}
		 	-   \jump{\mu}{u} {p}  \Vn
			+  \jump{f} {p}  \Vn \right)  } }\\
		&= \intdomega{ \left( {-} \jump{\alpha} \nabla_{\tau}{u} \cdot \nabla_{\tau}{p}
		 	-   \jump{\mu}{u} {p}
			+  \jump{f} {p}  \right)  {\Vn} }.
	\end{align*}
	This proves the characterization of the {Eulerian derivative} of $J$ as claimed in \eqref{eq:shape_gradient}.
\end{proof}
We observe that setting $k = 2$ is sufficient to establish the results in Theorem~\ref{thm:state_shape_derivative} and Proposition~\ref{prop:shape_derivative_of_the_cost_functional_J}.
Furthermore, with $\Omega$ being Lipschitz and $\omega$ belonging to the class $C^{1,1}$, we can derive the shape gradient of $J$ as presented in \eqref{eq:shape_gradient}.
To achieve this, we will employ the so-called \textit{rearrangement method} in the spirit of \cite{IKP2006,IKP2008,HIKKP2009}.
The derivation is provided in the next subsection.
\subsubsection{Computation of the shape gradient without the {Eulerian derivative} of the state}\label{subsec:rigorous_computation}
In this subsection, we provide a direct and rigorous computation of the shape gradient bypassing the need to compute the {Eulerian derivative} of the state.
The method only requires the mild assumptions given in Assumption~\ref{assumption:weak_assumptions}.
Although $f$ is simplified in this section (and later $\alpha$ for convenience), the shape gradient computation remains applicable even if $f$ has jump discontinuities.

We have the following remark on the continuity conditions on the product of the normal derivatives of $u$ and $p$ on the boundary interface $\domega$.
\begin{remark}
The continuity conditions given in \eqref{eq:continuity_equations_on_the_interface} and \eqref{eq:continuity_equations_on_the_interface_for_the_adjoint}, allow us to deduce that the product $ \jump{ {\alpha} \dn{u} \dn{p}} \Vn$
vanishes in the shape gradient’s kernel, which is the only part where these conditions are applied.
Indeed, the following property holds for the jump operator $\jump{\cdot}$:
\[
    \jump{\varphi \psi} = \jump{\varphi} \psi_{+} + \varphi_{-} \jump{\psi}
    = \varphi_{+} \jump{\psi} + \jump{\varphi} \psi_{-}
\]
which implies that
\[
    \jump{\varphi \psi} = 0 \quad \text{if } \jump{\varphi} = \jump{\psi} = 0.
\]
Thus, by this identity, given that the {conditions \eqref{eq:continuity_equations_on_the_interface} and \eqref{eq:continuity_equations_on_the_interface_for_the_adjoint}} hold, we infer that $ \jump{ {\alpha} \dn{u} \dn{p}} = 0$.
On another note, we comment that given the assumptions given in Assumption~\ref{assumption:weak_assumptions}, one can verify that the solution $u \in V$ of Problem~\ref{prob:weak_form_optimal_tomography} satisfies the local higher-regularity result $u_{\pm} \in H^{2}(\Omega_{\pm})$.
\end{remark}
\begin{theorem}\label{thm:state_shape_derivative_weak_assumptions}
Let Assumption~\ref{assumption:weak_assumptions} be satisfied, and assume that $\Omega \in \mathcal{O}_{ad}^{1}$, with Lipschitz smooth boundary $\partial\Omega$, and let $\VV \in {{\sfTheta}^{1}}$.
Then, $J$ is differentiable with respect to $\omega$ in the direction of $\VV$ and its
{Eulerian derivative} $dJ({\omega})[\VV]$ is given by
        \begin{equation}\label{eq:general_shape_gradient}
		dJ({\omega})[\VV]
		= \intdomega{ \left( - \jump{ {\alpha} } (\nabla_{\tau}{u} \cdot \nabla_{\tau}{p})
			+  \jump{ {\alpha}  \dfrac{\partial{u}}{\partial{\nn}}  \dfrac{\partial{p}}{\partial{\nn}} }
		 	-   \jump{\mu{u}} {p}
			+  f {p} \right) \Vn
		},
        \end{equation}
	where $u \in {V}$ solves \eqref{eq:main} and $p \in {V}$ satisfies the corresponding adjoint problem (cf. \eqref{eq:first_main_adjoint}).
\end{theorem}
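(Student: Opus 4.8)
The plan is to follow the \emph{rearrangement method} in the spirit of \cite{IKP2006,IKP2008,HIKKP2009}, computing $dJ(\omega)[\VV]$ straight from the difference quotient and thereby avoiding any construction of the material or Eulerian derivative of the state. Since $\VV \in {{\sfTheta}^{1}}$ has $\operatorname{supp}(\VV) \subset \overline{\Omega}_{\circ}$, the map $T_{t} = id + t\VV$ fixes $\bigdO$ pointwise and carries $\Omega$ onto itself, moving only the interface $\domega$ to $\partial\omega_{t}$; in particular the measurement boundary and the datum $h$ are untouched. Writing $\ut := u_{t} \circ T_{t}$, one has $J(\omega_{t}) = \tfrac12\intG{\abs{\ut - h}^{2}}$ because $T_{t}=id$ on $\bigdO$, and a change of variables $x = T_{t}(y)$ in the weak form of \eqref{eq:main} shows that $\ut \in V$ solves $a_{t}(\ut, v) = l_{t}(v)$ for all $v \in V$, where the whole $t$-dependence is pushed into the coefficients,
\[
	a_{t}(w,v) = \intO{\left( \alpha^{t} \At \nabla{w}\cdot\nabla{v} + \dett\, \mu^{t} w v \right)} + \frac{1}{\zeta}\intG{wv}, \qquad l_{t}(v) = \intO{\dett\, f^{t} v}.
\]

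First I would record the strong continuity $\ut \to u$ in $V$ as $t \to 0^{+}$, which follows from the uniform coercivity \eqref{eq:bounds_At_and_Bt}, the regularity of the coefficient maps in \eqref{eq:regular_maps}, and a standard Lax--Milgram stability estimate. Next, introduce the adjoint $p \in V$ of \eqref{eq:first_main_adjoint}, which by symmetry of $a_{0}$ satisfies $a_{0}(\phi, p) = \indO{u-h}{\phi}$ for every $\phi \in V$. Expanding the square and splitting $\ut + u - 2h = 2(u-h) + (\ut - u)$ gives
\[
	J(\omega_{t}) - J(\omega) = \indO{u-h}{\ut - u} + \frac12 \intG{\abs{\ut - u}^{2}},
\]
where the last term is $O(\norm{\ut - u}_{L^{2}(\bigdO)}^{2}) = o(t)$ by the trace theorem. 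For the first term I would substitute the adjoint, $\indO{u-h}{\ut - u} = a_{0}(\ut - u, p)$, and then eliminate $a_{0}(\ut, p)$ using the two state identities $a_{t}(\ut, p) = l_{t}(p)$ and $a_{0}(u, p) = l_{0}(p)$, which yields
\[
	\indO{u-h}{\ut - u} = \left( l_{t}(p) - l_{0}(p) \right) - \left( a_{t} - a_{0} \right)(\ut, p).
\]
Dividing by $t$ and letting $t \to 0^{+}$, the difference quotients of the coefficient maps converge by \eqref{eq:derivatives_of_maps} while $\ut \to u$ in $V$; the crucial point is that \emph{no} derivative of $\ut$ is required, only its continuity. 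With $A = (\dive\VV)\,id - D\VV - (D\VV)^{\top}$ as in \eqref{eq:derivatives_of_maps}, this produces the domain expression
\[
	dJ(\omega)[\VV] = \intO{\dive(f\VV)\, p} - \intO{\left( \alpha A \nabla{u}\cdot\nabla{p} + (\dive\VV)\,\mu u p \right)},
\]
with the obvious additional material terms $\nabla\alpha\cdot\VV$ and $\nabla\mu\cdot\VV$ appearing in the general piecewise-$W^{1,\infty}$ case.

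The hard part will be the final reduction of this domain integral to the boundary kernel \eqref{eq:general_shape_gradient} supported on $\domega$. Here I would use the local elliptic regularity $u_{\pm}, p_{\pm} \in H^{2}(\Omega_{\pm})$ noted in the preceding remark to integrate by parts on $\Omega_{+}$ and $\Omega_{-}$ \emph{separately}. Expanding $A$ and invoking the pointwise divergence identities for the fields $(\nabla{u}\cdot\nabla{p})\VV$, $(\VV\cdot\nabla{u})\nabla{p}$, and $(\VV\cdot\nabla{p})\nabla{u}$, the genuinely interior Hessian contributions are absorbed using the state and adjoint equations $-\dive(\alpha\nabla{u}) + \mu u = f$ and $-\dive(\alpha\nabla{p}) + \mu p = 0$ valid on each subdomain; this substitution is precisely where the source re-enters and generates the $f p$ term. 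All integrals over $\bigdO$ drop out because $\operatorname{supp}(\VV)\subset\overline{\Omega}_{\circ}$, so only the two-sided contributions on $\domega$ survive; assembling them with the opposite normal orientations ($\nn$ from $\omega$, $-\nn$ from $\Omega\setminus\baromega$) turns the one-sided traces into the jumps $\jump{\cdot}$, and the tangential--normal splitting $\nabla{u} = \nabla_{\tau}{u} + (\dn{u})\nn$ on $\domega$ separates the tangential energy $\jump{\alpha}\nabla_{\tau}{u}\cdot\nabla_{\tau}{p}$ from the normal part $\jump{\alpha\,\dn{u}\,\dn{p}}$. Collecting the reaction and source terms then gives exactly \eqref{eq:general_shape_gradient}, and the linearity and continuity of $\VV \mapsto dJ(\omega)[\VV]$ on ${{\sfTheta}^{1}}$ are immediate from the boundary form, establishing shape differentiability. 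I expect the bookkeeping of the Hessian cancellations together with the correct orientation and sign of every jump to be the most delicate and error-prone step, with the transmission conditions \eqref{eq:continuity_equations_on_the_interface} entering only at the very end to identify and combine terms.
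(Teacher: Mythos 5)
Your proposal is correct and follows essentially the same route as the paper's proof: the rearrangement method that transports the perturbed state back to $\Omega$, isolates the term linear in $u^{t}-u$ via the adjoint, differentiates only the coefficient maps $I_{t}$, $A_{t}$, $f^{t}$ (so that only strong continuity of $u^{t}$ is needed, never its derivative in $t$), and then reduces the resulting domain integral to the interface by piecewise integration by parts on $\Omega_{\pm}$ using the $H^{2}(\Omega_{\pm})$ regularity and the strong forms of the state and adjoint equations. The only point you should make explicit is that discarding the quadratic remainder $\tfrac12\int_{\partial\Omega}\abs{u^{t}-u}^{2}\,ds$ requires $\norm{u^{t}-u}_{V}=o(\sqrt{t})$ (e.g.\ via the Lipschitz-in-$t$ stability bound you invoke), not merely $u^{t}\to u$; the paper proves exactly this estimate before dropping that term.
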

\begin{proof}
	Assumption~\ref{assumption:weak_assumptions} ensures the boundedness of $dJ(\omega)[\VV]$ for any admissible domain $\Omega$ and deformation field $\VV \in {\sfTheta}^{1}$.
	For simplicity in the proof and to avoid lengthy expressions, we assume $\alpha \in \mathbb{R}_{+}$ and that $\mu$ is piecewise constant.
	This omits the corresponding computation for the spatial derivatives of $\alpha$ and $\mu$ in \eqref{eq:tilde_alpha}.
	
	The proof essentially proceeds in two parts.
	Firstly, we evaluate the limit $\lim_{t\to0} \frac{1}{t}\left( J({\omega}_{t}) - J({\omega}) \right)$.
	Then, using the regularity of the domain as well as the state and adjoint variables, we characterized the boundary integral expression for the computed limit.
	We begin by applying the boundary transformation formula
	\[
		\int_{\domega_{t}}{\varphi_{t}}{\, ds_{t}}
		= \int_{\domega}{\varphi_{t} \circ T_{t} \dett \abs{DT_{t}^{-\top}\nn}}{\, ds}
		=: \int_{\domega}{\varphi^{t} b_{t}}{\, ds}, \qquad (b_{t} = \dett \abs{DT_{t}^{-\top}\nn}),
	\]
	for a function $\varphi_{t} \in L^{1}(\partial\Omega_{t})$ \cite[Eq.~(4.9), p.~484]{DelfourZolesio2011} and the identity $\eta^{2} - \zeta^{2} = (\eta - \zeta)^{2} + 2\zeta (\eta - \zeta)$ to obtain the following calculations:
\begin{align*}
	J({\omega}_{t}) - J({\omega})
	&= \frac{1}{2} \intGt{\abs{{{u}_{t}} - h}^{2}} - \frac{1}{2} \intG{\abs{u - h}^{2}}	\\
	&= \frac{1}{2} \intG{ \left\{ ({b_{t}} - 1) ( \abs{\ut - h}^{2} - \abs{u - h}^{2} ) + ({b_{t}} - 1) \abs{u-h}^{2} \right\} }\\
	&\qquad \quad + \frac{1}{2} \intG{ \left\{  \abs{\ut - h}^{2} - \abs{u - h}^{2} \right\} }\\
	&= \frac{1}{2} \intG{ \left\{ ({b_{t}} - 1) ( \abs{\ut - h}^{2} - \abs{u - h}^{2} ) + ({b_{t}} - 1) \abs{u-h}^{2} \right\} }\\
	&\qquad \quad + \frac{1}{2} \intG{ \left\{ 2(\ut - h - (u - h))({u} - h) + \abs{\ut - h - (u - h)}^{2} \right\} }\\
	&= \frac{1}{2} \intG{ \left\{ ({b_{t}} - 1) ( \abs{\ut - h}^{2} - \abs{u - h}^{2} ) + ({b_{t}} - 1) \abs{u-h}^{2} \right\} }\\
	&\qquad + \frac{1}{2} \intG{ \left\{ 2(\ut - u)(u - h) + \abs{\ut - u}^{2} \right\} }\\
	&= j_{1}(t) + j_{2}(t) + j_{3}(t) + j_{4}(t).
\end{align*}
At this point, we note that, following a similar line of argument as in the first step of the proof of Theorem~\ref{thm:state_shape_derivative}, it can be shown that
\[
    \lim_{t \searrow 0} \frac{1}{\sqrt{t}} \norm{\ut - u}_{V} = 0.
\]
Using this result, together with equation \eqref{eq:derivatives_of_maps} and the fact that $\VV = \vect{0}$ on $\partial\Omega$, it can be verified that $\dot{j}_{1}(0) = \dot{j}_{2}(0) = \dot{j}_{4}(0) = 0$, where $\dot{j}_{i}(0) = \frac{d}{dt} j_{i}(t)\big|_{t=0}$, for $i = 1,2,3,4$.
For clarity, we comment that $ \dot{j}_{2}(0) = 0$ comes from the fact that $\dot{j}_{2}(0) = \frac{1}{2}\intG{ \operatorname{div}_{\tau}\VV \abs{u-h}^{2} }$, which is a consequence of \eqref{eq:derivatives_of_maps} and the observation that $\operatorname{div}_{\tau}\VV \in C(\domega)$ because $x \mapsto \nn(x) \in {{C}}^{0,1}(\domega)$.
This leads us to
\[
	\lim_{t\searrow 0} \frac{1}{t} \left( J({\omega}_{t}) - J({\omega}) \right)
		= \dot{j}_{3}(0)
		= \lim_{t\searrow 0} \intG{ \zt(u - h)  }.
\]
Let us consider the weak formulation of the adjoint problem \eqref{eq:first_main_adjoint}: find $p \in V$ such
\[
	a(p, v) = \intG{ (u-h) v}, \quad \forall v \in V.
\]
Now, let us choose $v = \wt =\ut - u \in V$ and again define $\zt = \wt/t$.
Observe that the result in the first step of the proof of Theorem~\ref{thm:state_shape_derivative} remains valid under Assumption~\ref{assumption:weak_assumptions}; refer to \eqref{eq:tilde_alpha}.
This leads us to the following series of equations
\begin{align*}
		\dot{j}_{3}(0)
		&= \lim_{t\searrow 0} \intG{ \zt(u - h)  }
		\stackrel{\eqref{eq:bilinear_and_linear_forms}}{=} \lim_{t\searrow 0} a(p, \zt)\\
		&=  \lim_{t\searrow 0} \left\{ \intO{( {\alpha} \nabla{p} \cdot \nabla{\zt} + {\mu} {p}{\zt})}
				+ \frac{1}{\zeta} \intG{{p}{\zt}} \right\}\\
		&= 
				-\intO{ \alpha A \nabla{u} \cdot \nabla{p}}
				-\intO{ \operatorname{div}{\VV} \mu {u} {p}}
				+\intO{ \left( \nabla{f}\cdot\VV {p} +  \operatorname{div}{\VV} f {p} \right)}\\
		&=: j_{31} + j_{32} + j_{33}.
\end{align*}
Using identity \eqref{eq:divergence_expansion}, the integral expressions $j_{32}$ and $j_{33}$ can be expressed as follows, respectively:
\begin{align}
	j_{32} &= -\intO{ \operatorname{div}{\VV} \mu {u} {p}}
		= -\intO{ \left[ \mu \operatorname{div}{({u}{p}\VV)} - \mu{u} (\nabla{p} \cdot \VV) - \mu{p} (\nabla{u} \cdot \VV) \right]}, \label{eq:expression_for_j32}\\
	j_{33} &
	= \intO{ \left( \nabla{f}\cdot\VV {p} +  {f}{p}  \operatorname{div}{\VV} \right)}
			= \intO{ \left[ \operatorname{div}{({f}{p}\VV)} - f \nabla{p} \cdot \VV \right]}.\label{eq:expression_for_j33}
\end{align}
Next, let us expand $j_{31}$.
From \eqref{eq:derivatives_of_maps}, we have
\begin{align*}
	j_{31} 
		&= -\intO{ \alpha {\operatorname{div}}{\VV} \nabla{u} \cdot \nabla{p} }
			+ \intO{ \alpha {D}\VV \nabla{u} \cdot \nabla{p} }
			+ \intO{ \alpha ({D}\VV)^\top \nabla{u} \cdot \nabla{p} }\\
		&=: k_{1} + k_{2} + k_{3}.
\end{align*}
%
We manipulate each term above.
First, since ${u}_{\pm}, {p}_{\pm} \in H^{2}(\Omega_{\pm})$, we have $\nabla {u}_{\pm} \cdot \nabla{p}_{\pm} \in W^{1,1}(\Omega_{\pm})$.
Thus, we can utilize the following identity:
\[
	-\intO{\operatorname{div}{\VV} \varphi}
	= \intO{\VV \cdot \nabla{\varphi}}
	- \intG{\varphi \VV \cdot \nn},
\]
which holds for $\VV \in {{C}}^{1}(\overline{\Omega})^{d}$, $\varphi \in W^{1,1}(\Omega)$, and a Lipschitz domain $\Omega$, by assigning $\varphi = \nabla{u}_{\pm} \cdot \nabla{p}_{\pm}$ and replacing $\Omega$ with $\Omega_{\pm}$.
Then, because $\VV = \vect{0}$ on $\partial\Omega$, we get
\begin{align*}
 k_{1}
	%
&= \int_{\Omega_{\pm}}{ {\alpha}_{\pm} \VV \cdot \nabla{(\nabla{u}_{\pm} \cdot \nabla{p}_{\pm})}}{\, dx}\\
& \quad  + \int_{\domega}{ {\alpha}_{+}(\nabla{u}_{+} \cdot \nabla{p}_{+}) \Vn}{\, ds}
		- \int_{\domega}{ {\alpha}_{-}(\nabla{u}_{-} \cdot \nabla{p}_{-}) \Vn}{\, ds}.
\end{align*}
The product $\nabla{(\nabla{u}_{\pm}\cdot\nabla{p}_{\pm})} \cdot \VV$ can be expanded as follows:
\[
	\nabla{(\nabla{u}_{\pm}\cdot\nabla{p}_{\pm})} \cdot \VV
		= (\nabla{p}_{\pm})^{\top} \nabla^{2}{u}_{\pm} \VV + \VV^{\top} \nabla^{2}{p}_{\pm} \nabla{u}_{\pm}
		= (\nabla^{2}{u}_{\pm} \nabla{p}_{\pm} + \nabla^{2}{p}_{\pm} \nabla{u}_{\pm}) \cdot \VV,
\]
where the latter equation follows from the fact that the Hessian $\nabla^{2}$ is symmetric.
These equations yield the following
\begin{align*}
	k_{1} &= \int_{\Omega_{\pm}}{ {\alpha}_{\pm}  (\nabla^{2}{u}_{\pm} \nabla{p}_{\pm} + \nabla^{2}{p}_{\pm} \nabla{u}_{\pm}) \cdot \VV }{\, dx} \\
	& \qquad \qquad + \int_{\domega}{ {\alpha}_{+}(\nabla{u}_{+} \cdot \nabla{p}_{+}) \Vn}{\, ds} \
		- \int_{\domega}{ {\alpha}_{-}(\nabla{u}_{-} \cdot \nabla{p}_{-}) \Vn}{\, ds}.
\end{align*}

Next, we find equivalent forms of $k_{2}$ and $k_{3}$.
For this purpose, considering $k_{2}$, we recall the second identity given in \eqref{eq:nabla_expansions} to get
\[
	{\operatorname{div}}((\VV \cdot \nabla{u}_{\pm})\nabla{p}_{\pm})
	= D\VV \nabla{p}_{\pm} \cdot \nabla{u}_{\pm} + \nabla{p}_{\pm}^{\top} \nabla^{2}{u}_{\pm} \VV + (\VV \cdot \nabla{u}_{\pm}) \Delta{p}_{\pm}.
\]
Integrating both sides of the above equation over $\Omega_{\pm}$, applying Stokes' theorem, and utilizing the boundary condition $\VV = \vect{0}$ on $\partial\Omega$, we arrive at the following results:
%
%
\begin{align*}
	\int_{\Omega_{\pm}}{ {\alpha} D\VV \nabla{p}_{{\pm}} \cdot \nabla{u}_{{\pm}} }{\, dx}
	&= -\int_{\Omega_{\pm}}{ {\alpha} \left\{ \nabla{p}_{{\pm}}^{\top} \nabla^{2}{u}_{{\pm}} \VV + (\VV \cdot \nabla{u}_{{\pm}}) \Delta{p}_{{\pm}} \right\} }{\, dx} \\
	& \quad \ \mp \int_{\domega}{ {\alpha} (\VV \cdot \nabla{u}_{\pm})\nabla{p}_{\pm} \cdot \nn }{\, ds}.
\end{align*}
Interchanging $u_{\pm}$ and $p_{\pm}$ and noting that $(D\VV)^{\top} \nabla{p}_{\pm} \cdot \nabla{u}_{\pm} = (D\VV) \nabla{u}_{\pm} \cdot \nabla{p}_{\pm}$, we get a similar equation for $k_{3}$.
Adding these computed expressions for $k_{1}$, $k_{2}$, and $k_{3}$, we get
\begin{align*}
		j_{31}
		&= k_{1} + k_{2} + k_{3}\\
		&= \int_{\Omega_{\pm}}{ {\alpha}_{\pm}  (\nabla^{2}{u}_{\pm} \nabla{p}_{\pm} + \nabla^{2}{p}_{\pm} \nabla{u}_{\pm}) \cdot \VV }{\, dx} \\
		&\quad -\int_{\Omega_{\pm}}{ {\alpha} \left[ \nabla^{2}{u}_{{\pm}} \nabla{p}_{{\pm}} \cdot \VV + (\VV \cdot \nabla{u}_{{\pm}}) \Delta{p}_{{\pm}} \right] }{\, dx}
				\ \mp \int_{\domega}{ {\alpha} (\VV \cdot \nabla{u}_{\pm}) \dn{p_{\pm}} }{\, ds}\\
		&\quad -\int_{\Omega_{\pm}}{{\alpha} \left[ \nabla^{2}{p}_{{\pm}} \nabla{u}_{{\pm}} \cdot \VV + (\VV \cdot \nabla{p}_{{\pm}}) \Delta{u}_{{\pm}} \right] }{\, dx}
				\ \mp \int_{\domega}{ {\alpha} (\VV \cdot \nabla{p}_{\pm})  \dn{u_{\pm}} }{\, ds}.
\end{align*}
By utilizing the continuity equations for $u$ given in \eqref{eq:continuity_equations_on_the_interface} and combining the resulting expression with \eqref{eq:expression_for_j32} and \eqref{eq:expression_for_j33}, we obtain, after some rearrangements and applying Stokes' theorem, the following result:
\begin{equation}\label{eq:sum_of_js}
\begin{aligned}
	\dot{j}_{3}(0)
		&= \intdomega{ \jump{{\alpha}(\nabla{u} \cdot \nabla{p})} \Vn} \\
		&\quad - \intdomega{ \jump{ {\alpha} (\VV \cdot \nabla{p})  \dn{u} } }
			- \intdomega{ \jump{ {\alpha} (\VV \cdot \nabla{u})  \dn{p} } }\\
		&\quad + \int_{\Omega_{\pm}}{ \left( - {\alpha} \Delta{u}_{\pm} + \mu_{\pm}{u}_{\pm} - f \right) (\VV \cdot \nabla{p}_{\pm}) }{\, dx}\\
		&\quad + \int_{\Omega_{\pm}}{ \left( - {\alpha} \Delta{p}_{\pm} + \mu_{\pm}{p}_{\pm} \right) (\VV \cdot \nabla{u}_{\pm}) }{\, dx}
		\\
		&\quad + \intdomega{ ({f} -  \jump{\mu{u}}) {p}\Vn}.
\end{aligned}
\end{equation}
Since $u_{\pm}, p_{\pm} \in H^{2}(\Omega_{\pm})$, we have  $\nabla u_{\pm} \cdot \mathbf{V}, \nabla p_{\pm} \cdot \mathbf{V} \in H^{1}(\Omega_{\pm})$.
By multiplying equation \eqref{eq:main} by $\nabla p_{\pm} \cdot \mathbf{V}$, where  $\alpha(x) = \alpha \in \mathbb{R}_{+}$ and $\mu$ is piecewise constant, we deduce that the fourth integral
in \eqref{eq:sum_of_js} equals zero.
Likewise, multiplying equation  \eqref{eq:first_main_adjoint} by $\nabla u_{\pm} \cdot \mathbf{V}$, we find that the fifth integral in \eqref{eq:sum_of_js} also equals zero.
Therefore, we have
\begin{align*}
		\dot{j}_{3}(0)
		&= \intdomega{ \jump{{\alpha}(\nabla{u} \cdot \nabla{p})} \Vn}
			-  \intdomega{ \jump{ {\alpha} (\VV \cdot \nabla{p})  \dn{u} } }
			-  \intdomega{ \jump{ {\alpha} (\VV \cdot \nabla{u})  \dn{p} } }\\
		&\quad + \intdomega{ ({f} -  \jump{\mu{u}}) {p}\Vn}.
\end{align*}
As a consequence of \eqref{eq:continuity_equations_on_the_interface}, we see that $\jump{\nabla_{\tau}{u}} = 0$ on $\domega$.
Similarly, $\jump{\dn{p}} = \jump{\nabla_{\tau}{p}} = 0$ on $\domega$.
Using these equations, we deduce that
\[
	 \jump{ {\alpha} (\VV \cdot \nabla{p}) \dfrac{\partial{u}}{\partial{\nn}} }
	 	= \jump{ {\alpha}  \dfrac{\partial{p}}{\partial{\nn}}  \dfrac{\partial{u}}{\partial{\nn}} } \Vn
		\quad\text{and}\quad
	 \jump{ {\alpha} (\VV \cdot \nabla{u}) \dfrac{\partial{p}}{\partial{\nn}}  }
	 	= \jump{ {\alpha}  \dfrac{\partial{u}}{\partial{\nn}}  \dfrac{\partial{p}}{\partial{\nn}} } \Vn.
\]
Thus, by using the identity
$
\nabla{u} \cdot \nabla{p}
		= \dn{u} \dn{p}
			+ \nabla_{\tau}{u} \cdot \nabla_{\tau}{p}
$,
we finally obtain the desired expression:
\begin{align*}
		dJ({\omega})[\VV]
		&= \intdomega{ \left\{ -  \jump{ {\alpha} } (\nabla_{\tau}{u} \cdot \nabla_{\tau}{p})
			+ \jump{ {\alpha}  \dfrac{\partial{u}}{\partial{\nn}}  \dfrac{\partial{p}}{\partial{\nn}} }
			- \jump{\mu{u}}{p} + {f} {p}  \right\} \Vn }.
\end{align*}
This concludes the proof.
\end{proof}
\begin{remark}
As established by the preceding proof, the Eulerian derivative of the cost functional in Theorem~\ref{thm:state_shape_derivative_weak_assumptions} holds under the requirements specified in Assumptions~\ref{assumption:weak_assumptions}. Furthermore, the numerical algorithm presented in the following section relies on these conditions and appears to remain effective in certain broader settings---for instance, in empirical tests where $f \notin L^{2}(\Omega)$.
\end{remark}
%
%
%
%
%
%
%
%
%
\section{Numerical Algorithm and Examples}\label{sec:numerical_studies}
In this section, we present the numerical implementation of the proposed approach.
We begin by discussing the computation of the forward problem and the selection of the regularization parameter $\rho$.
Next, we develop a numerical algorithm using a Sobolev-gradient descent scheme for boundary interface variation.
Finally, we validate the scheme with various numerical examples.
\subsection{Forward problem}
The computational setup is as follows:
In the forward problem, all free parameters in the PDE system are specified, including the input source $f$ and the exact geometry of $\omega$.
We emphasize that, in contrast to the usual approach based on non-destructive testing and evaluation, our method does not rely on input data from the boundary; instead, we use the observed data -- the single measurement $h$.
This data is synthetically generated by solving the direct problem \eqref{eq:main}.

To avoid `inverse crimes' (see \cite[p.~179]{ColtonKress2019}), the forward problem is solved using a fine mesh and ${P}_{2}$ finite element basis functions, while coarser triangulations and ${P}_{1}$ basis functions are used in the inversion.
Gaussian noise with mean zero and standard deviation $\gamma \norm{h}_{\infty}$, where $\gamma$ is a free parameter, is added to $h$ to simulate noise.
\subsection{Choice of regularization functional}
Regularization is commonly incorporated by adding specific terms to the numerical implementation, either during minimization or when addressing ill-posed systems of equations.
These terms often depend on the parametrization of the variable of interest or the discretization of the ill-posed systems \cite{Rundell2008, Fang2022}.

In our numerical experiments, we introduce a regularization functional that combines the perimeter of $\partial\omega$ with Tikhonov regularization for $\mu$ on $\partial\omega$:
\[
	\frac{\rho_{1}}{2} P(\partial\omega) + \frac{\rho_{2}}{2}  {R}(\mu)
	\coloneqq  \frac{\rho_{1}}{2} \int_{\partial\omega}{1}\, ds + \frac{\rho_{2}}{2} \intO{\mu^{2}},
\]
where $\rho_{1}$ and $\rho_{2}$ are small positive constants.

In most cases, we omit the perimeter penalization and rely solely on Tikhonov regularization for $\mu$ on $\partial\omega$, as this approach is sufficient for accurately reconstructing $\mu$ and $\partial\omega$, even with noise-contaminated data.
\subsection{Choice of regularization parameter}
In the reconstruction of noisy data, selecting the regularization parameter $\rho = \rho_{2}$ in \eqref{eq:minimization_mu} is critical.
This parameter is often determined using the discrepancy principle, which relies heavily on accurate knowledge of the noise level.
However, precise noise-level information is often unavailable or unreliable in many applications.
Errors in noise estimation can significantly reduce reconstruction accuracy when using the discrepancy principle.
To overcome this difficulty, we propose a heuristic rule for choosing $\rho$ that avoids dependence on noise-level knowledge.
This rule is based on the balancing principle \cite{ClasonJinKunisch2010b}: fix $\beta > 1$ and compute $\rho > 0$ such that
\begin{equation}\label{eq:balancing_principle}
    (\beta - 1) J({\muin},\omega) - \frac{\rho}{2} {R}(\mu)
    \coloneqq  (\beta - 1) \frac{1}{2} \norm{u(\mu) - h}_{L^{2}(\bigdO)}^{2} - \frac{\rho}{2} \norm{\mu}^{2}_{L^{2}(\Omega)} = 0.
\end{equation}
This approach balances the data-fitting term $J({\muin},\omega)$ with the penalty term ${R}(\mu)$, where $\beta > 1$ controls their trade-off.
It eliminates the need for noise-level knowledge and has been successfully applied to both linear and nonlinear inverse problems \cite{ClasonJinKunisch2010, ClasonJinKunisch2010b, Clason2012, ClasonJin2012, ItoJinTakeuchi2011, Meftahi2021}.

In the following numerical experiments, we explore two approaches: (1) directly assigning a fixed value to $\rho$ and (2) applying the heuristic rule \eqref{eq:balancing_principle}.
Specifically, choosing $\rho \in (0,1)$ indicates the first approach, while selecting $\beta > 1$ corresponds to the balancing principle \eqref{eq:balancing_principle}.
%
%
%
\subsection{Numerical algorithm}
\label{subsec:Numerical_Algorithm}
The main steps of our numerical algorithm follows a standard approximation procedure, the important details of which we provide as follows.

\subsubsection{Choice of descent direction for the boundary interface variation}
The domain $\omega$ is approximated using boundary interface variation, following an approach similar to domain variation in shape optimization \cite{Doganetal2007}.
We employ a Riesz representation of the shape gradient $G$ to suppress oscillations on the unknown interface boundary during the approximation process.
Rapid oscillations on the interface boundary may destabilize the approximation and potentially halt the process prematurely.

To compute a Riesz representative of the kernel $-G\nn$, we generate an $H^1$-smooth extension of the vector by seeking a weak solution $\VV \in H_{0}^1(\Omega)^{d} \coloneqq  \{ \vect{\varphi} \in H^1(\Omega)^{d} \mid \vect{\varphi}  = \vect{0} \ \text{on $\bigdO$}\}$ to the variational equation
\begin{equation}\label{eq:extension_regularization}
	(\nabla \VV, \nabla \vect{\varphi})_{\Omega} + (\VV , \vect{\varphi} )_{\Omega}
		= - \langle {G} \nn , \vect{\varphi}\rangle_{\partial\omega}, \ \text{for all $\vect{\varphi} \in H_{0}^1(\Omega)^{d}$}.
\end{equation}
In this way, we obtain a \textit{Sobolev gradient} \cite{Neuberger1997} representation $\VV \in H_{0}^1(\Omega)^{d}$ of $G$, which is only supported on $\partial\omega$.
More importantly, this approach produces a smoothed, preconditioned extension of $-{G}\nn$ over the entire domain $\Omega$.
Such an extension allows us to deform the discretized computational domain by moving the (movable) nodes of the computational mesh -- thus moving not only the nodes on the boundary interface but also all internal nodes within the discretized domain.
Further discussions about discrete gradient flows for shape optimization problems are provided in \cite{Doganetal2007}.
%
%
%
\subsubsection{Moving-mesh scheme}\label{subsubsec:algorithm}
To compute the $k$th boundary interface $\partial\omega^{{[k]}}$, we carry out the following procedures:
\begin{description}
\setlength{\itemsep}{2pt}
	\item[1. \it{Initilization}] Fix $\rho \in (0,1)$ (or $\beta > 1$) and choose initial guesses $\mu^{0}$ and $\partial\omega^{0}$.
	\item[2. \it{Iteration}] For $k = 0, 1, 2, \ldots$, do the following:
		\begin{enumerate}
			\item[2.1] Solve the state's and adjoint's variational equations on the current domain $\Omega^{{[k]}}$.
			\item[2.2] For a sufficiently small $t_{1}^{{[k]}} > 0$, do the update $\mu^{{[k+1]}} = \mu^{{[k]}} - t_{1} \Je'(\mu^{{[k]}})$.
			\item[2.3] Choose $t_{2}^{{[k]}}>0$, and compute the deformation vector $\VV^{{[k]}}$ using \eqref{eq:extension_regularization} in $\Omega^{{[k]}}$.
			\item[2.4] Update the current domain by setting $\Omega^{{[k+1]}} \coloneqq  \{ x + t_{2}^{{[k]}} \VV^{{[k]}}(x) \in \mathbb{R}^{d} \mid  x \in \Omega^{{[k]}}\}$.			
		\end{enumerate}
	\item[3. \it{Stop Test}] Repeat \textit{Iteration} until convergence.
\end{description}
        {In Step~2.4, the domain $\Omega$ is approximated by a conforming triangulation $\Omega_h$ with nodes $\{x_i\}_{i=1}^N$.
        To update the domain during the optimization, we define the mapping
        \[
        T^{[k]}(x) \coloneqq  x + t_2^{[k]} \VV^{[k]}(x),
        \]
        and obtain the next domain iterate as
        \[
        \Omega_h^{[k+1]} \coloneqq  T^{[k]}(\Omega_h^{[k]}),
        \]
        where $t_2^{[k]}$ is chosen sufficiently small to ensure stability.
        At the discrete level, this corresponds to moving each node according to
        \[
        x_i^{[k+1]} = x_i^{[k]} + t_2^{[k]} \VV^{[k]}(x_i^{[k]}), \quad i=1,\dots,N,
        \]
        while preserving the mesh connectivity. This update is applied to all nodes: interior nodes and free boundary nodes are moved according to the mapping, whereas nodes on fixed boundary parts remain unchanged.
        We refer the reader to Remark~\ref{rem:remeshing_in_3D} for a comment on practical aspects of mesh deformation.
        %
        %
        %
        %

In practice, the mesh deformation remains stable throughout the simulations, provided that sufficiently small step sizes $t^{[k]}$, $k=0,1,\ldots,$ are used. Excessive deformations may deteriorate the mesh quality and adversely affect the numerical accuracy. Therefore, the step size is additionally reduced whenever necessary to prevent highly distorted or inverted mesh elements. Further details are provided in the next subsection.
}

\subsubsection{Step-size computation and stopping condition} In Step 2.2, $t_{1}^{{[k]}} = t_{2}^{{[k]}} = t^{{[k]}}$ for all $k = 0, 1, 2, \ldots$, and $t^{{[k]}}$ is computed using a backtracking line search inspired by \cite[p.~281]{RabagoAzegami2020}, with the formula
\[
	t^{{[k]}} = s J({\omega}^{{[k]}}) / |\VV^{{[k]}}|^2_{H^{1}(\Omega^{{[k]}})^{d}}
\]
at each iteration step $k$, 
{where $s > 0$ is a scaling parameter chosen via empirical tuning over a finite set of trial values, so as to satisfy stability requirements and promote sufficient decrease of the objective functional. In particular, overly large values of $s$ may induce instability in the iteration, while overly small values can significantly deteriorate the convergence rate.}

Although more sophisticated step-size strategies may be considered, the present approach already provides stable and effective reconstructions in the numerical experiments.
To prevent inverted triangles in the mesh after the update, the step size $t^{{[k]}}$ is further reduced.

The algorithm terminates when $t^{{[k]}} < t_{0}$, where $t_{0} > 0$ is a small real number, or when the maximum number of iterations is reached.
In all experiments, $t_{0}$ is set to $10^{-12}$ for convex boundary interfaces and adjusted to $10^{-6}$ for noisy data.
For non-convex interfaces, where boundary reconstruction is more challenging under noise, the step size is further reduced when the cost value drops below $10^{-3}$ to avoid overshooting.

In the following subsections, we first test the proposed scheme with exact measurements in a simple setting, then extend it to more complex setups with noisy data.

\subsection{2D radial  problem}\label{sec:radial_case}
We first consider a series of 2D radial problems, similar to the test setup in \cite{MoskowSchotland2009,Machida2023}.
We let $g \in \mathbb{R}_{+}$, $\eta \geqslant -1$, and suppose that
\begin{equation}\label{eq:focused_case}
	{\mu_{1}}(x) = g(1 + \eta(x))
\end{equation}
where $\eta$ is supported in a closed ball $B_a$ of radius $a$, i.e., $\op{supp}\,\eta \subset B_a \subset \Omega$.
%
%

Let us assume the 2D radial geometry and consider \eqref{eq:main}.
In the polar coordinate system we have $x = (r, \theta)$, where $r$ is the radial coordinate and $\theta$ is the angular coordinate.
Let $\Omega$ be the disk of radius $R$ centered at the origin.
Assuming that $\eta$ has the radial symmetry, we write $\eta(x) = \eta(r)$, for $r \in (0,R)$.
Let $0< R_a < R$ and  $\etaa > 0$.
We suppose that $\eta(r) = \etaa$ for $r \in (0, R_{a})$ and $0$ for $r \in (R_{a}, R)$.

Hereinafter, we write $g = k^{2}$, $k > 0$.
We {let} $l = \zeta {\alpha}$, set ${\alpha} = 1$, and consider $\Omega_{-} = \{x \mid |x| \leqslant R_a\}$ and $\Omega_{+} = \{x \mid R_a < |x| < R\}$.
We put $\cdot^{\ast}$ when referring to the exact parameter value; e.g., we denote the exact absorption coefficient by $\mu^{\ast} = \mu_{0}^{\ast}  \chi_{\Omega\setminus\baromega}  + \mu_{1}^{\ast} \chi_{\omega}$.
We consider the form $\mu^{\ast} = \mu_{0}^{\ast}  \chi_{\Omega\setminus\baromega}  + \mu_{1}^{\ast} \chi_{\omega} = g (\chi_{\Omega\setminus\baromega}  + (1+\etaa^{\ast}) \chi_{\omega} )$.

In all experiments, we set $k=1$, $R=3$, $l=0.3$, and $R_{a}^{\ast} = 1.5$ for the axisymmetric case.
%
\subsection{Numerical tests with a constant source function}\label{subsec:constant_source}
We consider a constant source function $f=1$.
We take $s = 0.1$ and choose $\partial\omega^{0} = {B}_{r} \coloneqq  \{x \in \mathbb{R}^{2} \mid |x| = r = 2.8\}$ as the initial guess for the boundary interface.
The numerical results with $\mu_{1}^{\ast} = 1.2$ are shown in Figures \ref{fig:constant_source_example_1} with fixed $\rho$ and $\beta$.
The reconstructions show precision, and the plots indicate that when using a constant source function, the regularization parameter selection is almost the same, regardless of whether the balancing principle \eqref{eq:balancing_principle} is applied.
This suggests that we can choose a suitable value for $\rho$ to obtain a reconstruction of $\mu$ consistent with the case when \eqref{eq:balancing_principle} is used.
%
%
\begin{figure}[htp!]
\centering
\resizebox{0.2\textwidth}{!}{\includegraphics{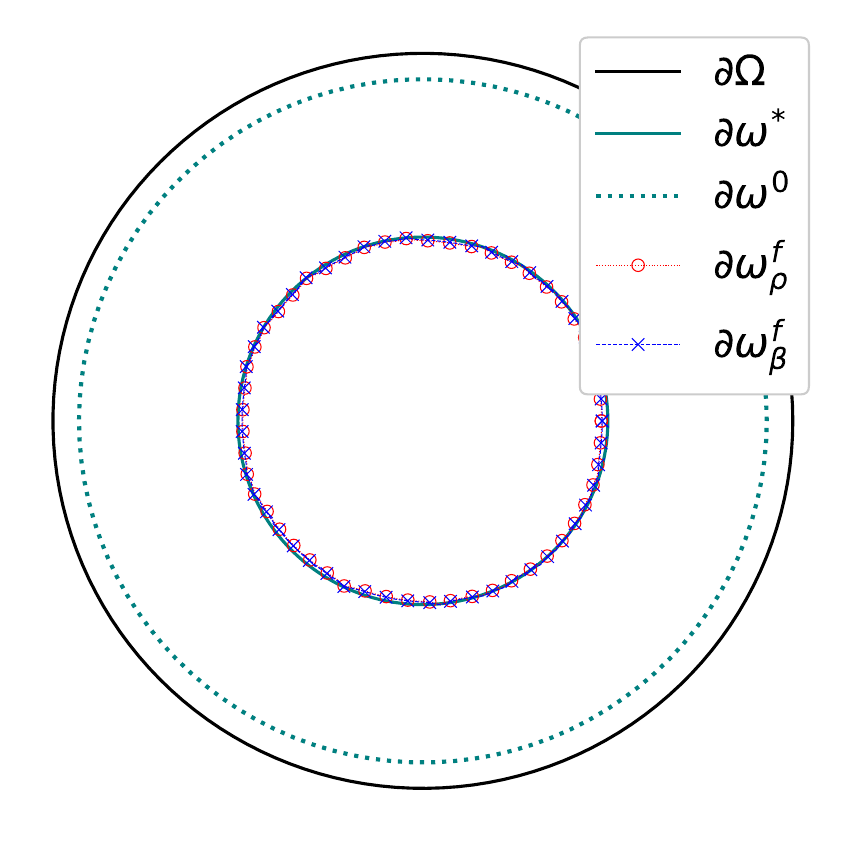}}
\resizebox{0.21\textwidth}{!}{\includegraphics{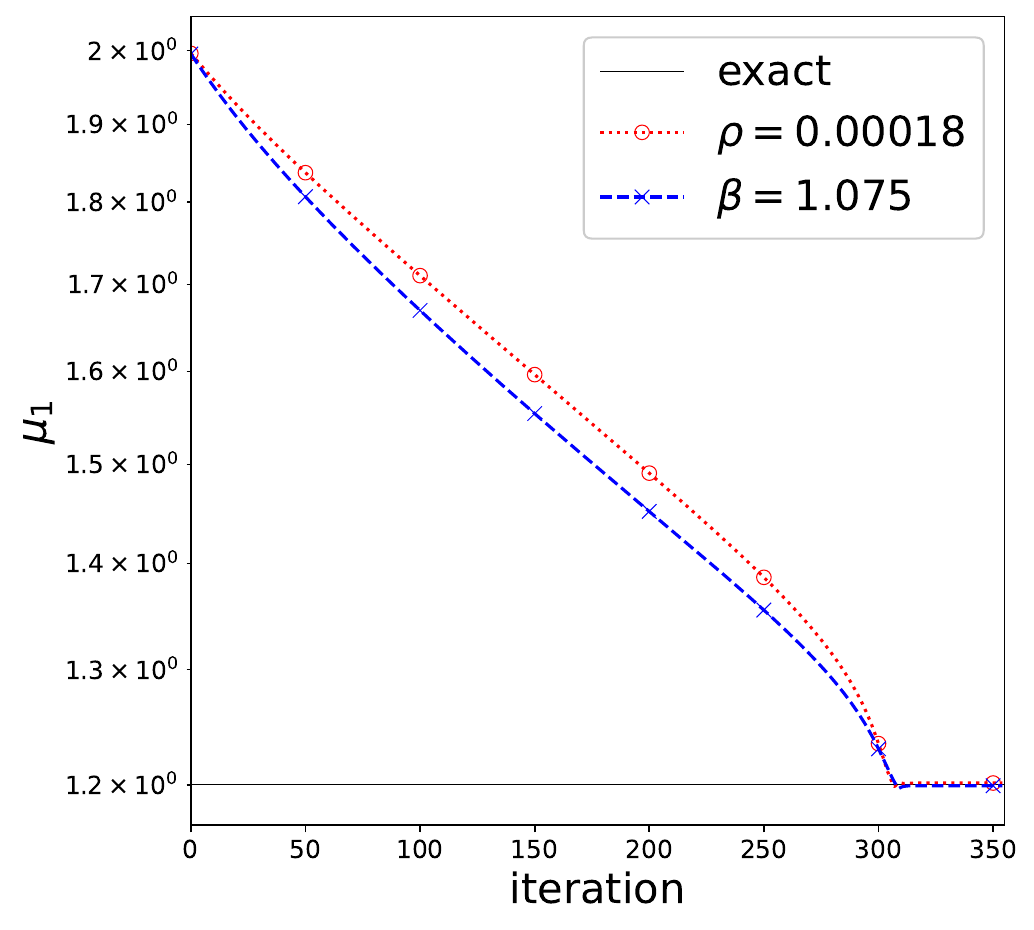}} \quad
\resizebox{0.2\textwidth}{!}{\includegraphics{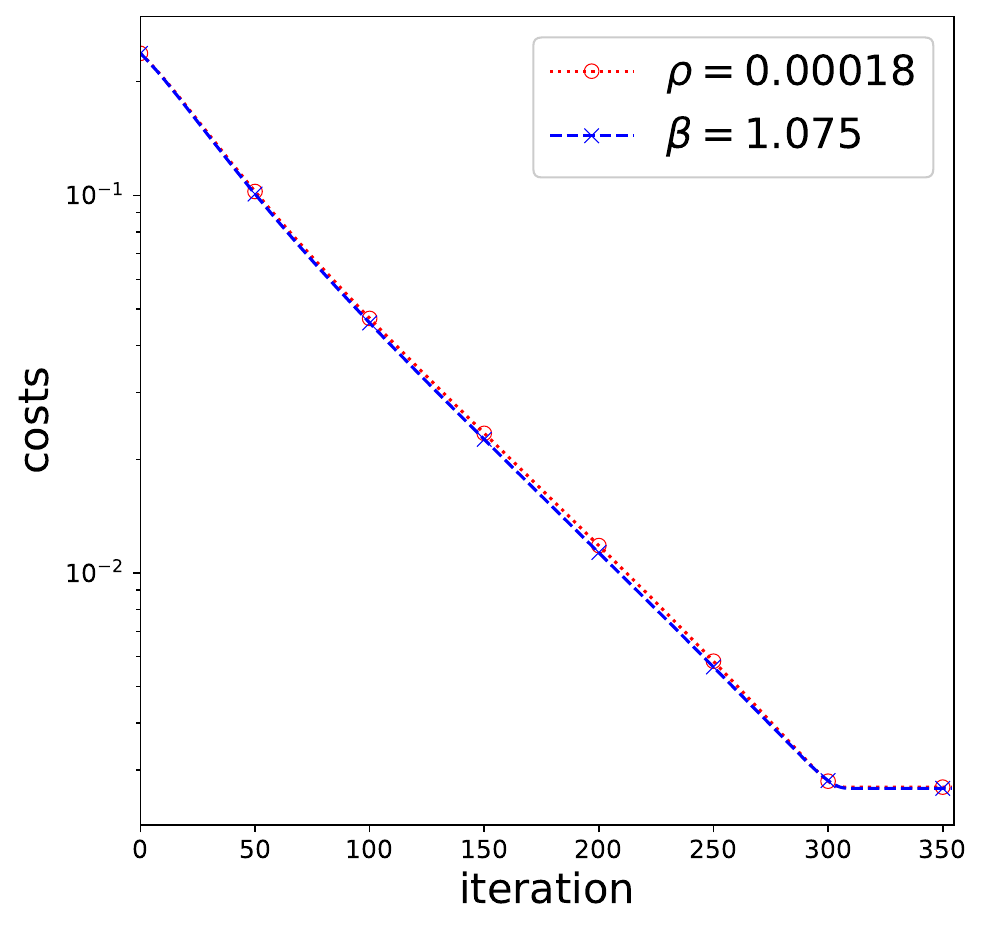}} \quad
\caption{A radial problem in 2D with $\mu_{1}^{\ast} = 1.2$ and source function $f=1$}
\label{fig:constant_source_example_1}
\end{figure}
%
%

We repeat the experiment with higher values of $\eta^{\ast}$. Using the balancing principle \eqref{eq:balancing_principle}, we obtain the results in Figure~\ref{fig:constant_source_example_2}.
Accurate identification of $\mu$ and $\partial\omega$ is possible for higher $\eta^{\ast}$ values when an appropriate $\beta$ is chosen with exact measurements.
\begin{figure}[htp!]
\centering \quad
\resizebox{0.2\textwidth}{!}{\includegraphics{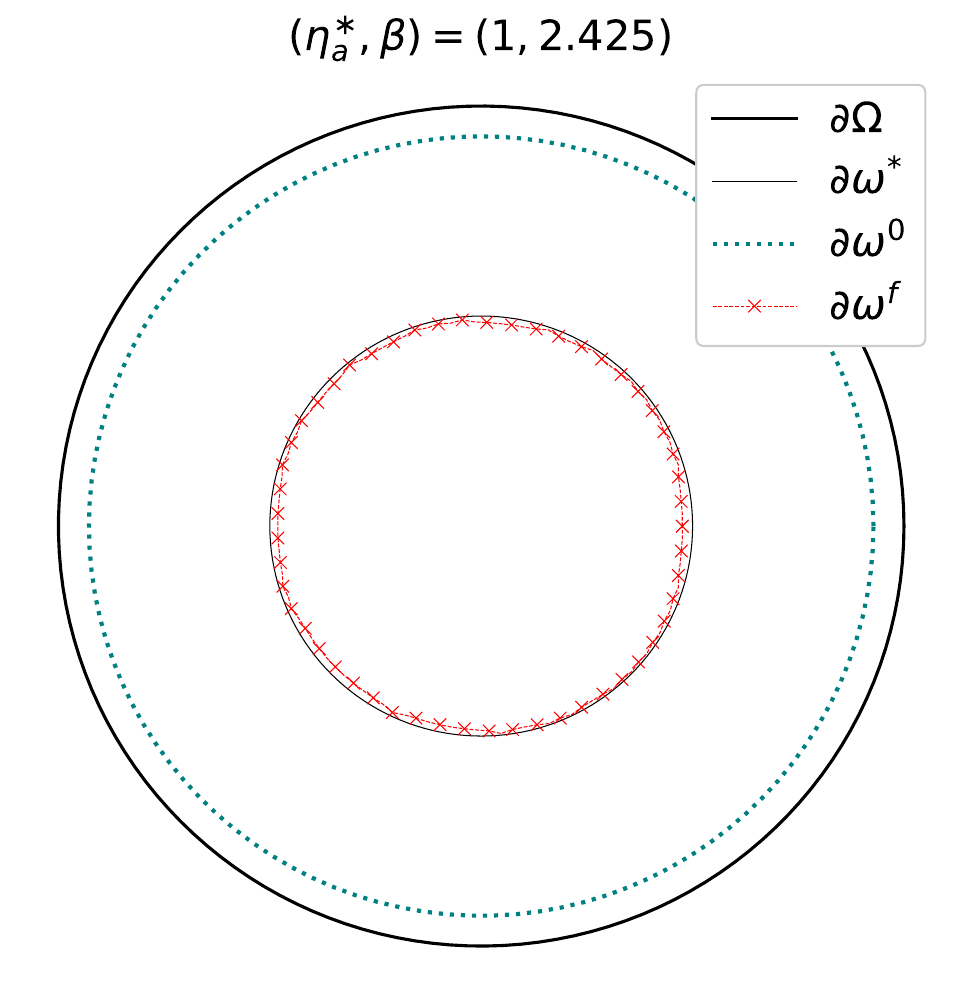}} \quad
\resizebox{0.2\textwidth}{!}{\includegraphics{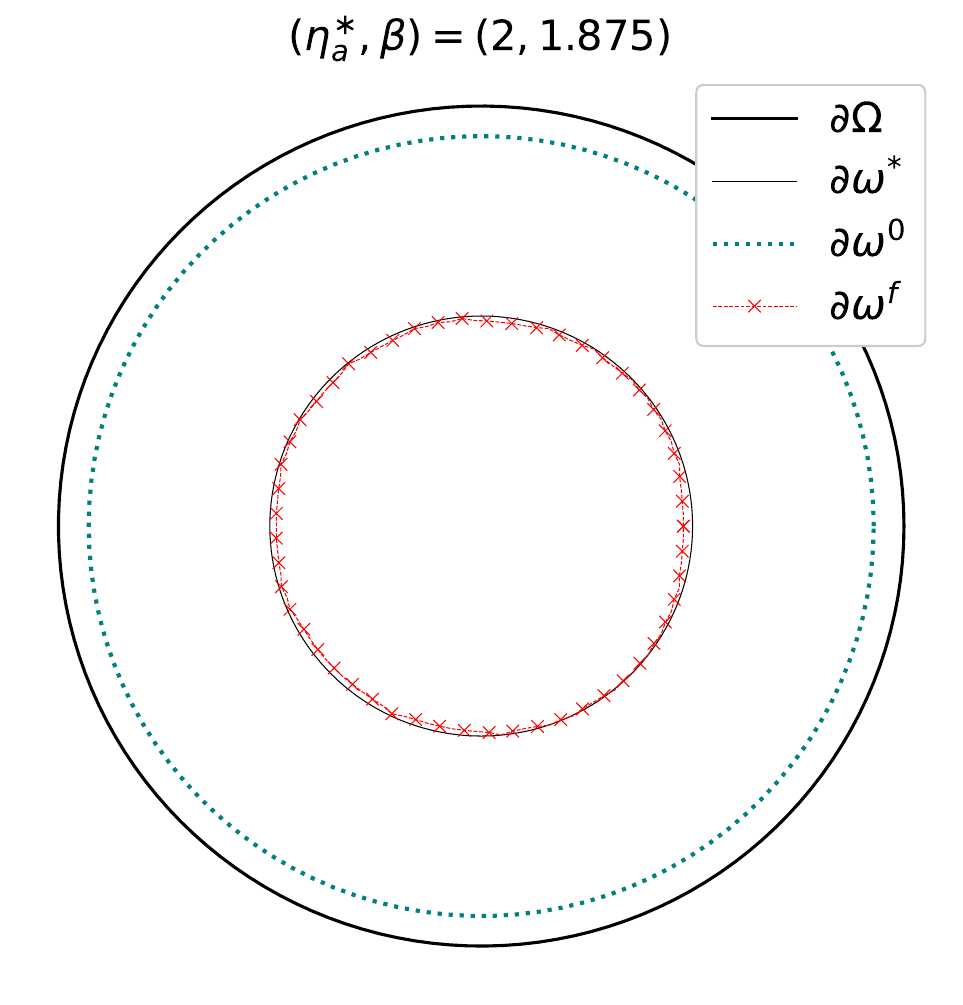}} \quad
\resizebox{0.2\textwidth}{!}{\includegraphics{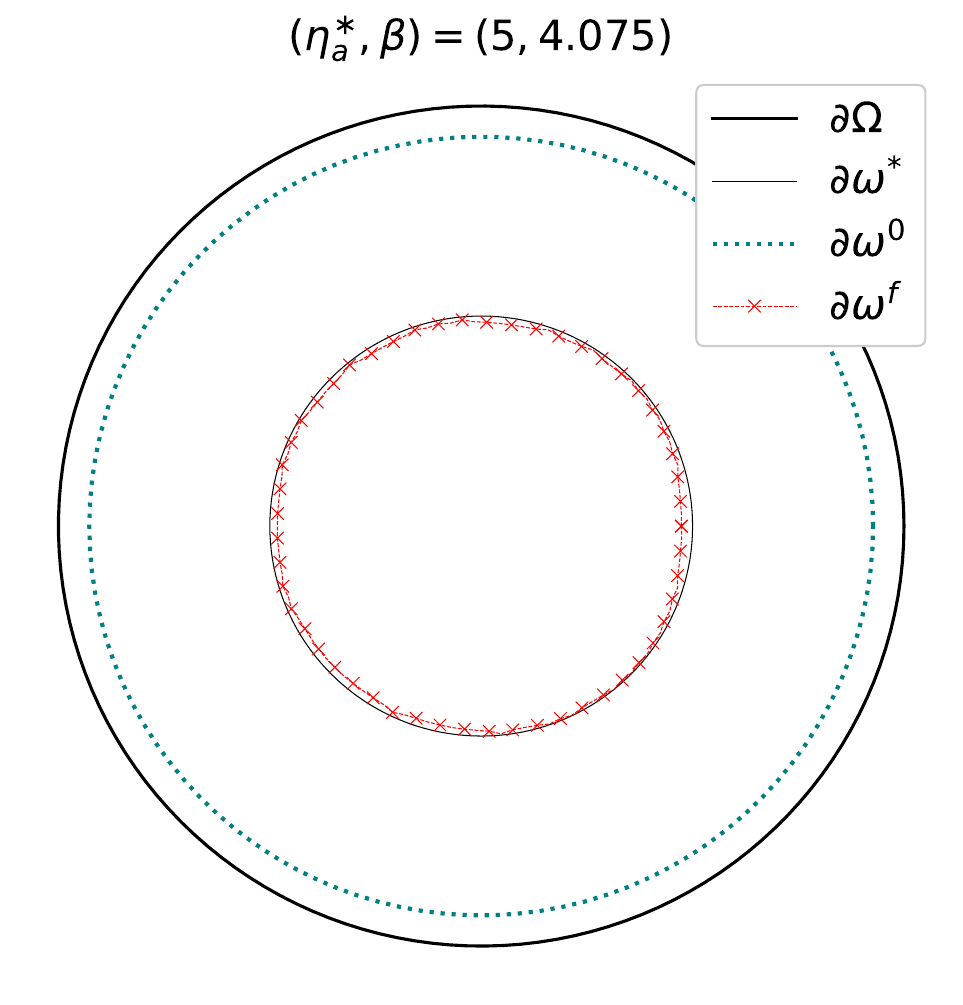}} \\
\resizebox{0.2\textwidth}{!}{\includegraphics{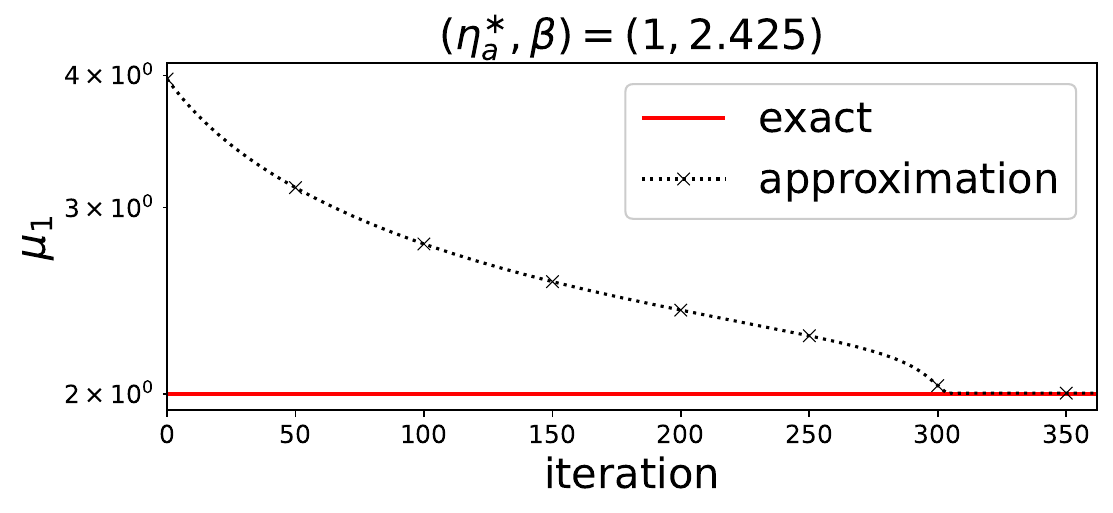}} \quad
\resizebox{0.2\textwidth}{!}{\includegraphics{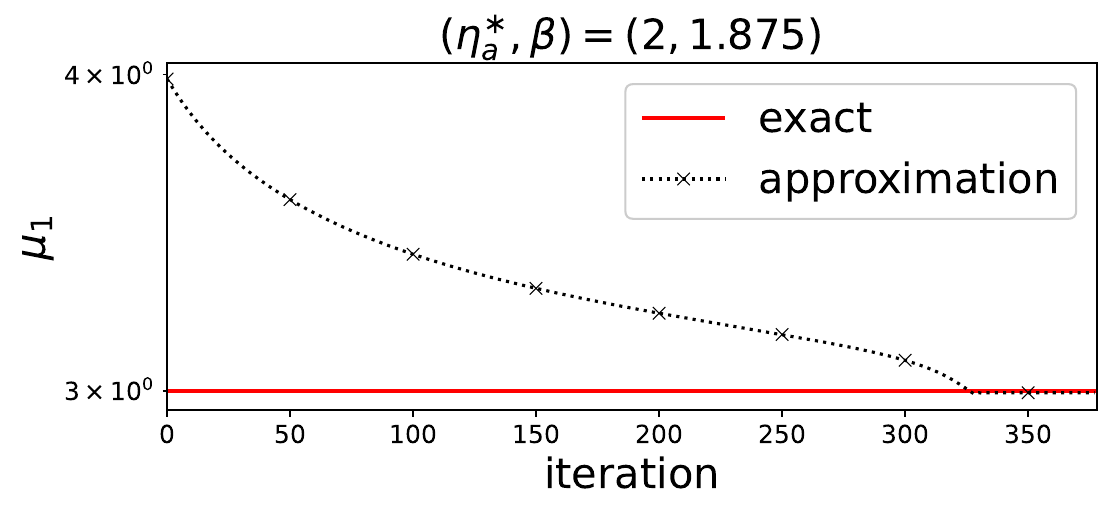}} \quad
\resizebox{0.2\textwidth}{!}{\includegraphics{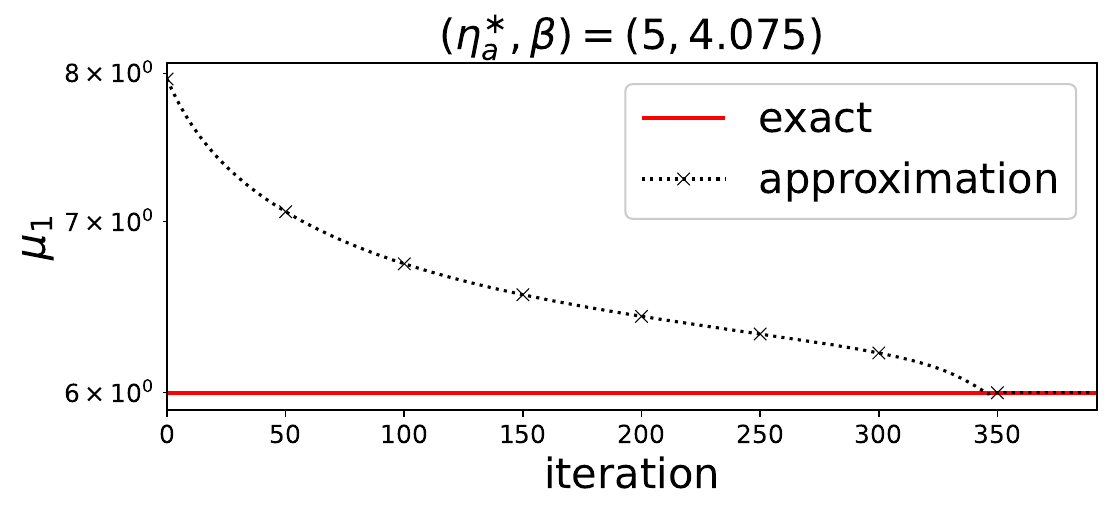}}
\caption{2D radial  problem with source function $f=1$ and varying $\mu_{1}^{\ast}$}
\label{fig:constant_source_example_2}
\end{figure}
\subsection{Numerical tests with a point source}\label{subsec:point_source}
We next consider the case of a point source\footnote{Note that this violates our regularity assumption on the source function.}, specifically when $f = \delta(x - x_{0})$, where $x_{0}$ is the position of the point source within $\Omega$.
{The inverse problem considered in this setup is related to fluorescence DOT.}

\begin{remark}
While the shape sensitivity analysis in subsection~\ref{sec:shape_sensitivity_analysis} relies on the assumption that $f \in H^{1}(\Omega)$,
an assumption violated in the case of point sources and for which the applicability of the derived shape derivative remains unclear,
the numerical experiments reported in the following suggest that the method may still exhibit reasonable practical performance.
\end{remark}

We set $x_{0} = 0$ and $(\mu_{0}^{\ast}, \mu_{1}^{\ast}) = (1, 1.2)$.
Without applying \eqref{eq:balancing_principle}, we obtain the results shown in Figure~\ref{fig:point_source_example_3}.
The leftmost plots correspond to $s = 1$, and the middle plots correspond to $s = 10$.
As expected, increasing the step-size parameter $s$ accelerates convergence toward the exact solution.
However, in both cases, the boundary interface approximation is inaccurate.
We then repeat the experiment with a new initial guess, $\partial\omega^{0} = B_{0.8}$, yielding a highly accurate approximation of the exact solution, as shown in the rightmost plot of Figure~\ref{fig:point_source_example_3}.
The scheme depends on the initialization, as anticipated.
By selecting a good initial guess for $\mu$ and $\partial\omega$, an accurate identification of the unknown coefficient and boundary interface can be achieved.

%
%
%
\begin{figure}[htp!]
\centering
\quad
\resizebox{0.2\textwidth}{!}{\includegraphics{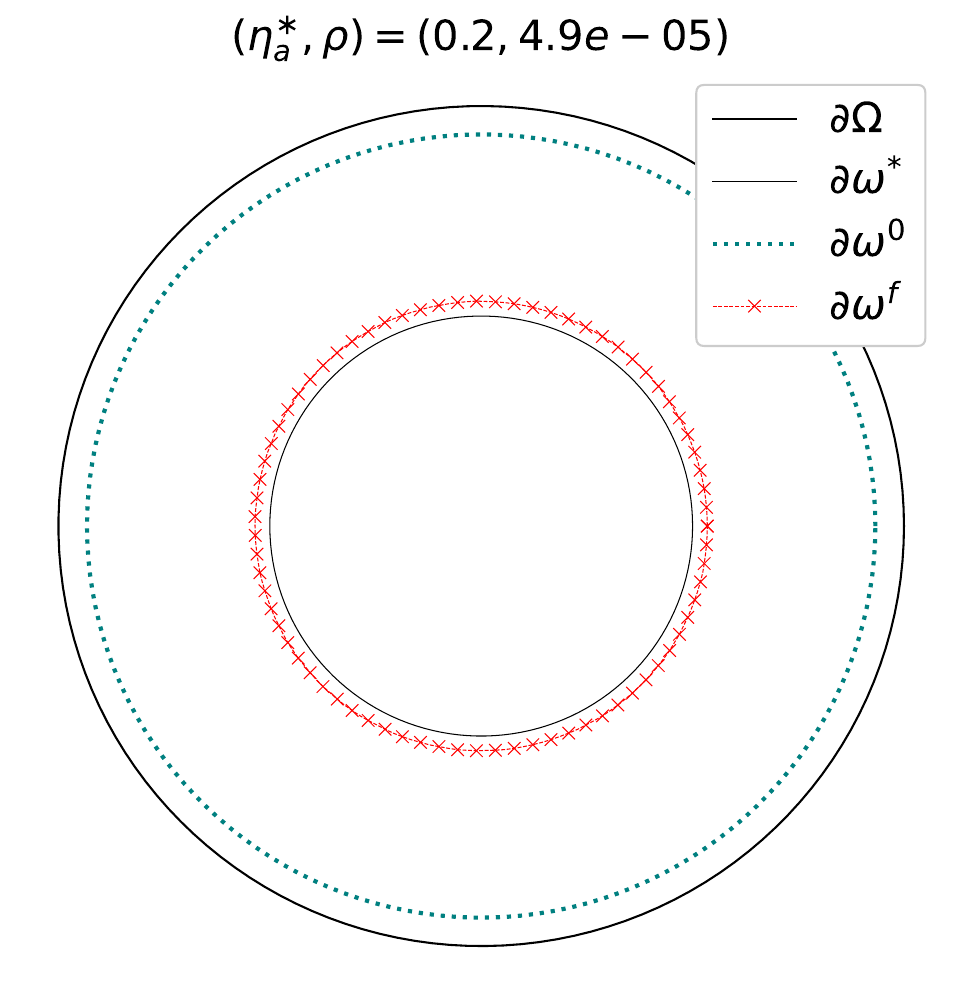}} \quad
\resizebox{0.2\textwidth}{!}{\includegraphics{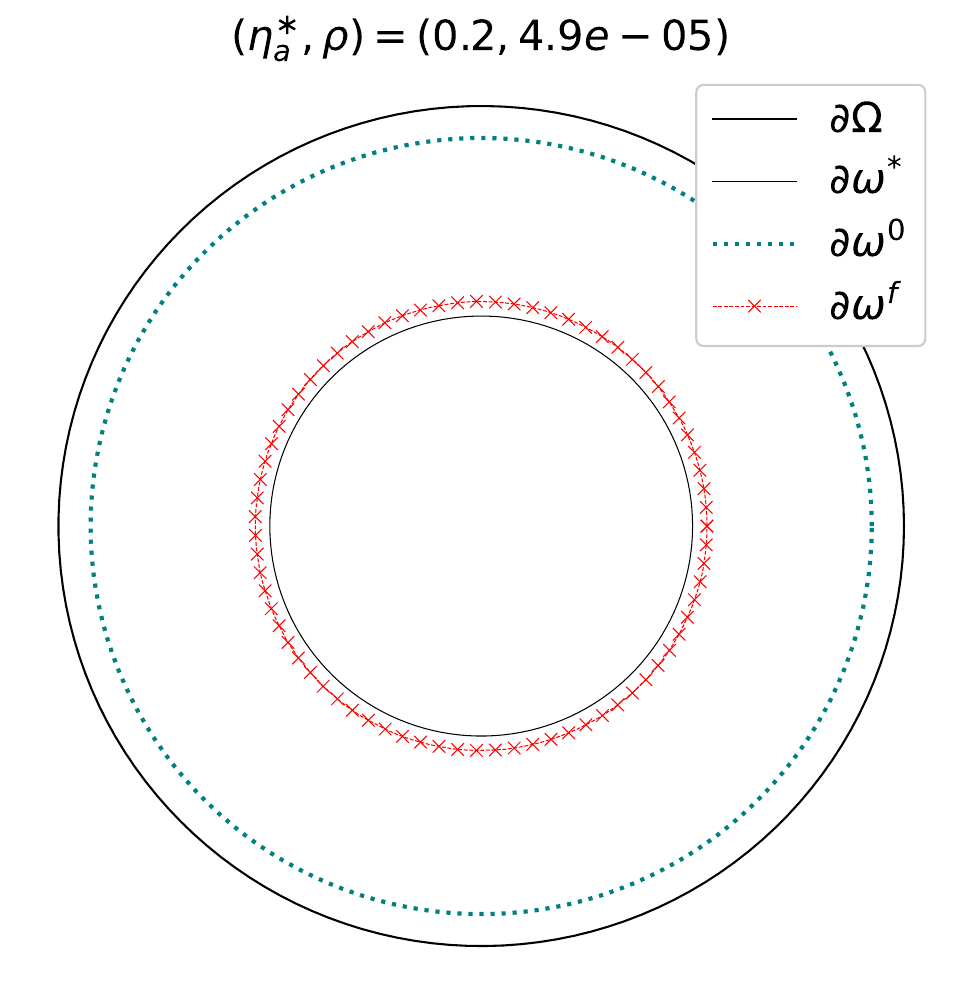}} \quad
\resizebox{0.2\textwidth}{!}{\includegraphics{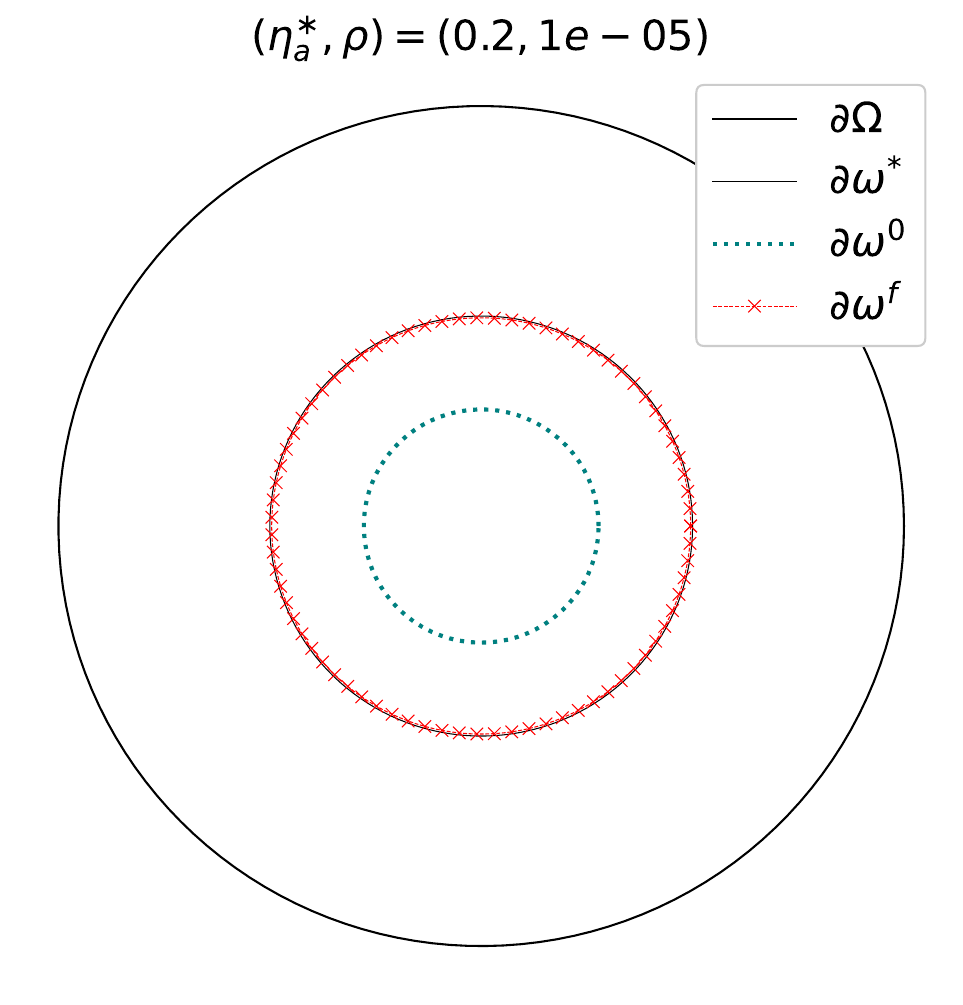}} \\
\resizebox{0.2\textwidth}{!}{\includegraphics{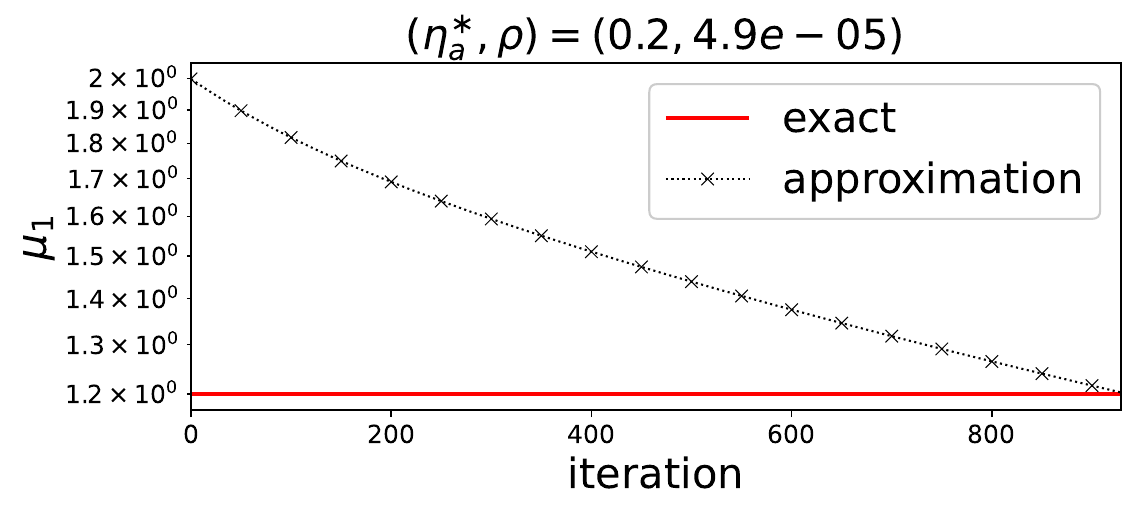}} \quad
\resizebox{0.2\textwidth}{!}{\includegraphics{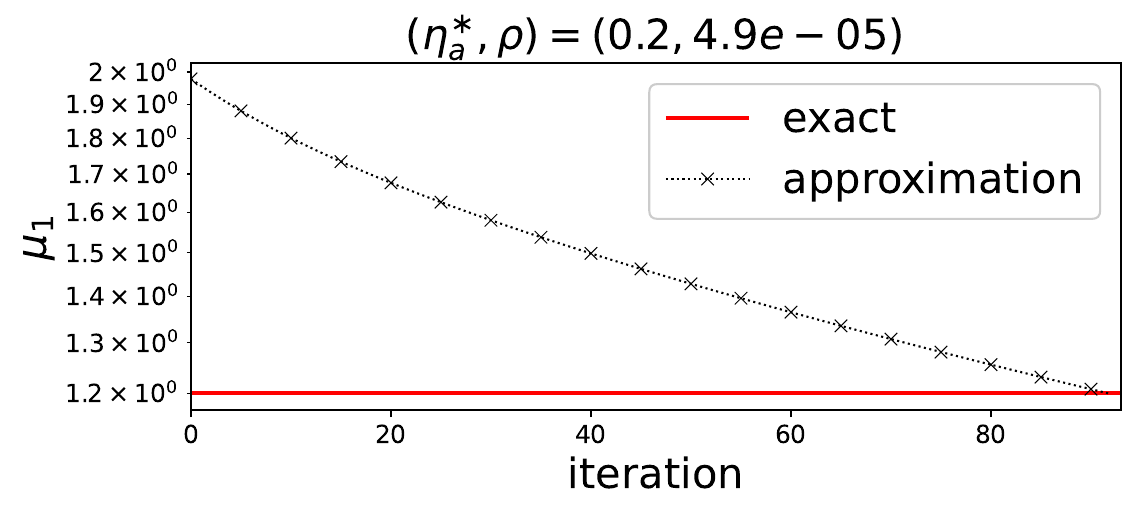}} \quad
\resizebox{0.2\textwidth}{!}{\includegraphics{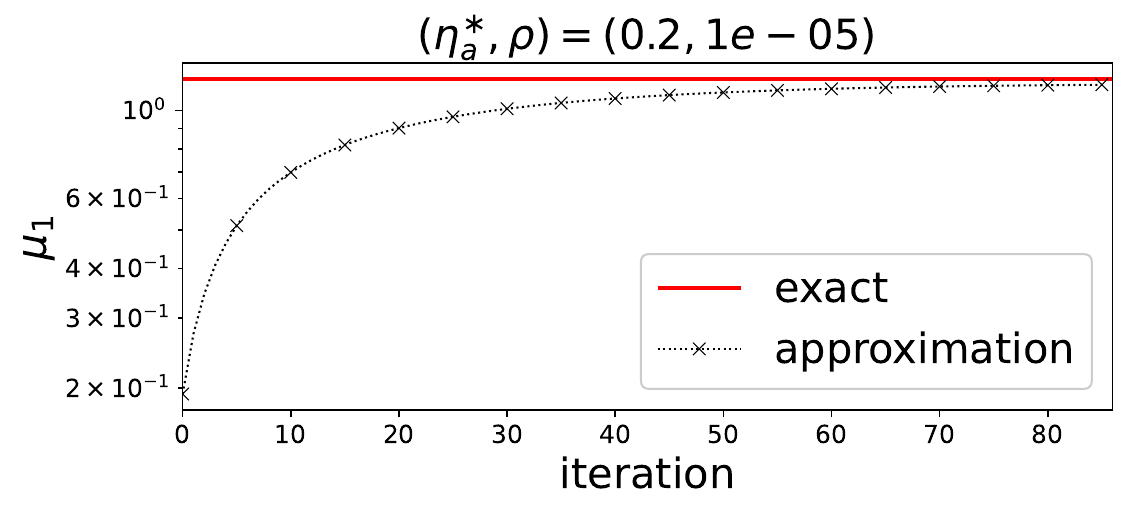}}
\caption{Results for a 2D radial problem with $f(x) = \delta(x)$}
\label{fig:point_source_example_3}
\end{figure}
%

We repeat the experiments with slight modifications to $\mu^{\ast}$.
Specifically, we examine the main case \eqref{eq:focused_case} with varying $\eta^{\ast} = 0.2,1,2,5$, but using a point source instead of a spatially oscillating source term.
With $s=100$, the results are summarized in Figure \eqref{fig:point_source_example_4}.
By carefully selecting initial guesses for the unknown parameter and boundary interface (see also Figure~\ref{fig:point_source_example_3}), we achieve precise identification of $\mu$ and $\partial\omega$.
%
\begin{figure}[htp!]
\centering
\hfill
\resizebox{0.2\textwidth}{!}{\includegraphics{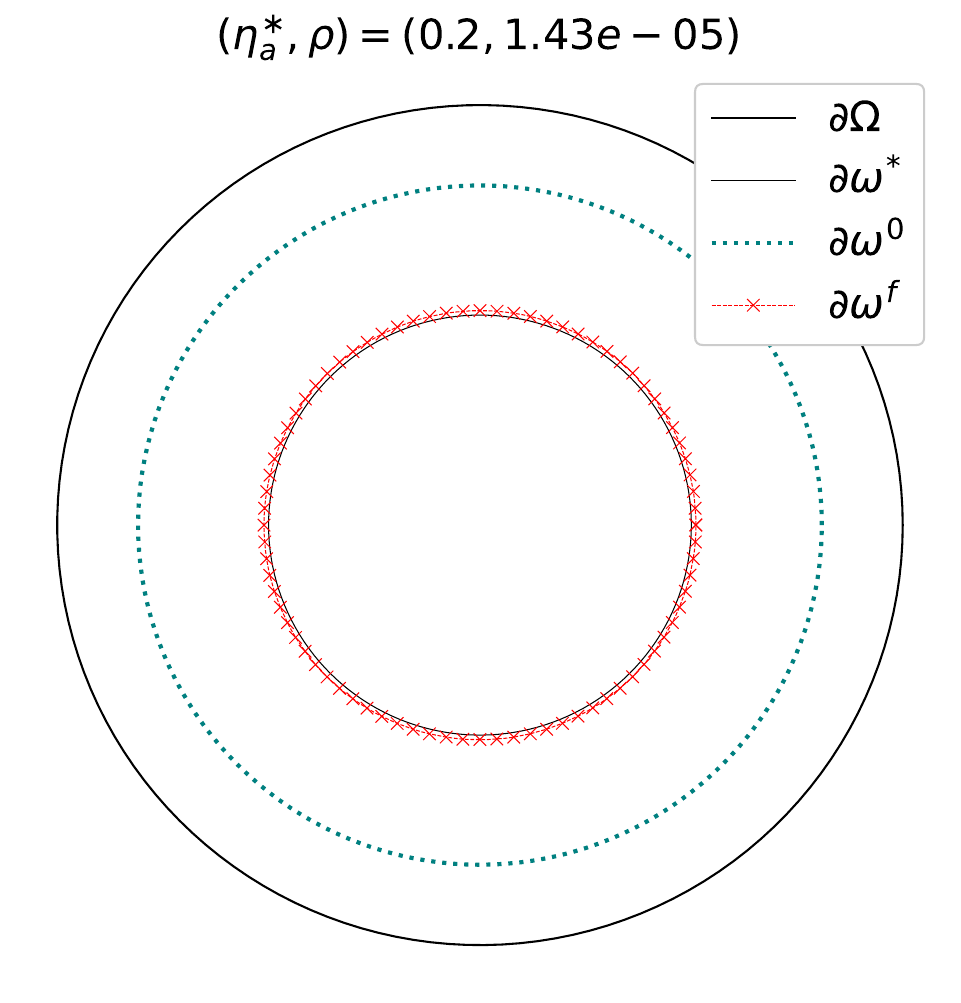}} \hfill
\resizebox{0.2\textwidth}{!}{\includegraphics{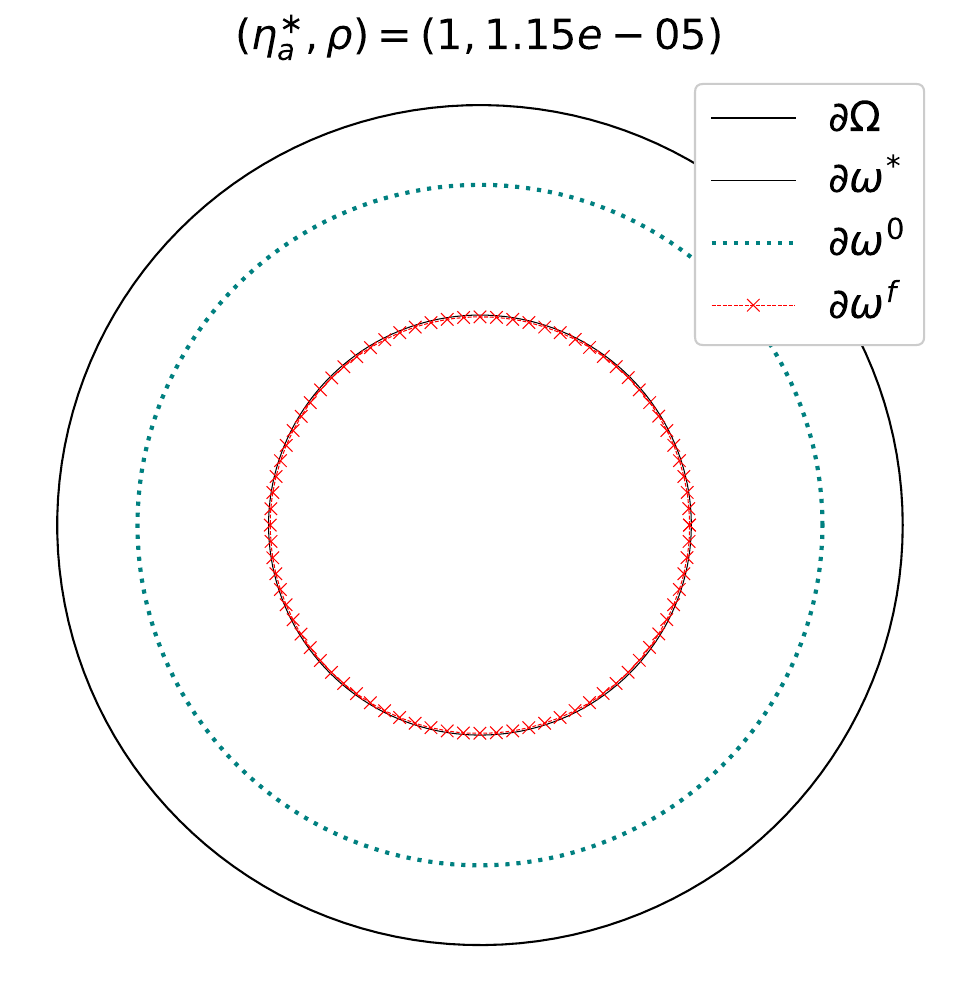}} \hfill
\resizebox{0.2\textwidth}{!}{\includegraphics{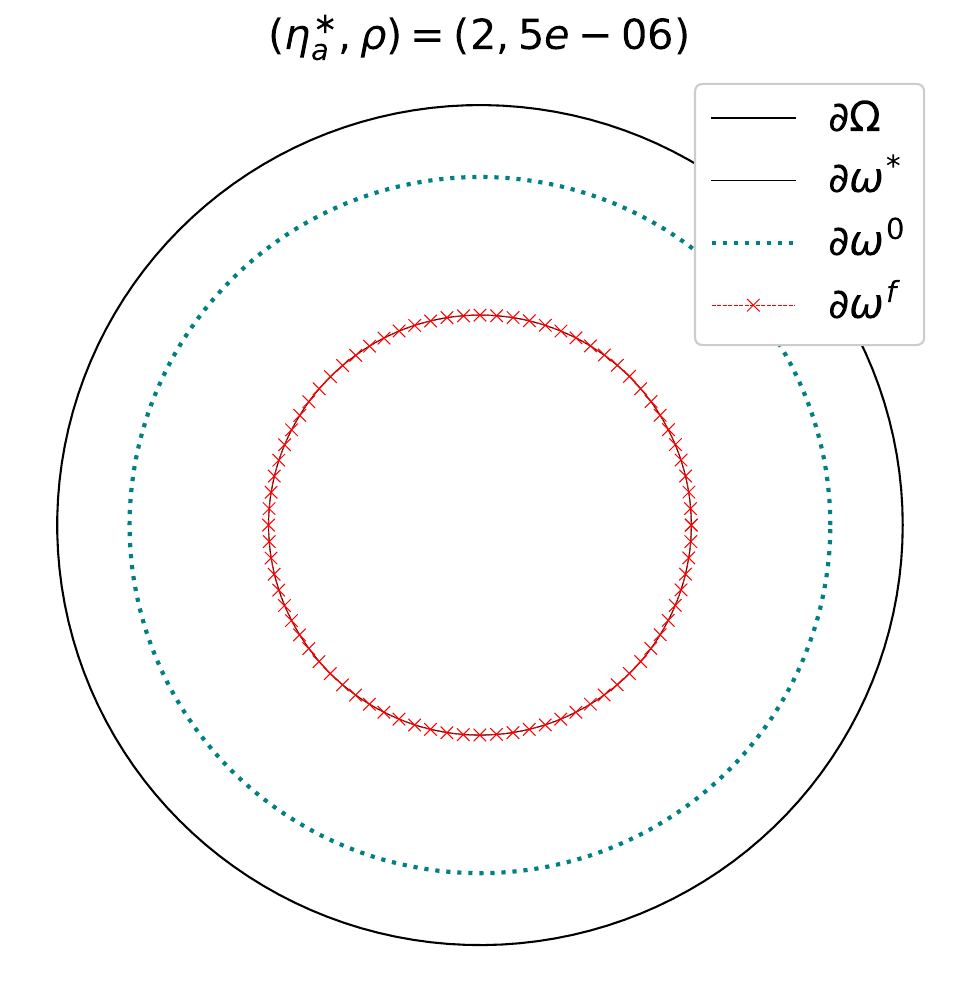}} \hfill
\resizebox{0.2\textwidth}{!}{\includegraphics{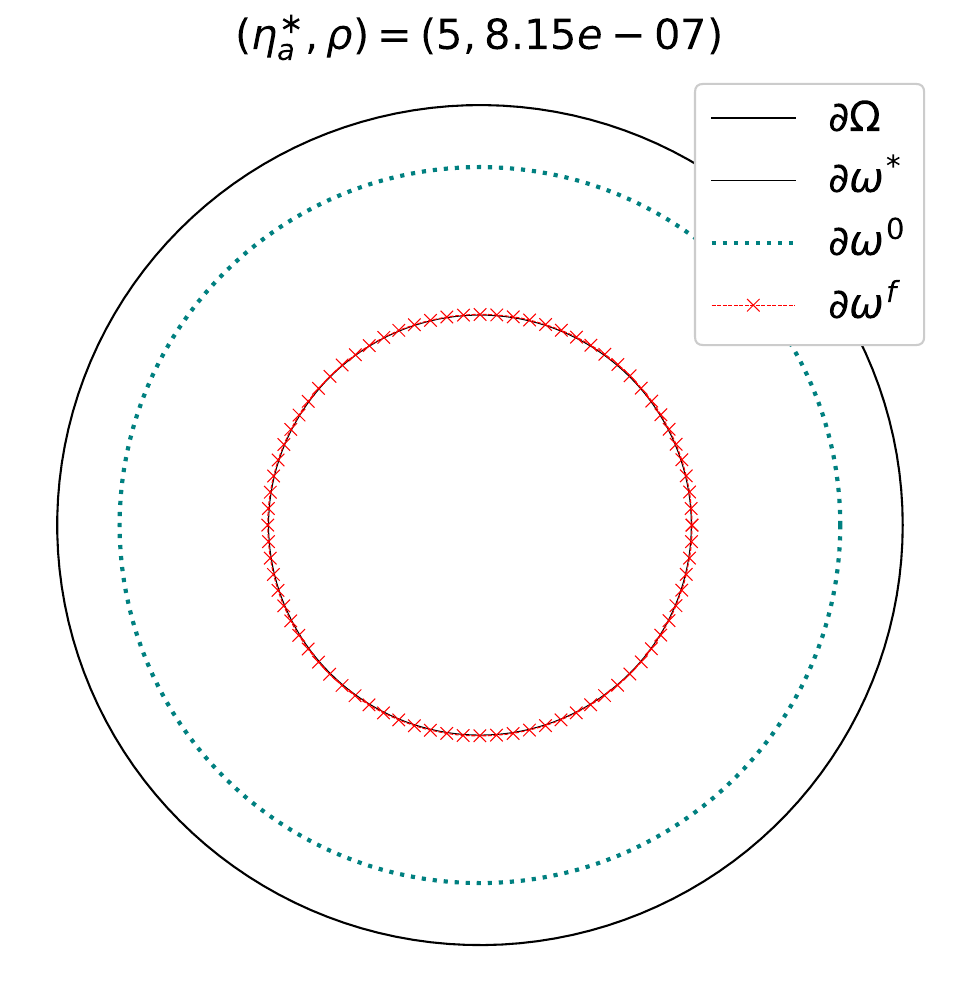}} \hfill\\
\resizebox{0.1275\textwidth}{!}{\includegraphics{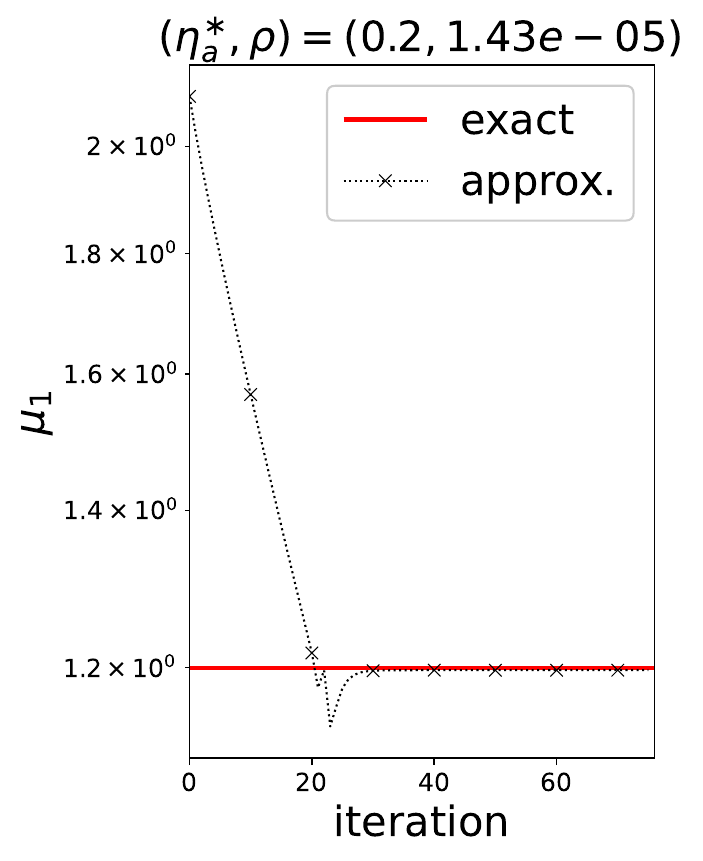}} \hfill
\resizebox{0.112\textwidth}{!}{\includegraphics{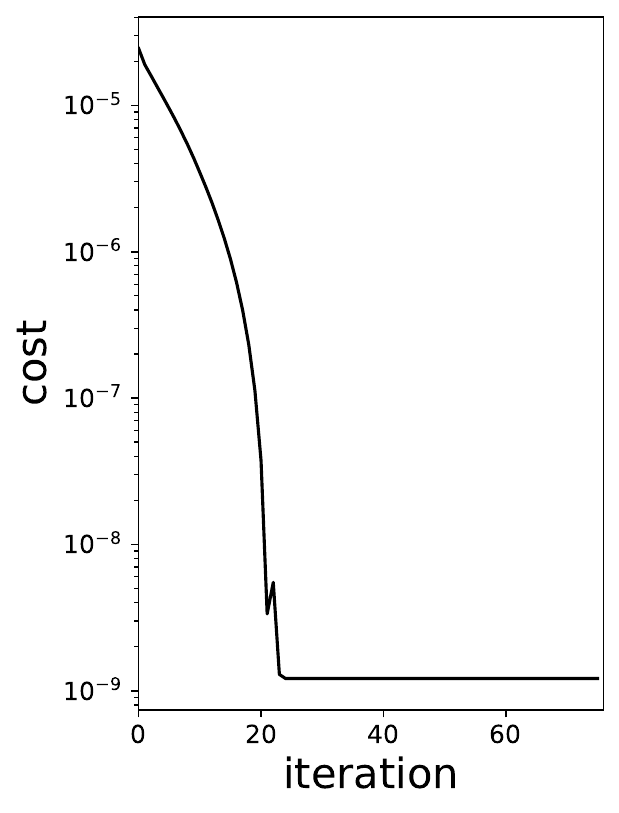}} \hfill
\resizebox{0.12\textwidth}{!}{\includegraphics{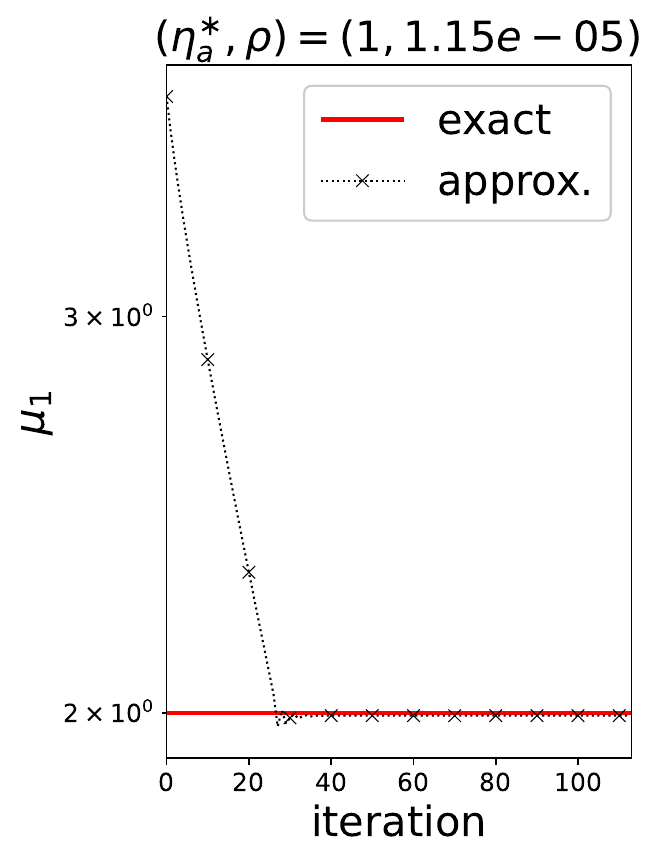}} \hfill
\resizebox{0.112\textwidth}{!}{\includegraphics{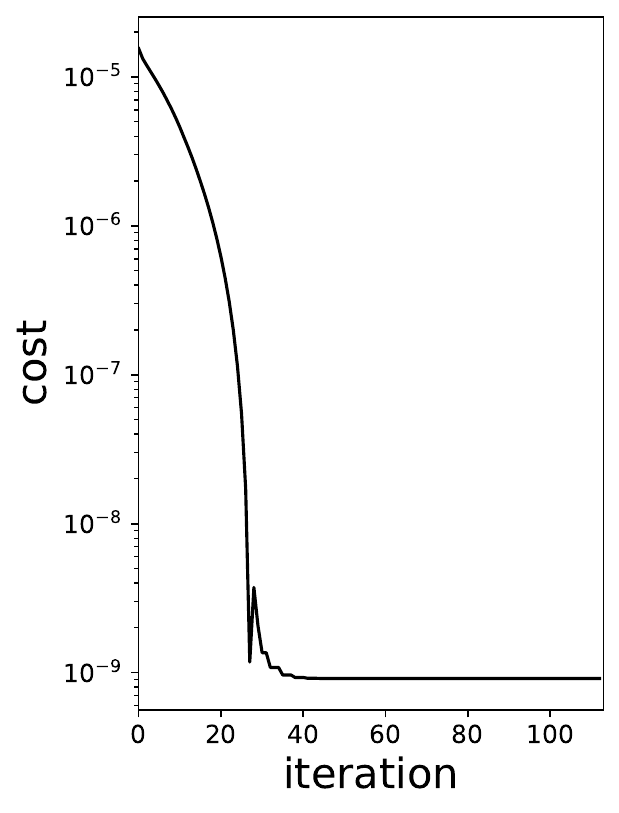}} \hfill
\resizebox{0.12\textwidth}{!}{\includegraphics{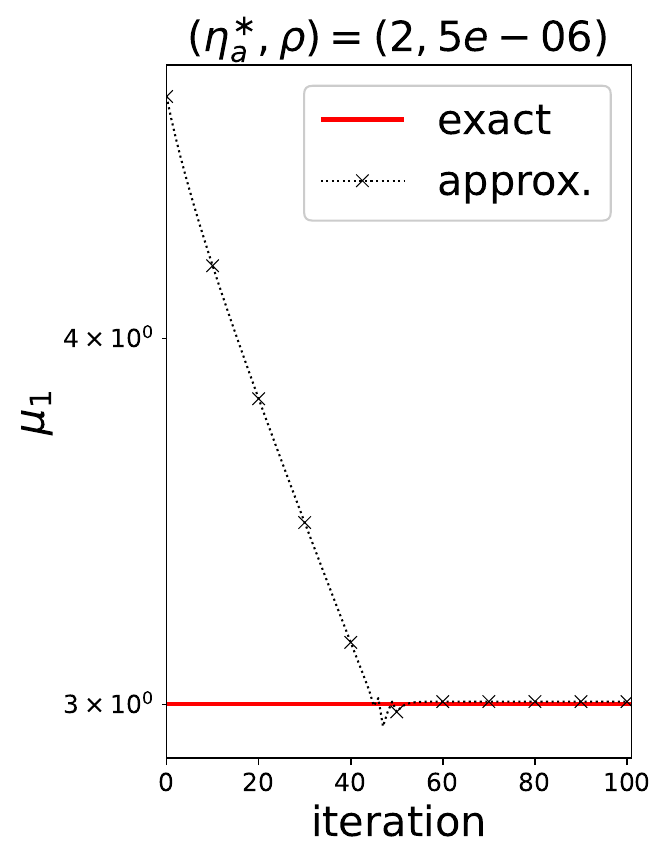}} \hfill
\resizebox{0.112\textwidth}{!}{\includegraphics{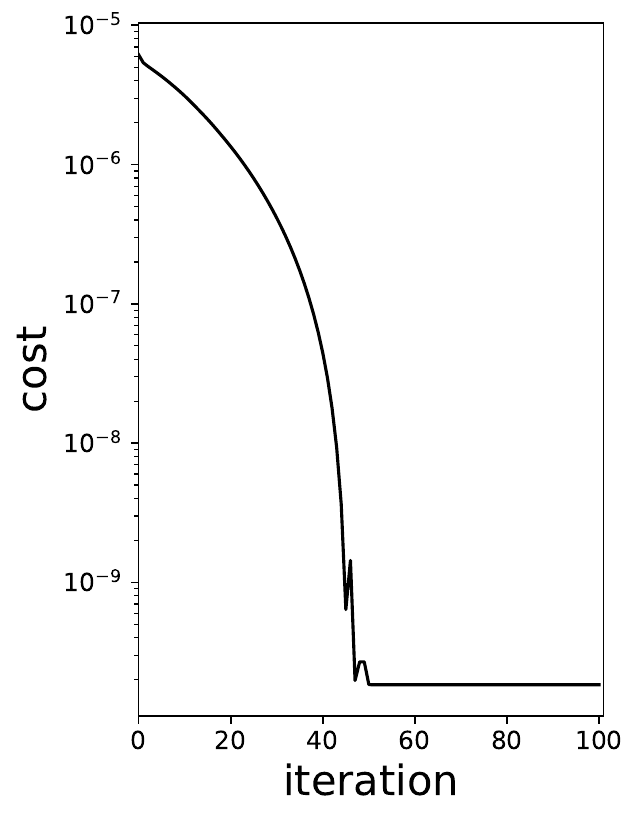}} \hfill
\resizebox{0.12\textwidth}{!}{\includegraphics{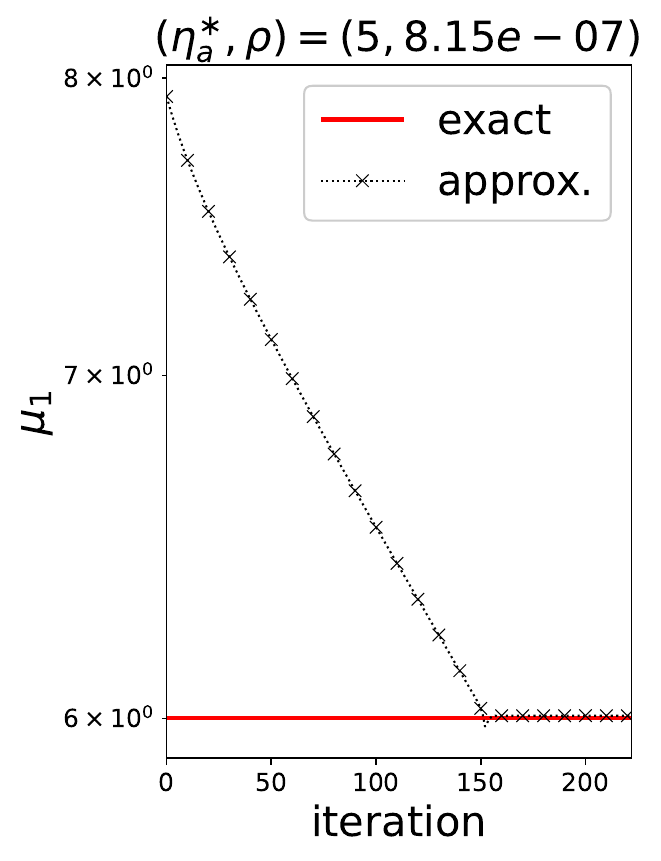}} \hfill
\resizebox{0.112\textwidth}{!}{\includegraphics{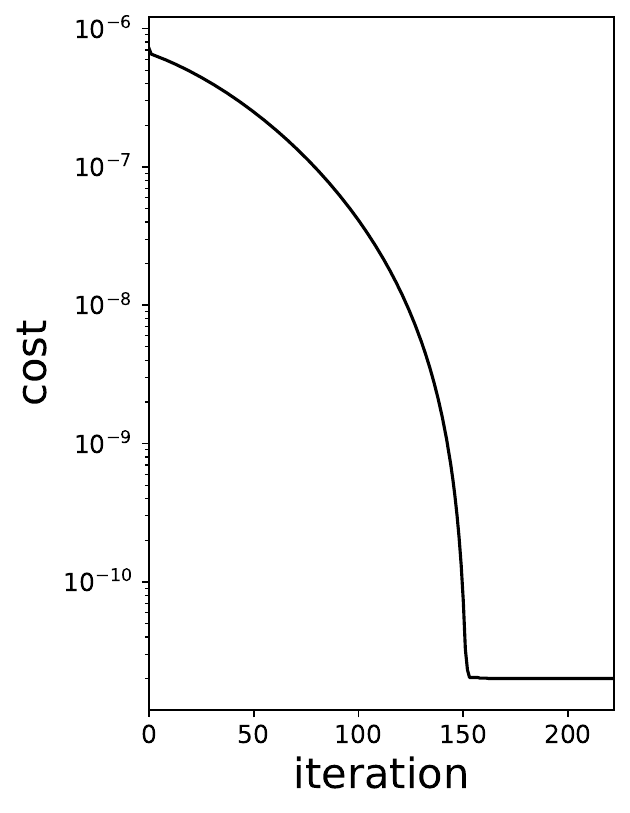}} \hfill
\caption{Results for a 2D radial  problem with $f(x) = \delta(x)$ and varying $\eta_{a}^{\ast}$}
\label{fig:point_source_example_4}
\end{figure}
%
%

The previous tests were based on precise measurements. For noisy data, we summarize the reconstructions of $\partial\omega^{\ast} = B_{1.5}$ and identifications when $\eta^{\ast} = 0.2, 5$ in Figure~\ref{fig:point_source_with_noise_example_5}.
Despite high noise levels, the identifications of both values and shapes were satisfactory.
We used the same $\rho$ as in the non-noisy case to demonstrate the impact of noise under a fixed Tikhonov regularization parameter.
As expected, the reconstructions were less accurate than with exact measurements, as shown by the cost value history in the rightmost plots of Figure~\ref{fig:point_source_with_noise_example_5}.
The reconstructions can be improved by adjusting $\rho$.
%
%
\begin{figure}[htp!]
\centering
\resizebox{0.165\textwidth}{!}{\includegraphics{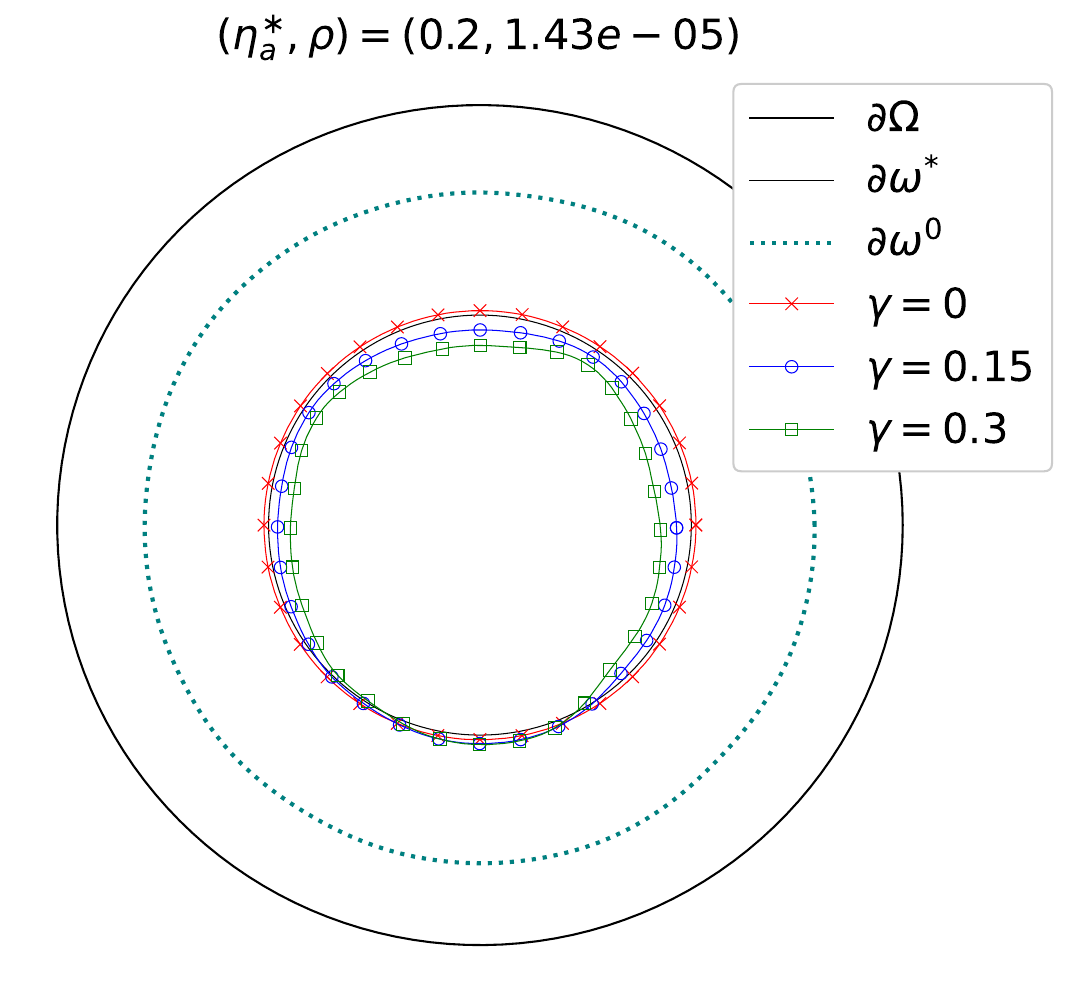}}
\resizebox{0.15\textwidth}{!}{\includegraphics{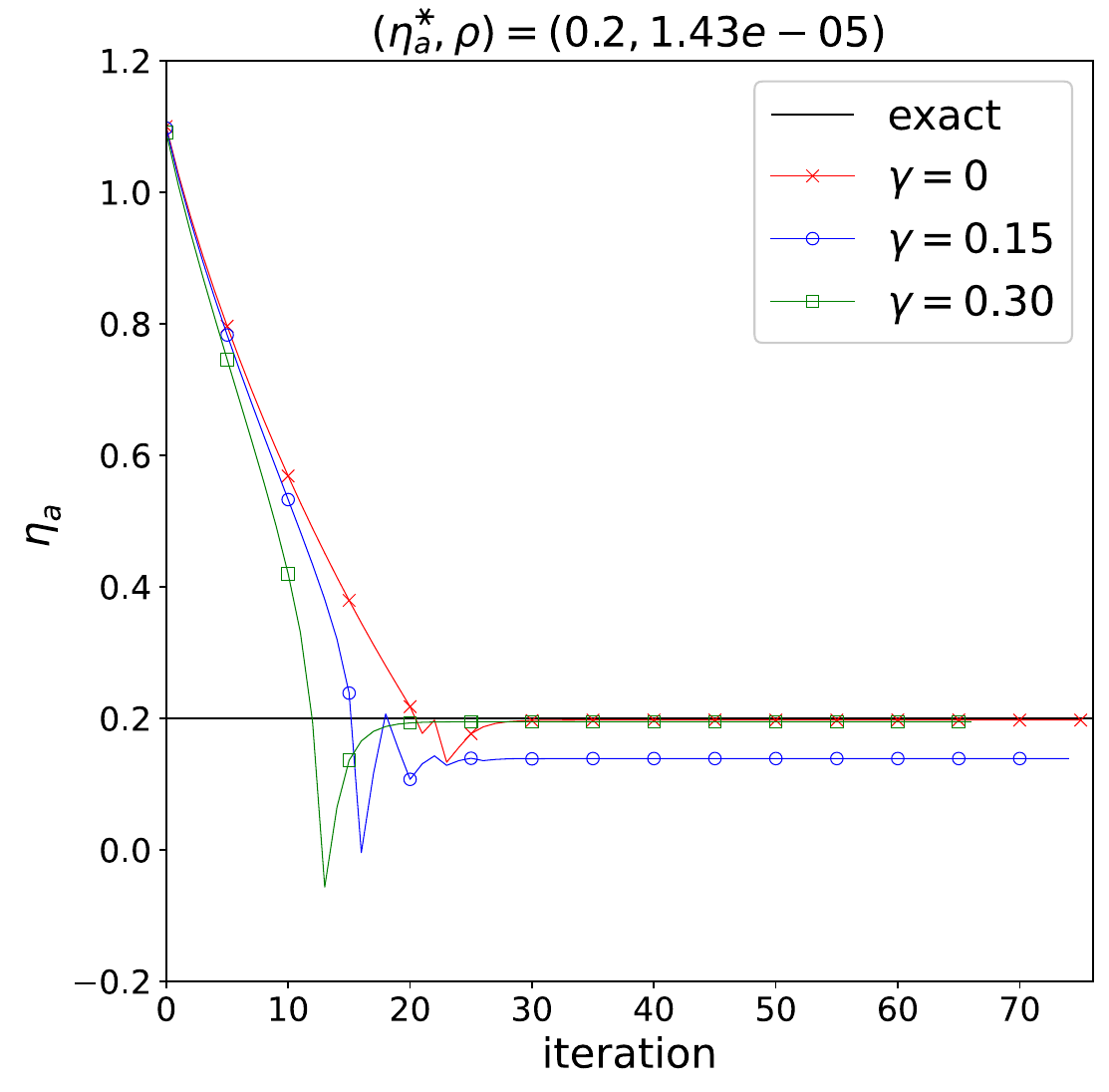}}
\resizebox{0.15\textwidth}{!}{\includegraphics{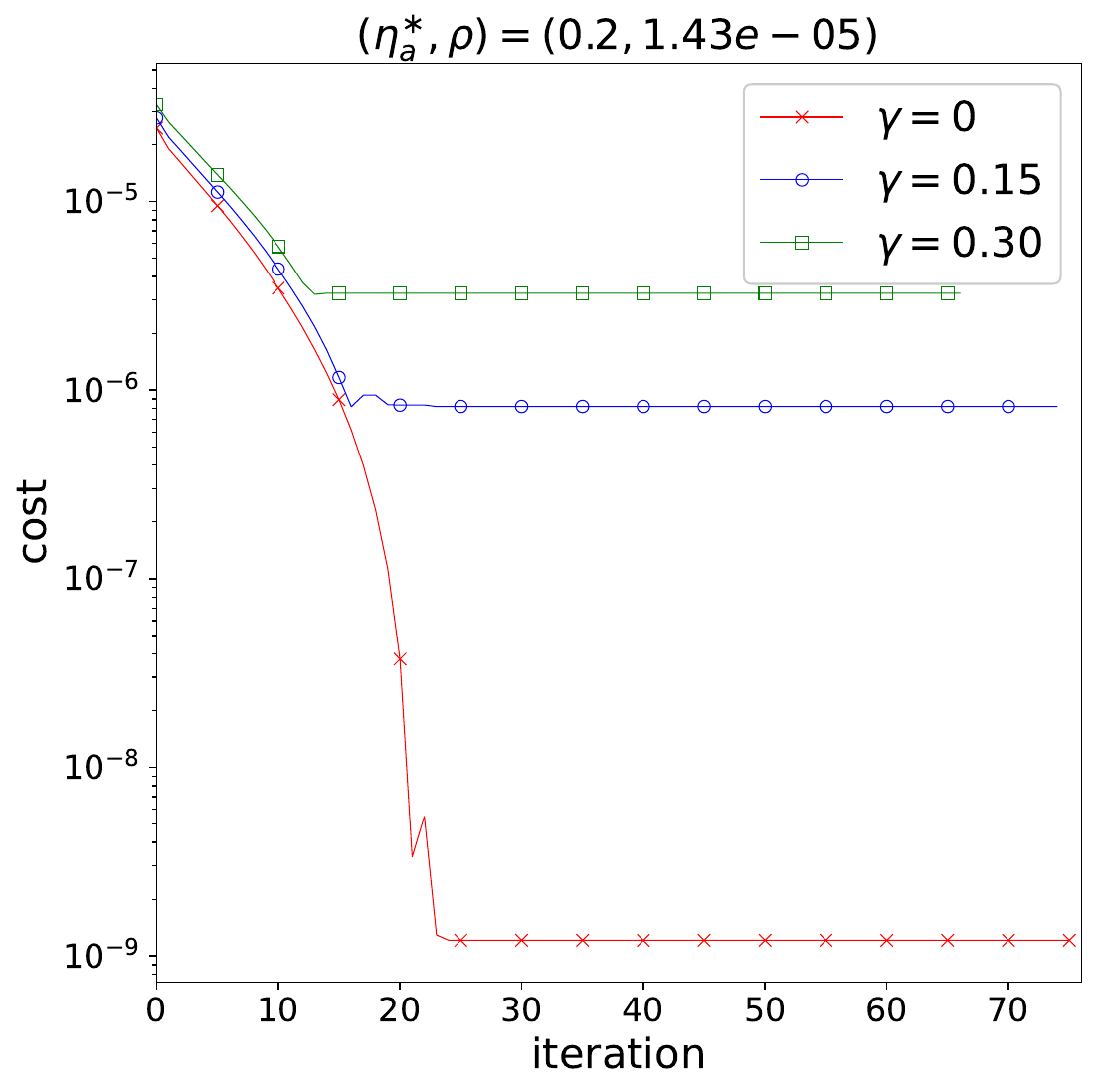}}\quad
\resizebox{0.165\textwidth}{!}{\includegraphics{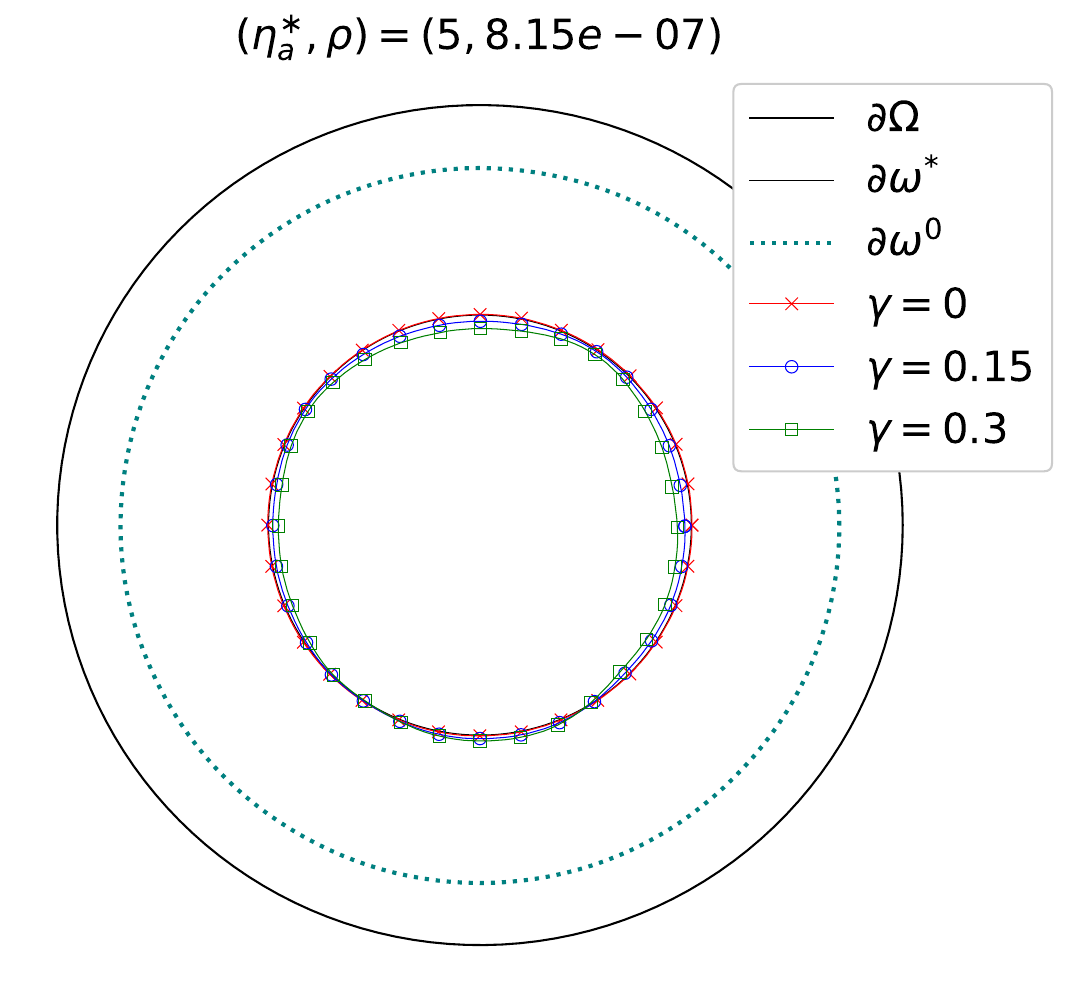}}
\resizebox{0.15\textwidth}{!}{\includegraphics{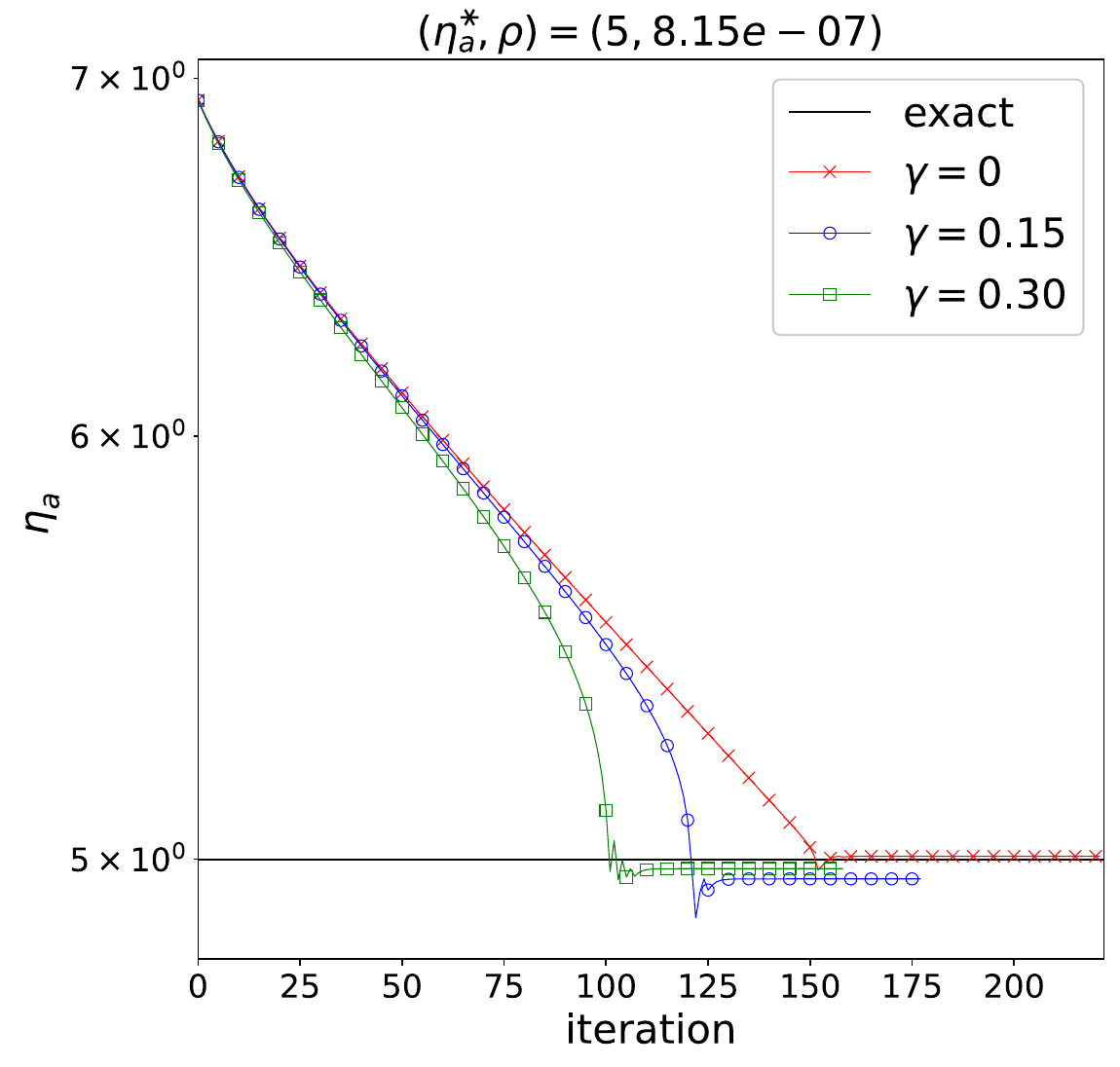}}
\resizebox{0.15\textwidth}{!}{\includegraphics{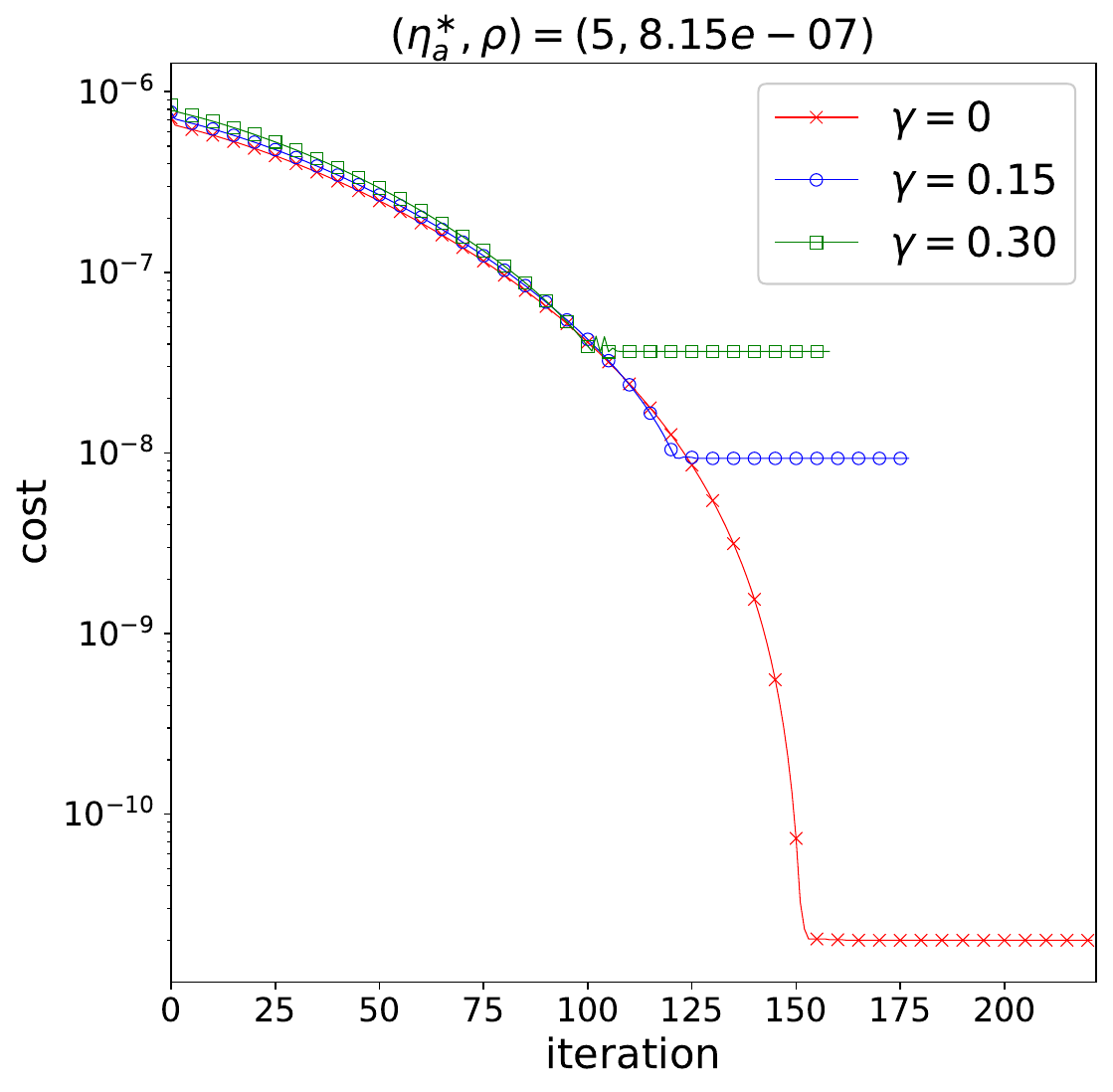}}
\caption{Results for a 2D radial problem with $f(x) = \delta(x)$ under noisy data, with $\eta^{\ast} = 0.2$ (left three plots) and $\eta^{\ast} = 5$ (right three plots)}
\label{fig:point_source_with_noise_example_5}
\end{figure}
%
\subsection{Numerical tests with a constant source function and non-circular boundary interface}\label{subsec:constant_source_non_circular}
We repeat the experiments from subsection~\ref{subsec:constant_source}, but with the exact boundary interface given by:
\[
	\domega^{\ast} = \left\{5(0.4+0.06\cos(3t))\begin{pmatrix}\cos{t}\\ \sin{t}\end{pmatrix}, \forall t \in [0, 2\pi) \right\}.
\]
This boundary is non-convex with minor concavities.
As before, we set $f=1$, $k=1$, $R=3$, and $l=0.3$, and show the results for $\etaa^{\ast} = 0.2$ in Figures \ref{fig:constant_source_flower_1} with $\rho = 0.0002$ and $s = 4$.
Though the reconstruction misses the concavities, it closely matches the exact boundary, and the method accurately identifies $\mu_{1}$.
%
%
\begin{figure}[htp!]
\centering
\resizebox{0.18\textwidth}{!}{\includegraphics{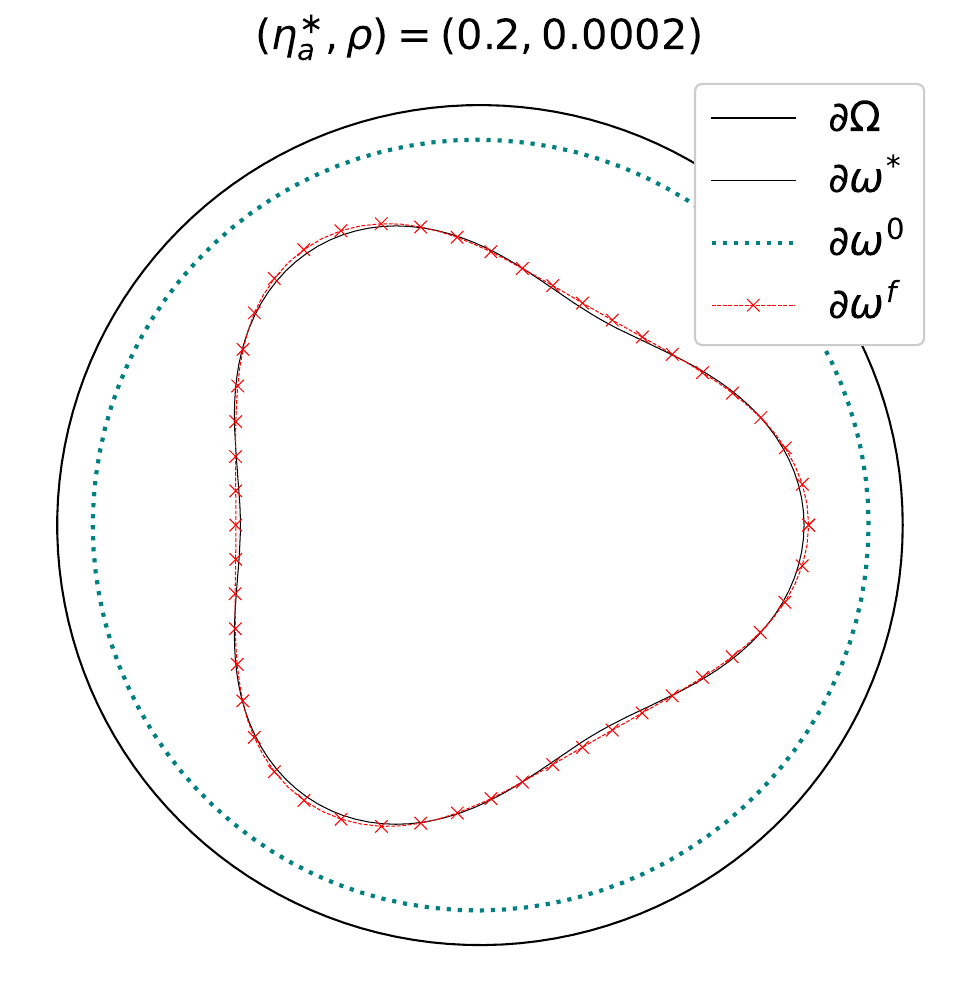}} \quad
\resizebox{0.15\textwidth}{!}{\includegraphics{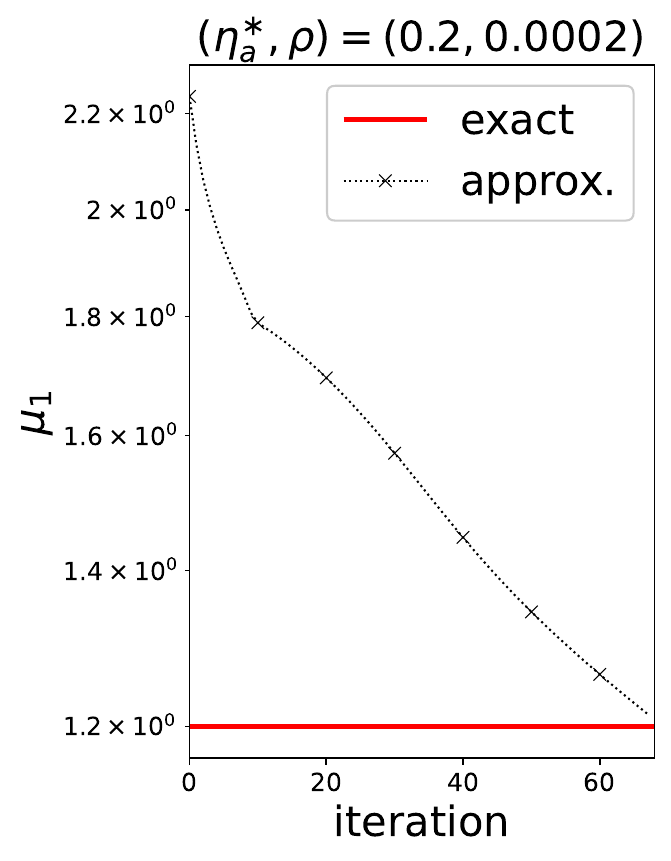}} \quad
\resizebox{0.14\textwidth}{!}{\includegraphics{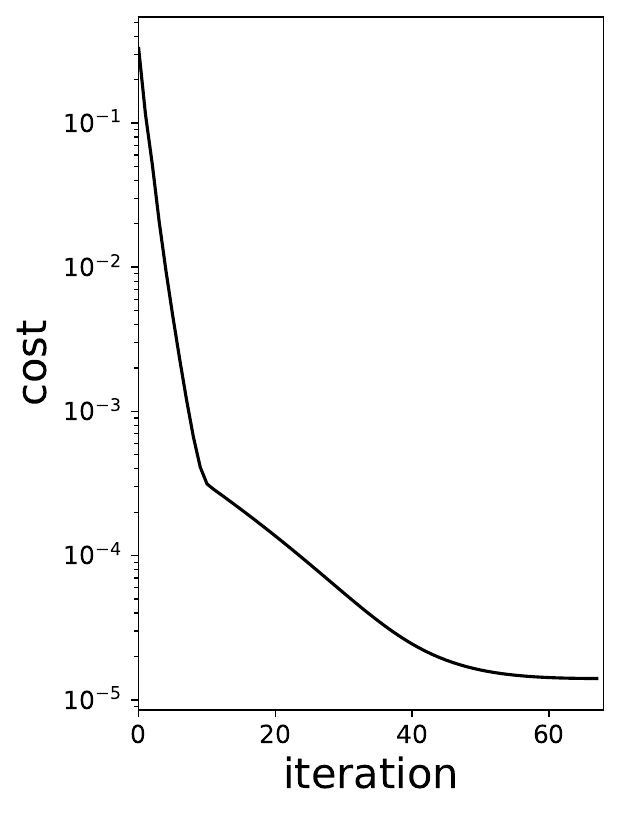}}
\caption{An example featuring a non-circular boundary interface}
\label{fig:constant_source_flower_1}
\end{figure}
%
%

We perform another set of experiments with noisy data at two different levels.
The numerical results, displayed in Figures \ref{fig:constant_source_flower_2}, show the results when $\gamma = 0.5, 0.10$.
{As expected, the reconstructions under noisy measurements are less accurate than those obtained with exact data, but they remain reasonable, as illustrated in the figures.
Furthermore, the cost values are higher in the presence of noise, which is consistent with expectations.}
%
%
\begin{figure}[htp!]
\centering
\resizebox{0.2\textwidth}{!}{\includegraphics{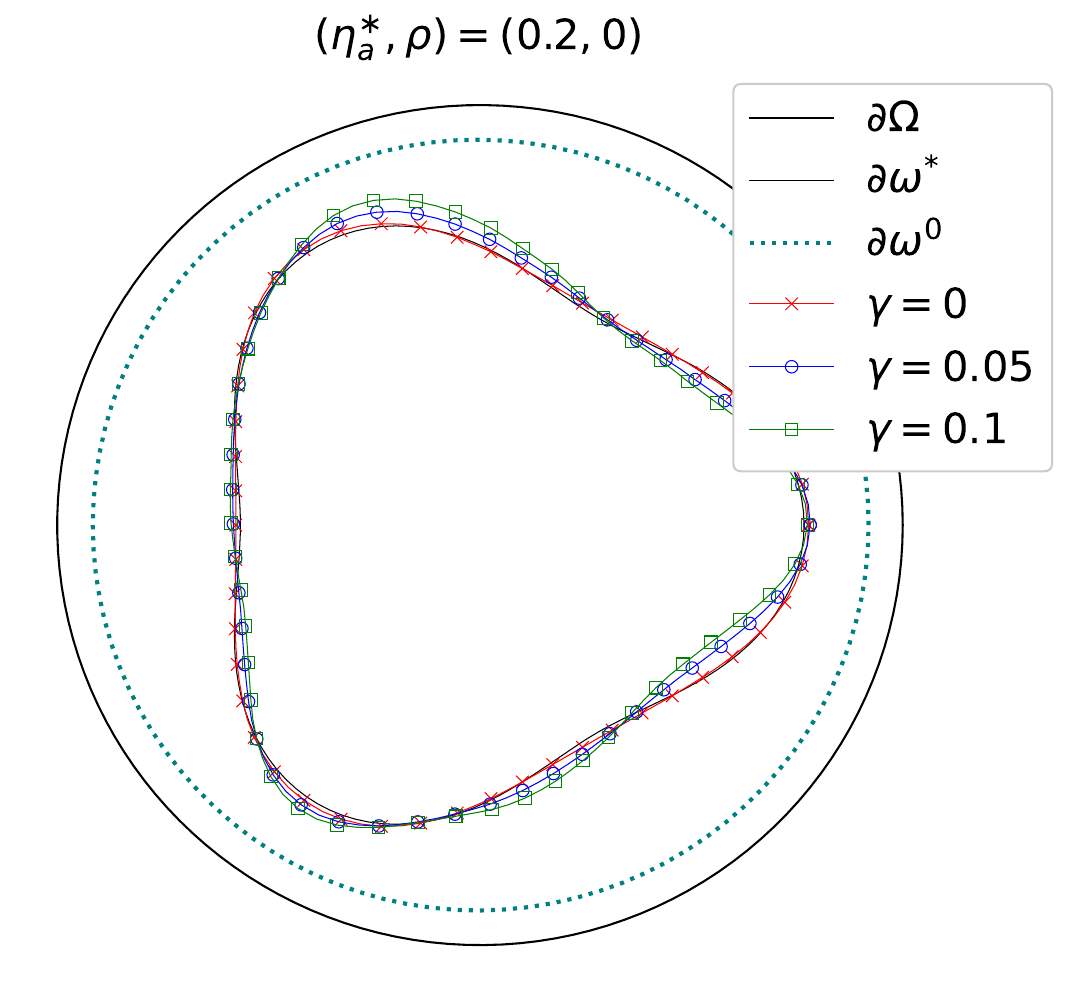}} \quad
\resizebox{0.19\textwidth}{!}{\includegraphics{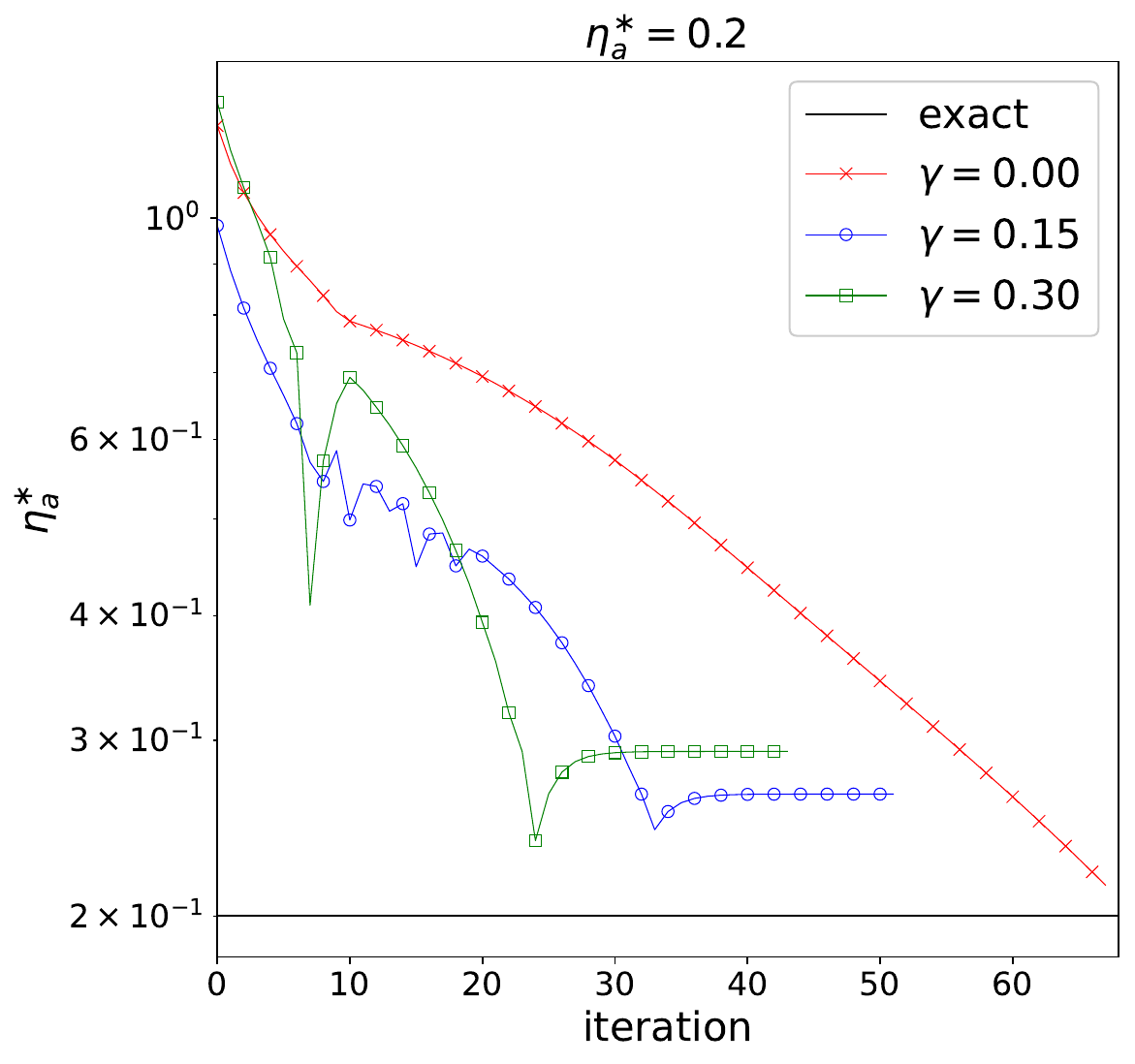}} \quad
\resizebox{0.18\textwidth}{!}{\includegraphics{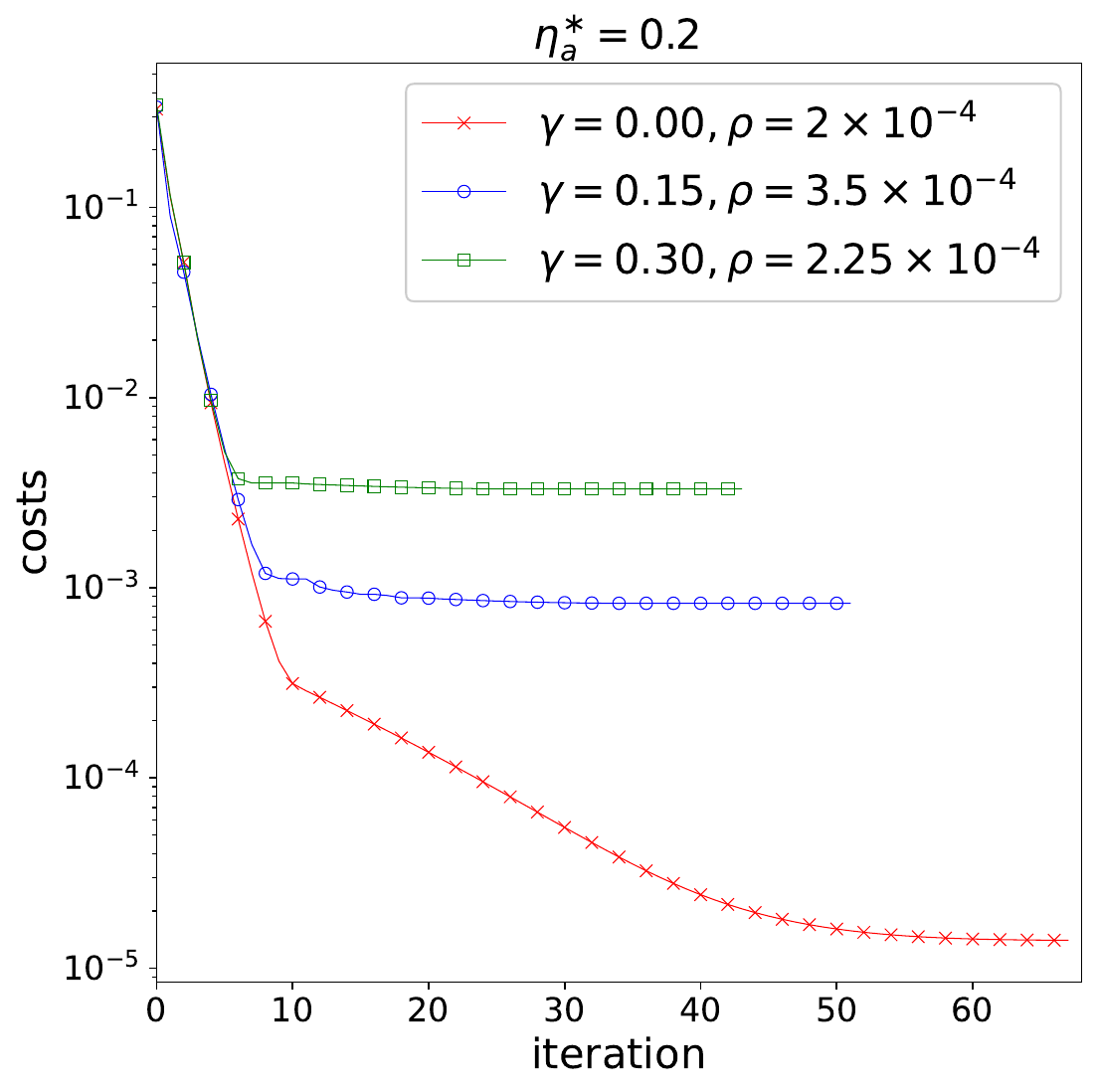}}
\caption{An example featuring a non-circular boundary interface with noisy data}
\label{fig:constant_source_flower_2}
\end{figure}
%
%

Next, we conduct experiments by varying $\etaa^{\ast}$ with exact measurements, where the initial guess is smaller than the exact boundary interface and positioned inside $\domega^{\ast}$.
Specifically, we use $\domega^{0} = B_{0.5}$ as the initial boundary geometry and set $s=10$.
The results, including the histories of $\mu_{1}$ and cost values, are shown in Figure~\ref{fig:constant_source_flower_3}.
The method successfully identified the boundary interface, including its concave parts, and the values for $\mu_{1}$ closely match the exact values, demonstrating the approach's robustness.
%
%
\begin{figure}[htp!]
\centering
\hfill
\resizebox{0.2\textwidth}{!}{\includegraphics{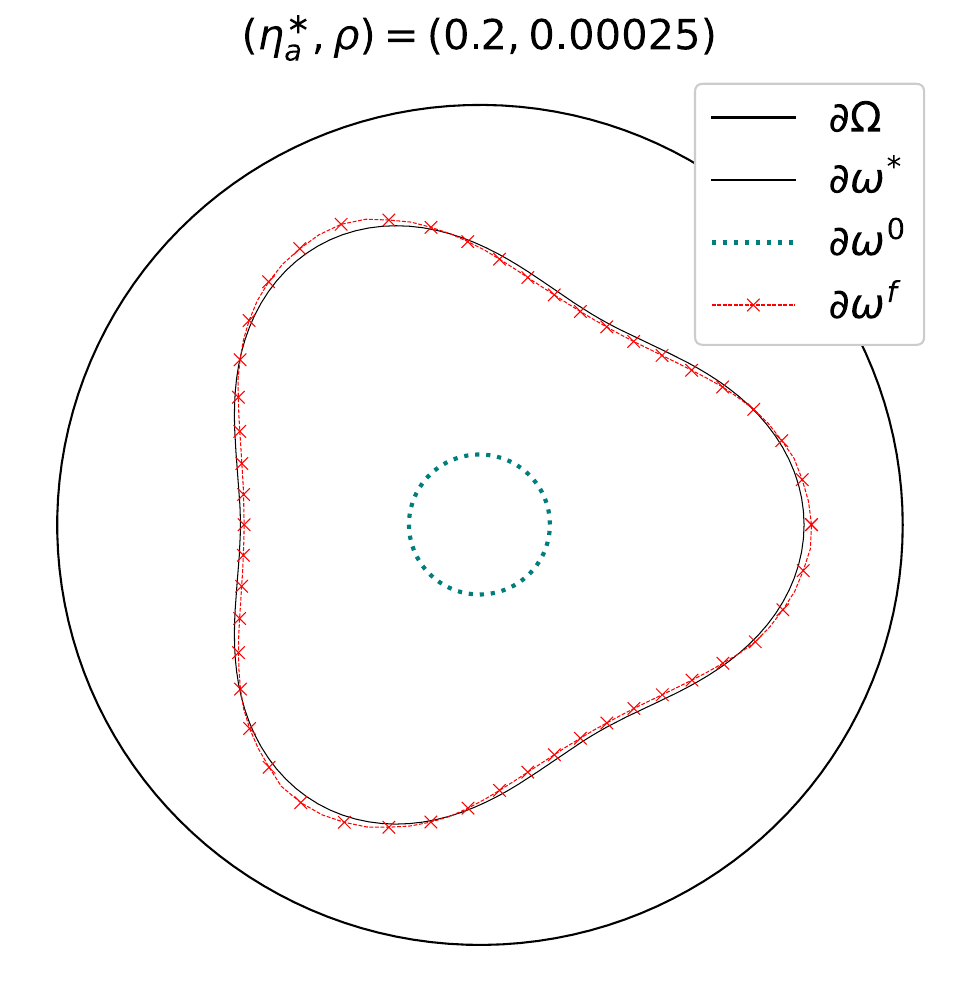}} \hfill
\resizebox{0.2\textwidth}{!}{\includegraphics{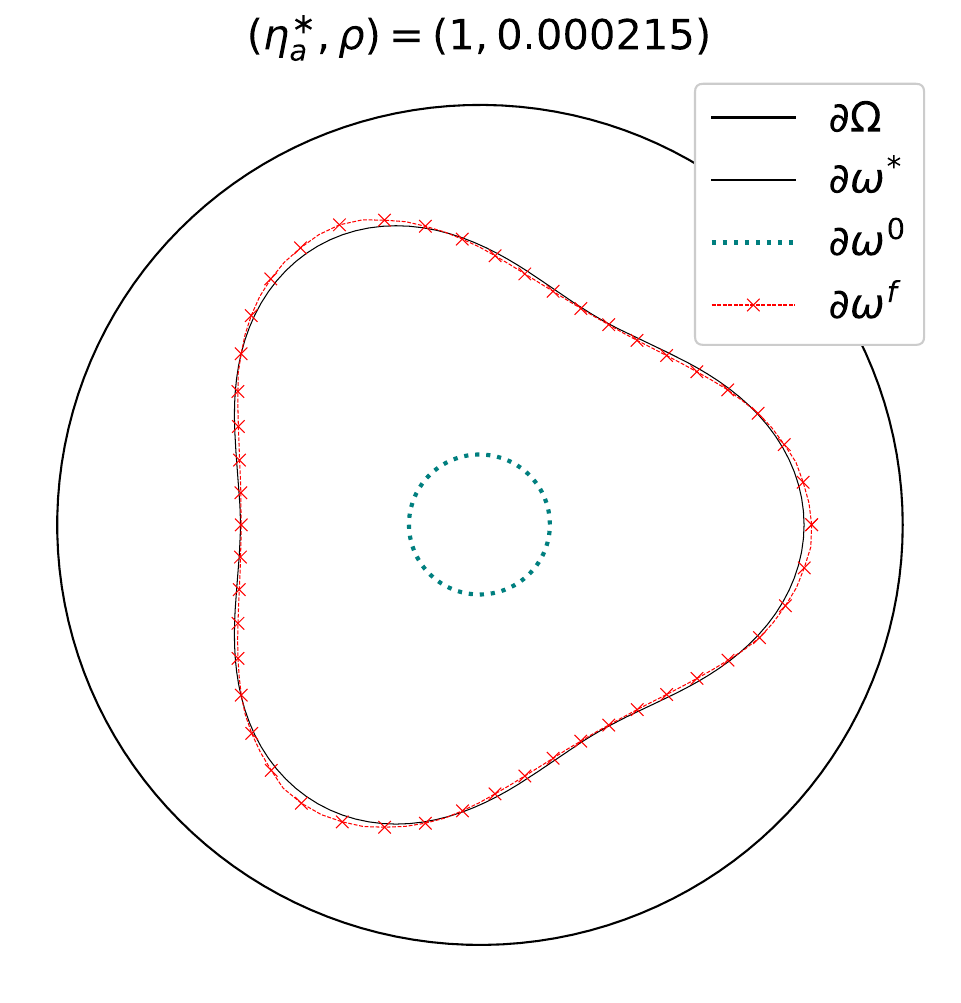}} \hfill
\resizebox{0.2\textwidth}{!}{\includegraphics{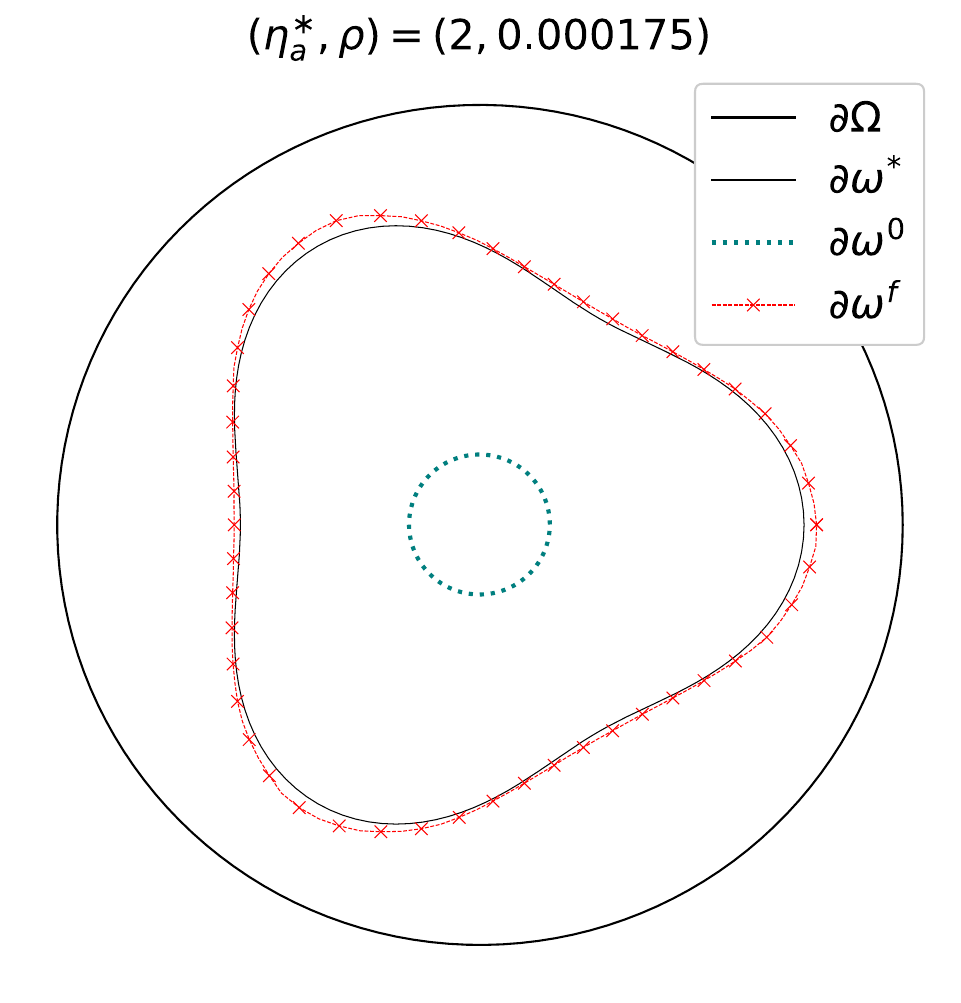}} \hfill
\resizebox{0.2\textwidth}{!}{\includegraphics{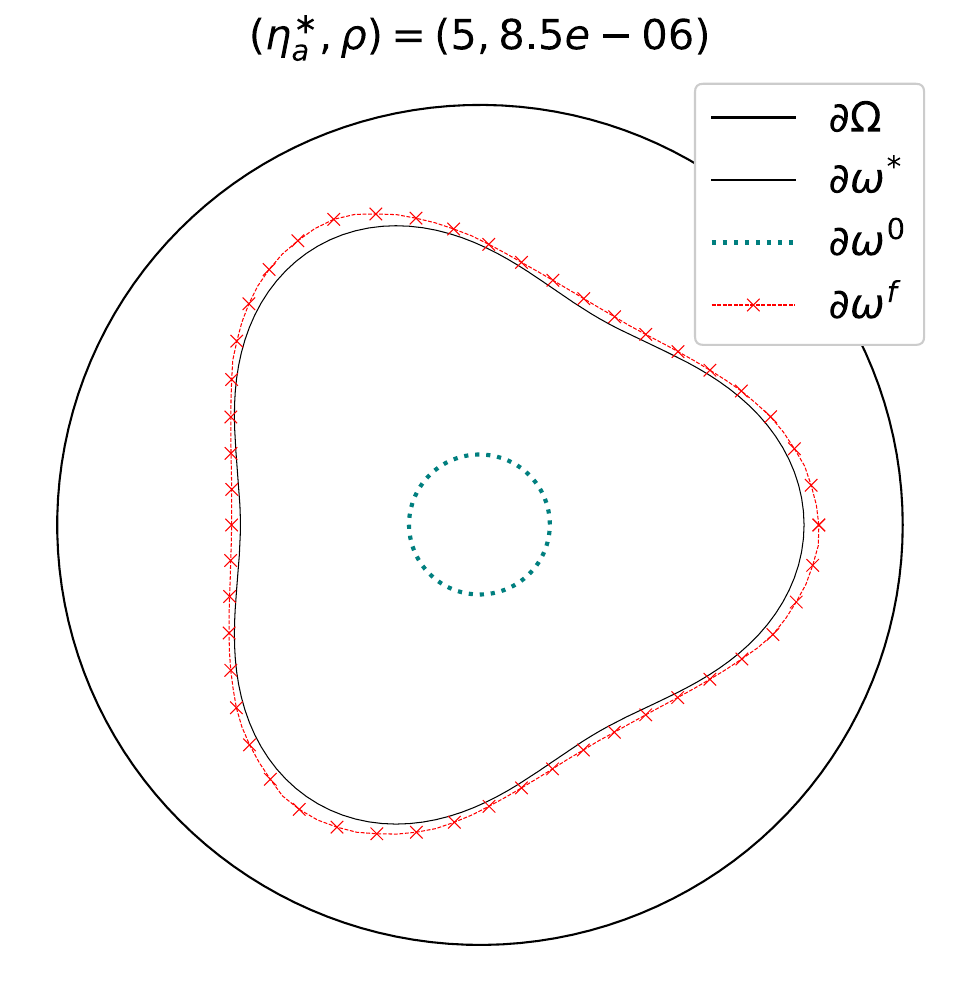}} \hfill\\
\resizebox{0.11\textwidth}{!}{\includegraphics{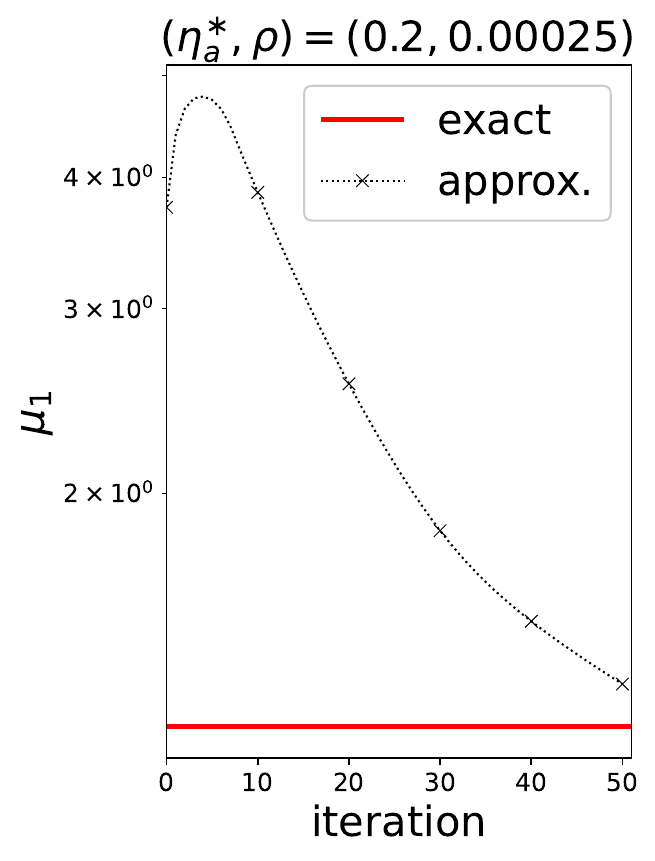}} \hfill
\resizebox{0.11\textwidth}{!}{\includegraphics{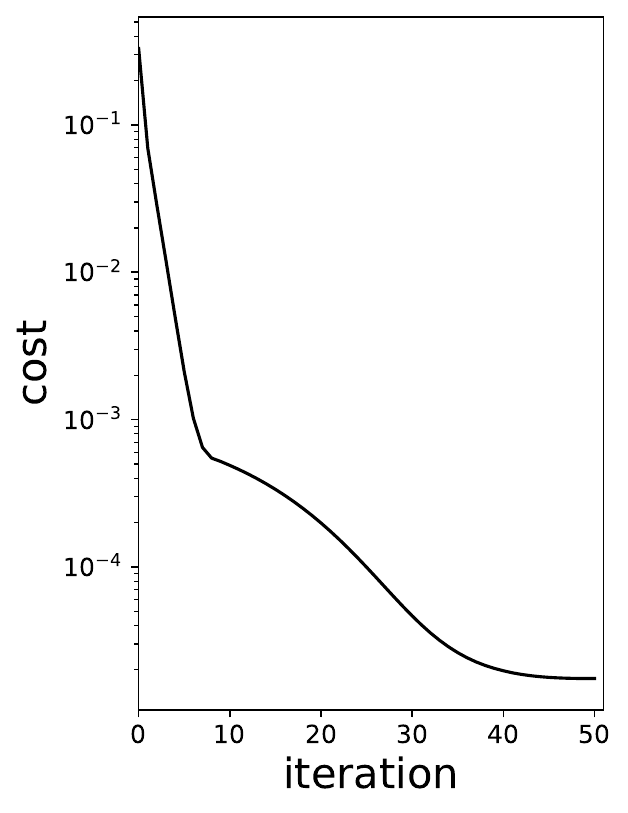}} \hfill
\resizebox{0.11\textwidth}{!}{\includegraphics{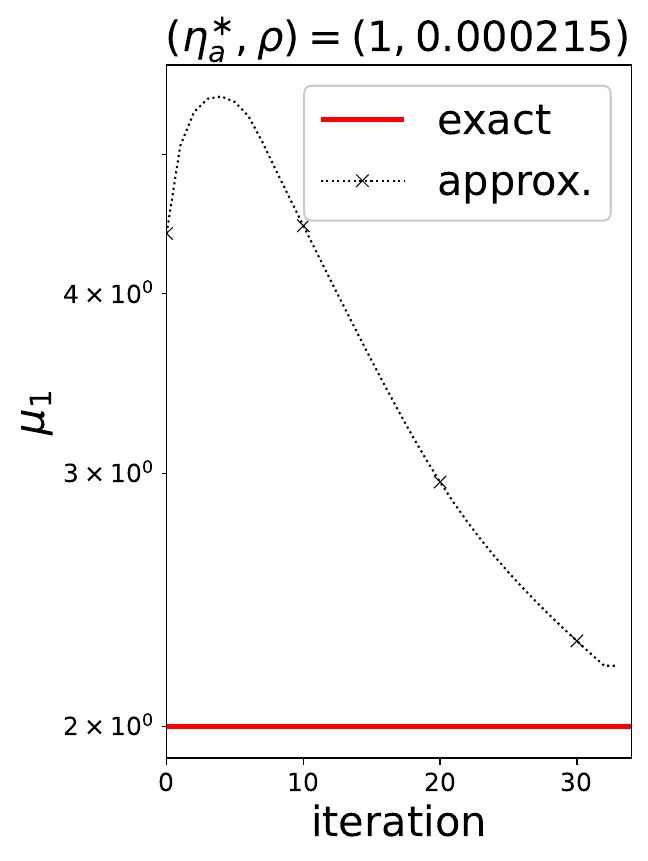}} \hfill
\resizebox{0.11\textwidth}{!}{\includegraphics{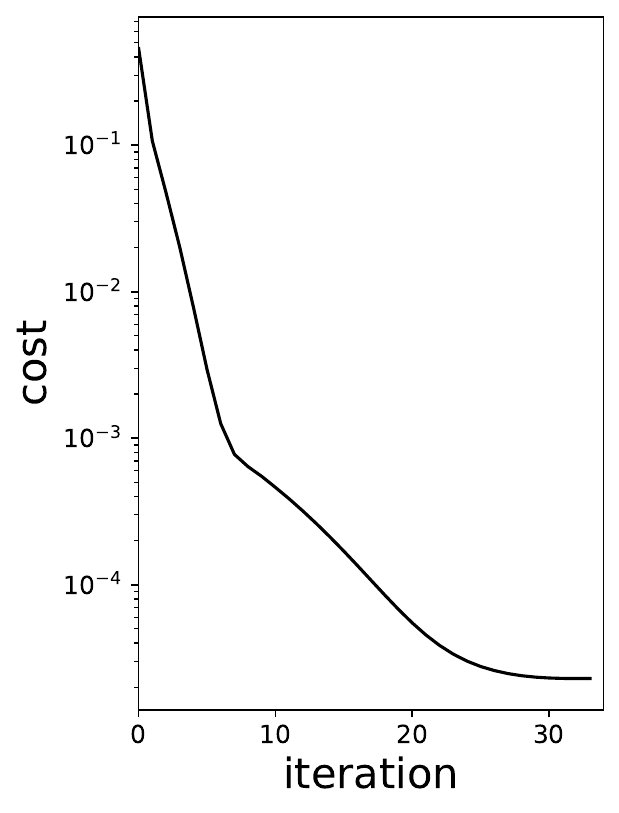}} \hfill
\resizebox{0.11\textwidth}{!}{\includegraphics{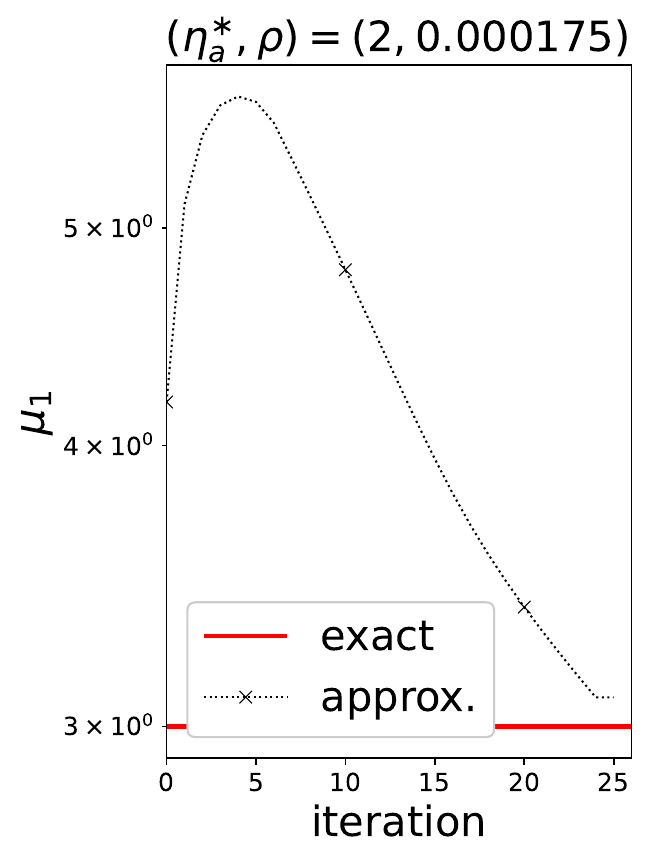}} \hfill
\resizebox{0.11\textwidth}{!}{\includegraphics{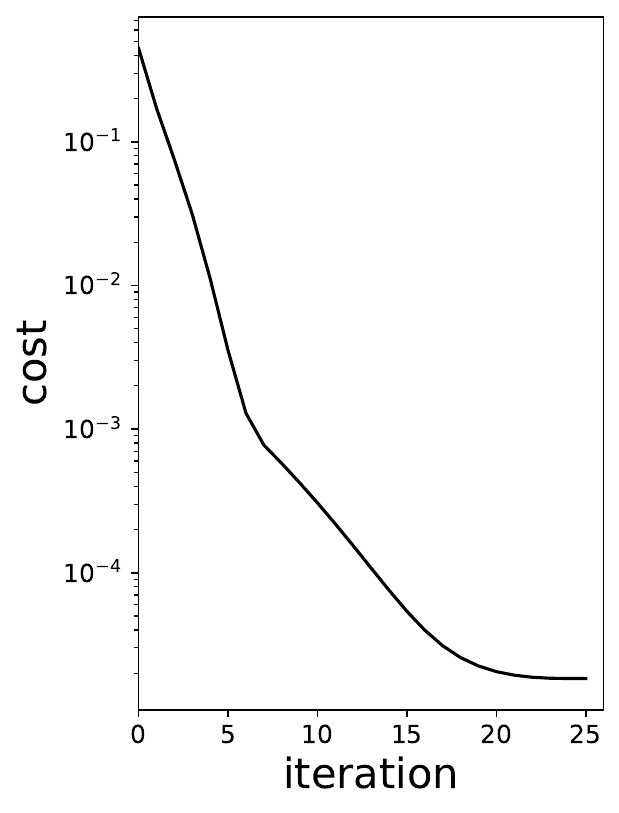}} \hfill
\resizebox{0.11\textwidth}{!}{\includegraphics{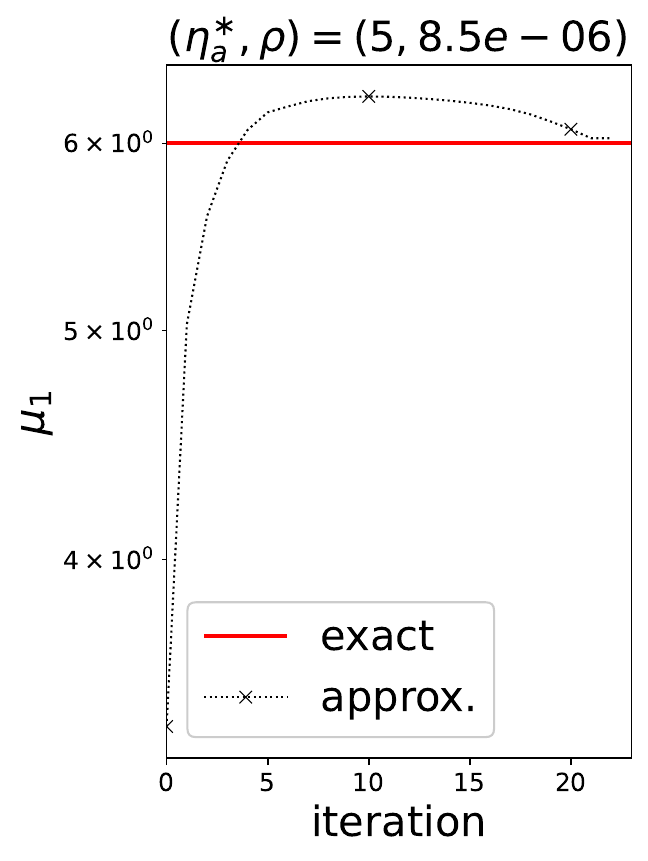}} \hfill
\resizebox{0.11\textwidth}{!}{\includegraphics{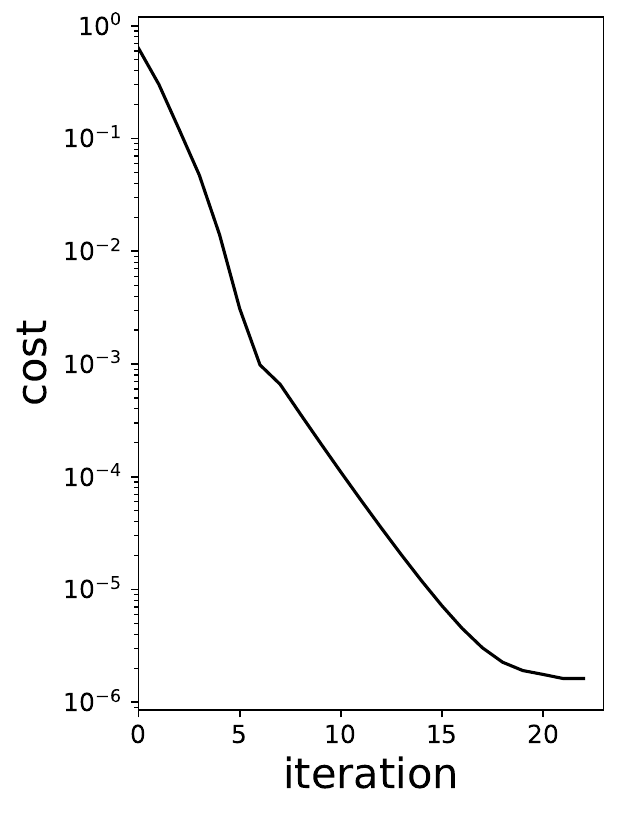}} \hfill
\caption{Results for a non-circular boundary interface with varying $\eta_{a}^{\ast}$ values}
\label{fig:constant_source_flower_3}
\end{figure}
%
%

For the reconstruction with noisy data at $\gamma = 0.05, 0.10$ and $\eta^{\ast} = 5$, the results are shown in Figure~\ref{fig:constant_source_flower_4}.
The Tikhonov regularization parameter $\rho$ is chosen based on the noise level, and a large step-size parameter $s = 5$ is used.
Despite the noise, the method successfully reconstructed the boundary interface and identified the absorption coefficient with good accuracy.
The histories of $\mu_{1}$ and the cost are also shown, with higher noise levels corresponding to larger cost values.
%
%
\begin{figure}[htp!]
\centering
\resizebox{0.2\textwidth}{!}{\includegraphics{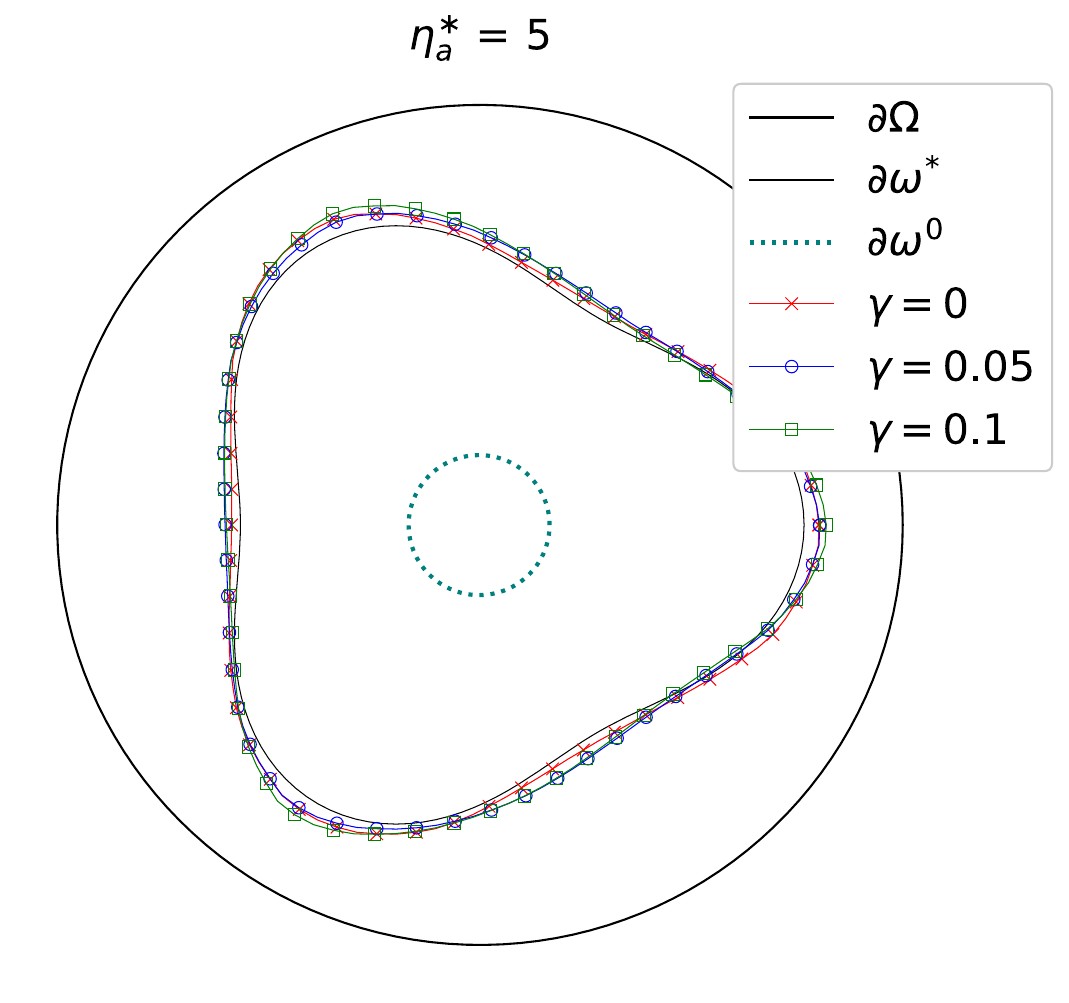}} \quad
\resizebox{0.19\textwidth}{!}{\includegraphics{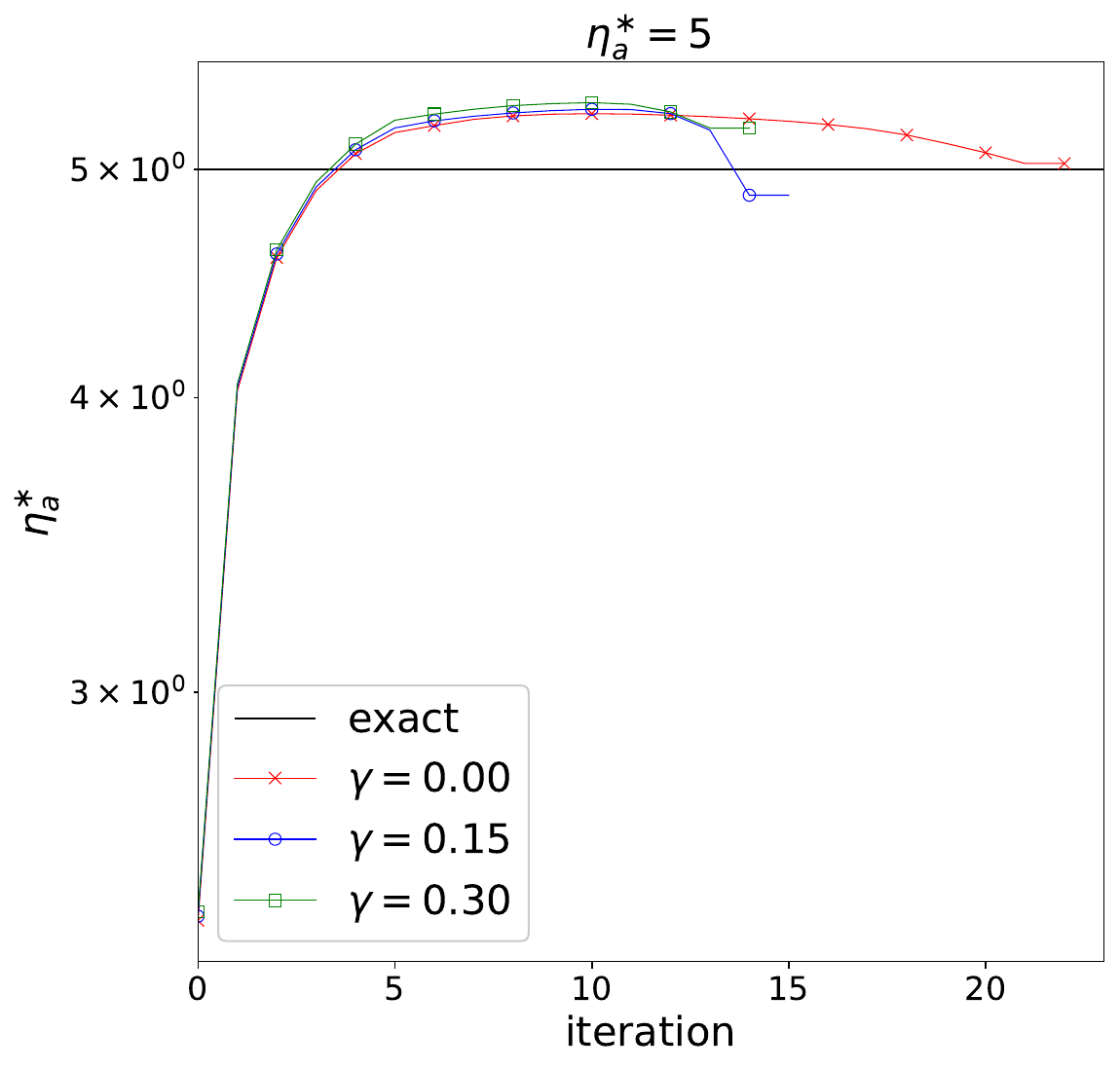}} \quad
\resizebox{0.18\textwidth}{!}{\includegraphics{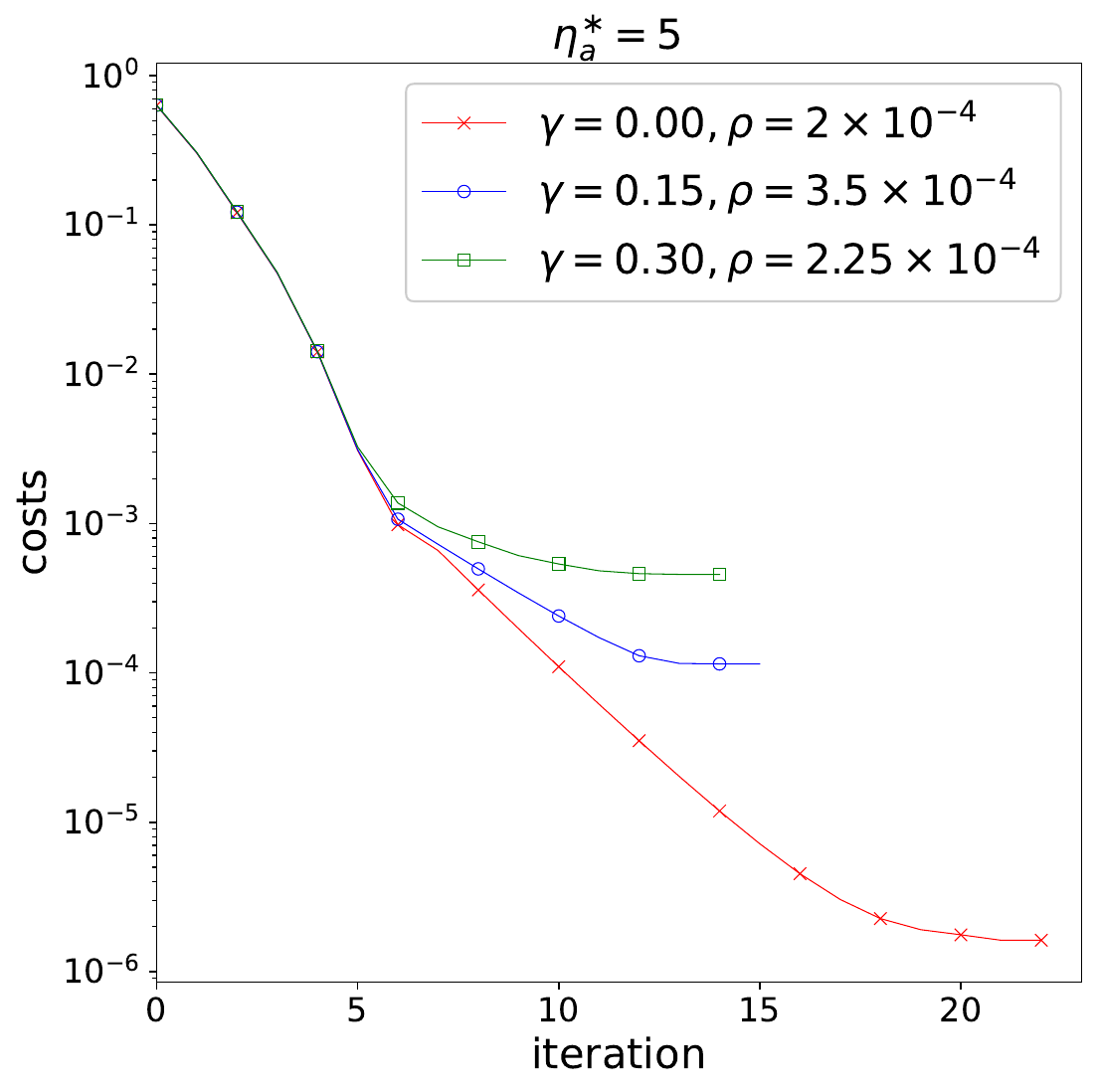}}
\caption{An example featuring a non-circular boundary interface with noisy data}
\label{fig:constant_source_flower_4}
\end{figure}
%
%

We repeated the experiment using a peanut-shaped boundary interface and fixed $\rho$ at 0.0001.
The results, shown in Figure~\ref{fig:constant_source_peanut_1}, demonstrate that despite the complex shape, the recovery of the absorption coefficient and boundary geometry remained accurate, even with high noise.
This highlights the robustness of the method for constant source.

To further assess the robustness of the proposed reconstruction method, we investigated the influence of different initial guesses under noisy data with noise level $\gamma = 0.10$.
{As shown in Figure~\ref{fig:constant_source_peanut_1_varying_initial_guess}, the initialization affects mainly the transient convergence behavior, including the number of iterations required and the intermediate evolution of the reconstructed interface. In particular, initial guesses located farther from the target inclusion generally require more iterations before stabilization. Nevertheless, despite these differences in convergence trajectories, all tested initializations converge to qualitatively similar reconstructed shapes and yield nearly identical values of the reconstructed absorption coefficient. This suggests that the proposed method possesses a certain degree of stability with respect to the initial guess, even in the presence of relatively high noise. We also observe that the method remains capable of recovering the principal geometric features of the inclusion for different initial geometries, although small discrepancies near regions of higher curvature persist due to the combined effects of noise and shape sensitivity.}
%
%
%
%
%
%
\begin{figure}[htp!]
\centering
\resizebox{0.2\textwidth}{!}{\includegraphics{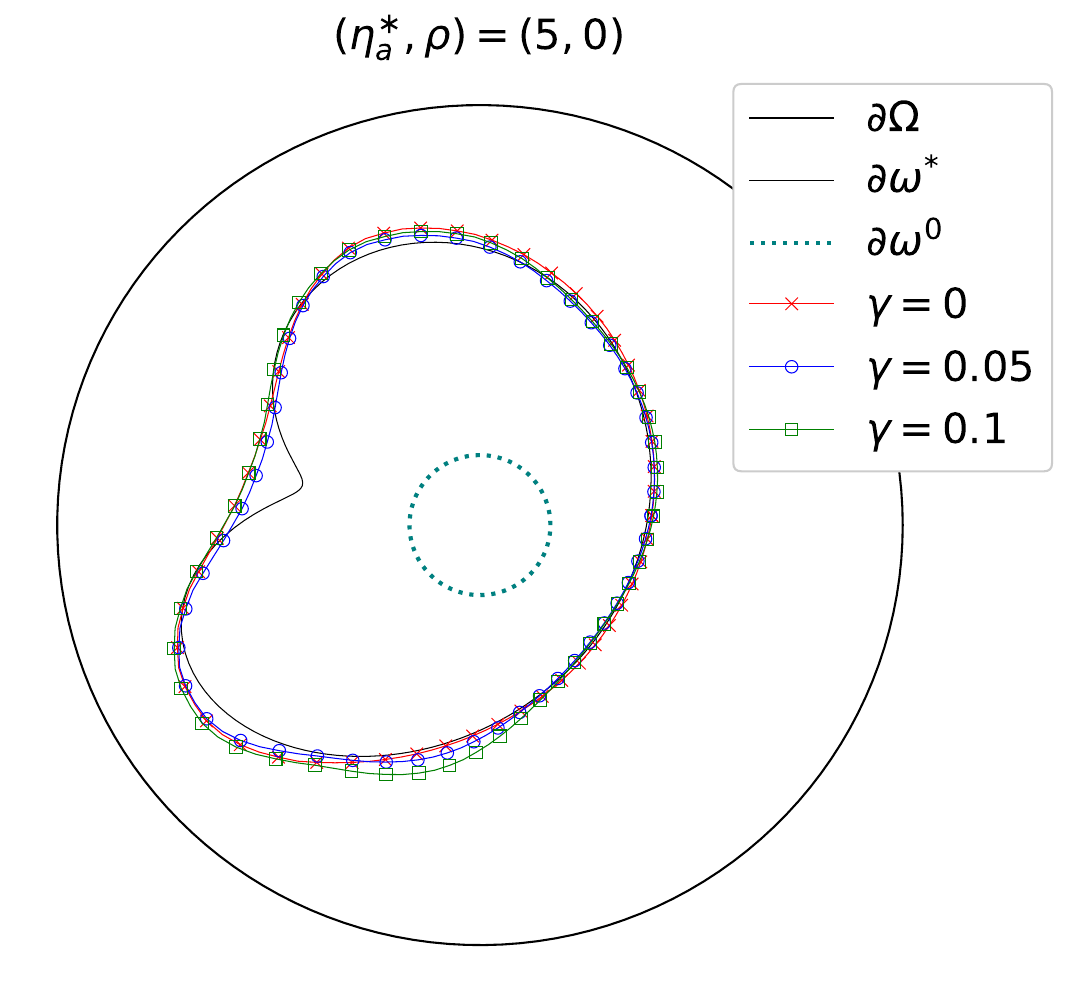}} \quad
\resizebox{0.19\textwidth}{!}{\includegraphics{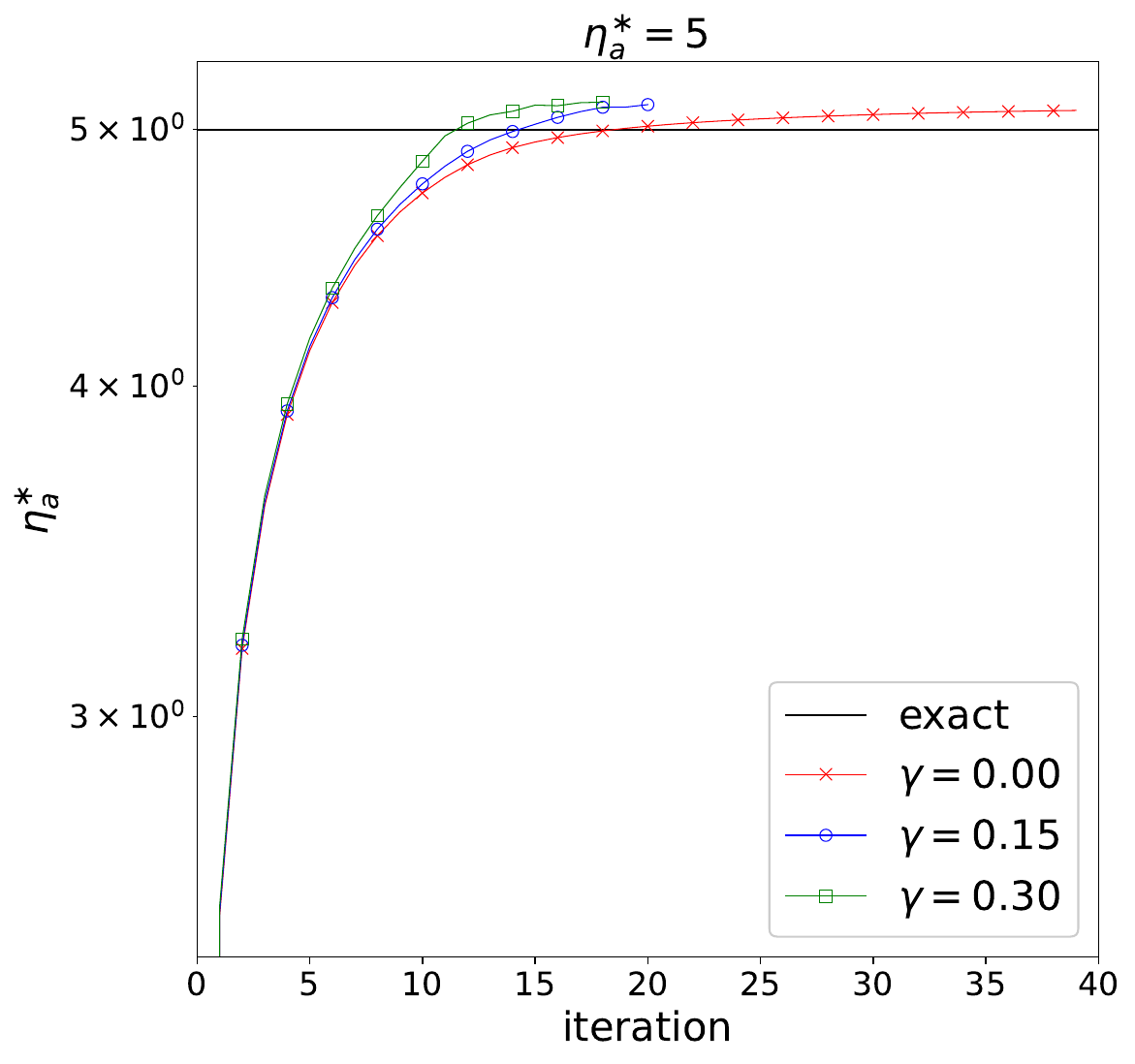}} \quad
\resizebox{0.18\textwidth}{!}{\includegraphics{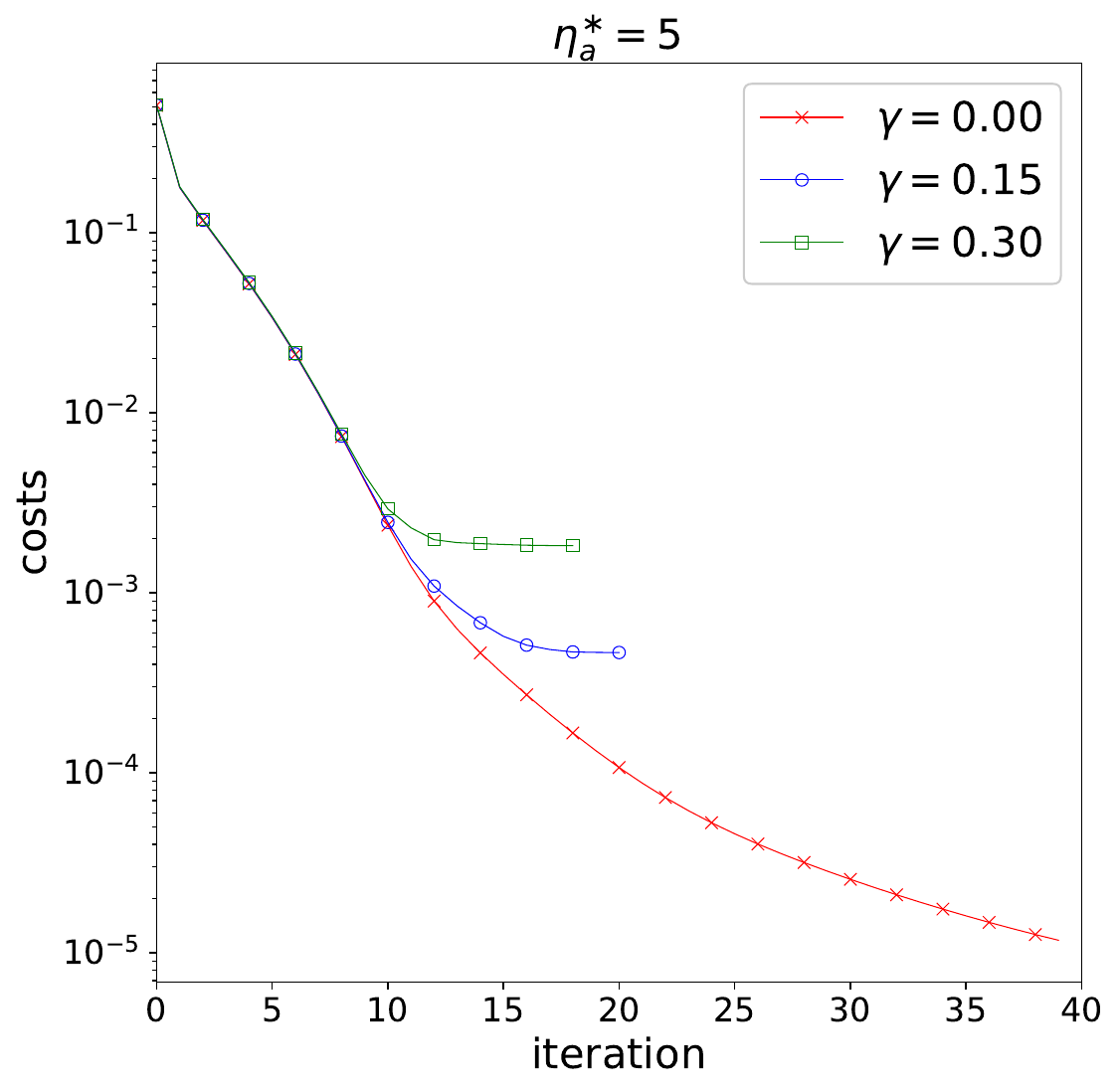}}
\caption{An example of a peanut-shaped boundary interface with noisy data}
\label{fig:constant_source_peanut_1}
\end{figure}
%
%
\begin{figure}[htp!]
\centering
\hfill
\resizebox{0.2\textwidth}{!}{\includegraphics{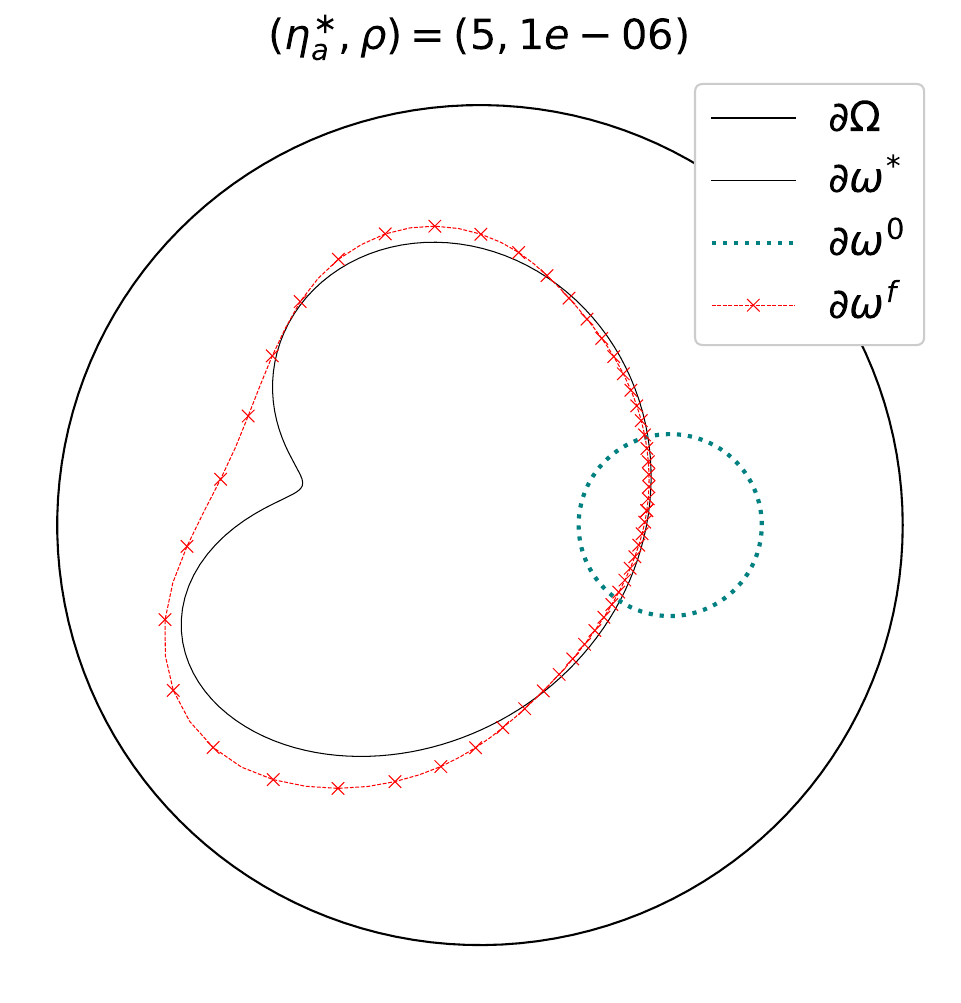}} \hfill
\resizebox{0.2\textwidth}{!}{\includegraphics{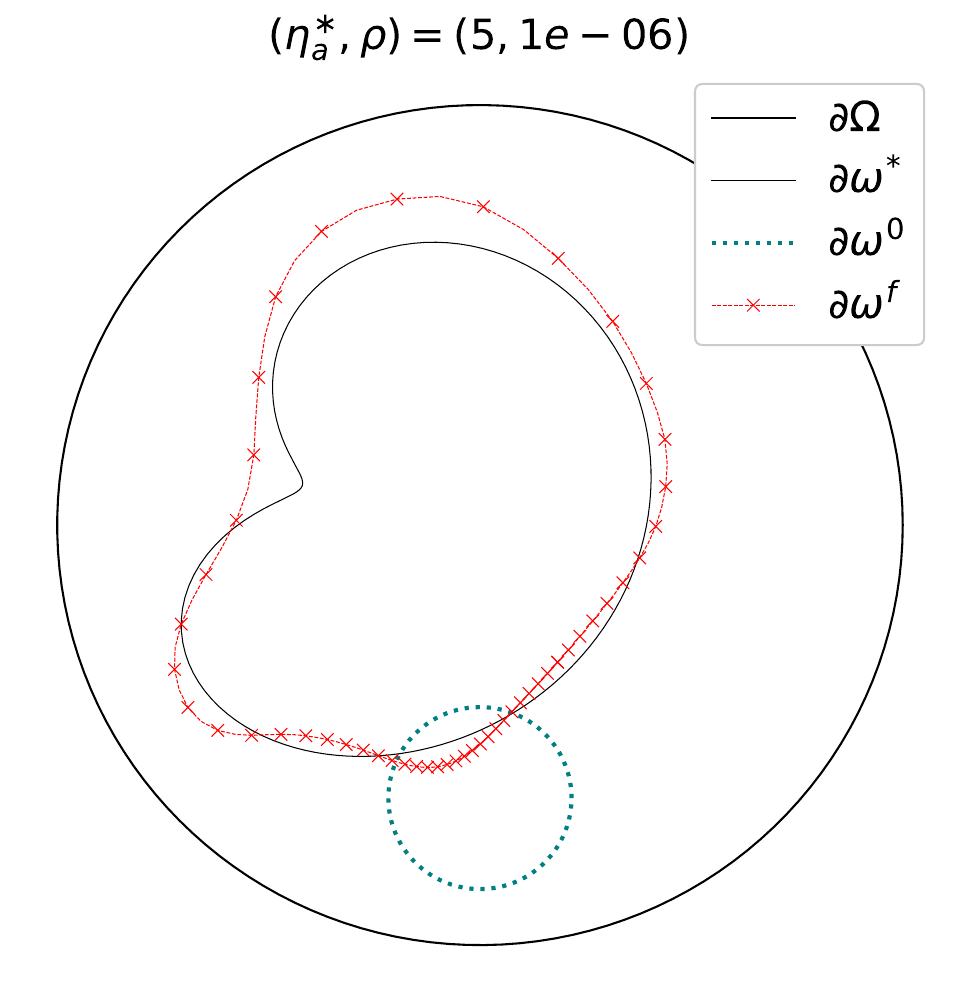}} \hfill
\resizebox{0.2\textwidth}{!}{\includegraphics{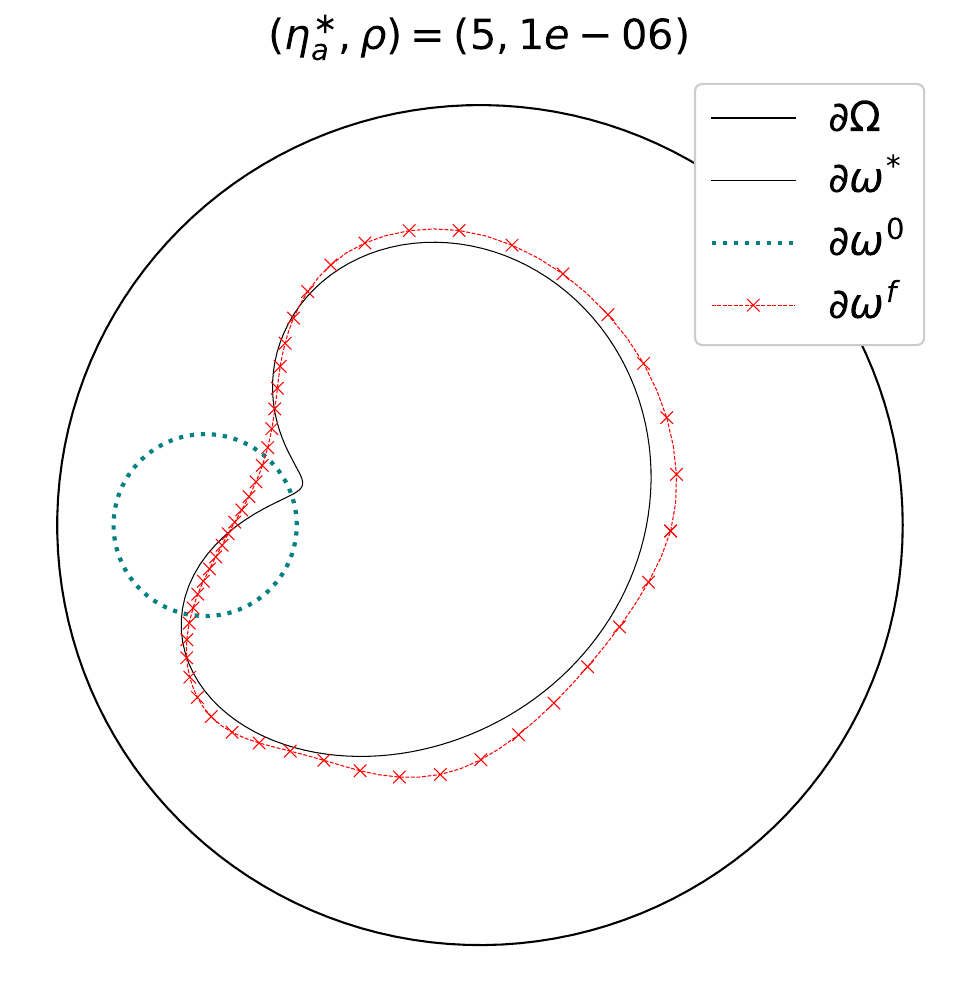}} \hfill
\resizebox{0.2\textwidth}{!}{\includegraphics{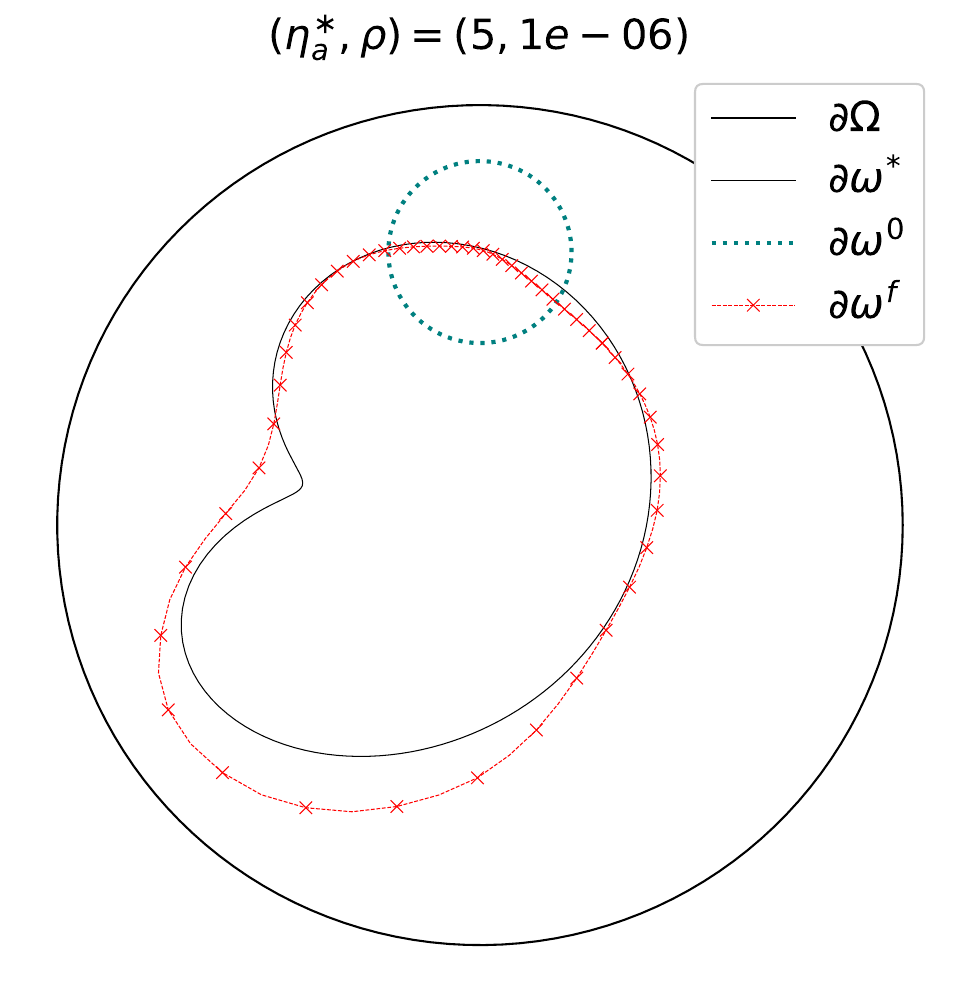}} \hfill\\
\resizebox{0.11\textwidth}{!}{\includegraphics{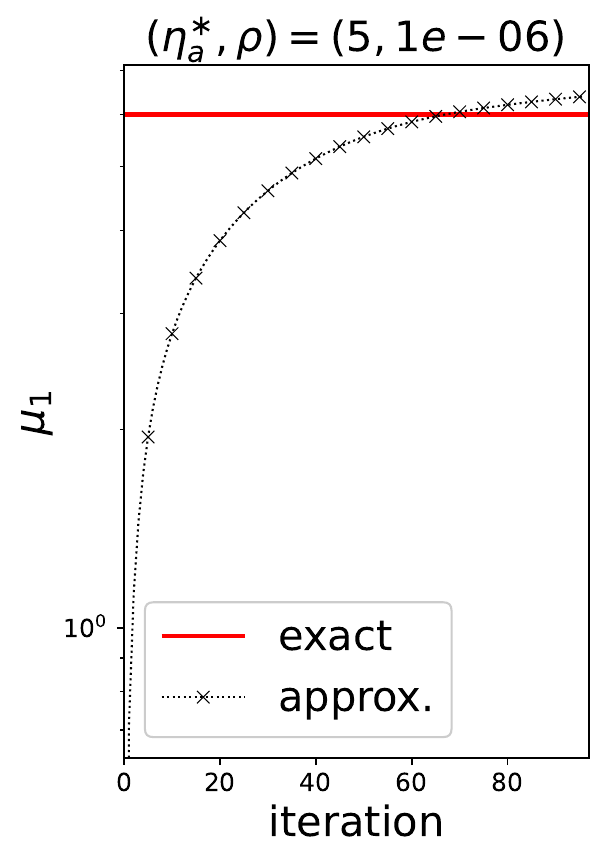}} \hfill
\resizebox{0.11\textwidth}{!}{\includegraphics{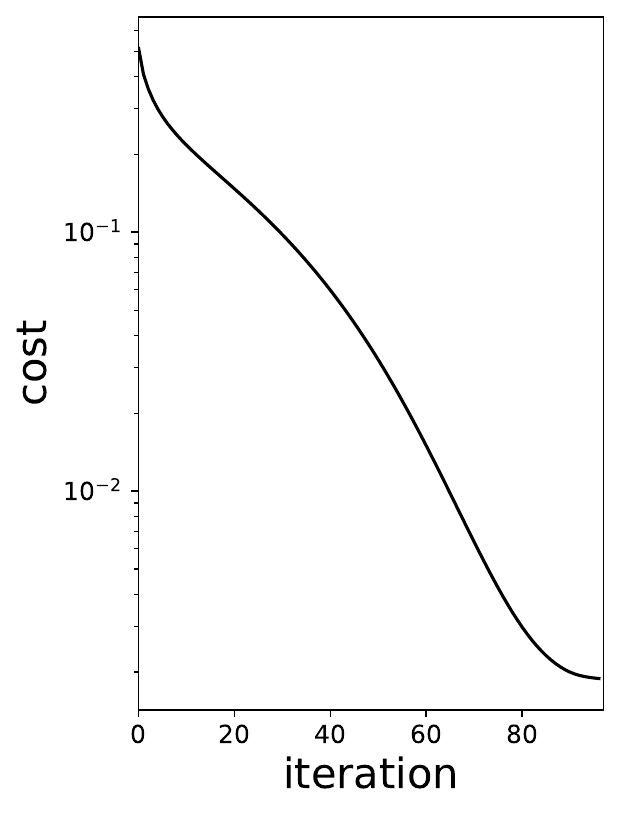}} \hfill
\resizebox{0.11\textwidth}{!}{\includegraphics{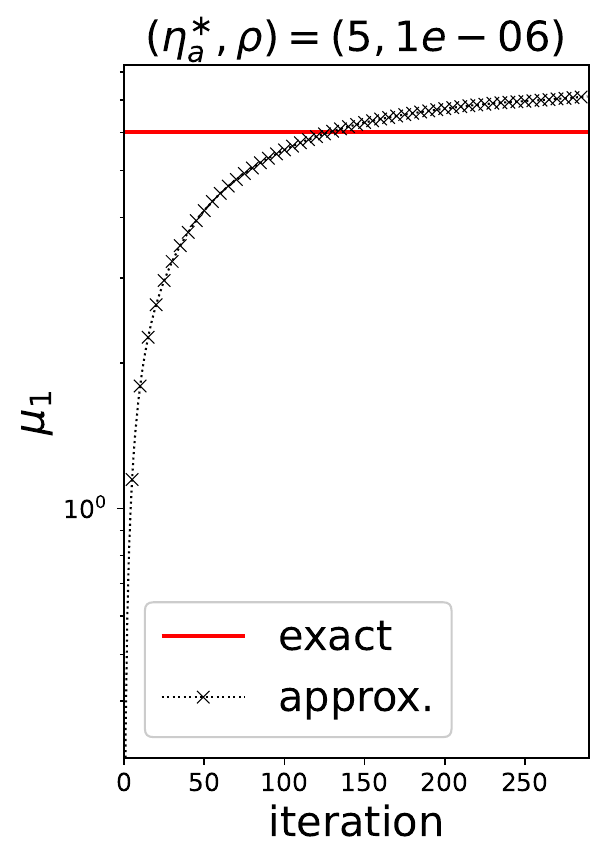}} \hfill
\resizebox{0.11\textwidth}{!}{\includegraphics{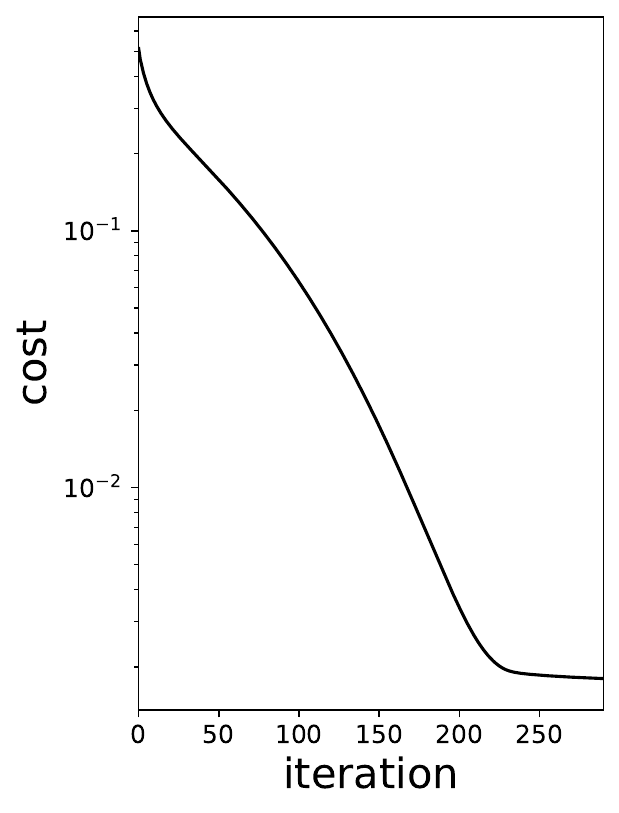}} \hfill
\resizebox{0.11\textwidth}{!}{\includegraphics{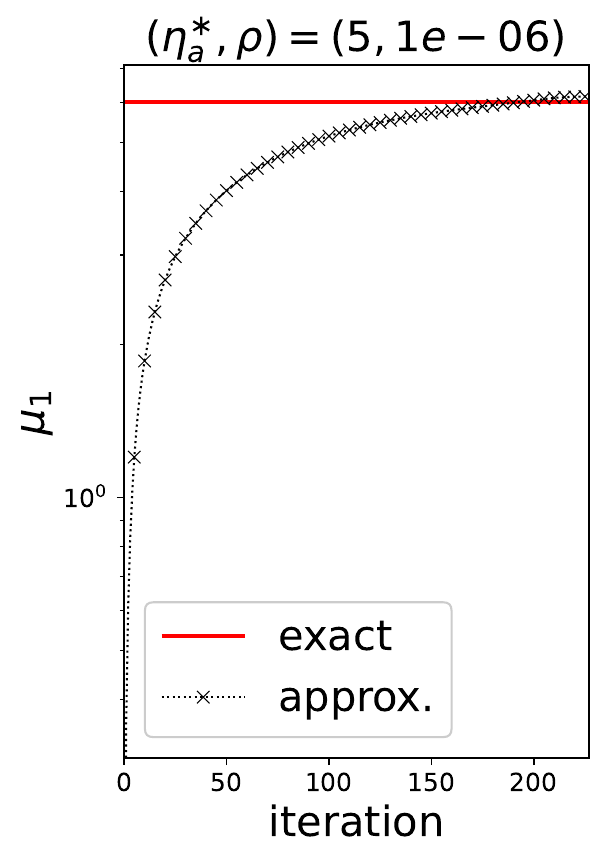}} \hfill
\resizebox{0.11\textwidth}{!}{\includegraphics{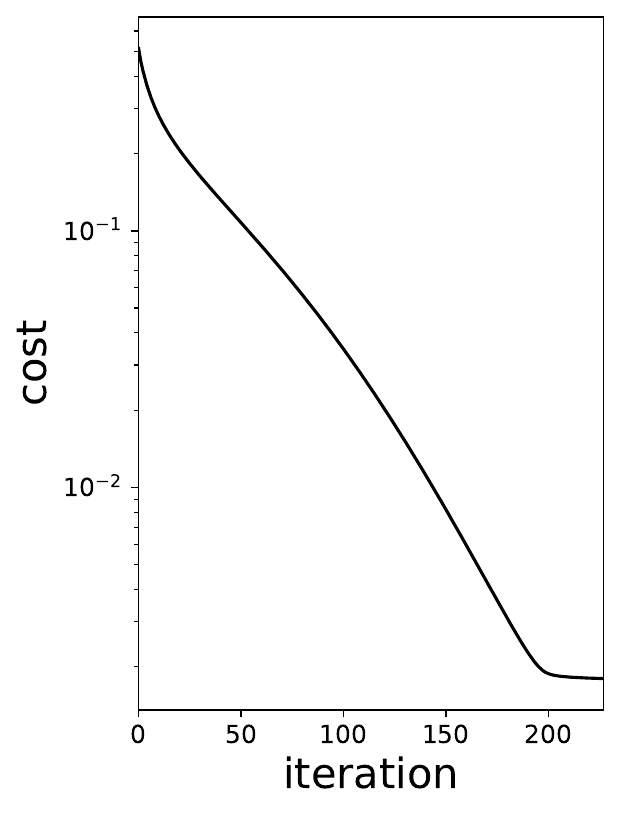}} \hfill
\resizebox{0.11\textwidth}{!}{\includegraphics{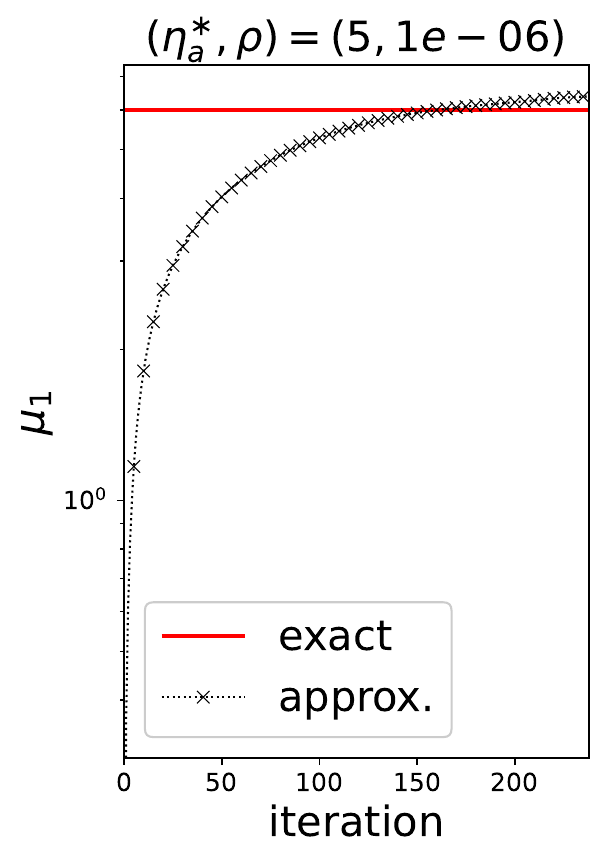}} \hfill
\resizebox{0.11\textwidth}{!}{\includegraphics{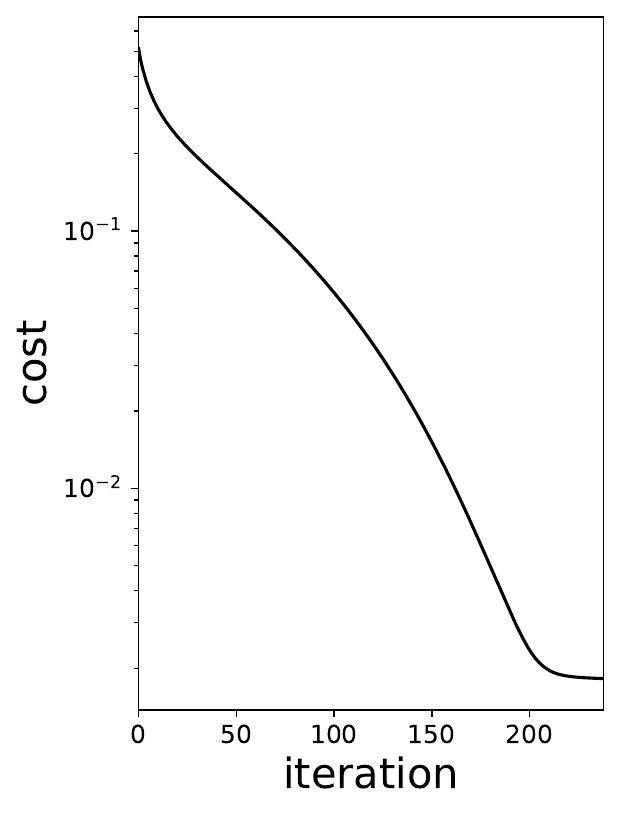}} \hfill
\caption{Effect of choice of initial guess}
\label{fig:constant_source_peanut_1_varying_initial_guess}
\end{figure}
%
%
%
\subsection{Numerical tests with point source and non-circular boundary interface}\label{subsec:point_source_non_circular}
We conduct numerical experiments using a point source to reconstruct a non-circular boundary interface, based on the setup from subsection~\ref{subsec:constant_source_non_circular}, with some modifications.
Using $f(x) = 100\delta(x)$ as the point source, we reconsider the reconstruction problem shown in Figure~\ref{fig:constant_source_flower_1}.
The results, in Figure~\ref{fig:point_source_flower}, compare scenarios with and without noise, with the regularization parameter $\rho$ set to $10^{-7}$.
The problem becomes more ill-posed, as small perturbations in the measurements lead to significant discrepancies in the reconstruction.
Apparently, reconstructing a non-circular boundary interface with a point source is particularly challenging.
%
%
\begin{figure}[htp!]
\centering
\resizebox{0.15\textwidth}{!}{\includegraphics{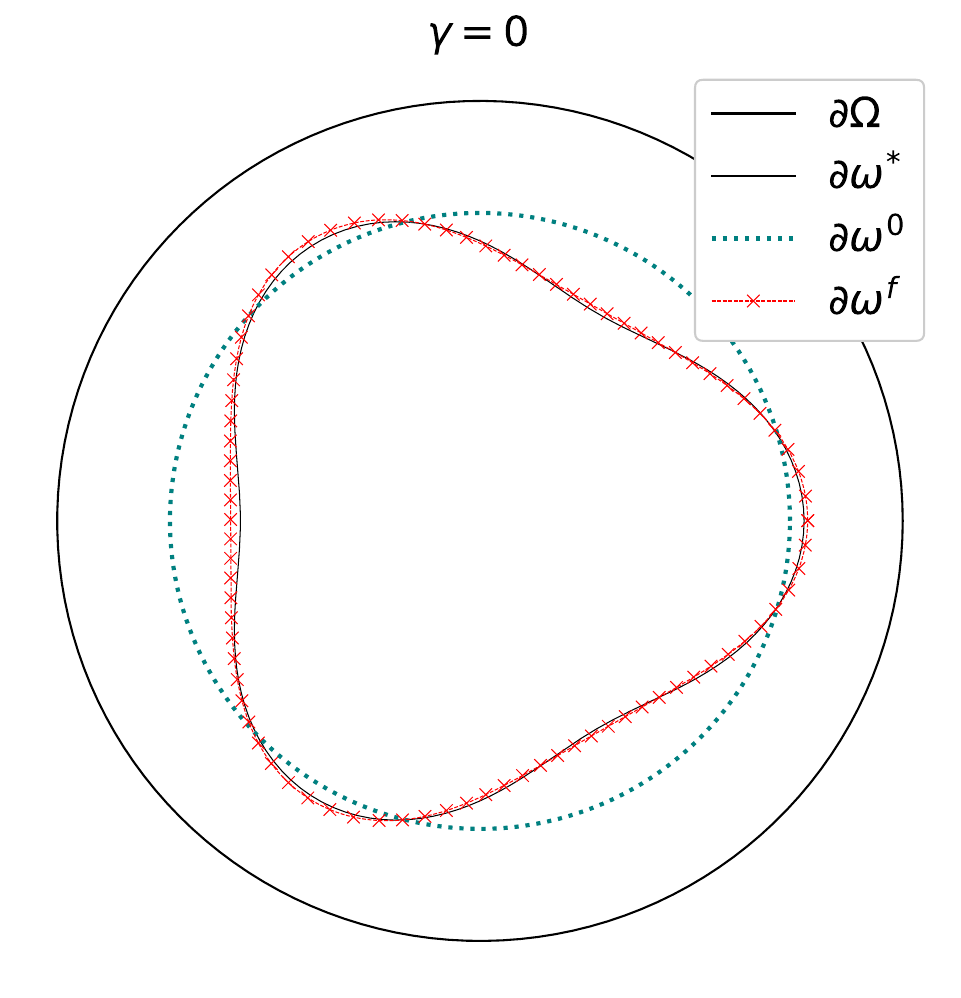}}  \quad
\resizebox{0.16\textwidth}{!}{\includegraphics{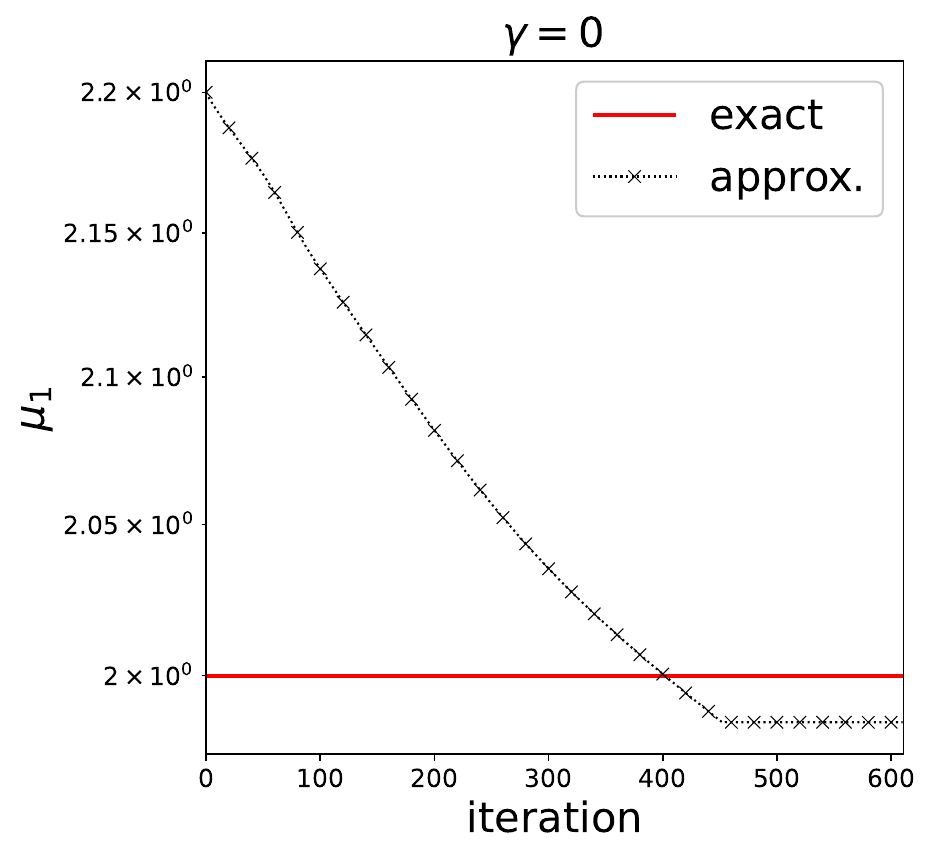}} \quad
\resizebox{0.15\textwidth}{!}{\includegraphics{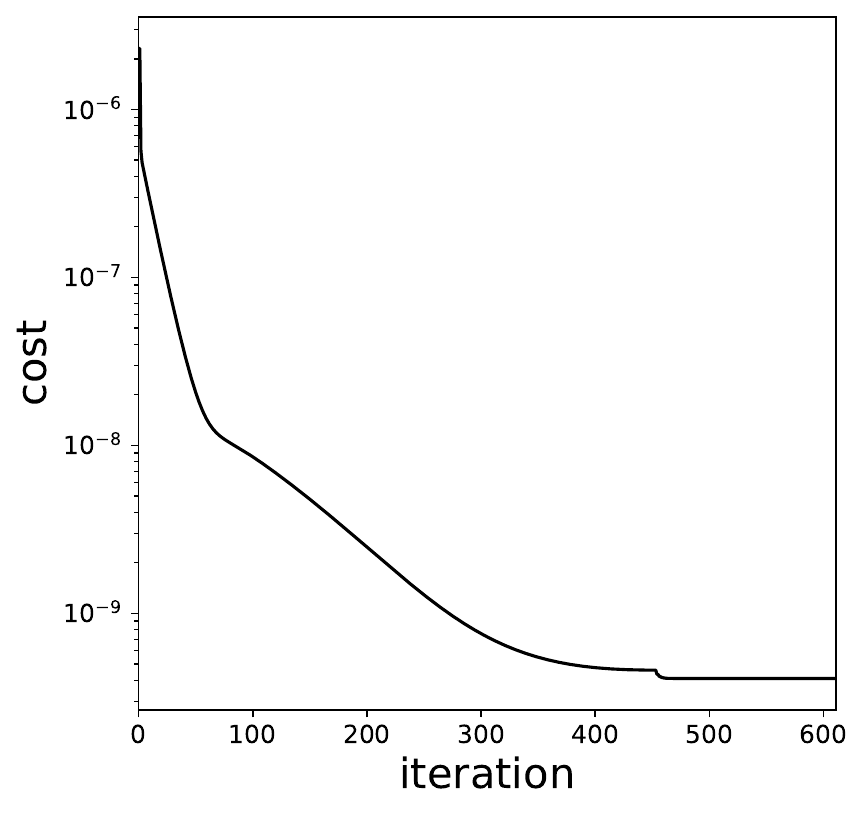}}\\
\resizebox{0.15\textwidth}{!}{\includegraphics{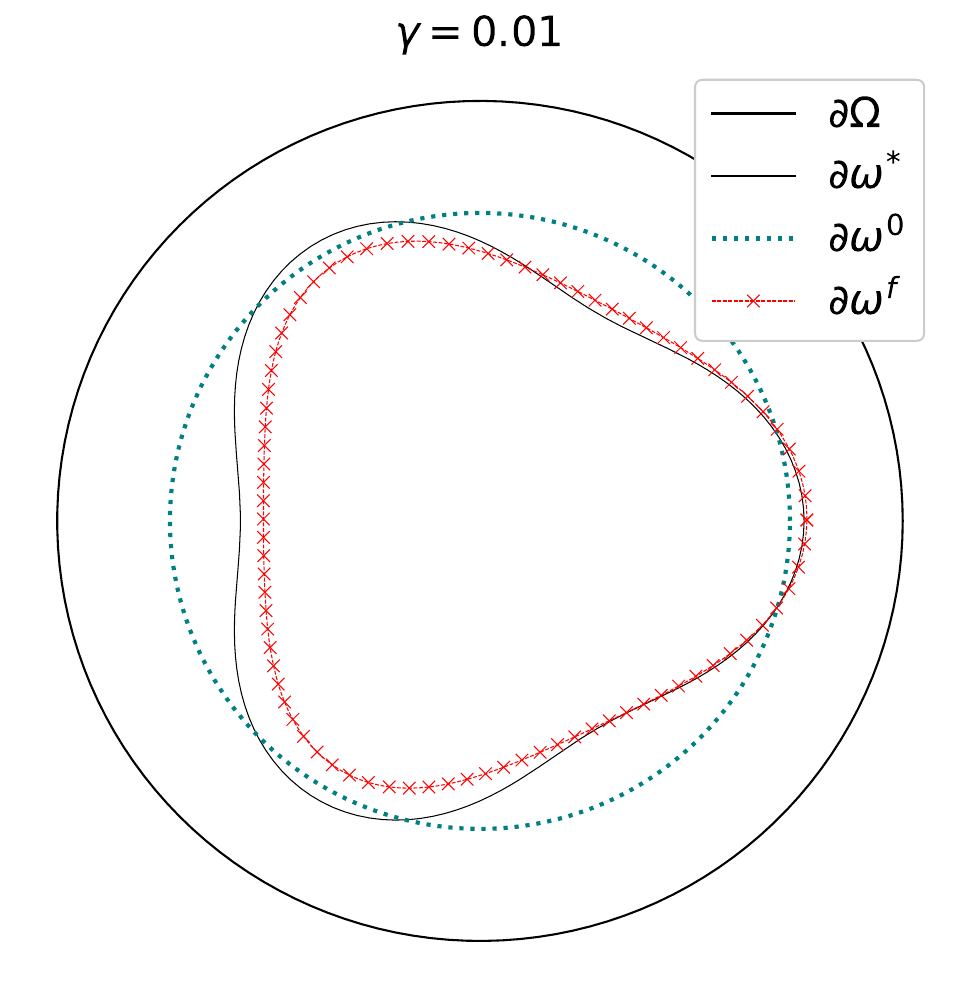}} \quad
\resizebox{0.16\textwidth}{!}{\includegraphics{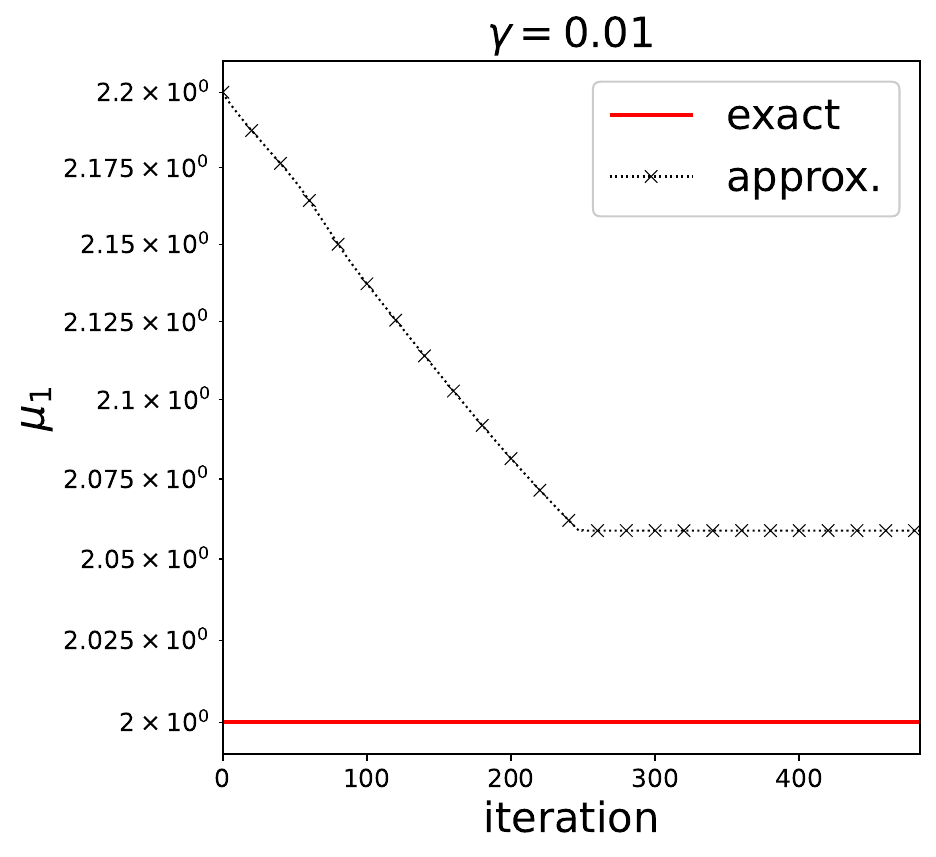}} \quad
\resizebox{0.15\textwidth}{!}{\includegraphics{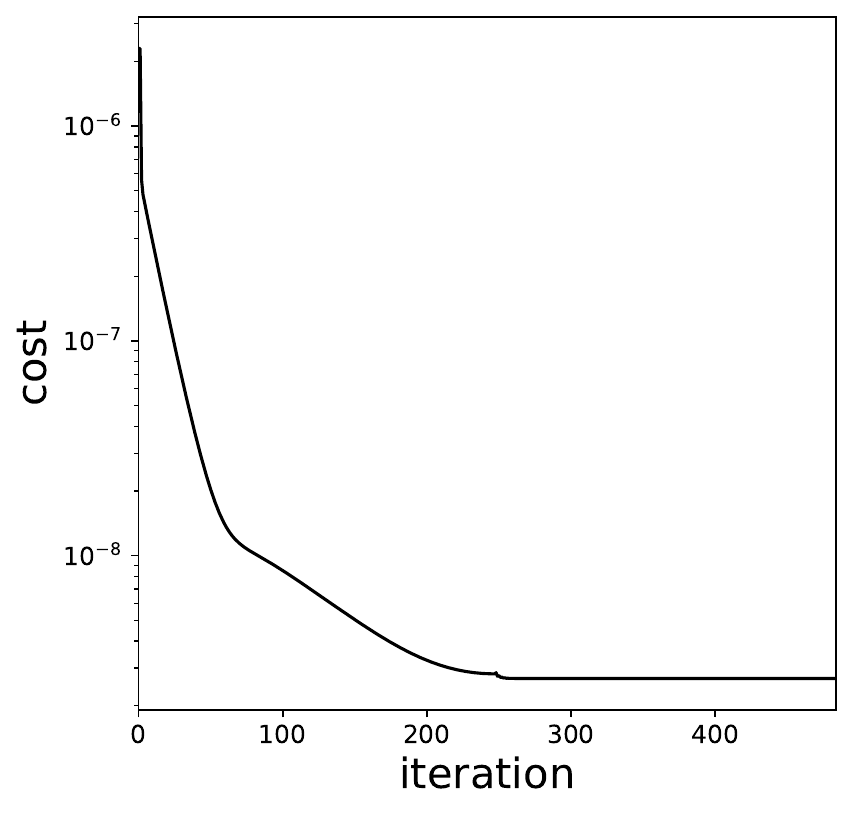}}
\caption{Reconstruction results for a non-circular interface with $f(x)=100\delta(x)$ using exact measurements (top row) and noisy measurements ($\gamma=0.01$, bottom row).}
\label{fig:point_source_flower}
\end{figure}
%
%

We also test our approach with a more complex boundary interface, parameterized as:
\[
	\domega^{\ast} = \left\{5(0.4 + 0.12\cos(3t))\begin{pmatrix}\cos{t}\\ \sin{t}\end{pmatrix}, \forall t \in [0, 2\pi) \right\}.
\]
We set $f(x) = 100\delta(x)$ and $\mu_{1}^{\ast} = 6$.
Reconstruction results using both exact measurements and noisy data with $\gamma = 0.01$ are shown in Figure~\ref{fig:point_source_fan}.
Reconstruction is highly accurate with exact measurements, but introducing noise ($\gamma = 0.01$) makes it slightly more challenging.
However, the reconstructed boundary interface and coefficient $\mu$ remain satisfactory.
For exact measurements, we set $\rho = 5 \times 10^{-6}$, and for noisy data, $\rho = 5 \times 10^{-5}$.
These reconstructions, like the previous examples, depend heavily on the initial guess.

%
%
\begin{figure}[htp!]
\centering
\resizebox{0.15\textwidth}{!}{\includegraphics{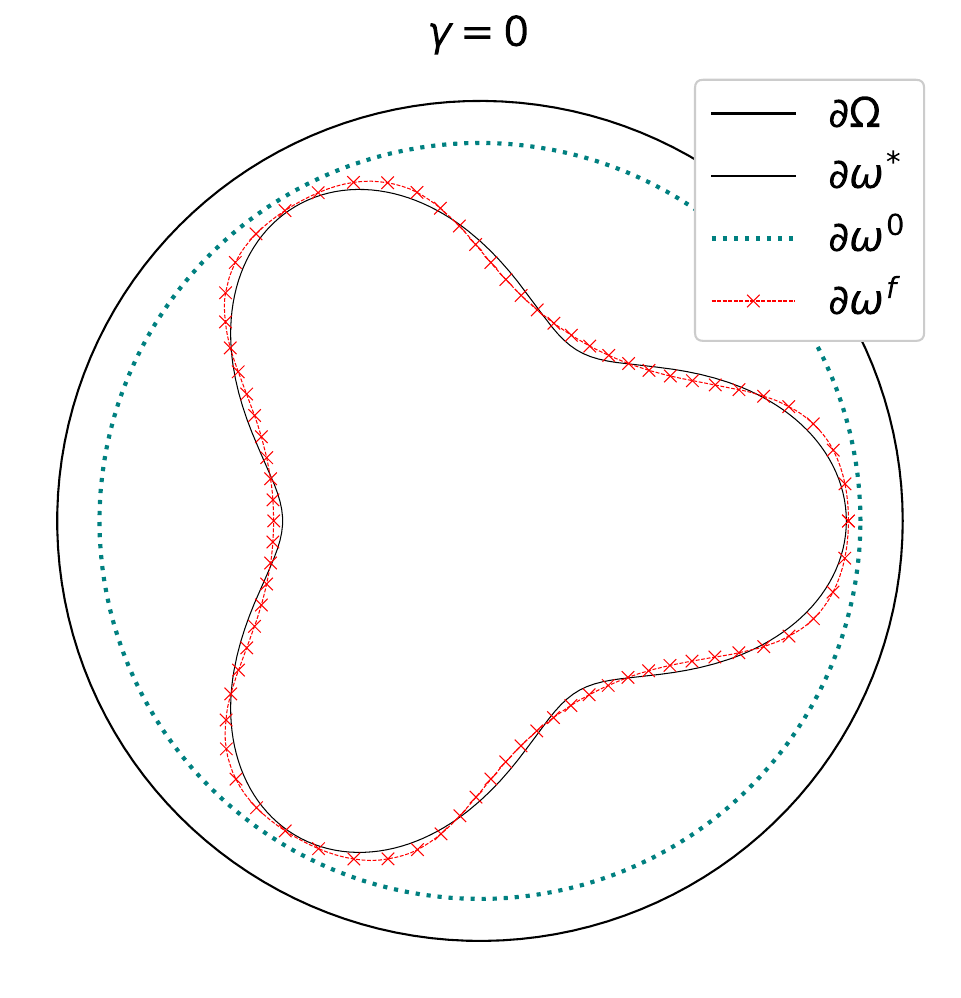}} \quad
\resizebox{0.15\textwidth}{!}{\includegraphics{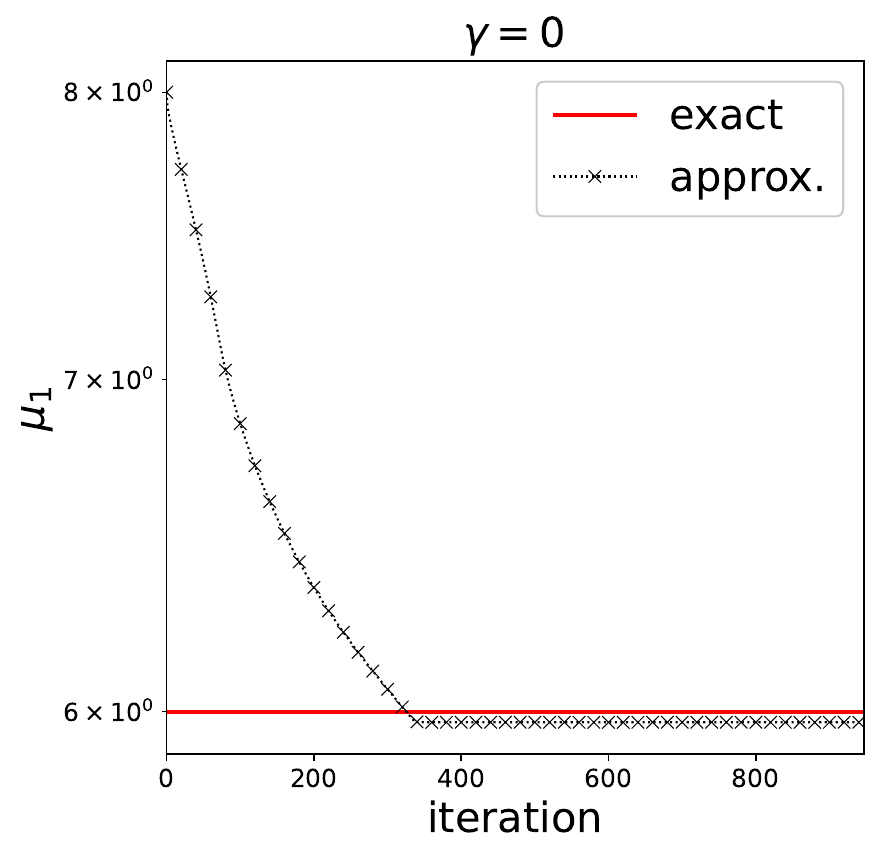}} \quad
\resizebox{0.15\textwidth}{!}{\includegraphics{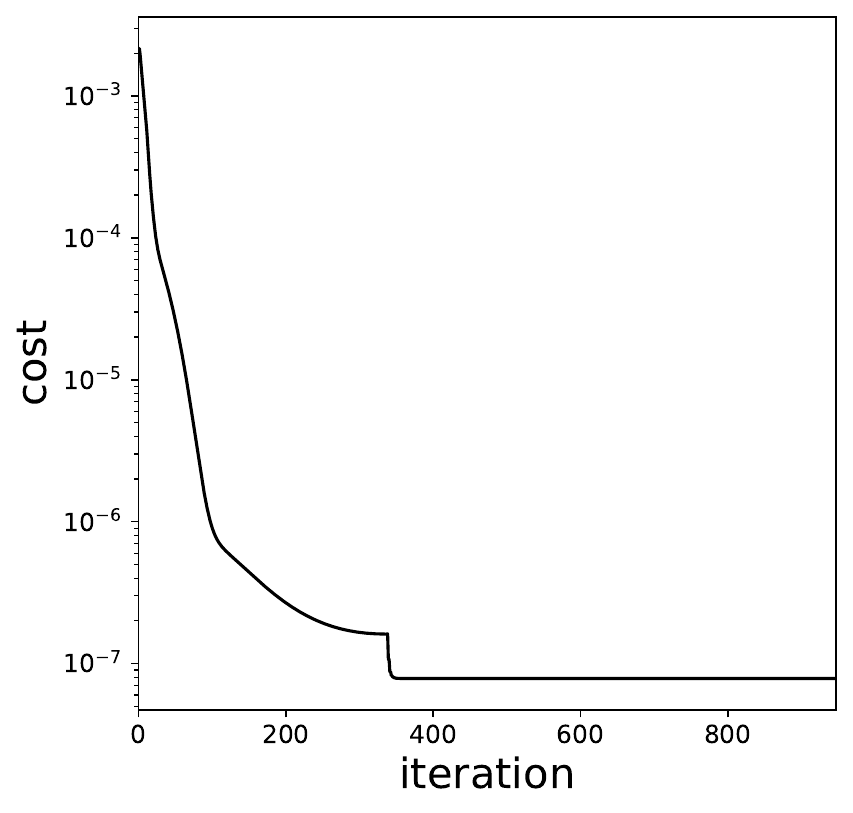}} \\
\resizebox{0.15\textwidth}{!}{\includegraphics{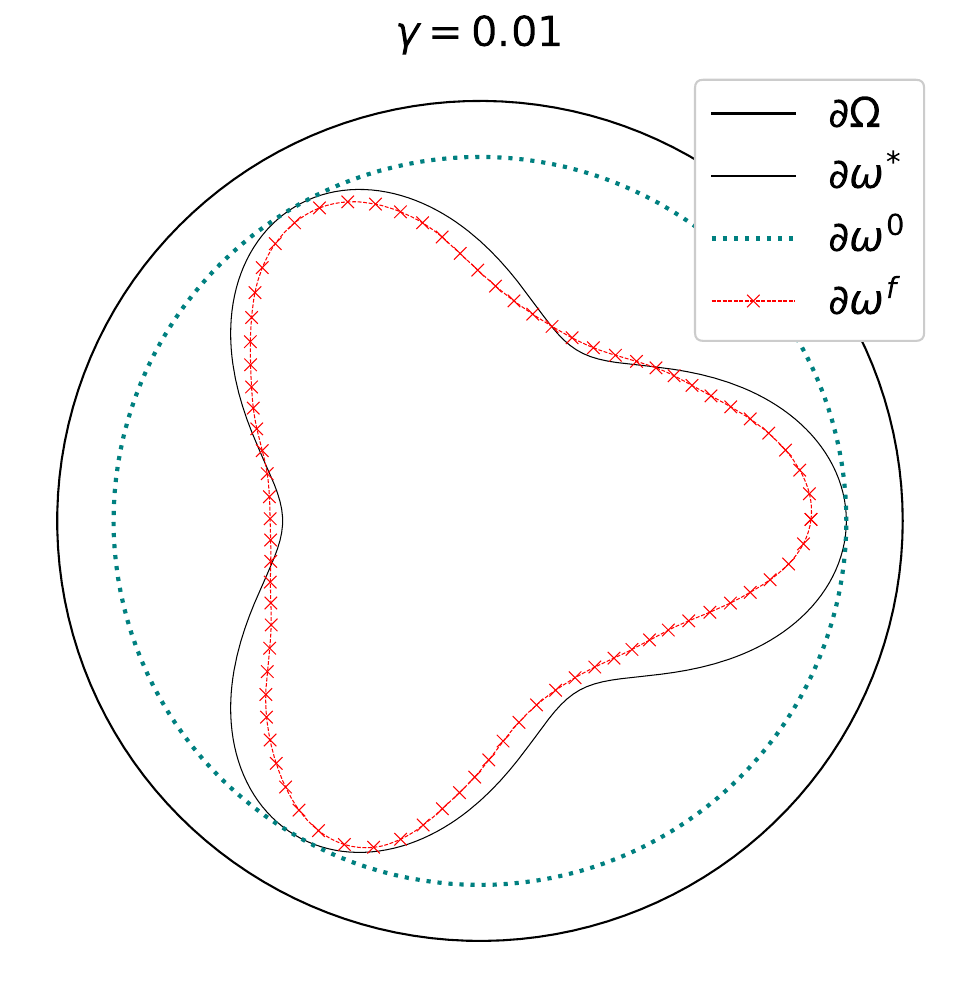}} \quad
\resizebox{0.15\textwidth}{!}{\includegraphics{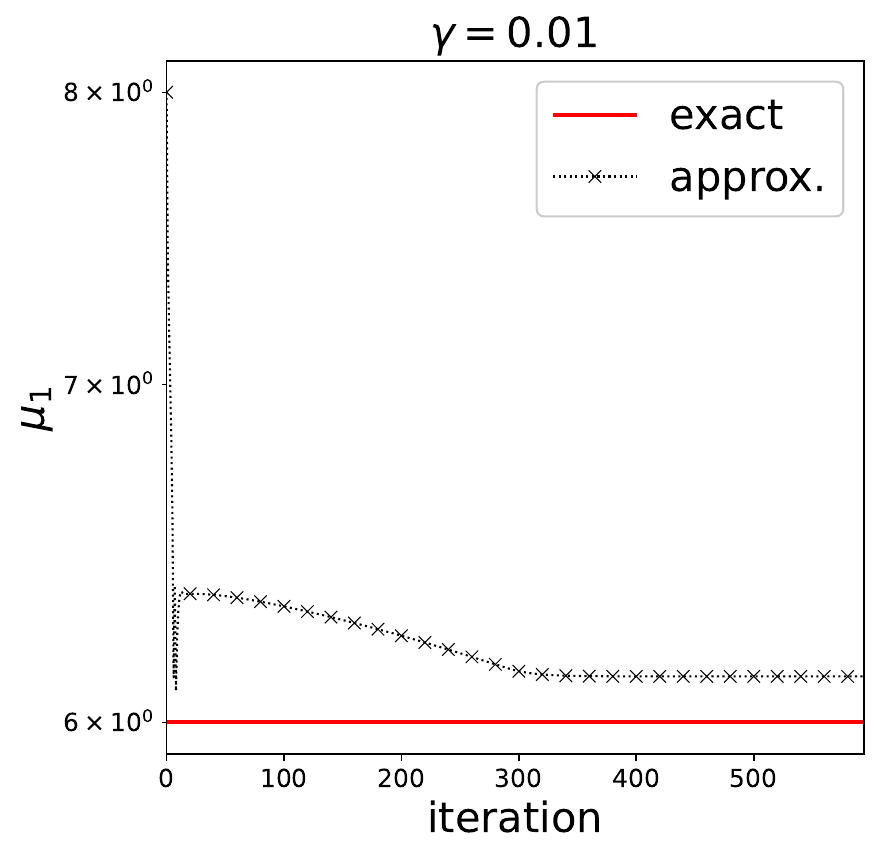}} \quad
\resizebox{0.15\textwidth}{!}{\includegraphics{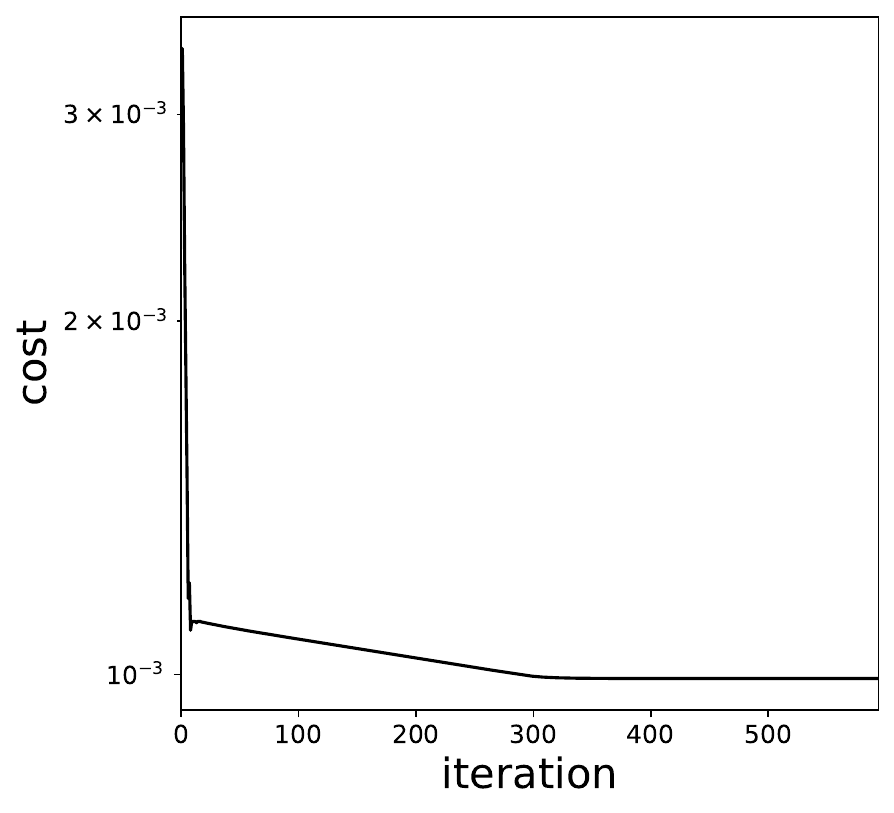}}
\caption{Results for a smooth non-circular boundary interface with $f(x) = 100\delta(x)$ and $ \mu_{1}^{\ast} = 6$ under exact (top row) and noisy measurements ($\gamma = 0.01$, bottom row)
}
\label{fig:point_source_fan}
\end{figure}
%
%

Finally, we consider a smaller boundary interface in the last test of this subsection.
This time, ${\omega}^{\ast}$ is parametrized as:
\[
    \domega^{\ast} = \left\{8(2 + 0.6\cos(4t))\begin{pmatrix}\cos{t}\\ \sin{t}\end{pmatrix}, \forall t \in [0, 2\pi) \right\}.
\]
The computational setup remains the same, but instead of fixing $\rho$, we apply \eqref{eq:balancing_principle} to evaluate its impact and accuracy.
The reconstruction results, shown in Figure~\ref{fig:point_source_boomer}, were obtained with $s = 10$ for exact measurements and $s = 1$ for noisy ones.
As expected, reconstructing smaller boundary interfaces, especially those farther from the exterior boundary, is challenging.
Reconstruction accuracy decreases with even small amounts of noise.
However, the method successfully identified concavities in the boundary interface, with the reconstructed geometry closely approximating the exact shape and providing a reasonably accurate absorption coefficient.
Figure~\ref{fig:point_source_boomer} also includes plots of the histories of values for $\mu_{1}$, cost values, and $\rho$.

From this point forward, all reconstructions use the balancing principle \eqref{eq:balancing_principle}.
The source is fixed as $f(x) = 100\delta(x)$, $\mu_{1}^{\ast} = 6$ and noisy measurements mean the noise level is set to $\gamma = 0.005$.
%
%
\begin{figure}[htp!]
\centering
\resizebox{0.15\textwidth}{!}{\includegraphics{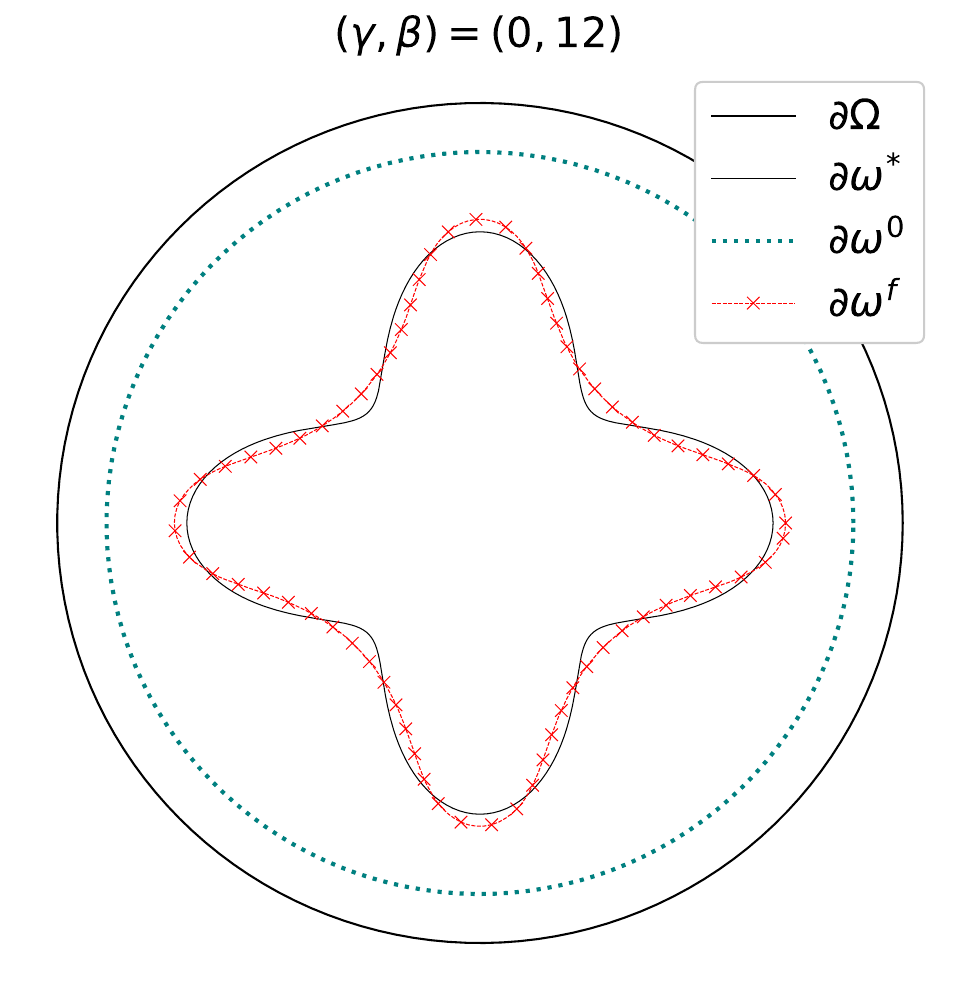}} \
\resizebox{0.15\textwidth}{!}{\includegraphics{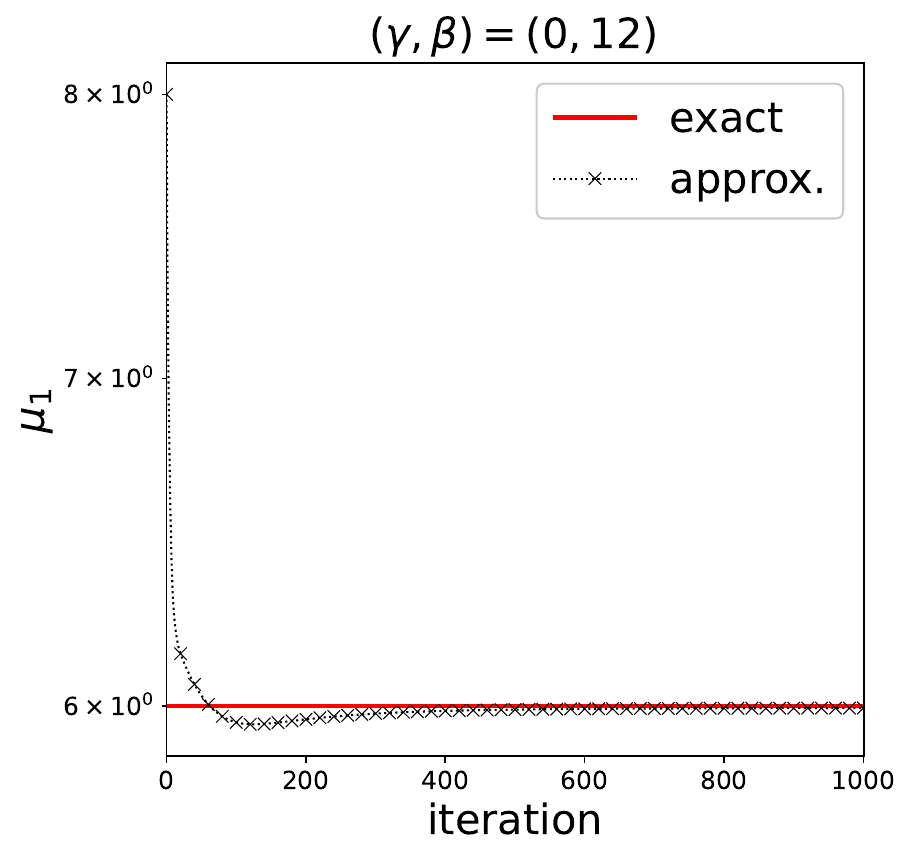}} \
\resizebox{0.15\textwidth}{!}{\includegraphics{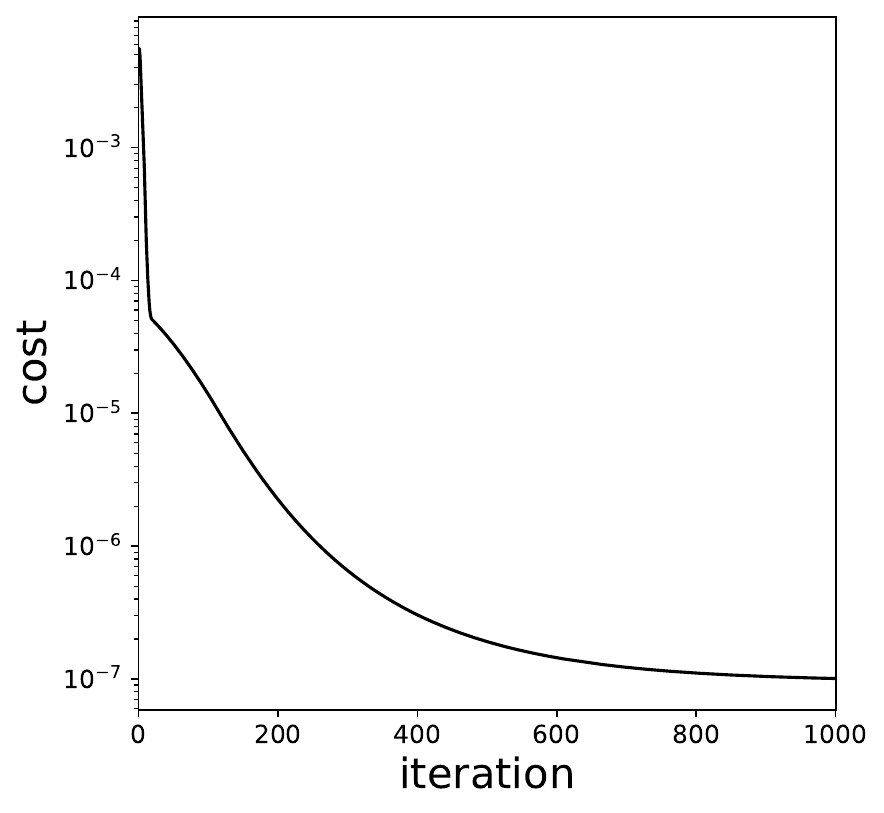}} \
\resizebox{0.15\textwidth}{!}{\includegraphics{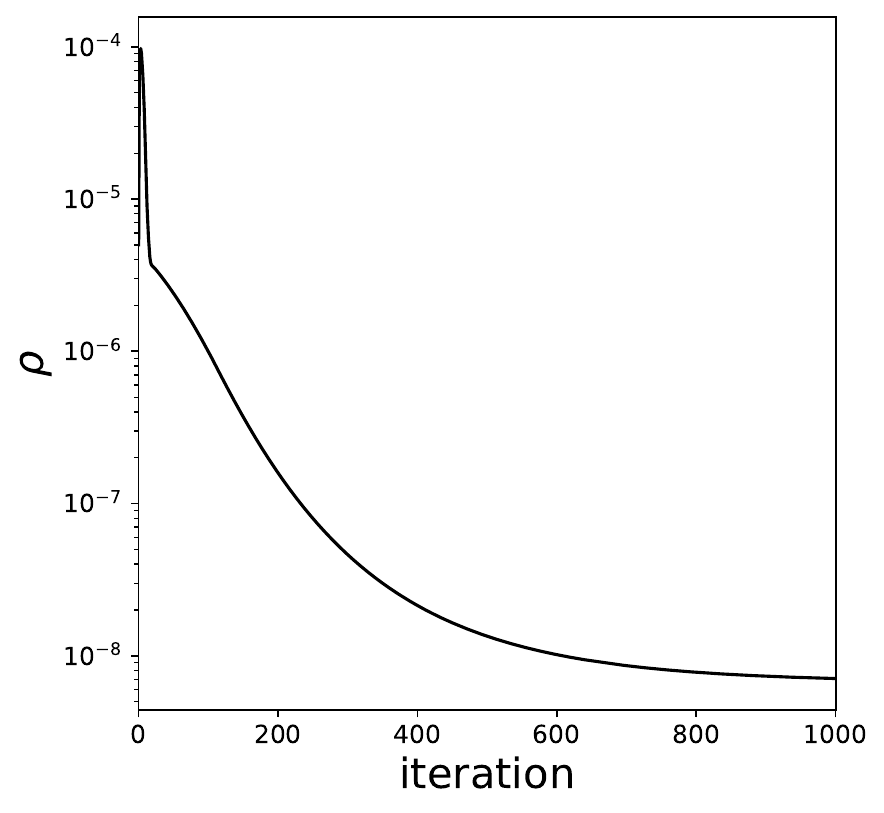}}
\\
\resizebox{0.15\textwidth}{!}{\includegraphics{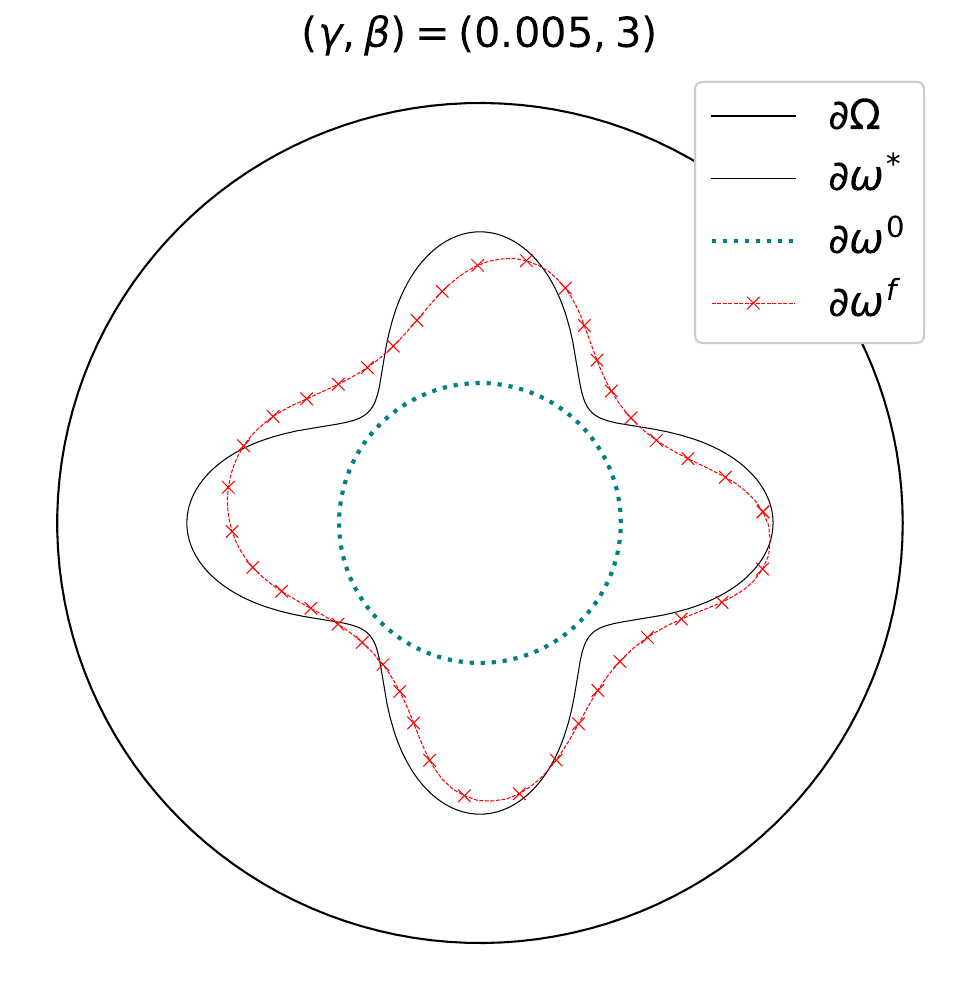}}  \
\resizebox{0.15\textwidth}{!}{\includegraphics{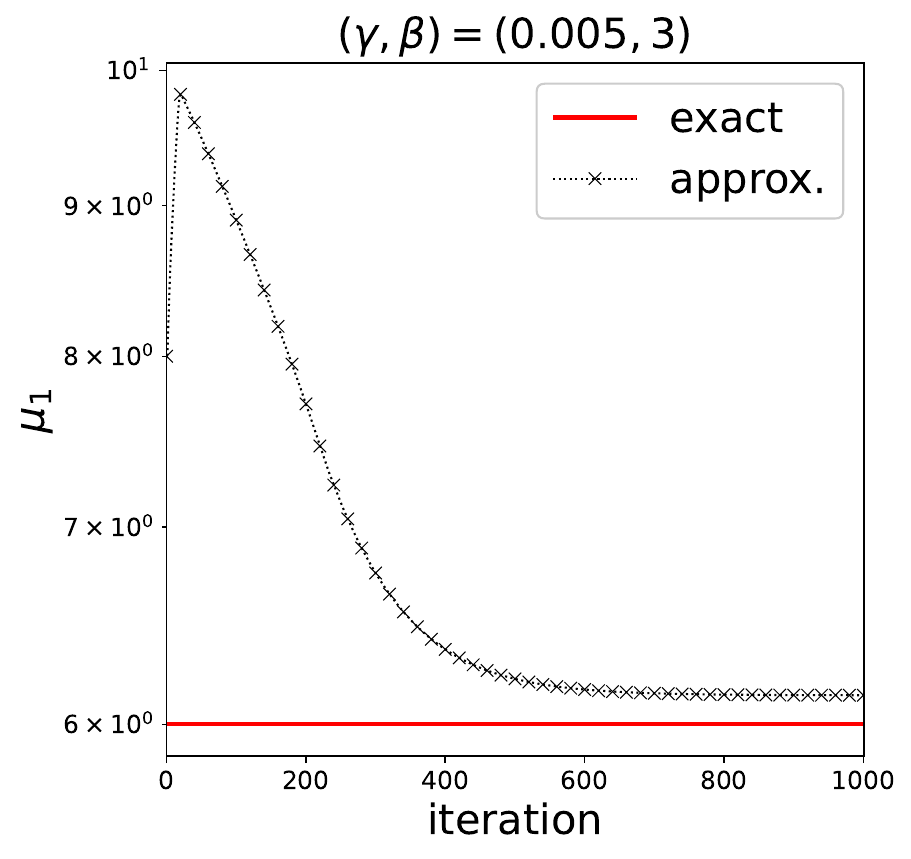}} \
\resizebox{0.155\textwidth}{!}{\includegraphics{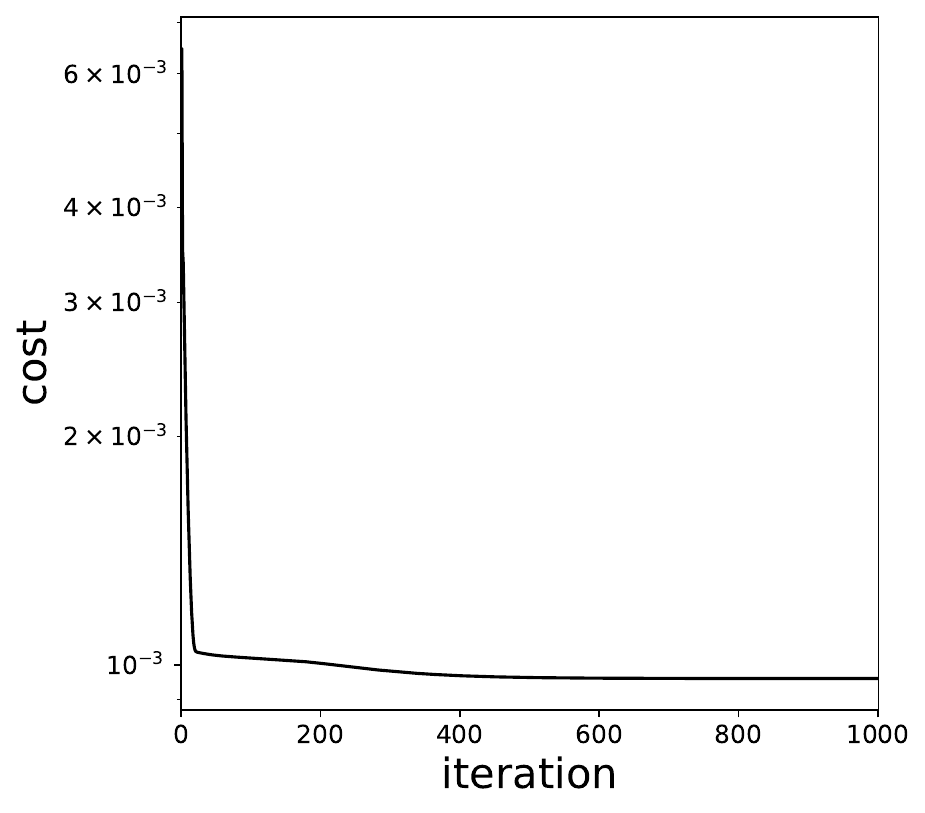}} \
\resizebox{0.15\textwidth}{!}{\includegraphics{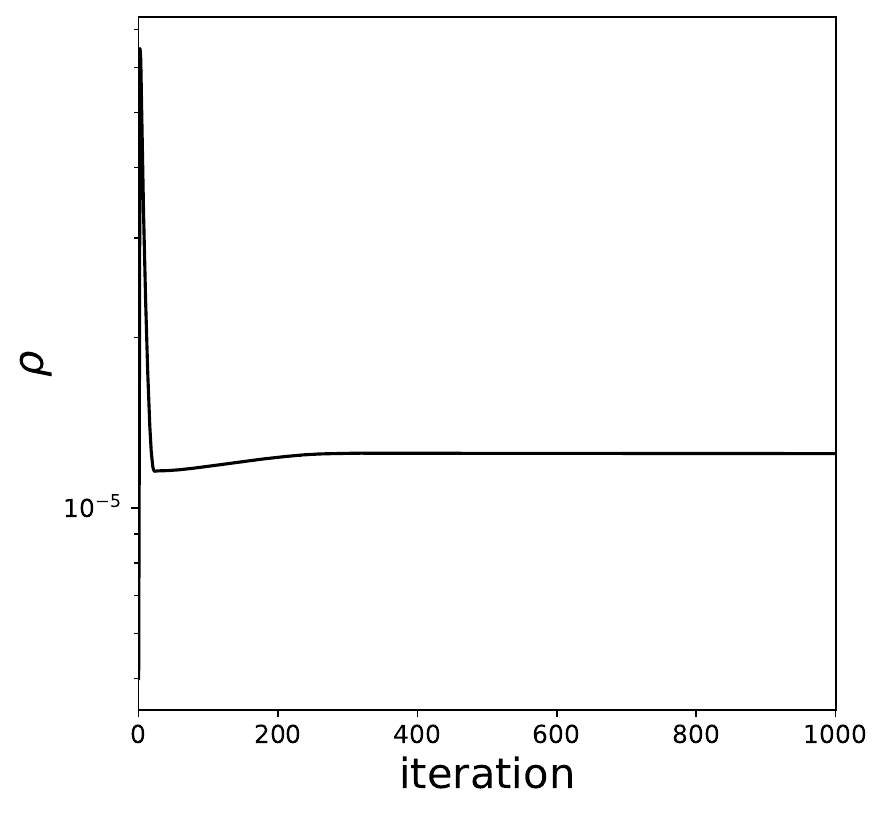}}
\caption{Results for a smooth non-circular boundary interface with $f(x) = 100\delta(x)$ and $\mu_{1}^{\ast} = 6$ under exact (top row) and noisy measurements ($\gamma = 0.005$, bottom row)
}
\label{fig:point_source_boomer}
\end{figure}
%
%
%
%
%
\subsection{Numerical tests with point source and boundary interface with sharp edges}\label{subsec:point_source_sharp_edges}
For the next experiment, we consider boundary interfaces with sharp edges, which are outside the regularity assumptions used in the analysis. These cases are included to illustrate the behavior of the numerical method under non-smooth geometries. The setup remains the same as in the previous subsection, with the only change being the modified boundary interface geometry that needs reconstruction. Specifically, we test a square boundary interface and an inverted {\textsf{T}-shaped} polygon.

The reconstruction results are shown in Figures~\ref{fig:point_source_non-smooth_interface_square} and~\ref{fig:point_source_non-smooth_interface_T} for exact and noisy measurements.
In Figure~\ref{fig:point_source_non-smooth_interface_square}, reconstructing the square's vertices is challenging.
Even so, the method successfully detects the edges with good accuracy.
Noise significantly affects the reconstruction, making it hard to accurately deduce the boundary geometry, but the method still identifies the interface and nearly reconstructs $\mu_{1}^{\ast}$ accurately.

{A similar observation can be made for the inverted {\textsf{T}-shaped} polygon shown in Figure~\ref{fig:point_source_non-smooth_interface_T}, which represents a more challenging, non-smooth geometry. While sharp vertices and edges are not fully resolved, the method is still able to capture the main geometric features of the interface, including its concavities. Moreover, the reconstructed value of $\mu_{1}^{\ast}$ remains accurate, even in the presence of noise.
In addition, the results suggest that the choice of the initial guess plays a more pronounced role in this setting. In particular, initial configurations that enclose the true interface tend to yield improved reconstructions, especially under noisy conditions, and allow for a more accurate identification of non-convex regions of the boundary.
These observations indicate that the proposed approach is capable of providing reliable reconstructions for non-smooth boundary interfaces, although certain fine geometric details may not be fully recovered.}
%
%
\begin{figure}[htp!]
\centering
\resizebox{0.15\textwidth}{!}{\includegraphics{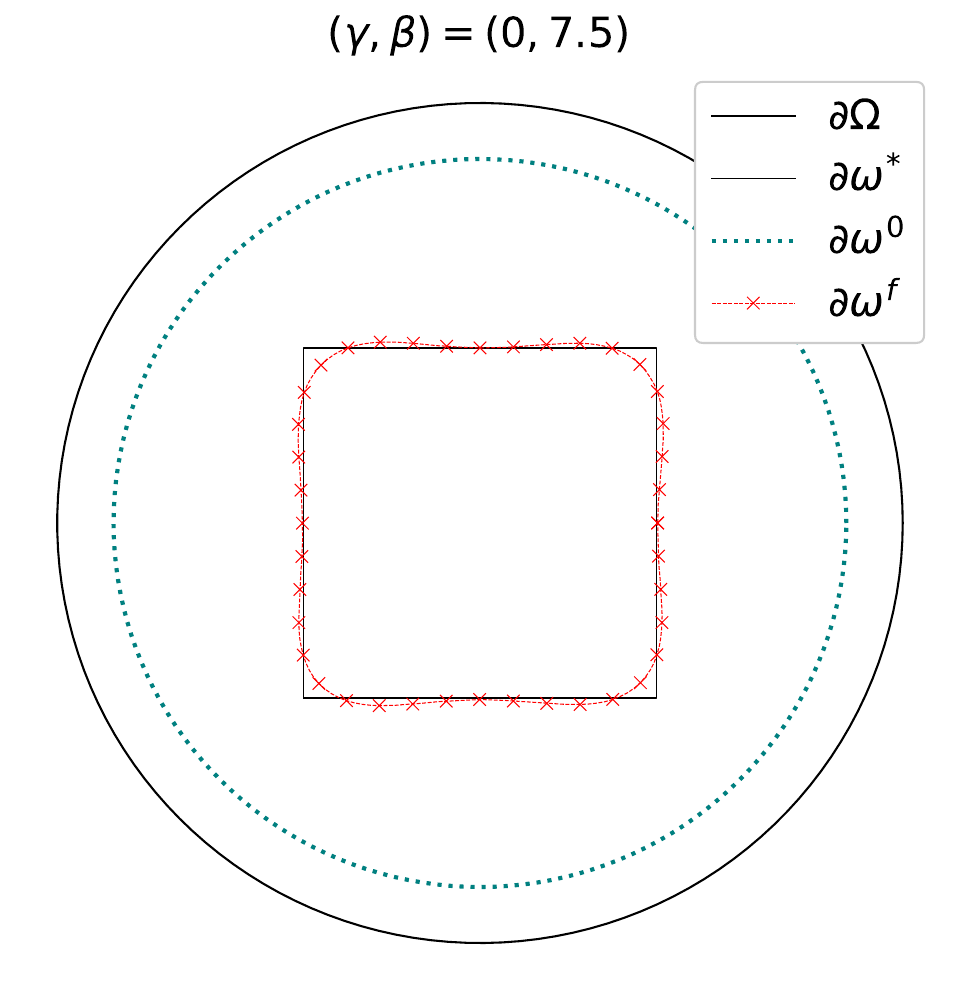}} \
\resizebox{0.15\textwidth}{!}{\includegraphics{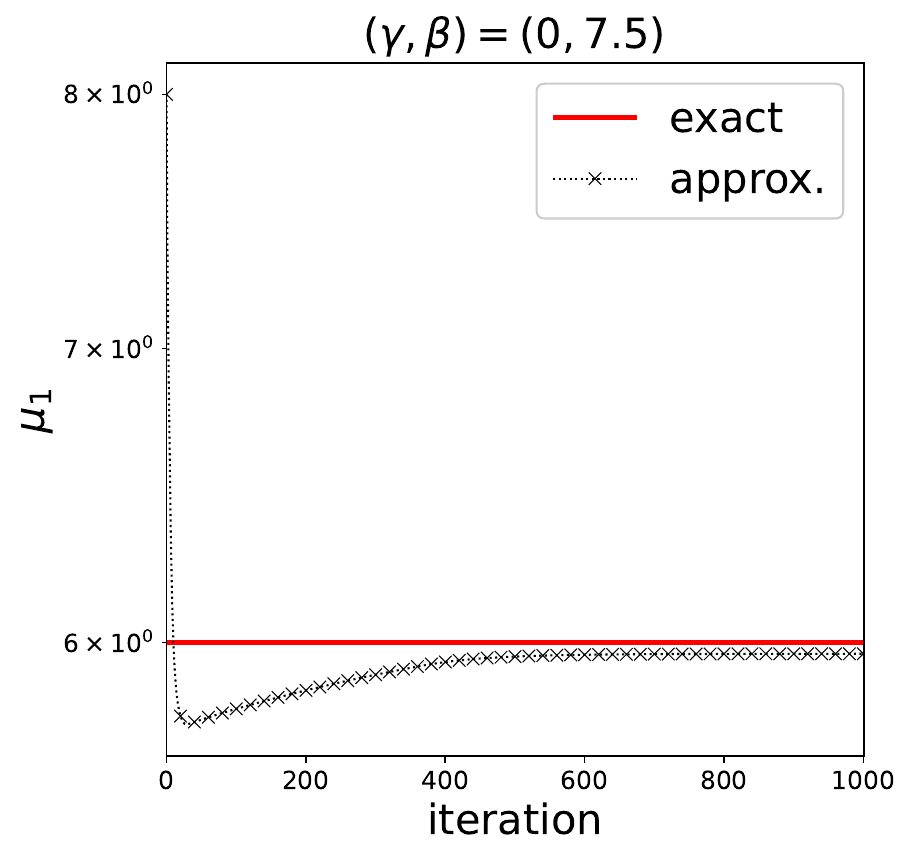}} \
\resizebox{0.15\textwidth}{!}{\includegraphics{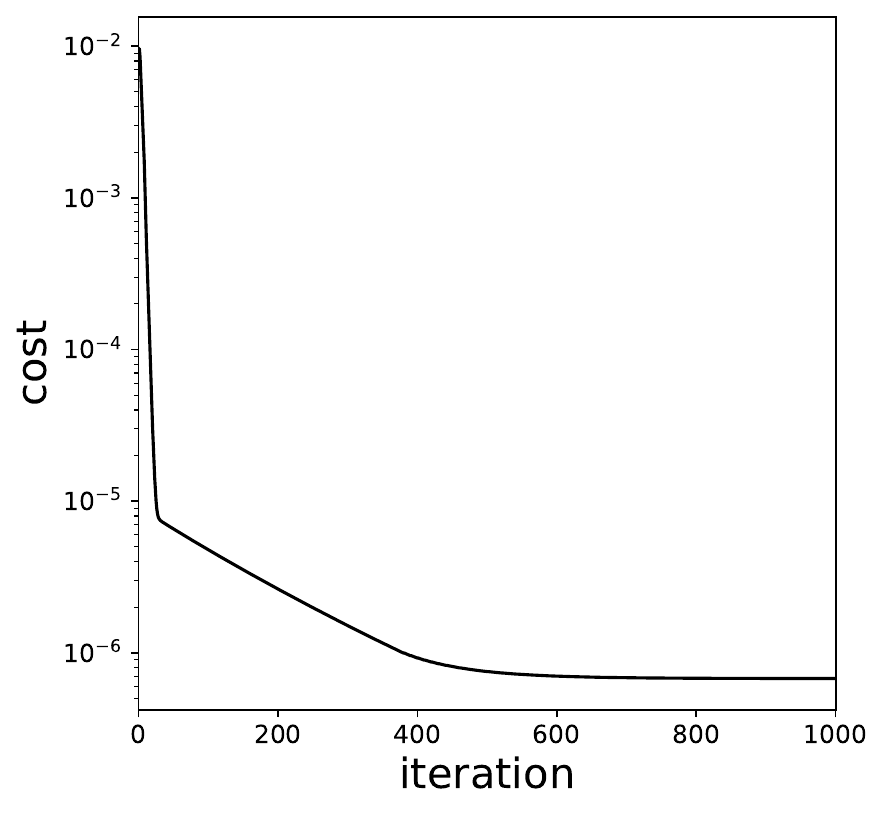}} \
\resizebox{0.15\textwidth}{!}{\includegraphics{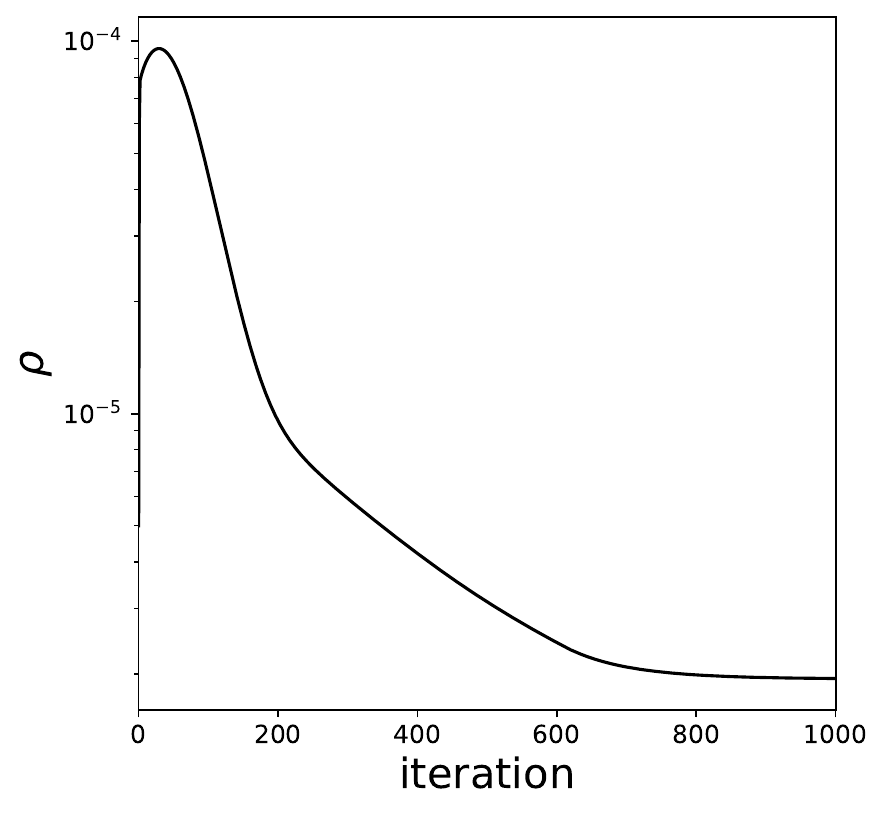}}
\\
\resizebox{0.15\textwidth}{!}{\includegraphics{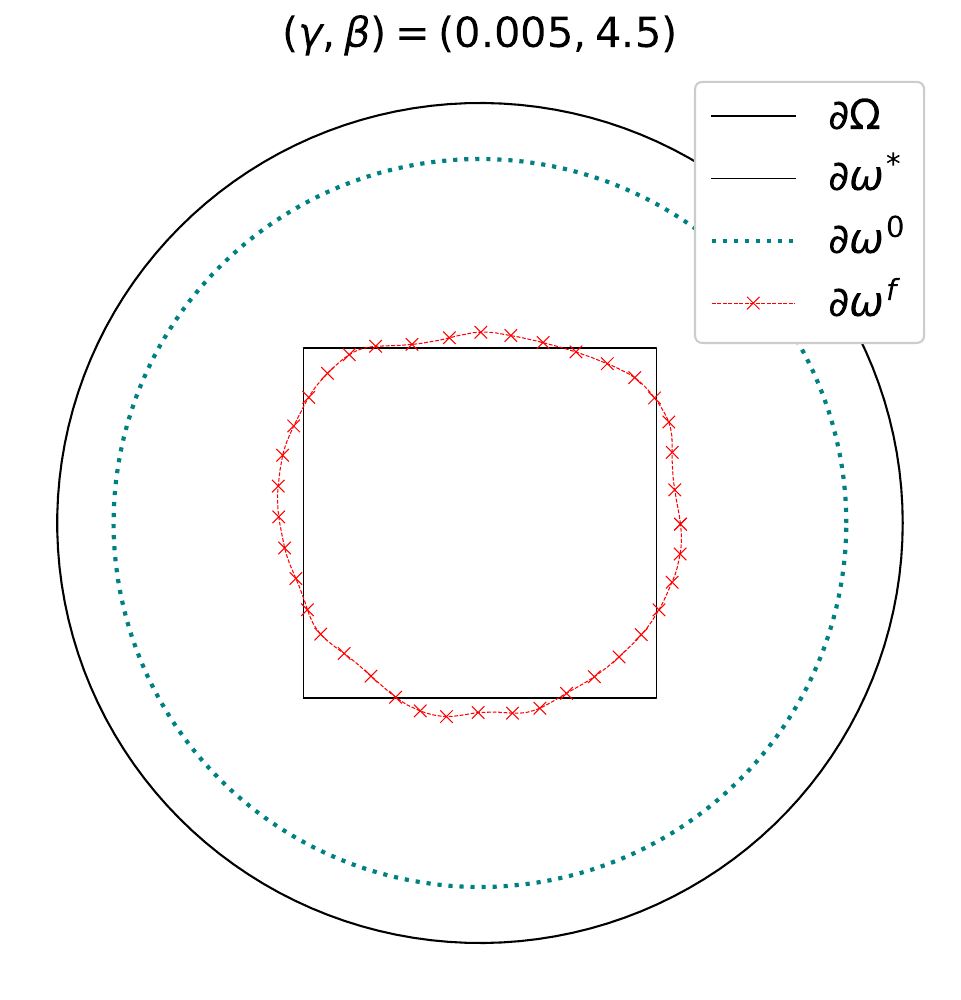}}  \
\resizebox{0.15\textwidth}{!}{\includegraphics{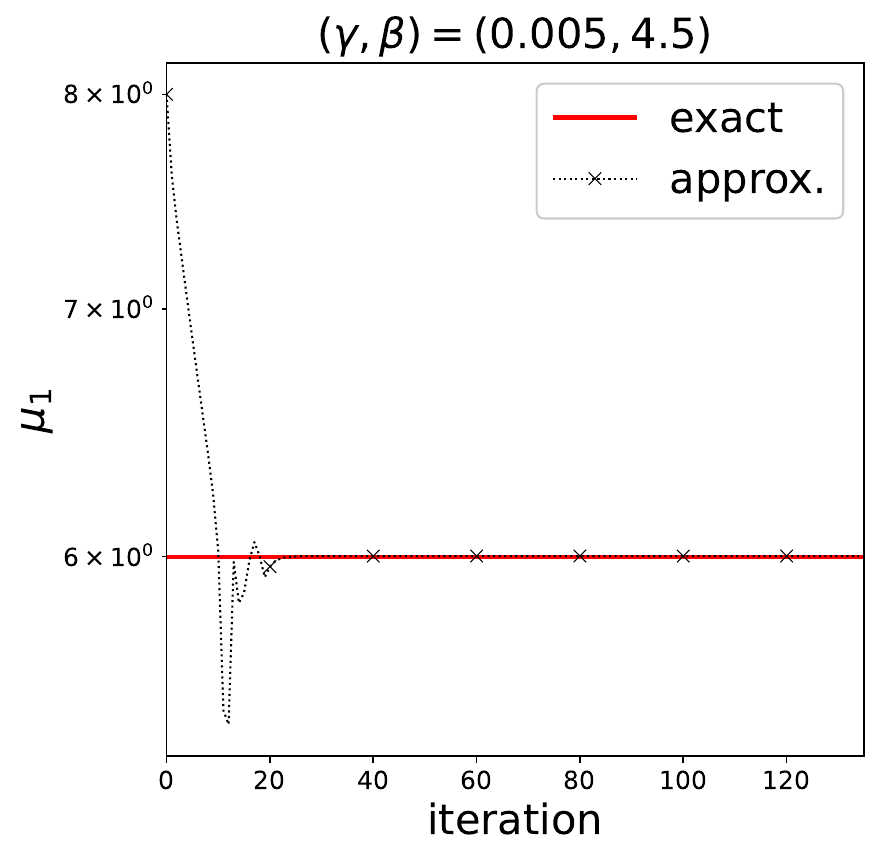}} \
\resizebox{0.15\textwidth}{!}{\includegraphics{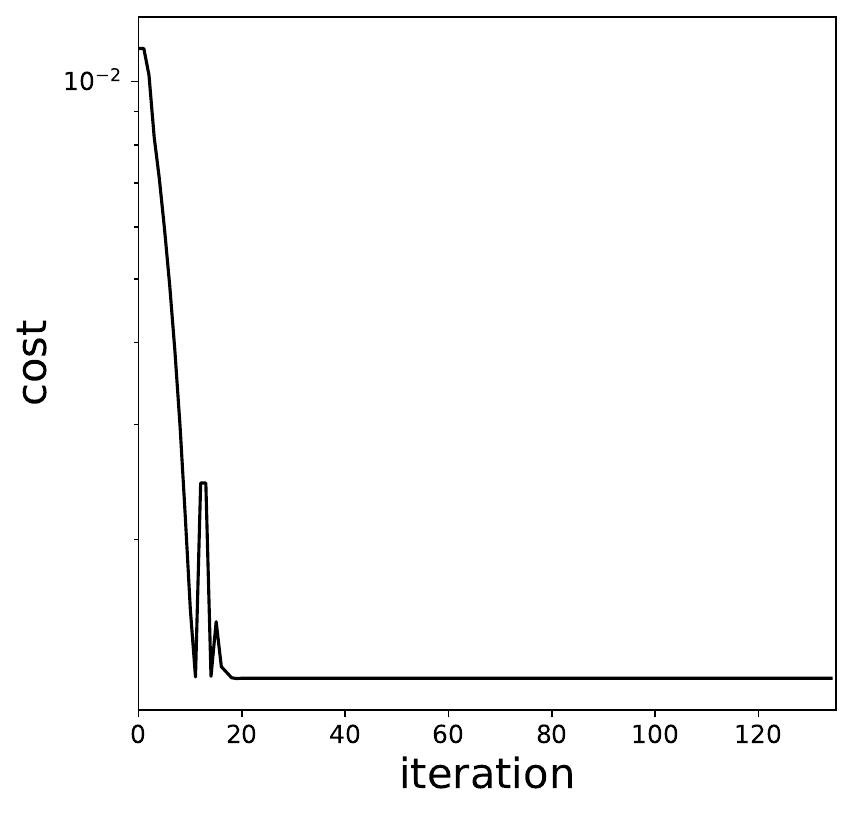}} \
\resizebox{0.15\textwidth}{!}{\includegraphics{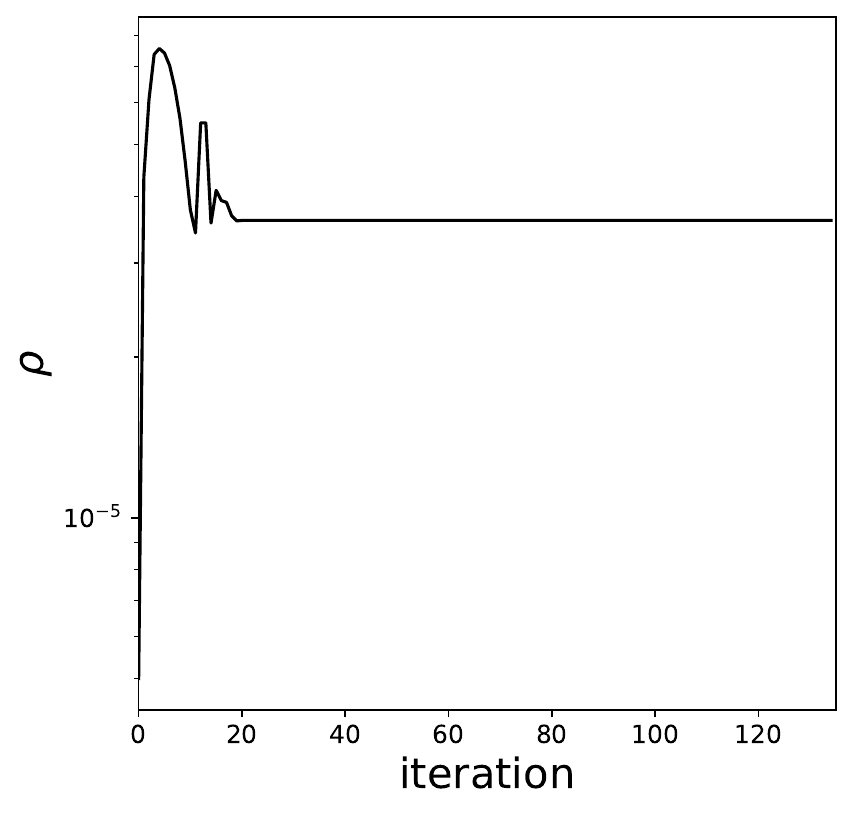}}
\caption{Results for a square boundary interface}
\label{fig:point_source_non-smooth_interface_square}
\end{figure}
%
%
%
%
\begin{figure}[htp!]
\centering
\resizebox{0.15\textwidth}{!}{\includegraphics{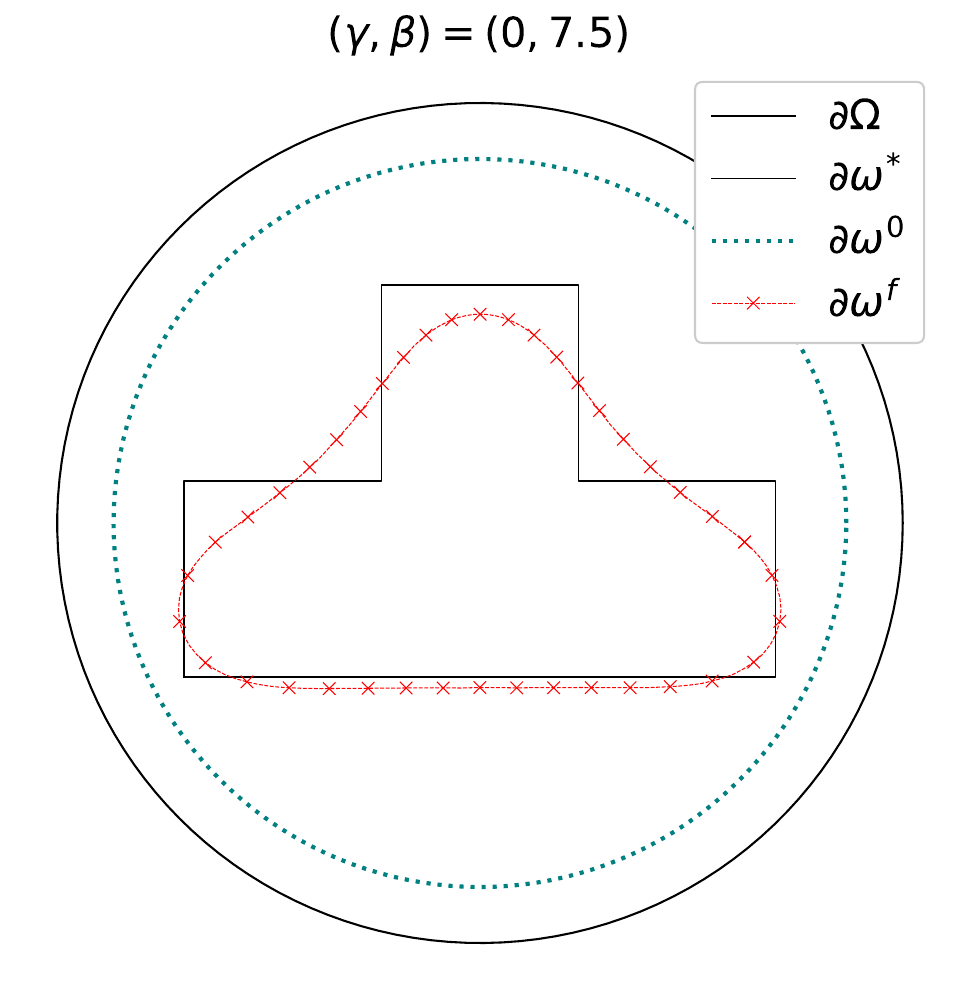}} \
\resizebox{0.15\textwidth}{!}{\includegraphics{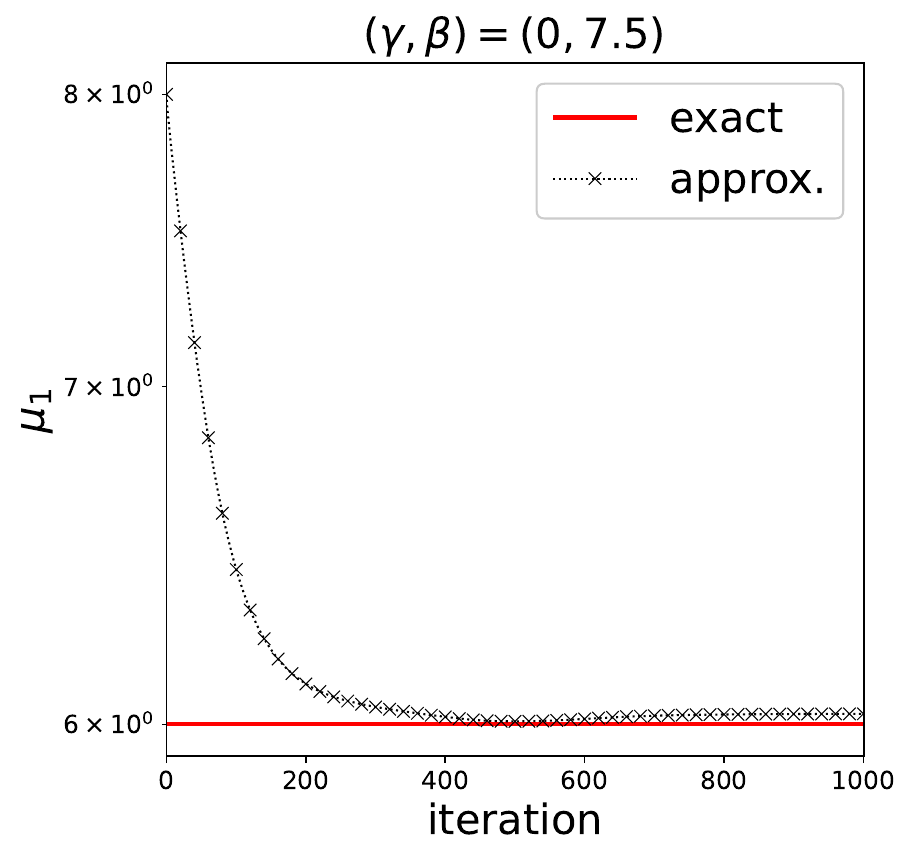}} \
\resizebox{0.15\textwidth}{!}{\includegraphics{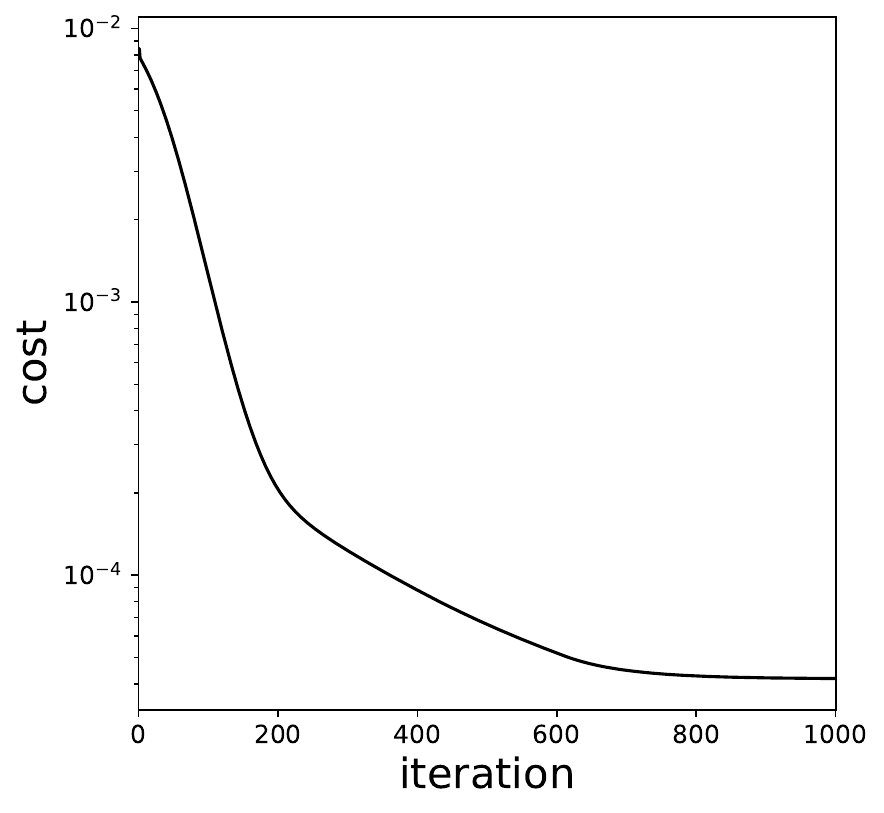}} \
\resizebox{0.15\textwidth}{!}{\includegraphics{rhoinvTb7.5n0ne0t1nu5a08R02.6p0.pdf}}
\\
\resizebox{0.15\textwidth}{!}{\includegraphics{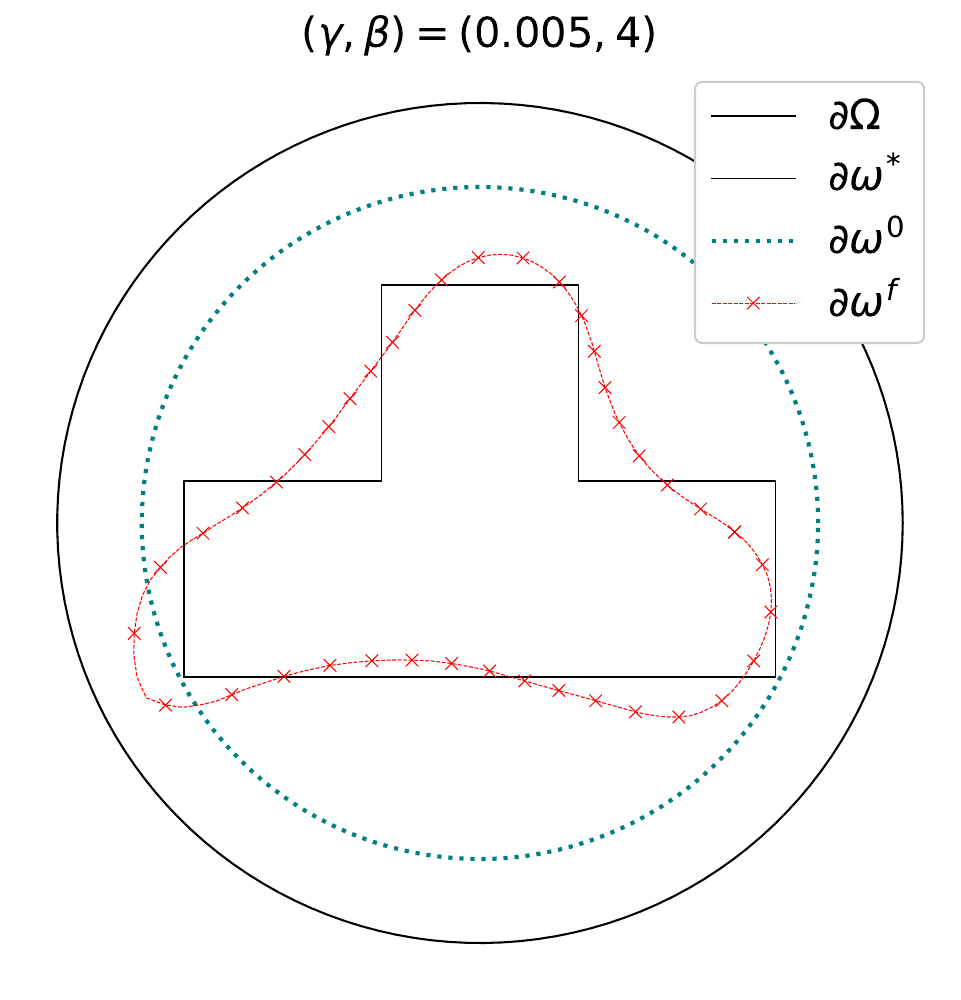}}  \
\resizebox{0.15\textwidth}{!}{\includegraphics{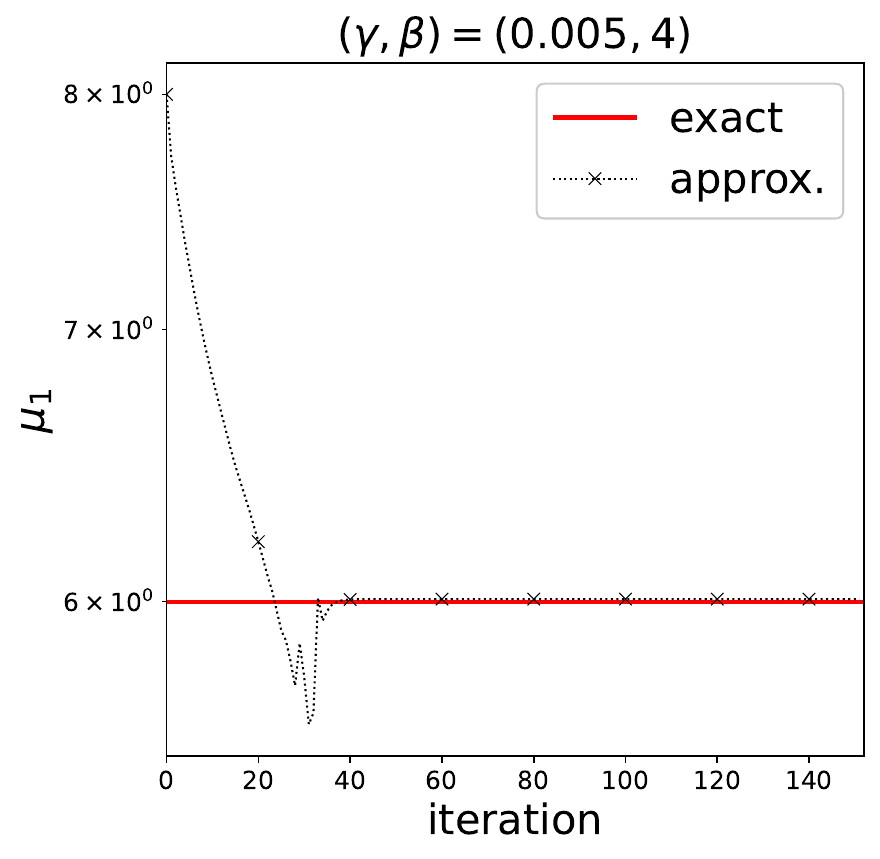}} \
\resizebox{0.15\textwidth}{!}{\includegraphics{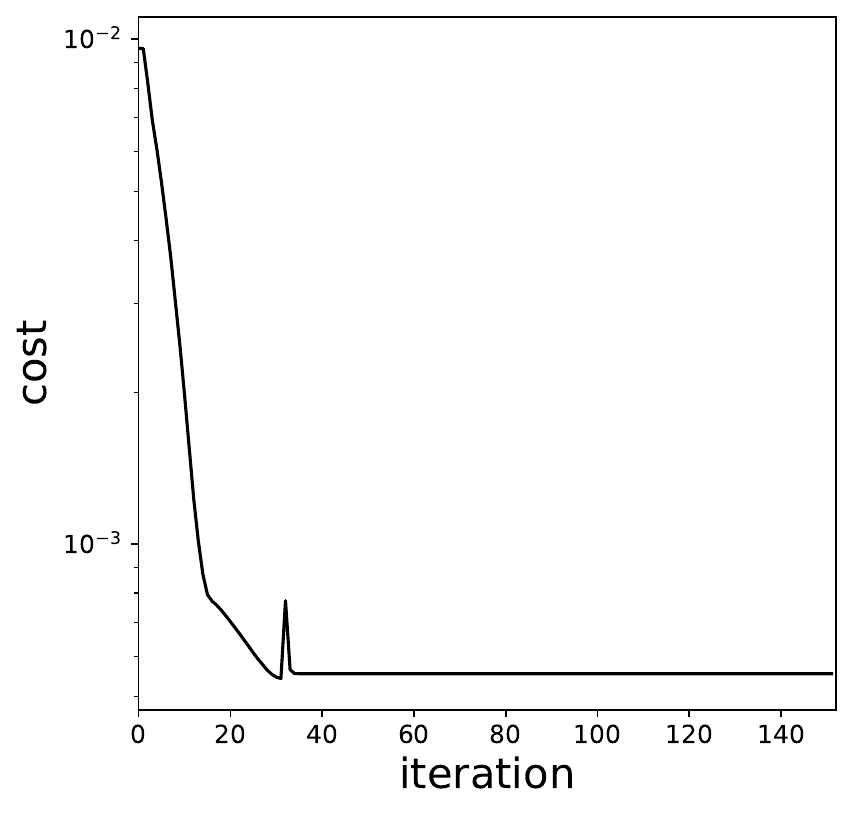}} \
\resizebox{0.15\textwidth}{!}{\includegraphics{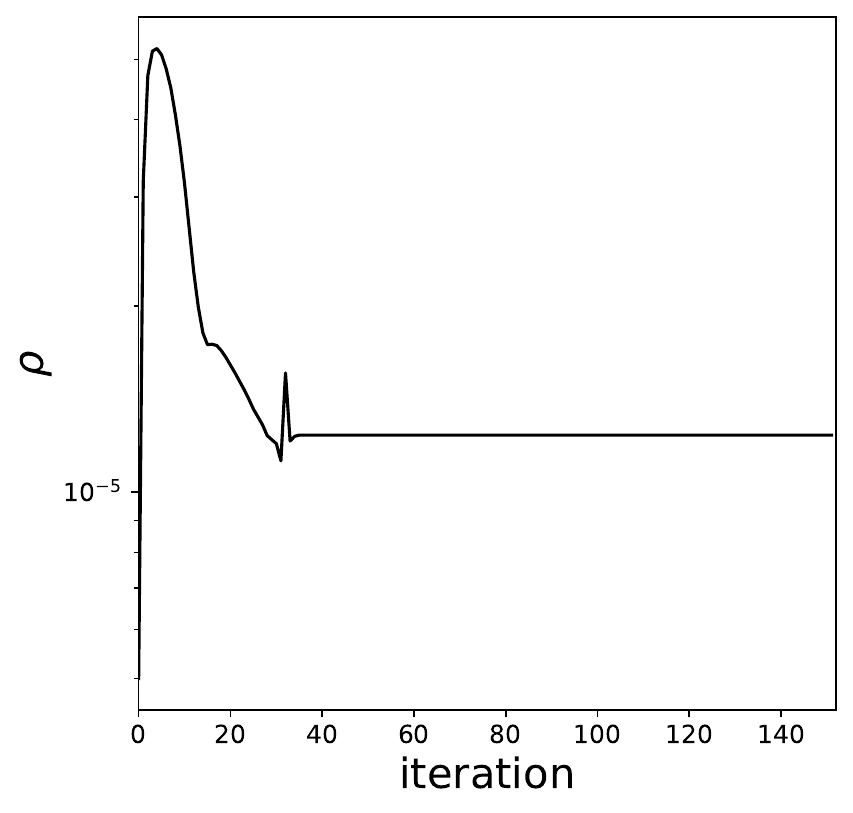}}
\\
\resizebox{0.15\textwidth}{!}{\includegraphics{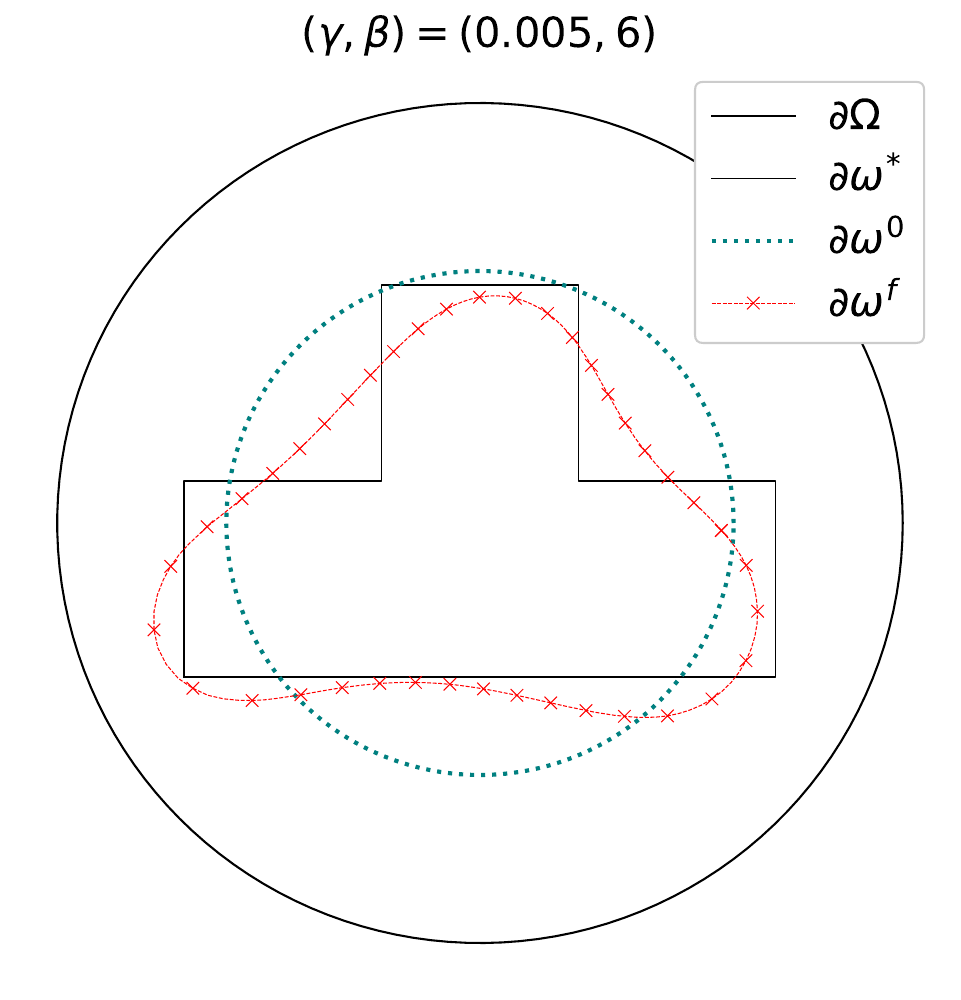}}  \
\resizebox{0.15\textwidth}{!}{\includegraphics{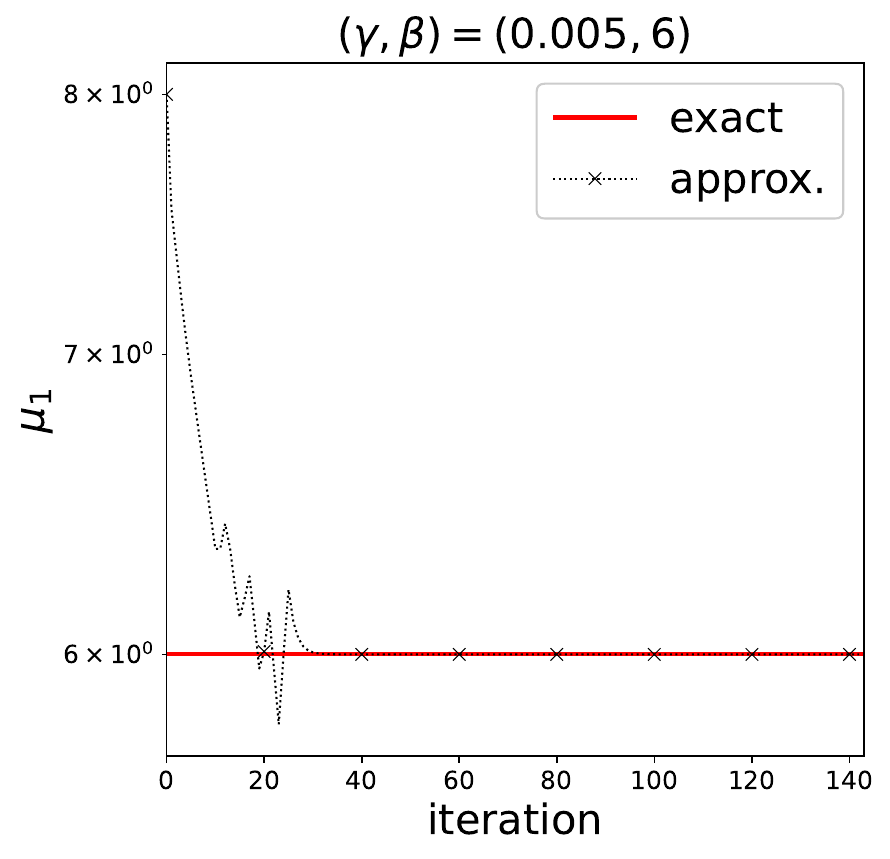}} \
\resizebox{0.15\textwidth}{!}{\includegraphics{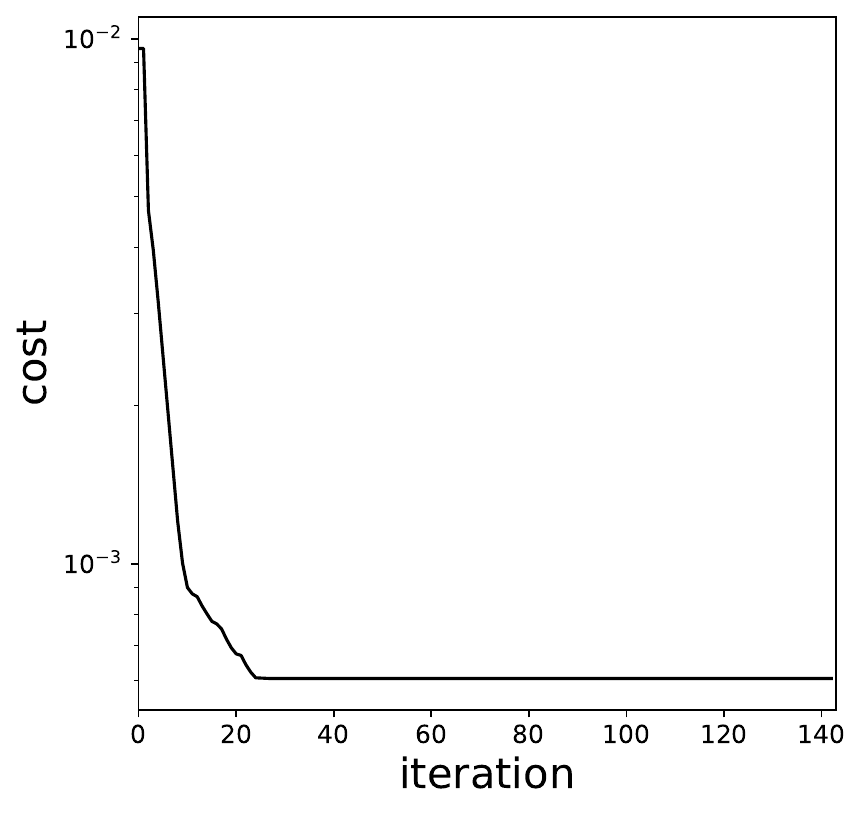}} \
\resizebox{0.15\textwidth}{!}{\includegraphics{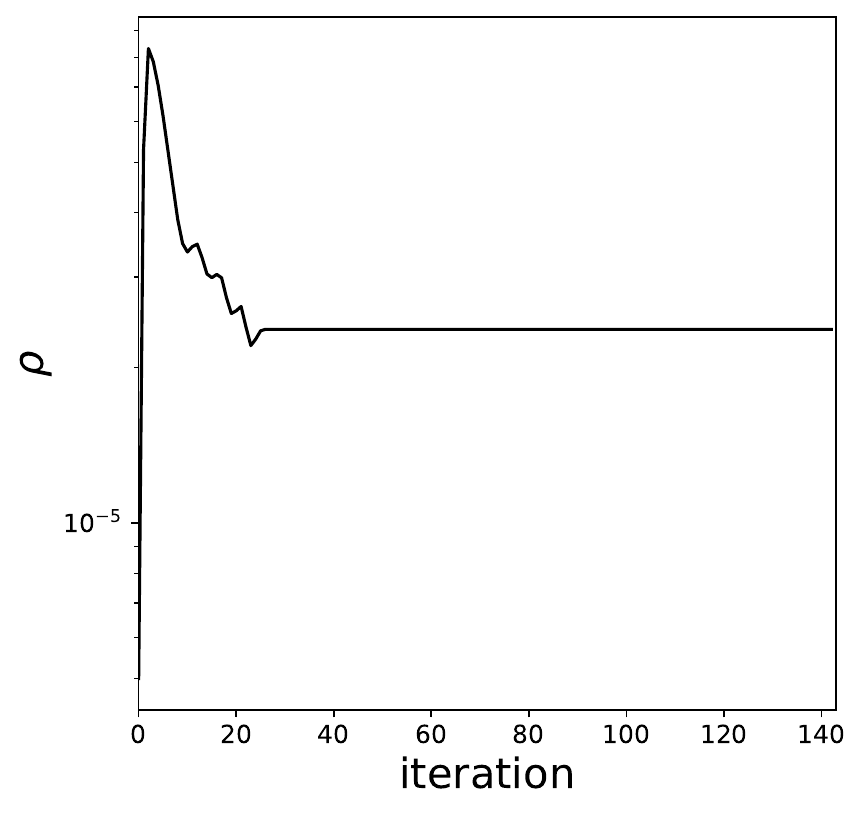}}
\caption{Results for an inverted {\textsf{T}-shaped} boundary interface  }
\label{fig:point_source_non-smooth_interface_T}
\end{figure}
%
%
\subsection{Numerical tests with sources close to the boundary}\label{subsec:point_sources_close_to_the_boundary}
To conclude our numerical examples, we consider cases with multiple (point) sources.
The sources are positioned near the boundary and we define $f$ as follows:
\begin{equation}\label{eq:multiple_sources_near_boundary}
	f(x) = \sum_{i=1}^{M} \exp\left(1 - \frac{(x - x_{i})^2 + (y - y_{i})^2}{\epsilon^2}\right),
\end{equation}
where $\epsilon > 0$, $M \in \mathbb{N}$, and $(x_{i}, y_{i}) = (\bar{R}_{f} \cos{\theta_{i}}, \bar{R}_{f} \sin{\theta_{i}})$ with $\theta_{i} \in [0,2\pi]$ and $\bar{R}_{f} \in (0, R)$. We set $\bar{R}_{f} = 2.99$, and in this subsection, noisy measurements mean $\gamma = 0.1$.

Figure~\ref{fig:multiple_sources_circular} presents results for a circular boundary interface with parameters $(\mu_{0}^{\ast}, \mu_{1}^{\ast}) = (1, 1.2)$ under exact and noisy measurements.
Thick black dots indicate the source positions, which remain consistent across all cases, and reconstructions are achieved without perimeter regularization.
Reconstruction accuracy decreases with fewer sources, as expected.
For instance, when $\theta_{i} = (2\pi/3)i$ for $i = 1, 2, 3$ (first column in Figure~\ref{fig:multiple_sources_circular}), the reconstructed shape deviates more from a circle compared to cases with more sources.
Nonetheless, the results remain reasonable, even with noise.
For these cases, we set $\epsilon = 0.5$ in \eqref{eq:multiple_sources_near_boundary}.
%
%
\begin{figure}[htp!]
\centering
\hfill
\resizebox{0.15\textwidth}{!}{\includegraphics{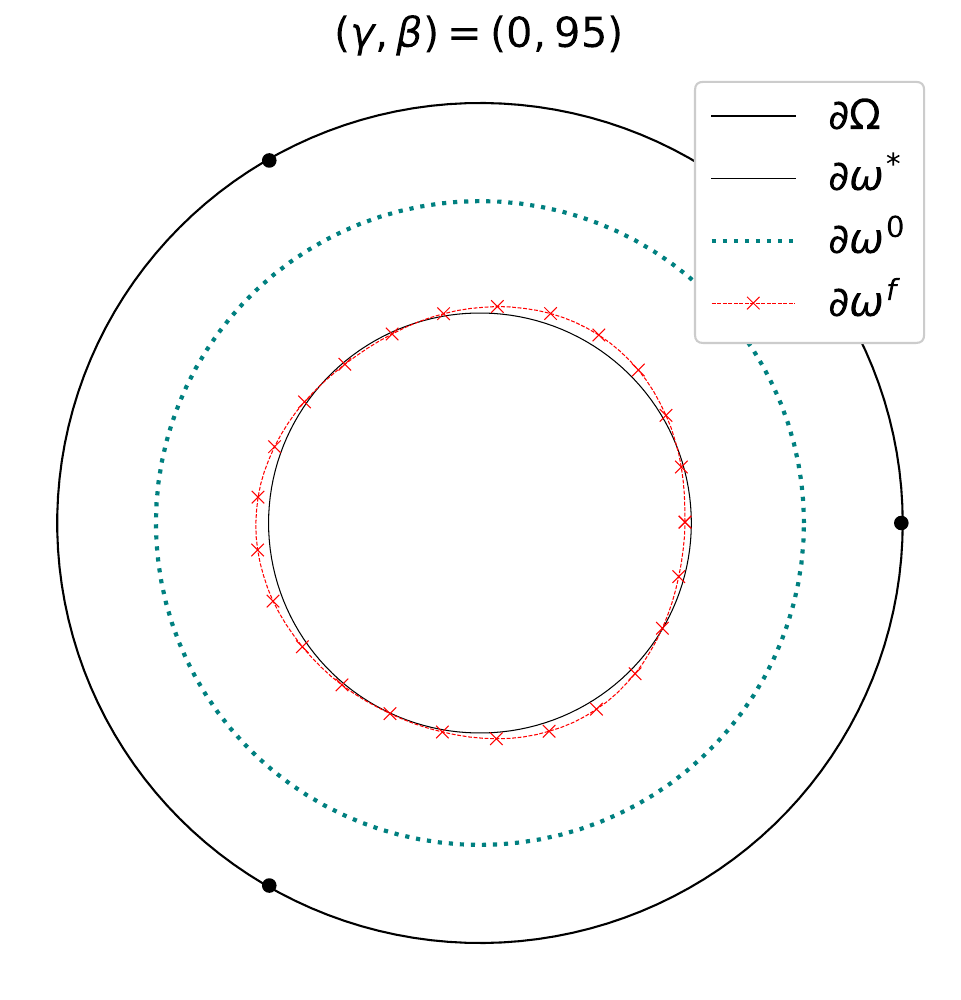}} \hfill
\resizebox{0.15\textwidth}{!}{\includegraphics{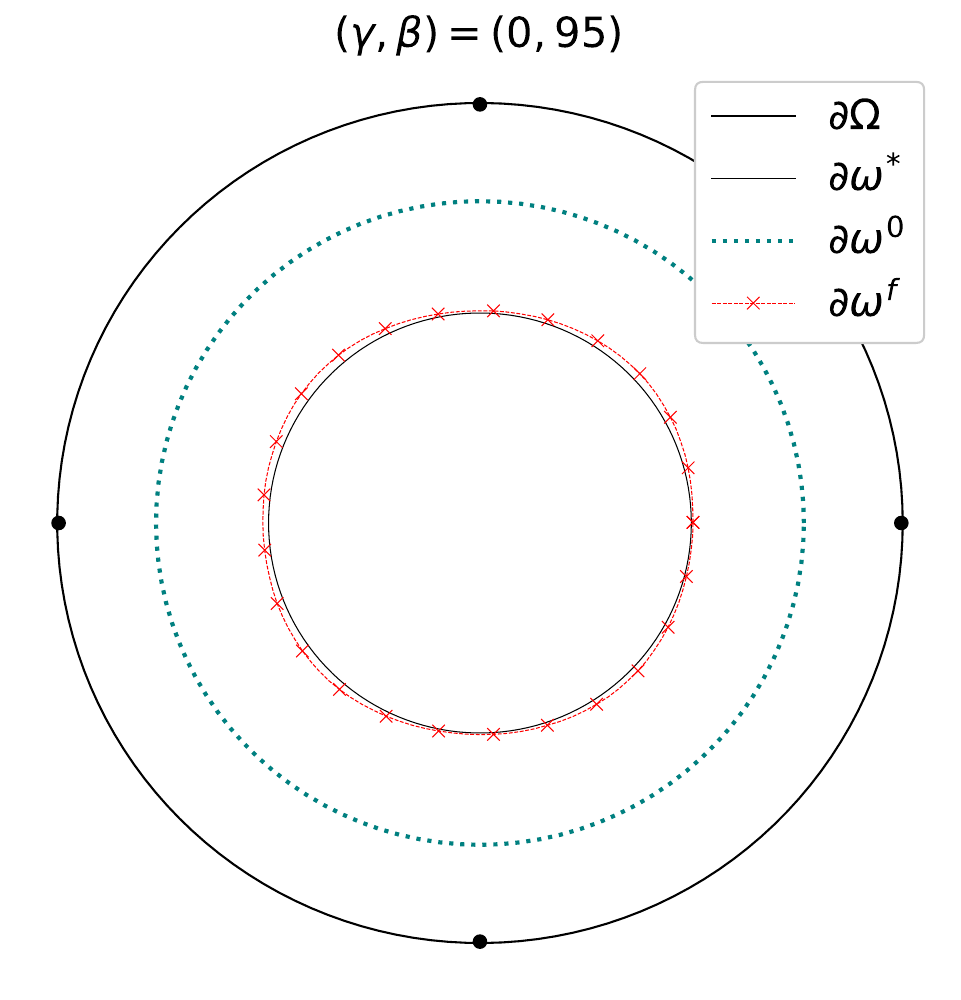}} \hfill
\resizebox{0.15\textwidth}{!}{\includegraphics{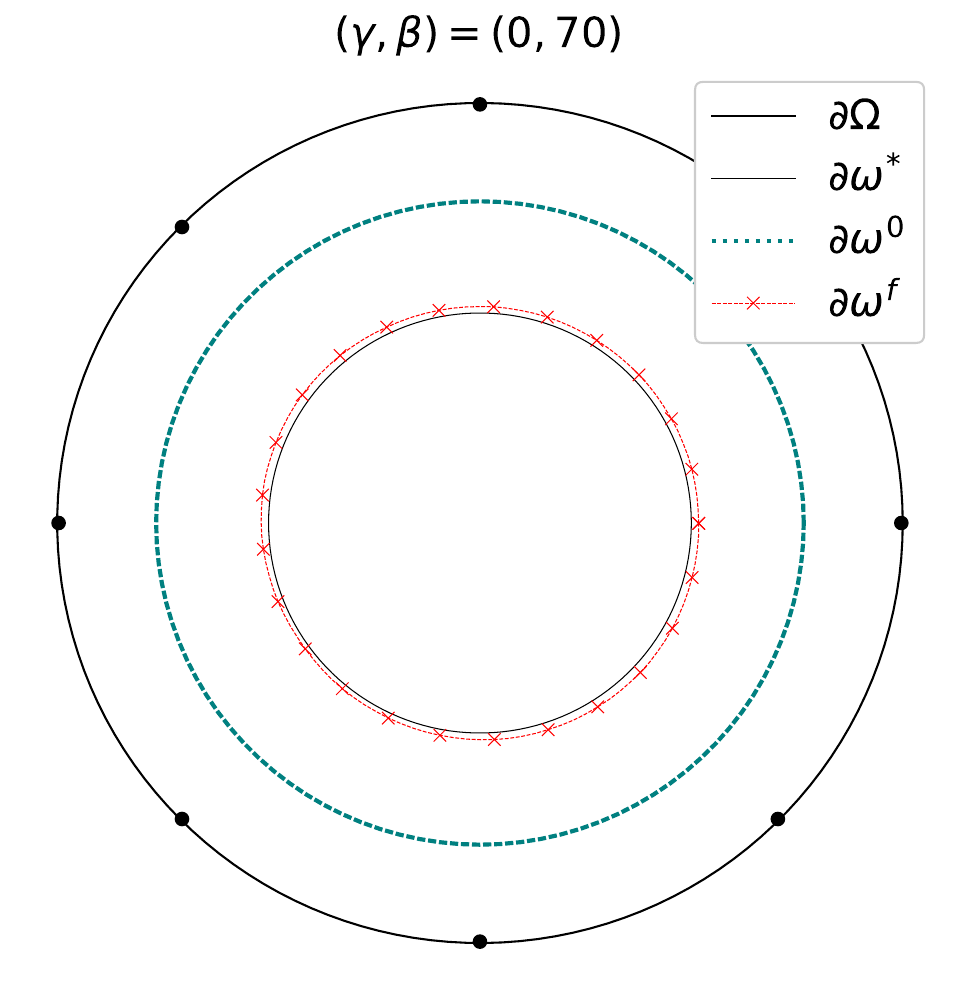}} \hfill
\resizebox{0.15\textwidth}{!}{\includegraphics{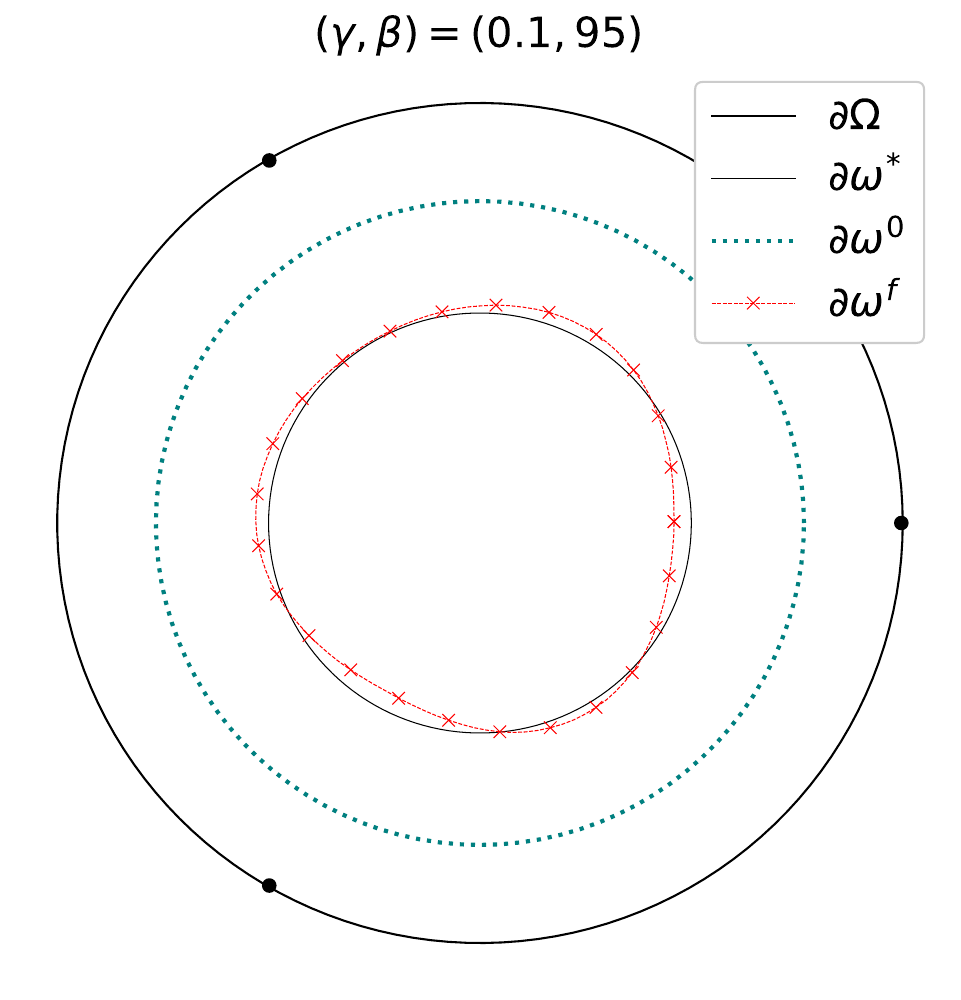}} \hfill
\resizebox{0.15\textwidth}{!}{\includegraphics{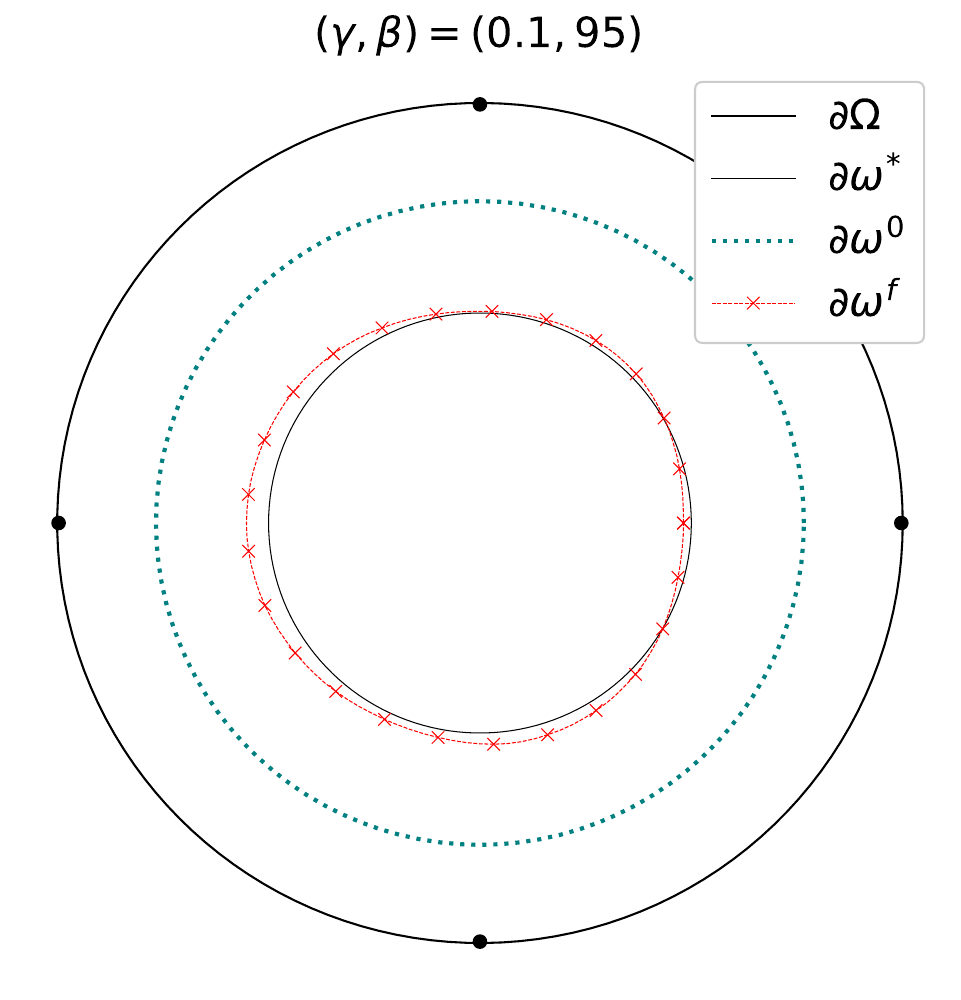}} \hfill
\resizebox{0.15\textwidth}{!}{\includegraphics{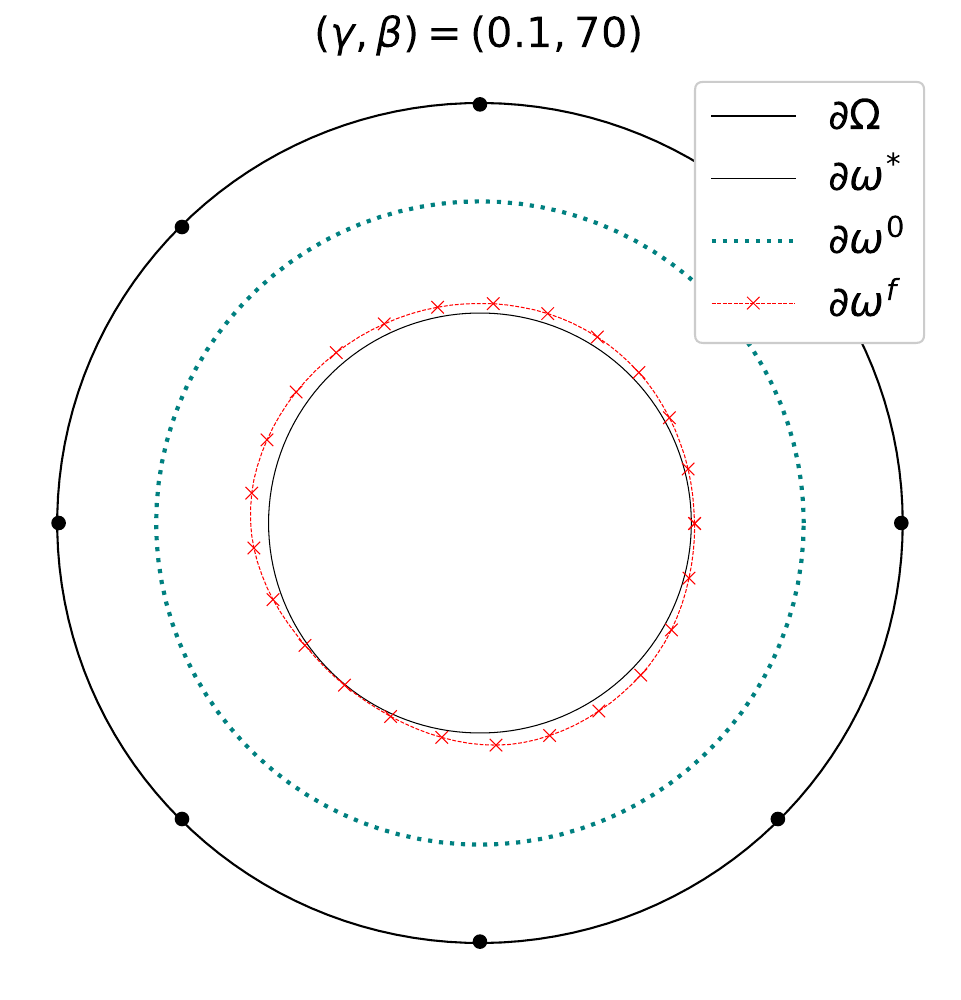}} \\
\resizebox{0.15\textwidth}{!}{\includegraphics{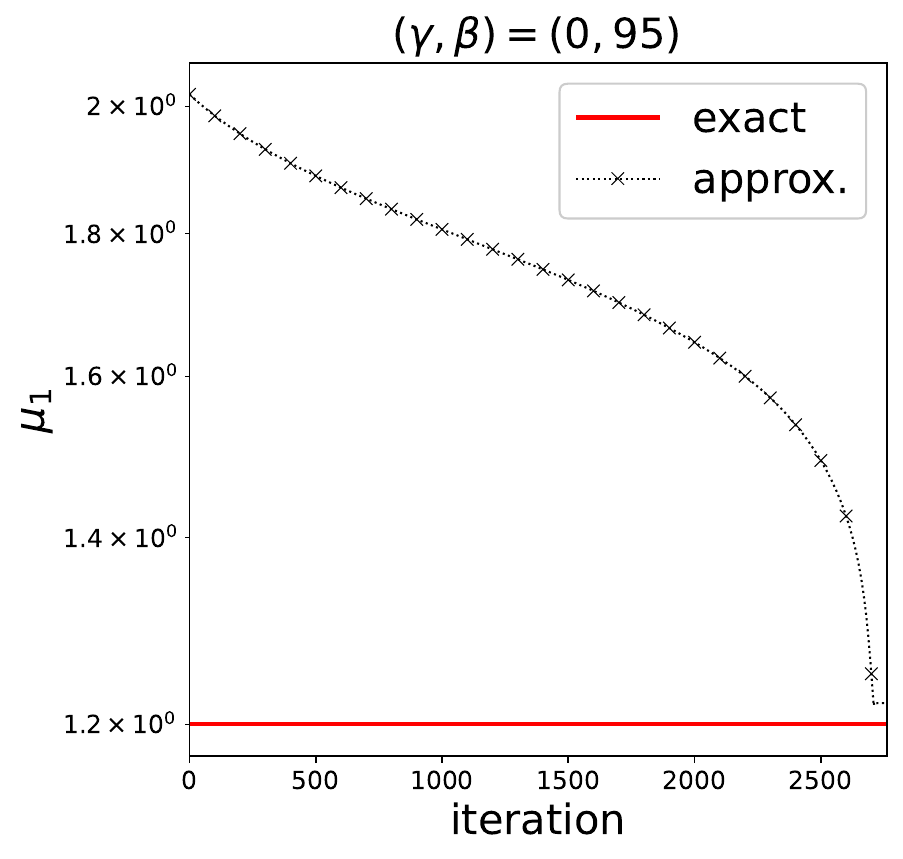}} \hfill
\resizebox{0.15\textwidth}{!}{\includegraphics{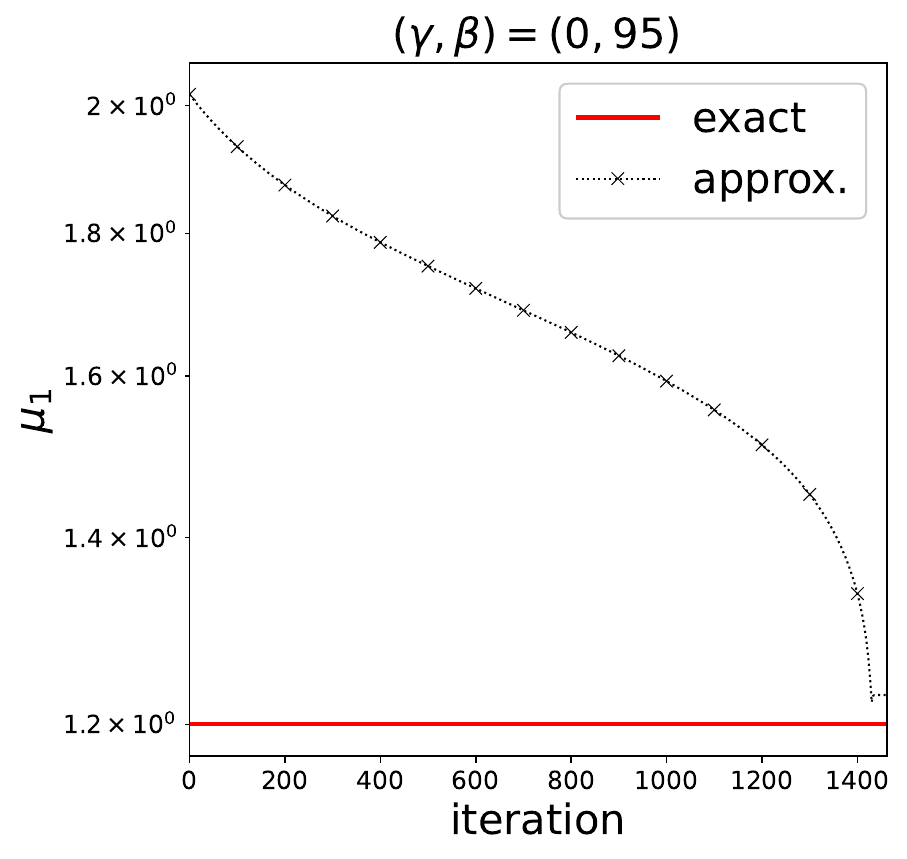}} \hfill
\resizebox{0.15\textwidth}{!}{\includegraphics{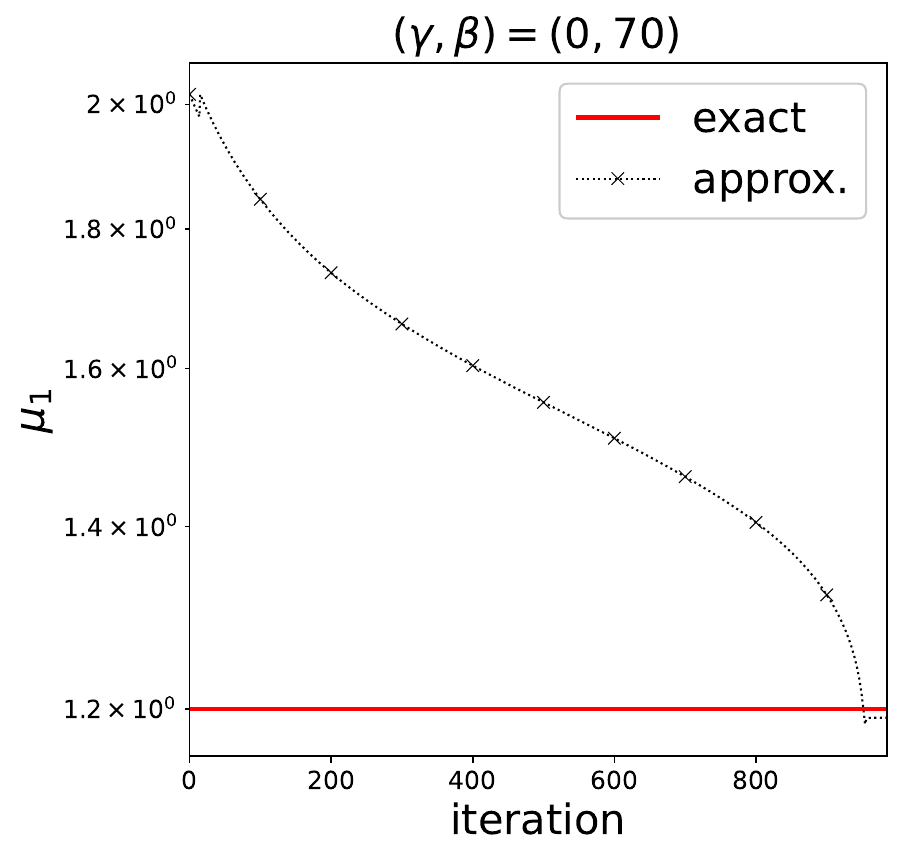}} \hfill
\resizebox{0.15\textwidth}{!}{\includegraphics{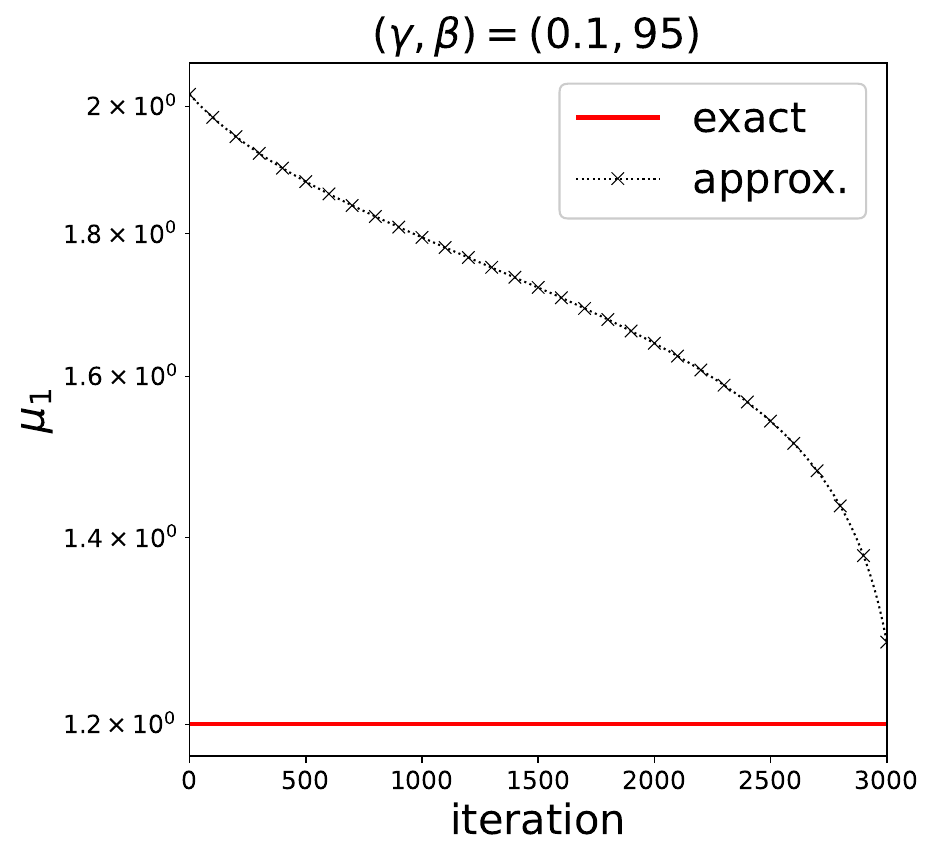}} \hfill
\resizebox{0.15\textwidth}{!}{\includegraphics{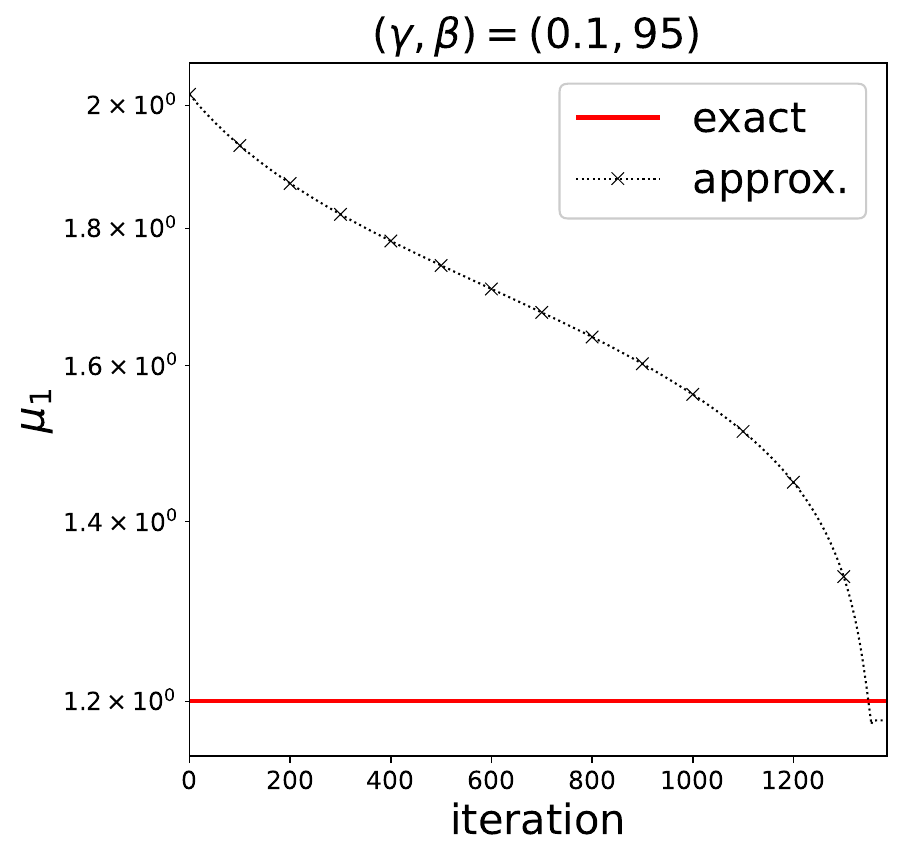}} \hfill
\resizebox{0.15\textwidth}{!}{\includegraphics{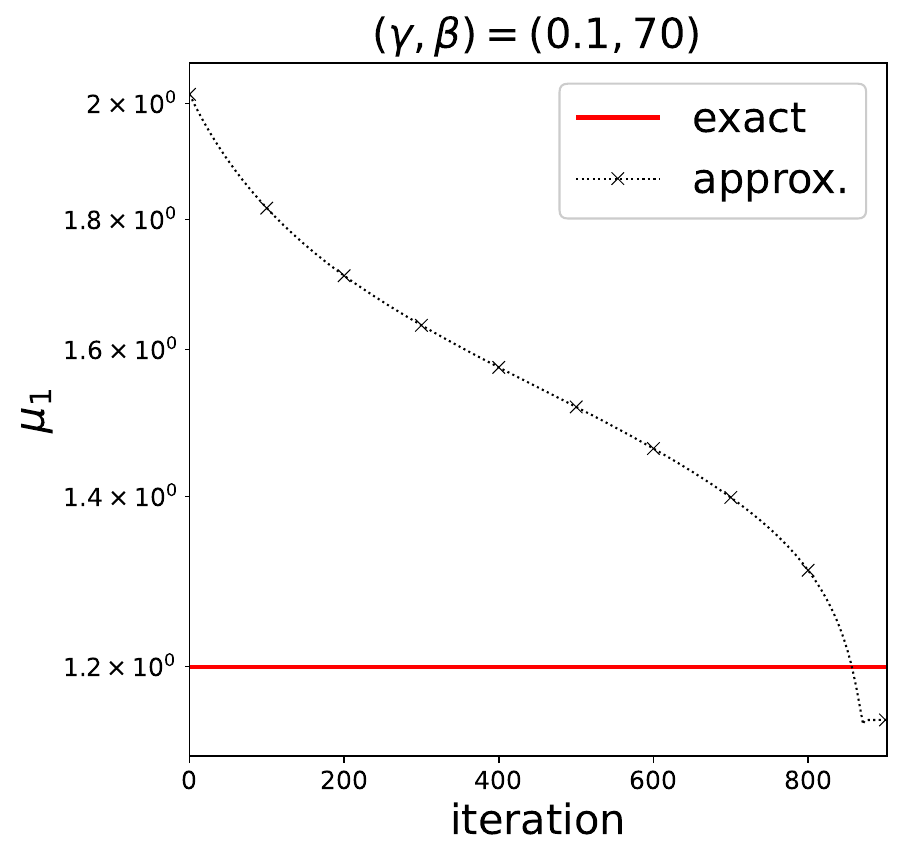}} \hfill
\caption{Results for a circular boundary interface with $f$ (given by \eqref{eq:multiple_sources_near_boundary} with $\epsilon = 0.5$) of various positions and instances near the boundary and parameters $(\mu_{0}^{\ast}, \mu_{1}^{\ast}) = (1, 1.2)$, under exact (first three columns from the left) and noisy measurements.
No perimeter regularization was applied in any of the cases.
}
\label{fig:multiple_sources_circular}
\end{figure}
%
%
%

Next, we {revisit} the problem with eight sources positioned at $\theta_{i} = \frac{\pi}{4}i$, where $i = 1, \ldots, 8$
and examine the effect of varying $\epsilon$ in \eqref{eq:multiple_sources_near_boundary}.
We consider $\epsilon = 0.5, 0.3, 0.2$ and apply perimeter penalization with a small weight.
For the remaining experiments, we set $(\mu_{0}^{\ast}, \mu_{1}^{\ast}) = (1, 6)$ and $\gamma = 0.1$, using a peanut-shaped boundary interface.

Figures~\ref{fig:multiple_sources_peanut_1} to \ref{fig:multiple_sources_peanut_3} show the results.
Smaller $\epsilon$ values lead to less accurate reconstructions, making it harder to capture concave boundary regions.
These results confirm that $\epsilon$ reflects the diffusion level of the sources.

{Finally, we analyze the reconstruction accuracy based on source positioning under noisy measurements.}
Using eight sources ($M = 8$) with $\epsilon = 0.5$ in \eqref{eq:multiple_sources_near_boundary}, we fix the penalization parameter $\rho_{1}$ to $6 \times 10^{-6}$. Reconstructions are evaluated for six configurations of source positions (see Figure~\ref{fig:positioning_sources} for two illustrations):
\[
	\text{Setup ${K}$}:\qquad \theta_{i} = \frac{{K}\pi}{3} + \frac{i\pi}{8}, \qquad i=1,\ldots,8, \quad {K} = 1,\ldots,6.
\]

The reconstruction results are summarized in Figure~\ref{fig:positioning_sources_results}.
Even with noisy data, the method reconstructs the boundary interface and the unknown coefficient $\mu$ effectively.
Also, observe that source positioning strongly influences accuracy, particularly for concave boundary regions.
Reconstructions are less accurate in areas farther from the sources, as expected.
Overall, the proposed method is robust and highly effective in reconstructing boundary interfaces with complex geometries under noisy measurements.
%
\begin{figure}[htp!]
\centering
\resizebox{0.15\textwidth}{!}{\includegraphics{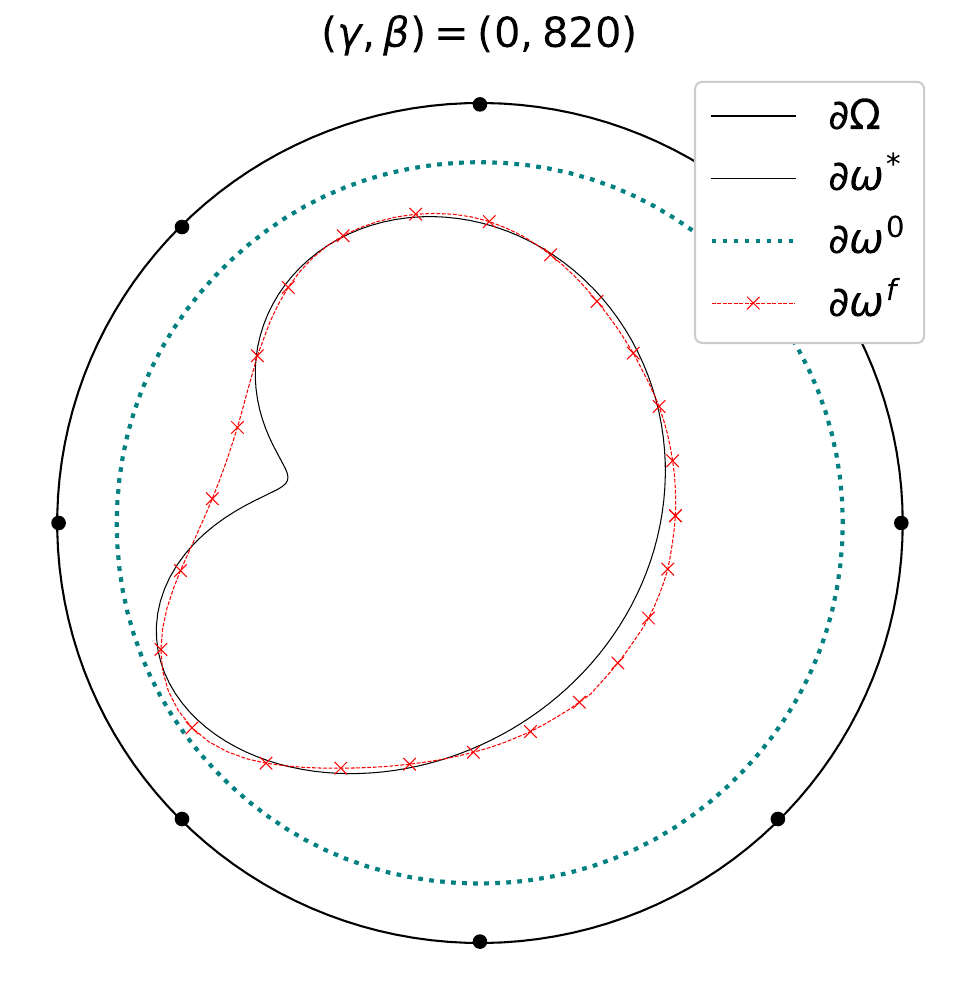}} \
\resizebox{0.15\textwidth}{!}{\includegraphics{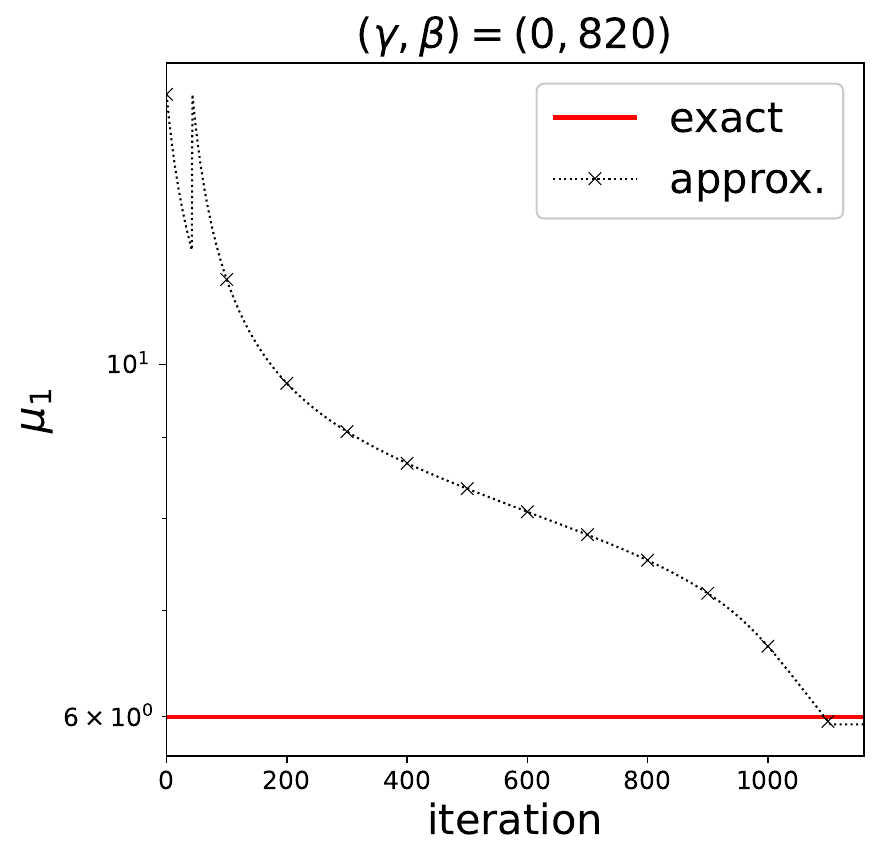}} \
\resizebox{0.15\textwidth}{!}{\includegraphics{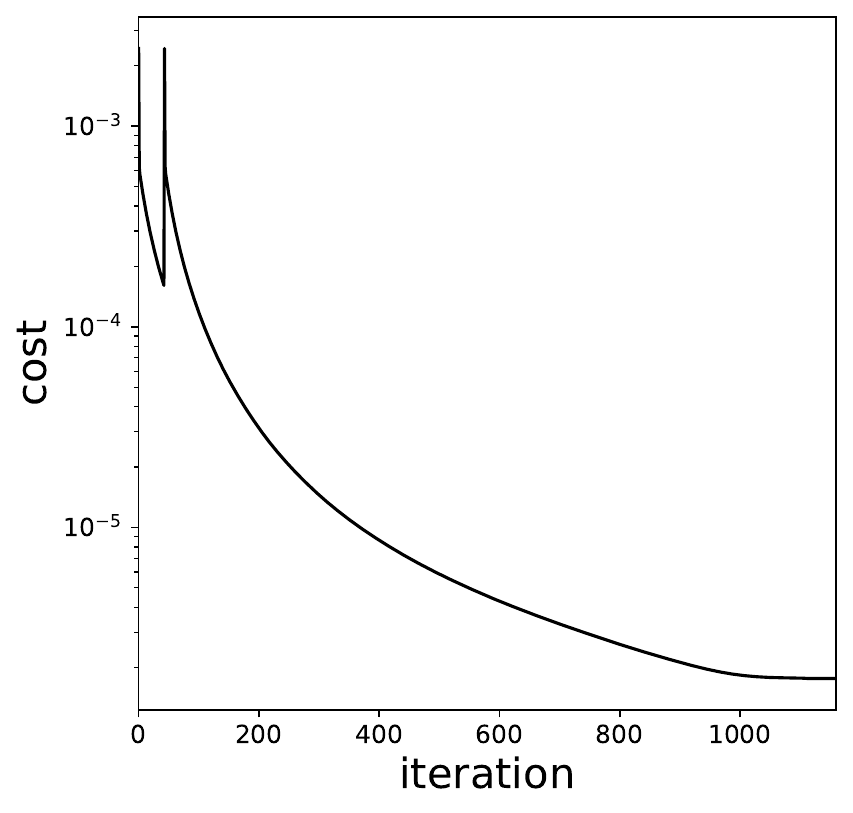}} \
\resizebox{0.15\textwidth}{!}{\includegraphics{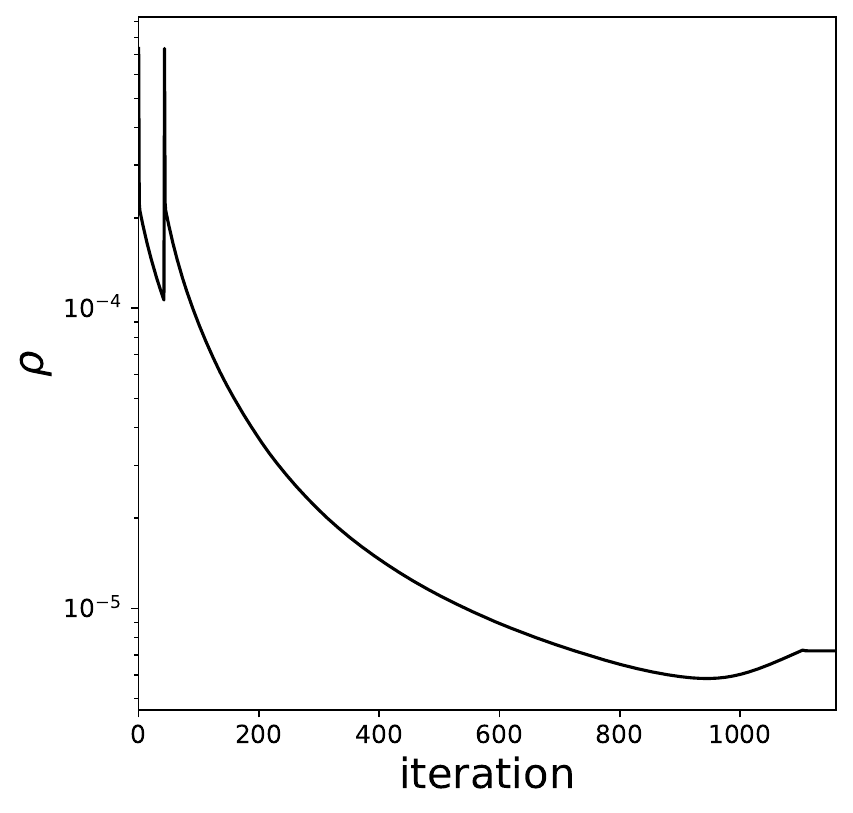}}
\\
\resizebox{0.15\textwidth}{!}{\includegraphics{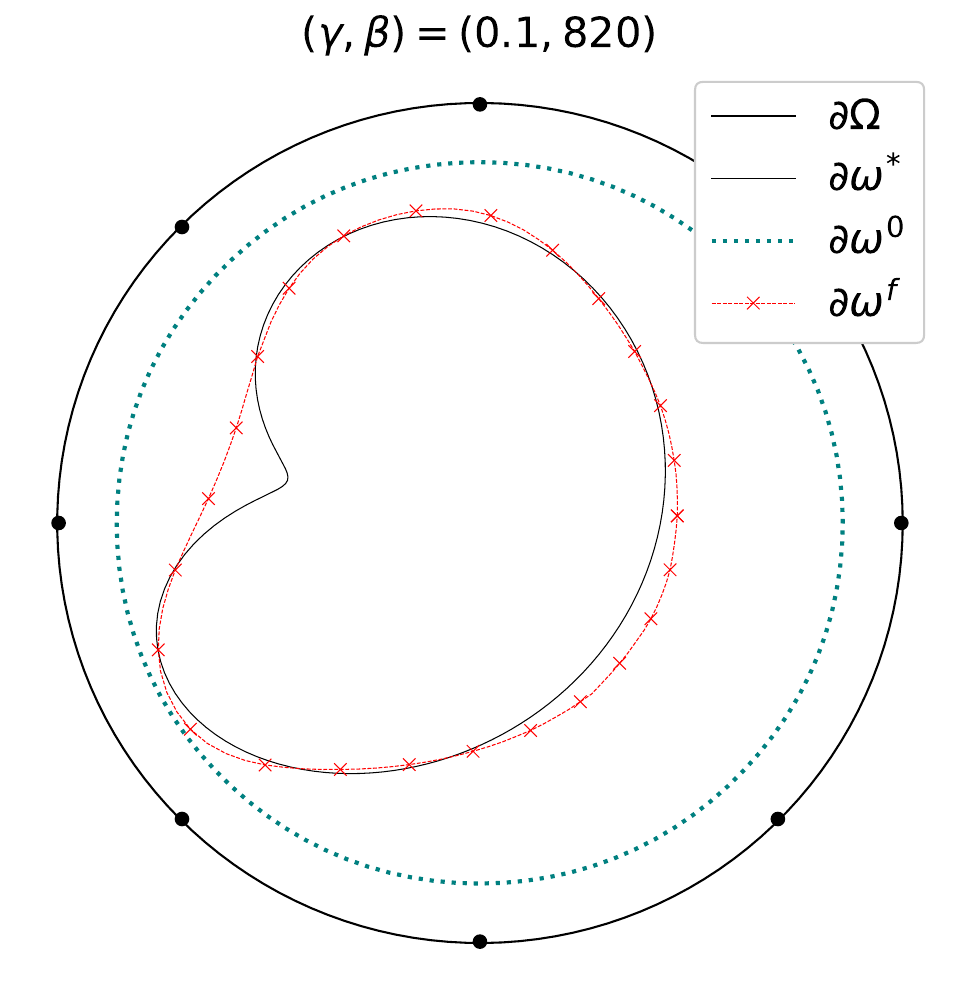}} \
\resizebox{0.15\textwidth}{!}{\includegraphics{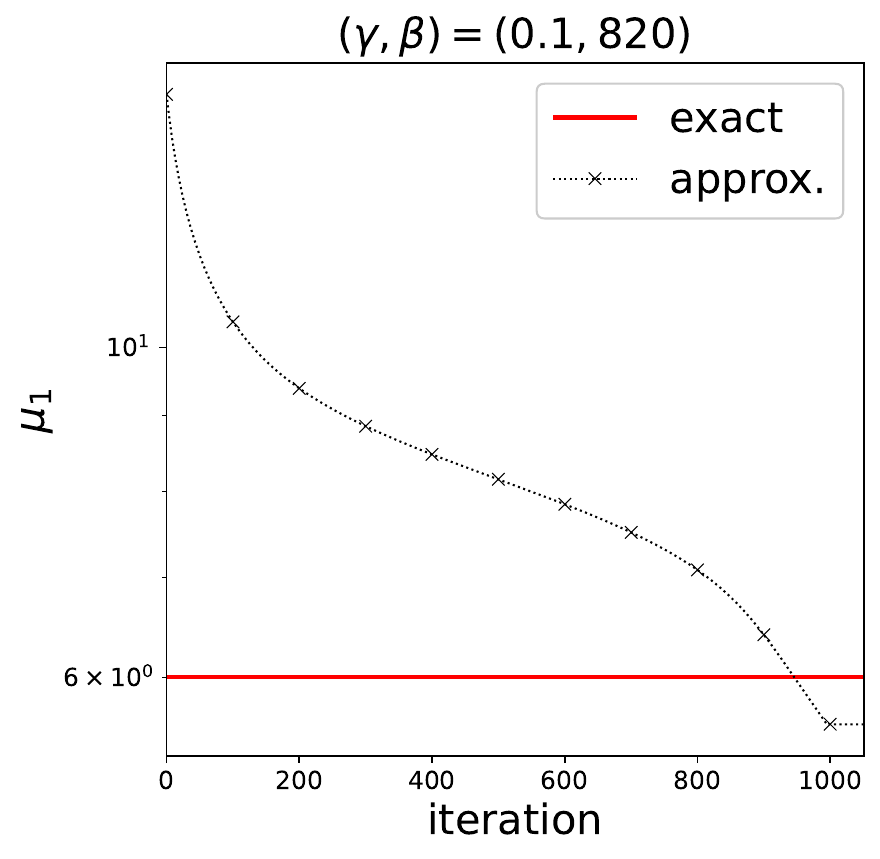}} \
\resizebox{0.15\textwidth}{!}{\includegraphics{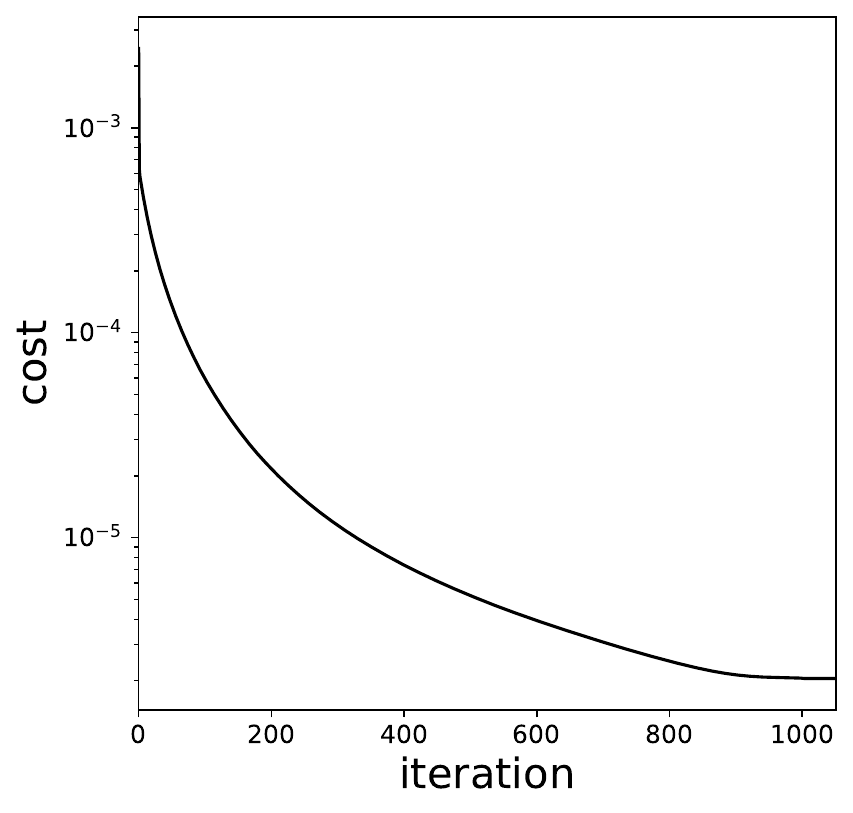}} \
\resizebox{0.15\textwidth}{!}{\includegraphics{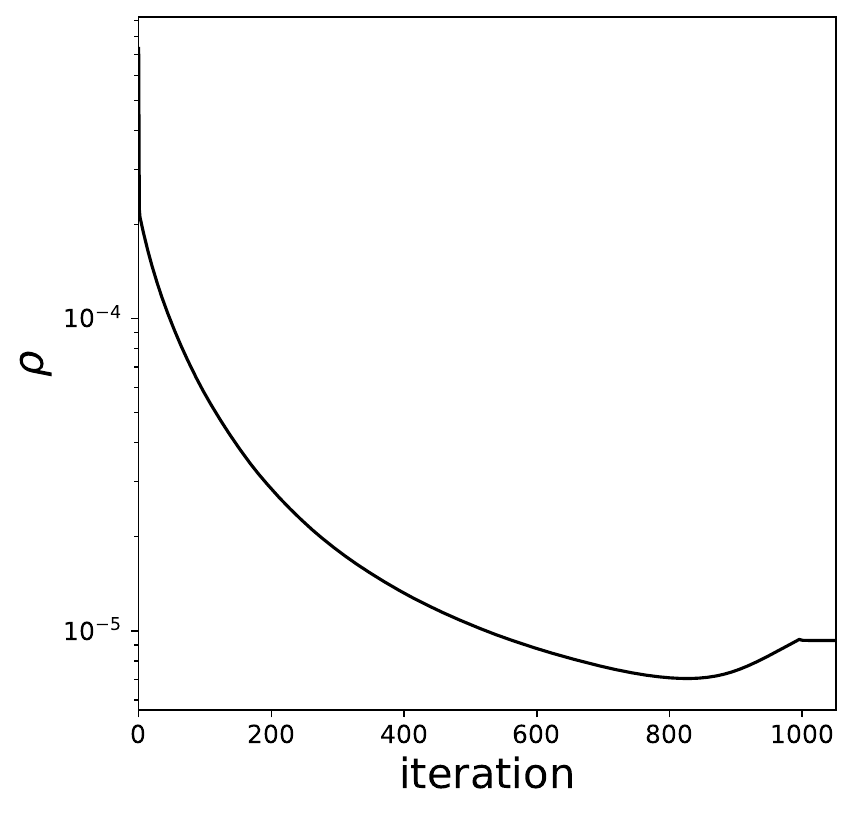}}
\caption{Results for a peanut-shape boundary interface with source $f$ (given by \eqref{eq:multiple_sources_near_boundary} with $\epsilon = 0.5$) near the boundary, under exact (top row) and noisy measurements.
Perimeter penalization was applied in all of the cases with $\rho_{1} = 0.00004$.
}
\label{fig:multiple_sources_peanut_1}
\end{figure}
%
%
%
%
\begin{figure}[htp!]
\centering
\resizebox{0.15\textwidth}{!}{\includegraphics{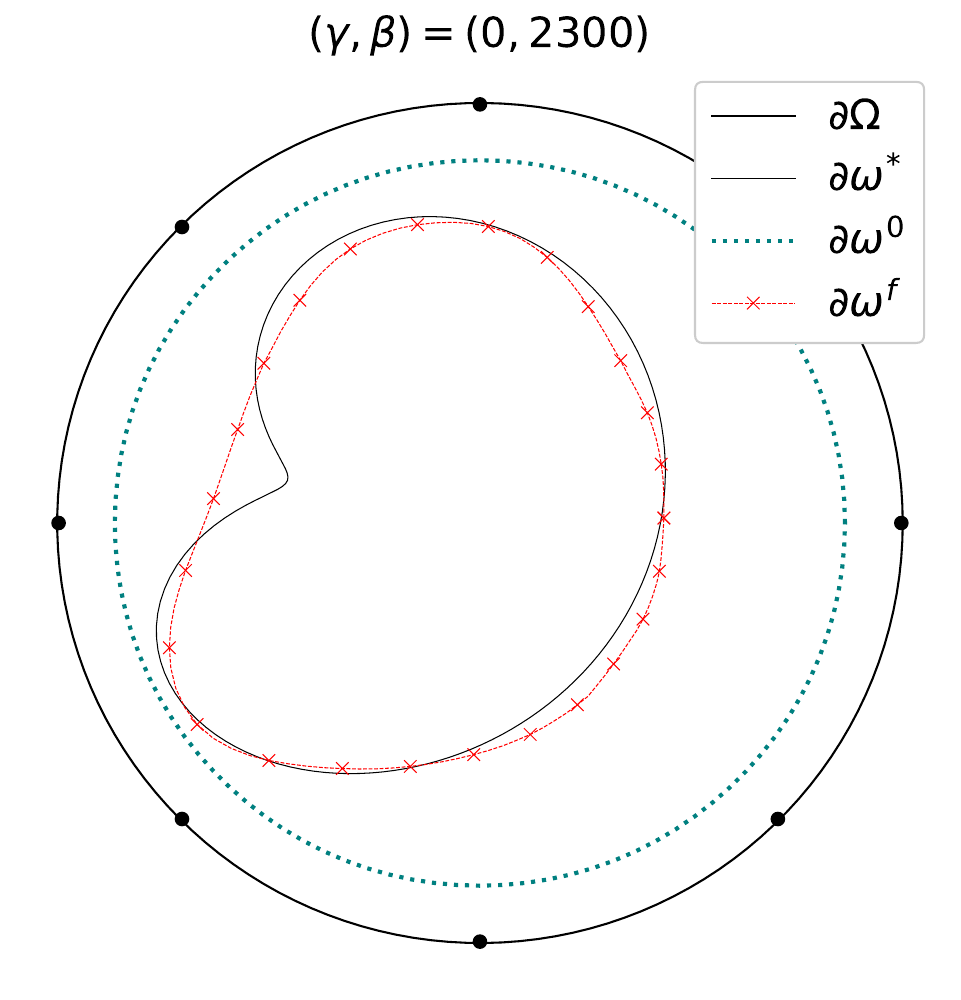}} \
\resizebox{0.15\textwidth}{!}{\includegraphics{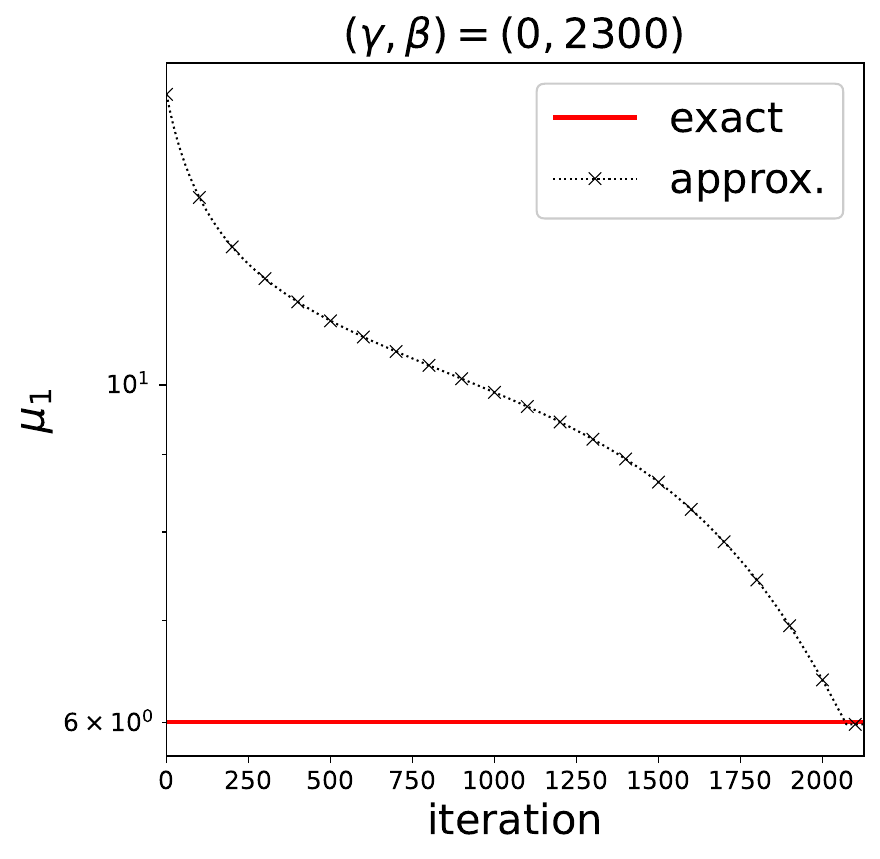}} \
\resizebox{0.15\textwidth}{!}{\includegraphics{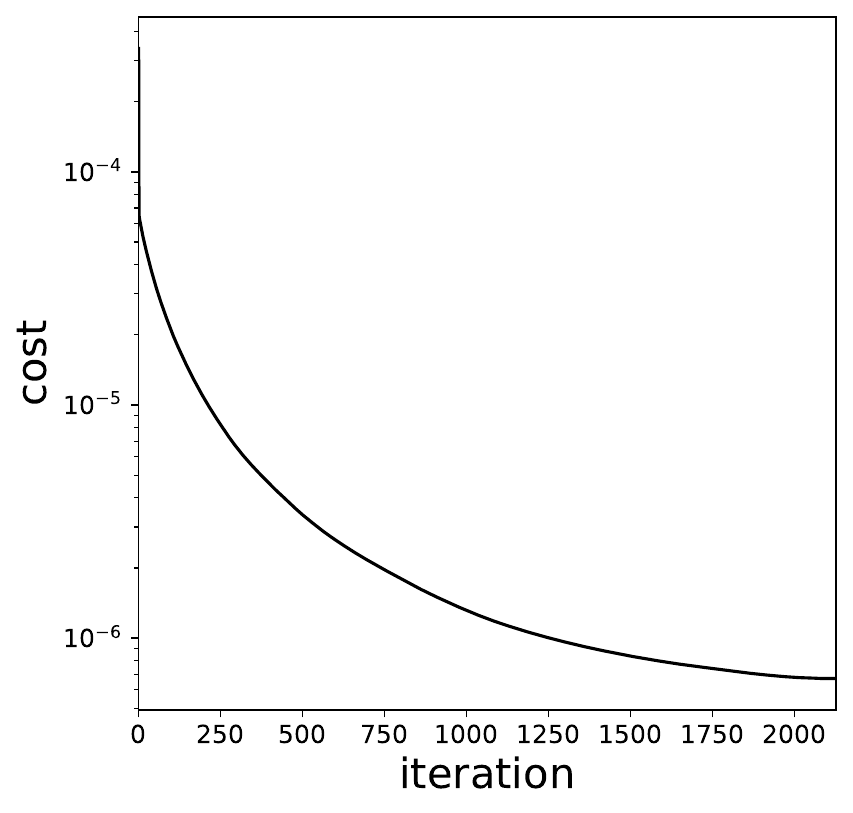}} \
\resizebox{0.15\textwidth}{!}{\includegraphics{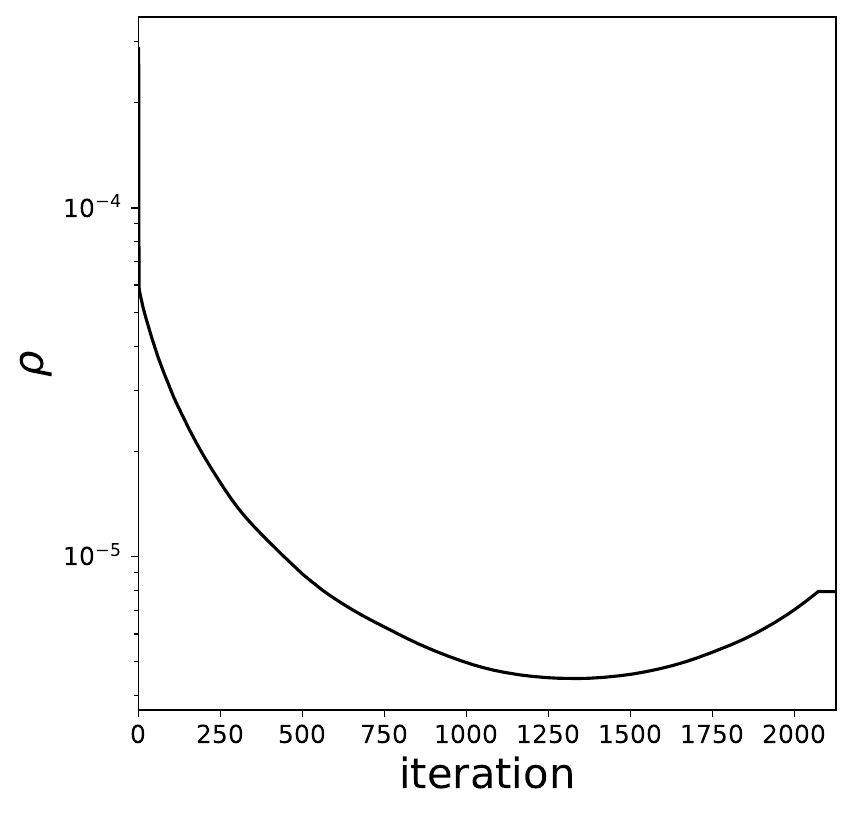}}
\\
\resizebox{0.15\textwidth}{!}{\includegraphics{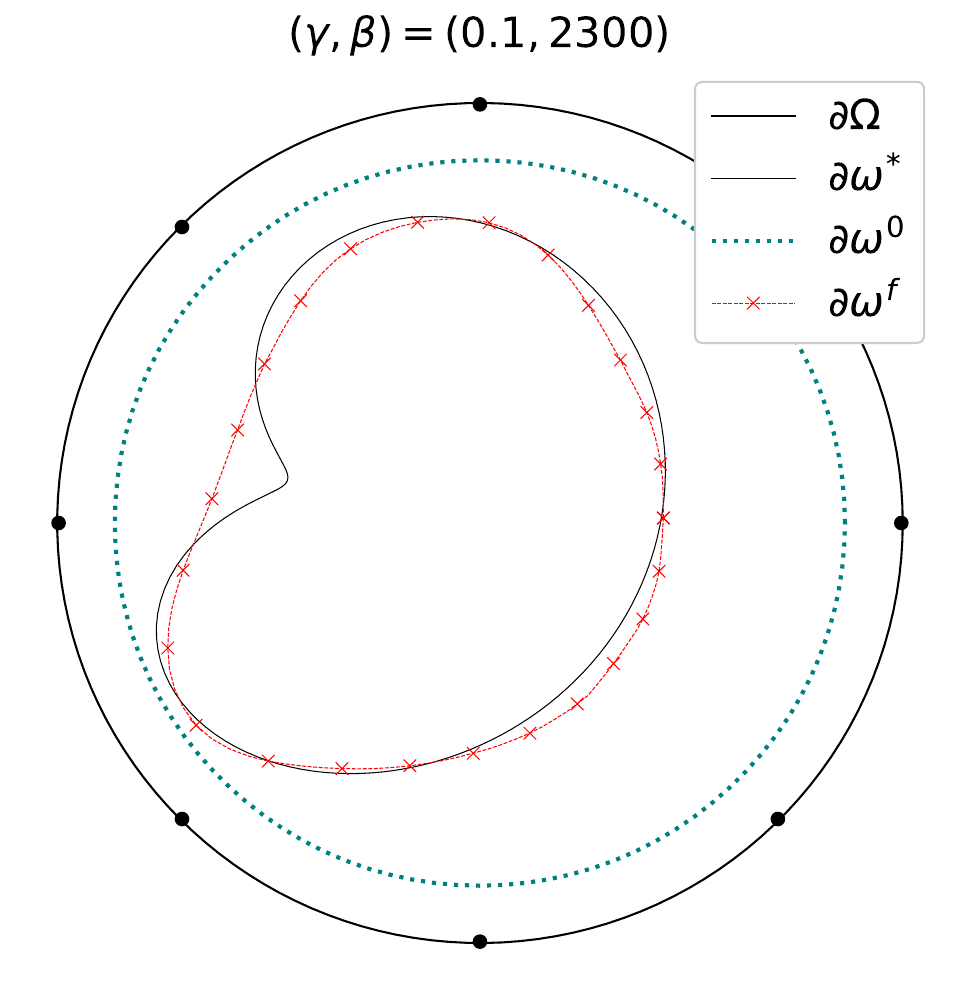}} \
\resizebox{0.15\textwidth}{!}{\includegraphics{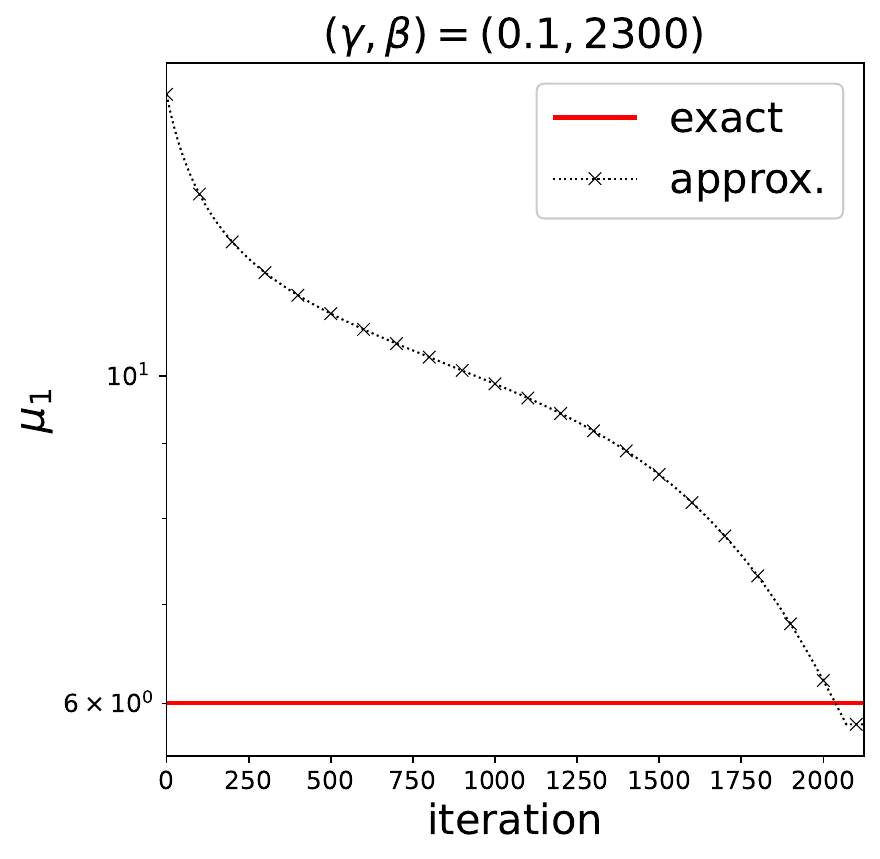}} \
\resizebox{0.15\textwidth}{!}{\includegraphics{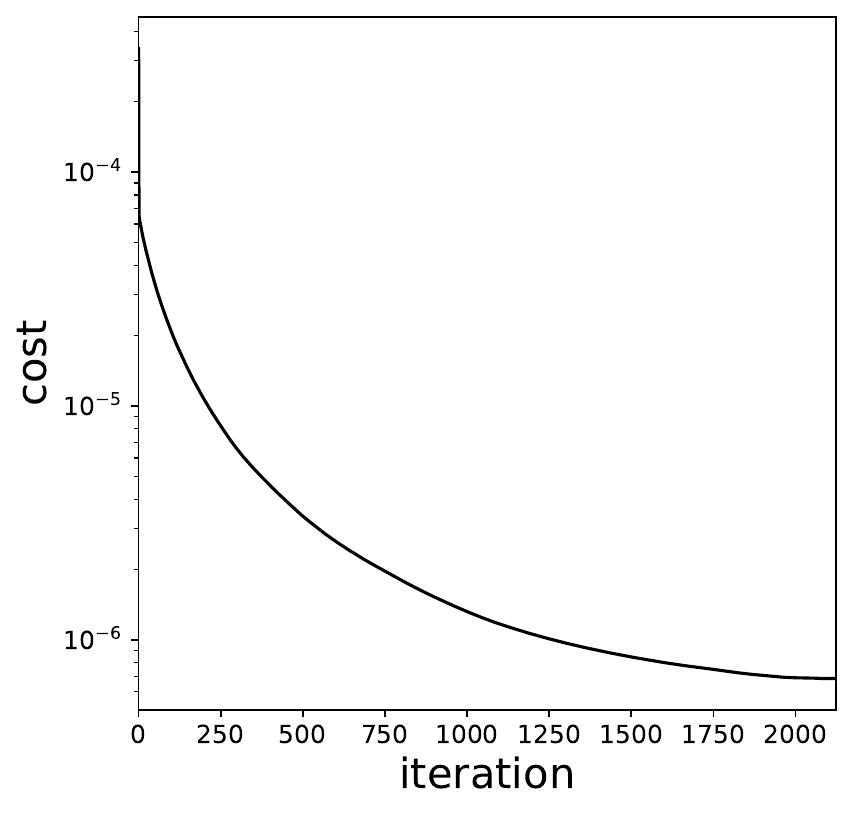}} \
\resizebox{0.15\textwidth}{!}{\includegraphics{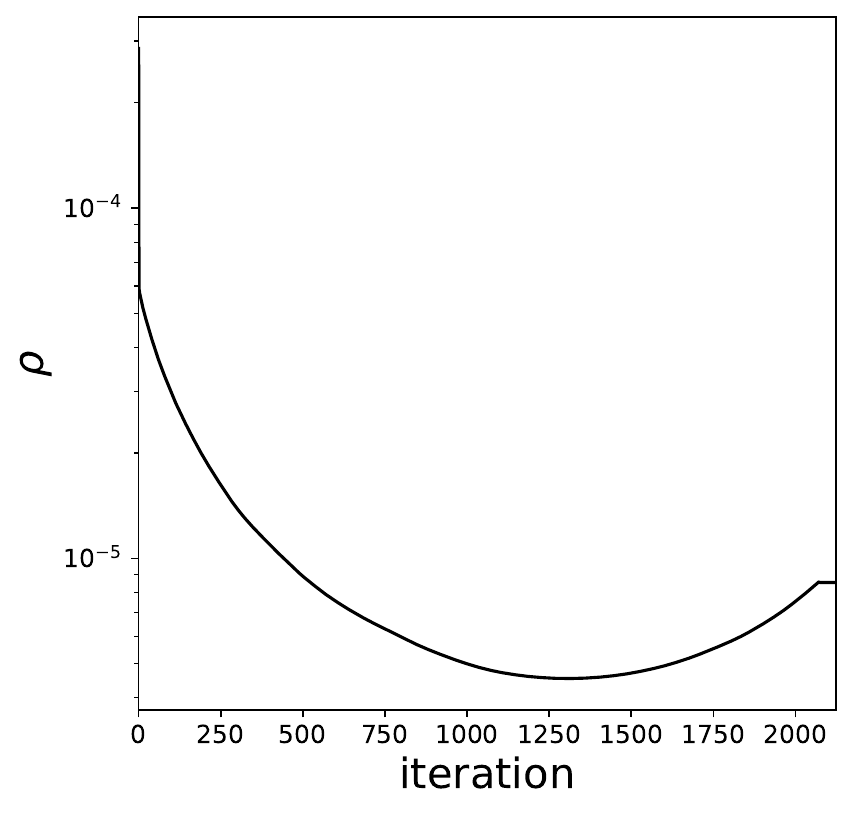}}
\caption{Results for a peanut-shape boundary interface with source $f$ (given by \eqref{eq:multiple_sources_near_boundary} with $\epsilon = 0.3$) near the boundary, under exact (top row) and noisy measurements.
Perimeter penalization was applied in all of the cases with $\rho_{1} = 0.00003$.
}
\label{fig:multiple_sources_peanut_2}
\end{figure}
%
%
%
%
\begin{figure}[htp!]
\centering
\resizebox{0.15\textwidth}{!}{\includegraphics{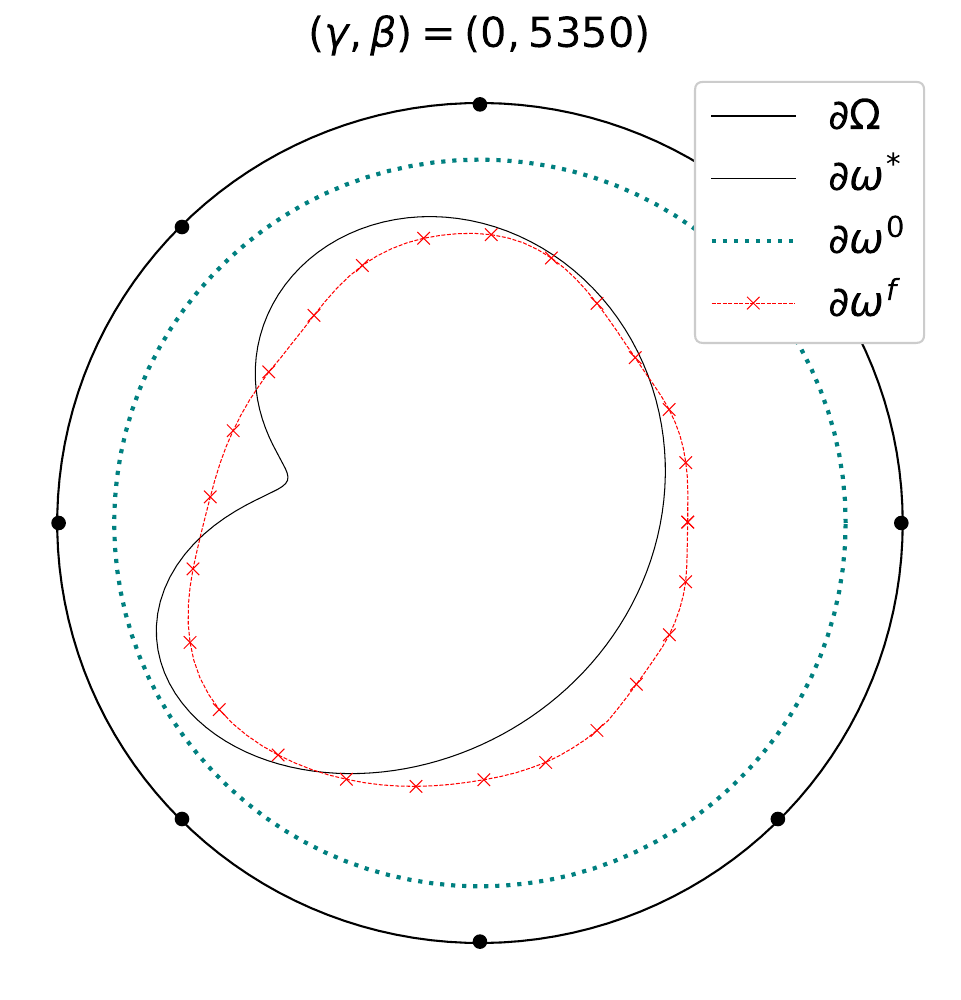}} \
\resizebox{0.15\textwidth}{!}{\includegraphics{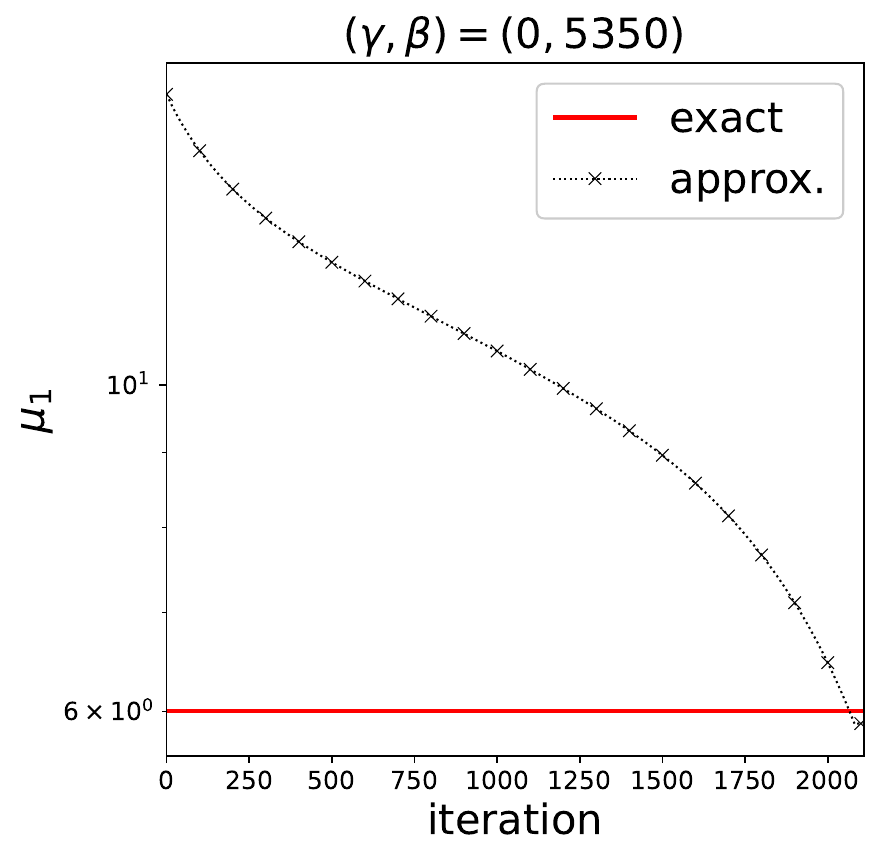}} \
\resizebox{0.15\textwidth}{!}{\includegraphics{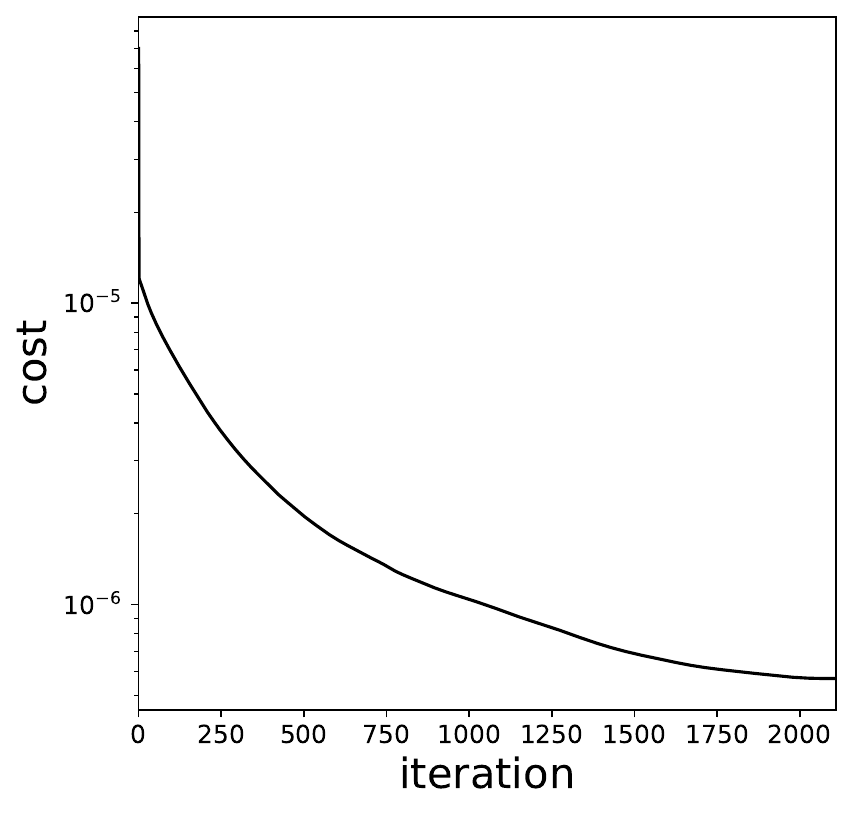}} \
\resizebox{0.15\textwidth}{!}{\includegraphics{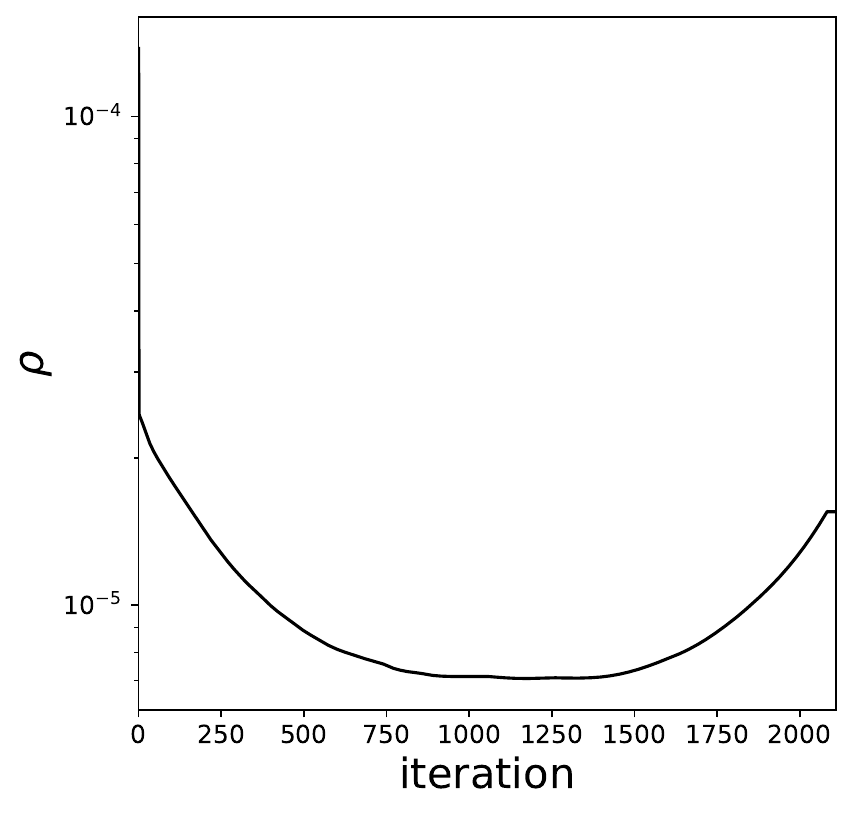}}
\\
\resizebox{0.15\textwidth}{!}{\includegraphics{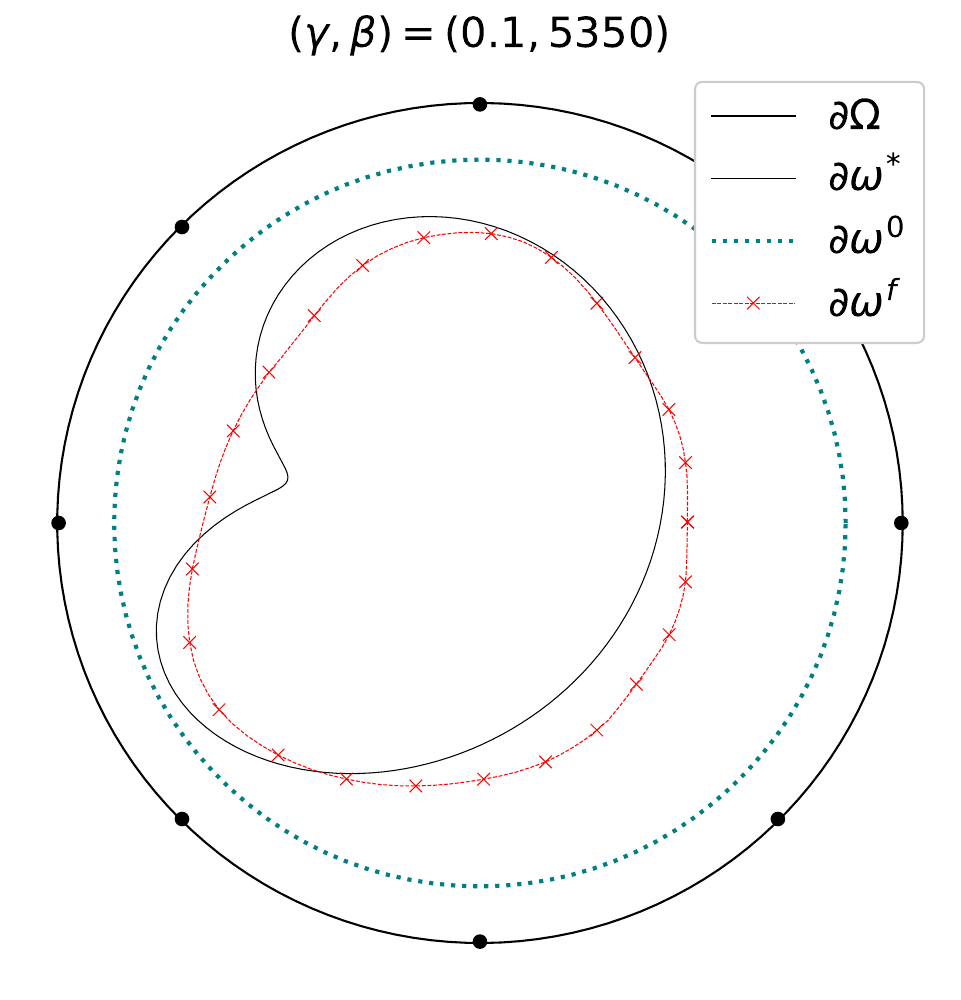}} \
\resizebox{0.15\textwidth}{!}{\includegraphics{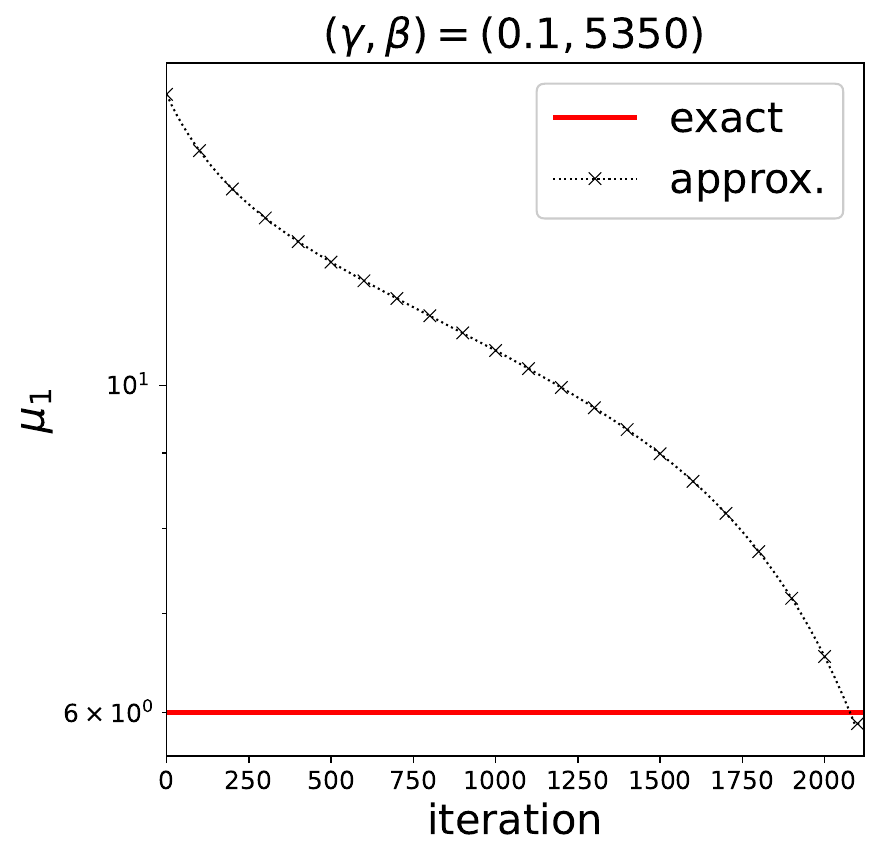}} \
\resizebox{0.15\textwidth}{!}{\includegraphics{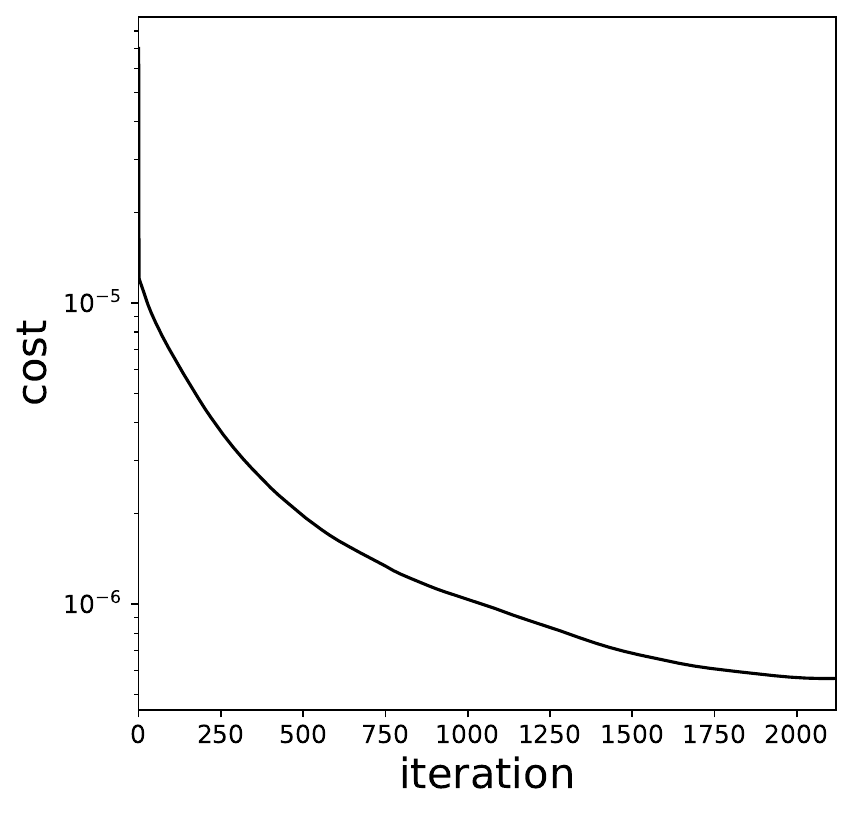}} \
\resizebox{0.15\textwidth}{!}{\includegraphics{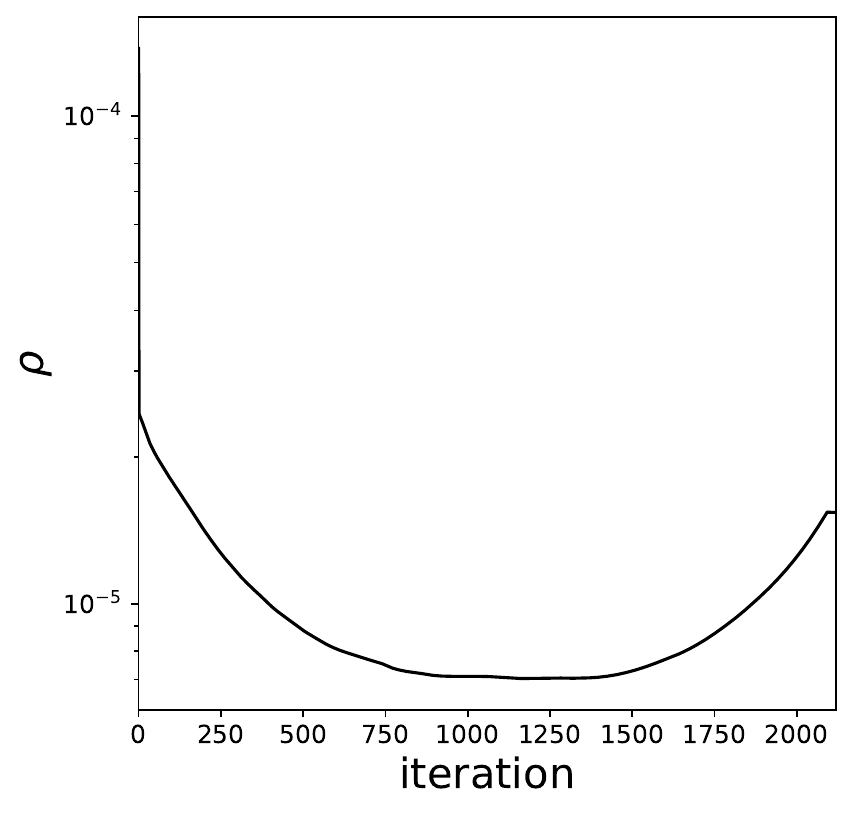}}
\caption{Results for a peanut-shape boundary interface with source $f$ (given by \eqref{eq:multiple_sources_near_boundary} with $\epsilon = 0.2$) near the boundary, under exact (top row) and noisy measurements.
The locations of the point sources are marked by thick black dots.
Perimeter penalization was applied in all of the cases with $\rho_{1} = 0.00003$.
}
\label{fig:multiple_sources_peanut_3}
\end{figure}
%
%
%
%
\begin{figure}[htp!]
\centering
\resizebox{0.25\textwidth}{!}{\includegraphics{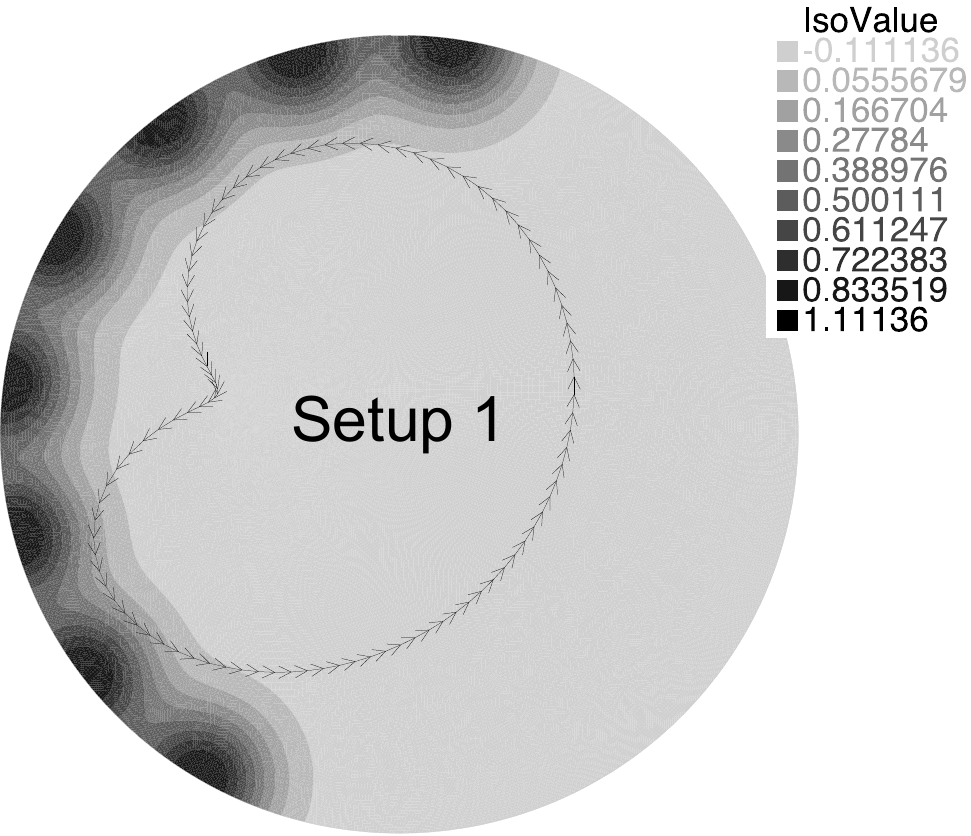}} \quad
\resizebox{0.25\textwidth}{!}{\includegraphics{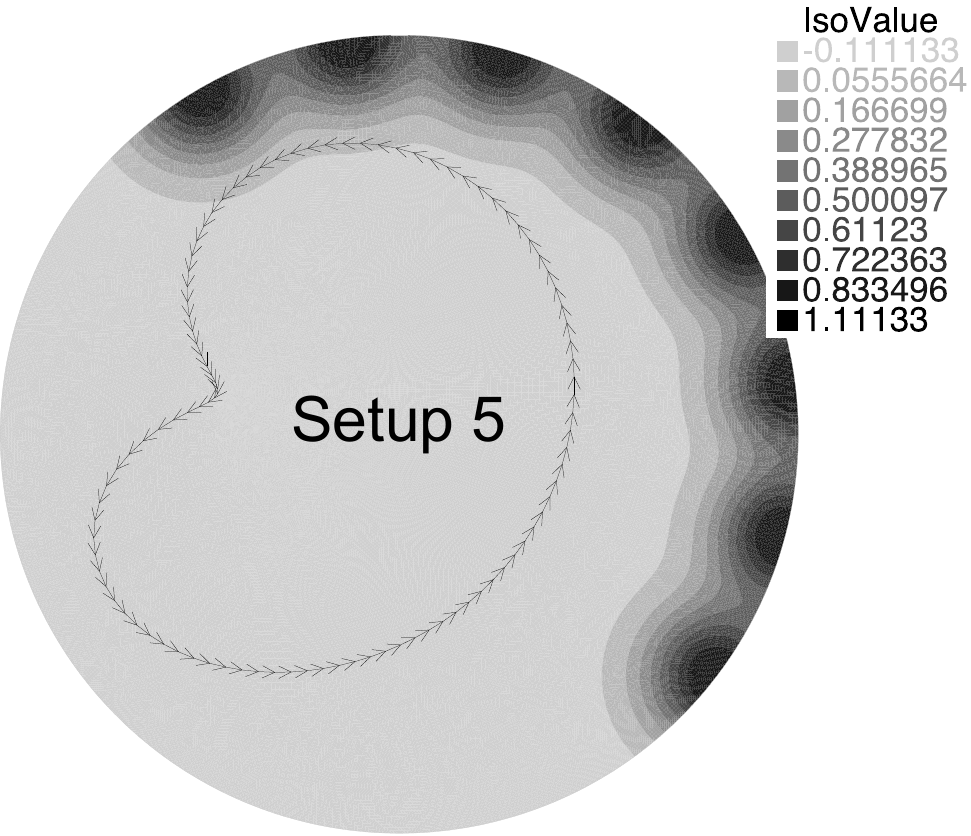}}
\caption{Positioning of sources}
\label{fig:positioning_sources}
\end{figure}
%
%

%
%
\begin{figure}[htp!]
\centering
\hfill
\resizebox{0.15\textwidth}{!}{\includegraphics{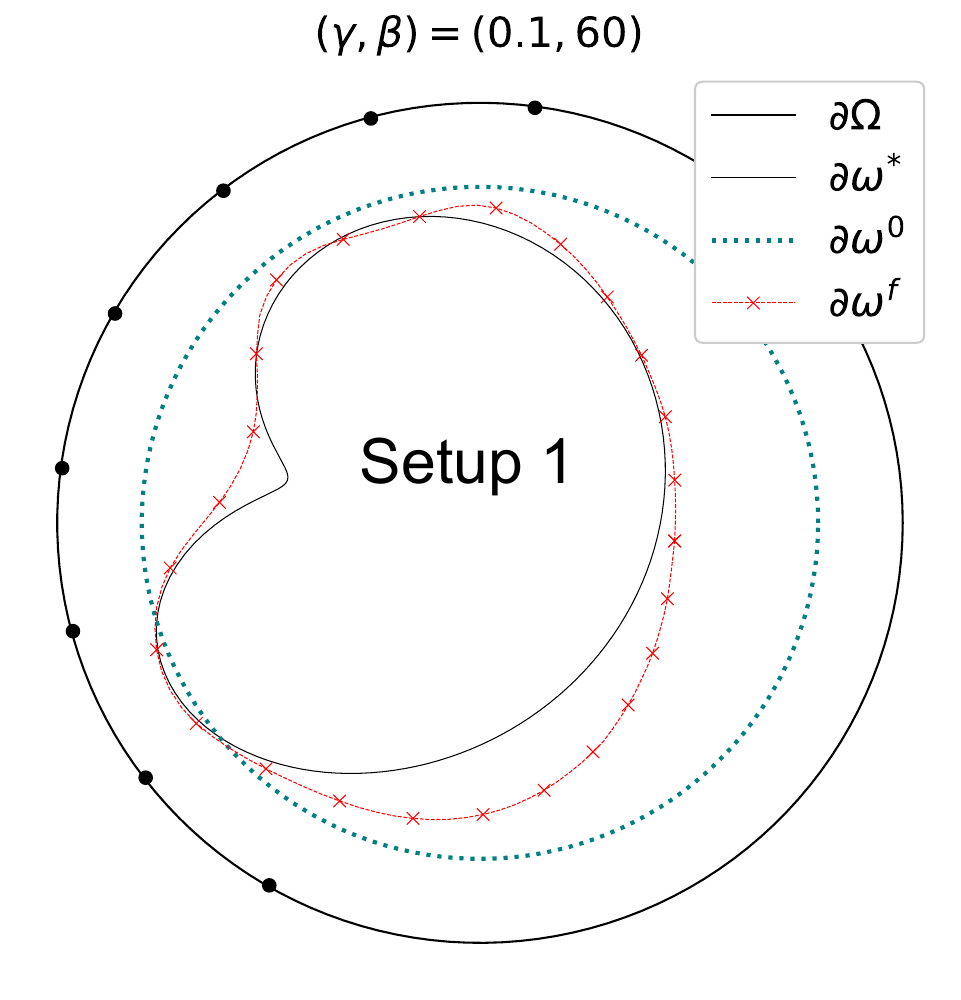}} \hfill
\resizebox{0.15\textwidth}{!}{\includegraphics{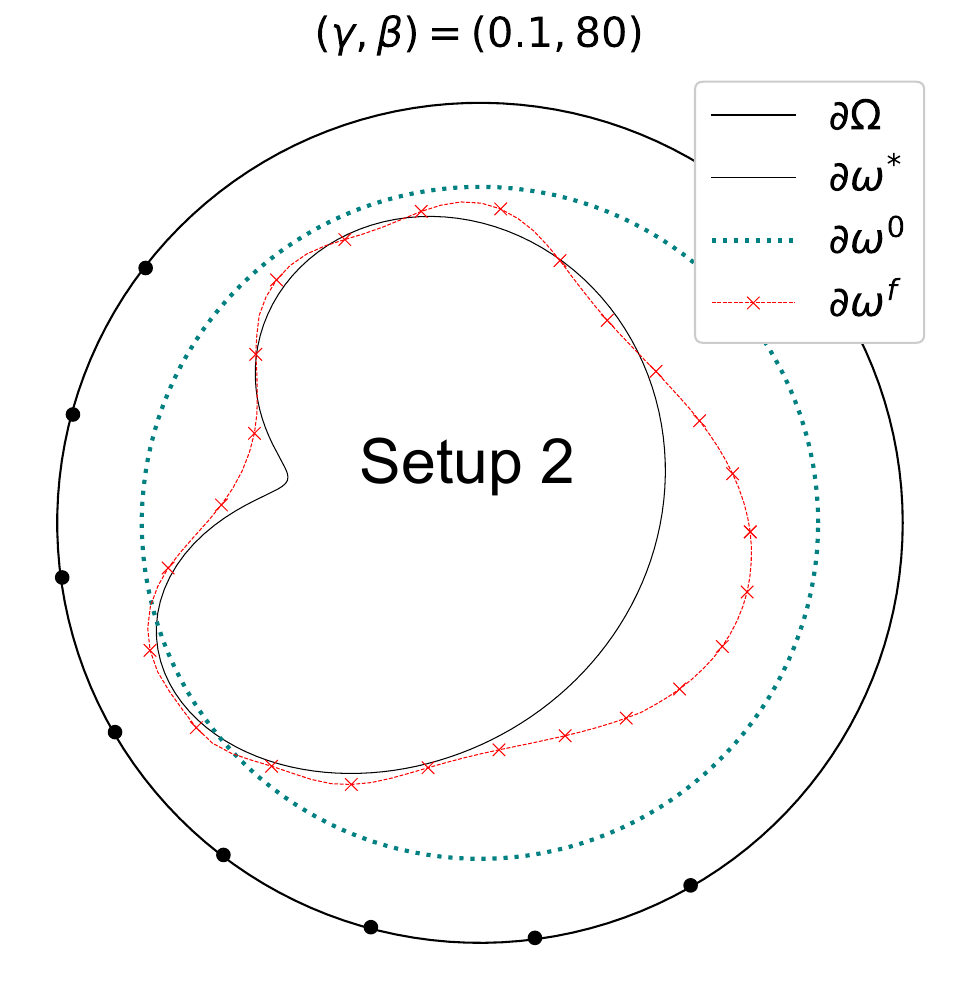}} \hfill
\resizebox{0.15\textwidth}{!}{\includegraphics{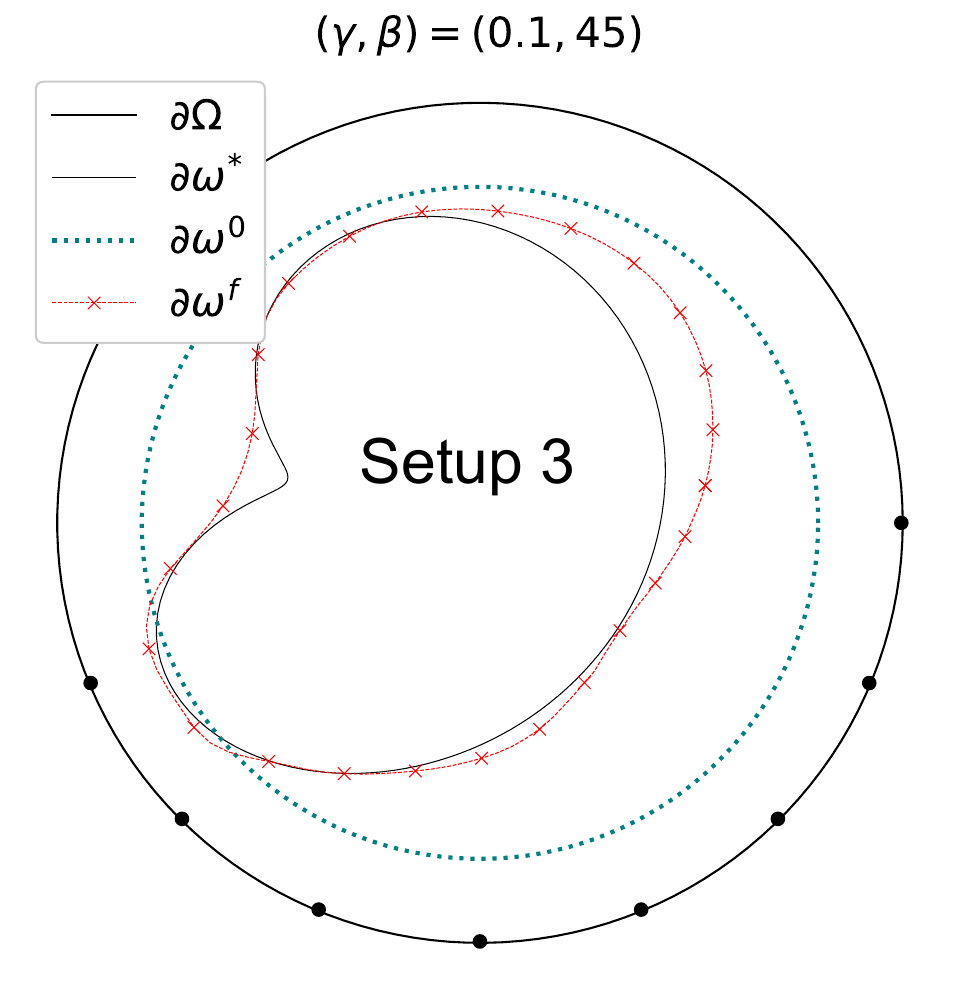}} \hfill
\resizebox{0.15\textwidth}{!}{\includegraphics{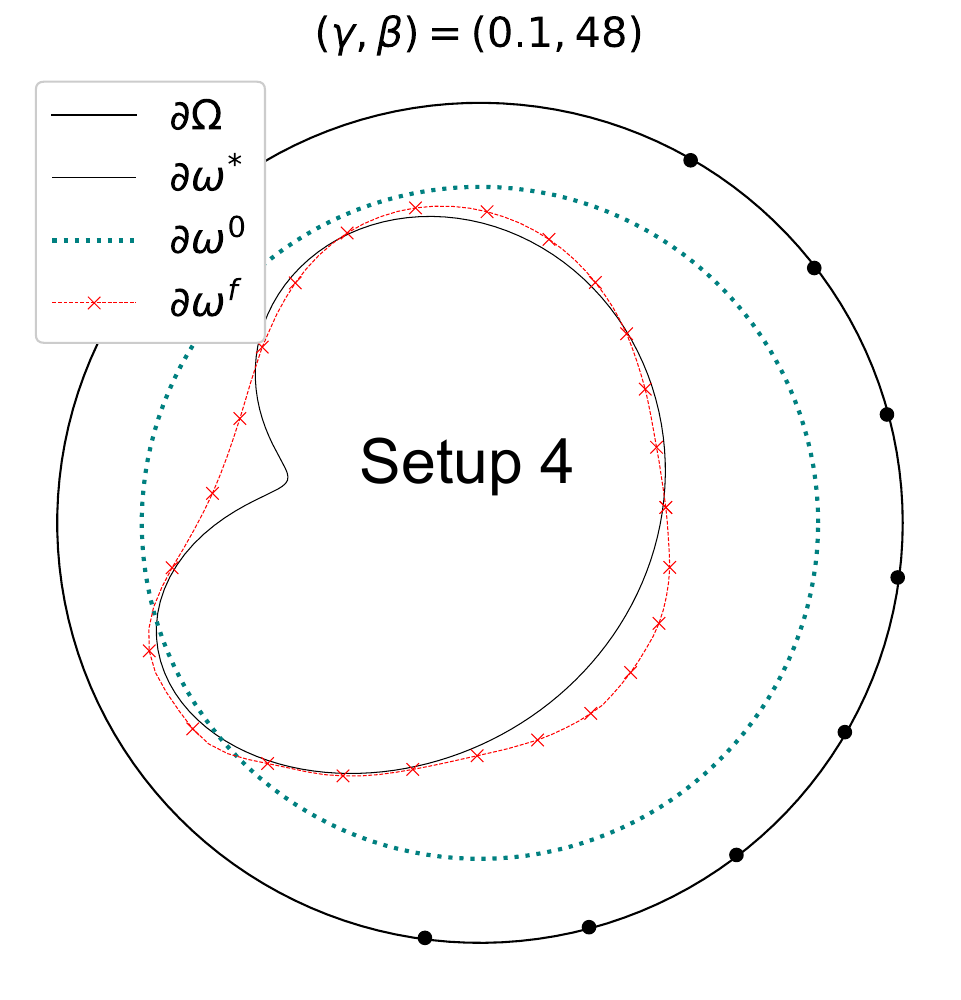}} \hfill
\resizebox{0.15\textwidth}{!}{\includegraphics{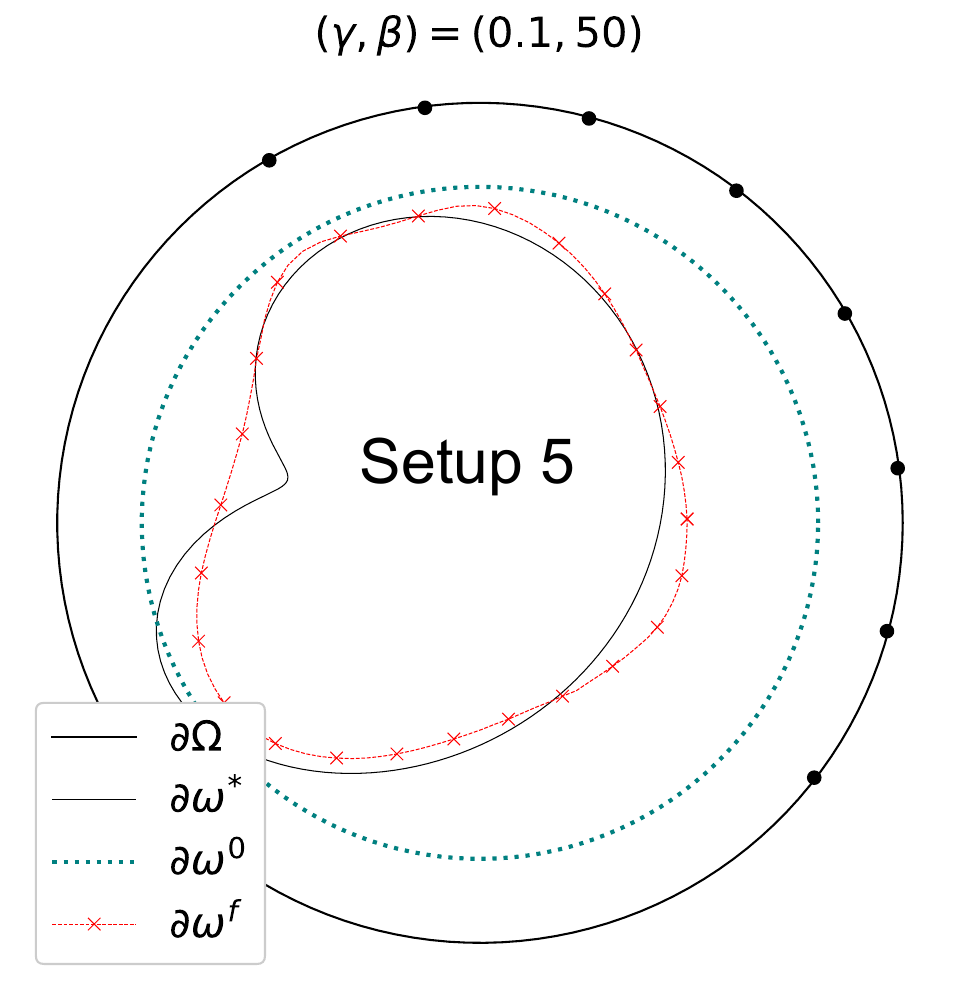}} \hfill
\resizebox{0.15\textwidth}{!}{\includegraphics{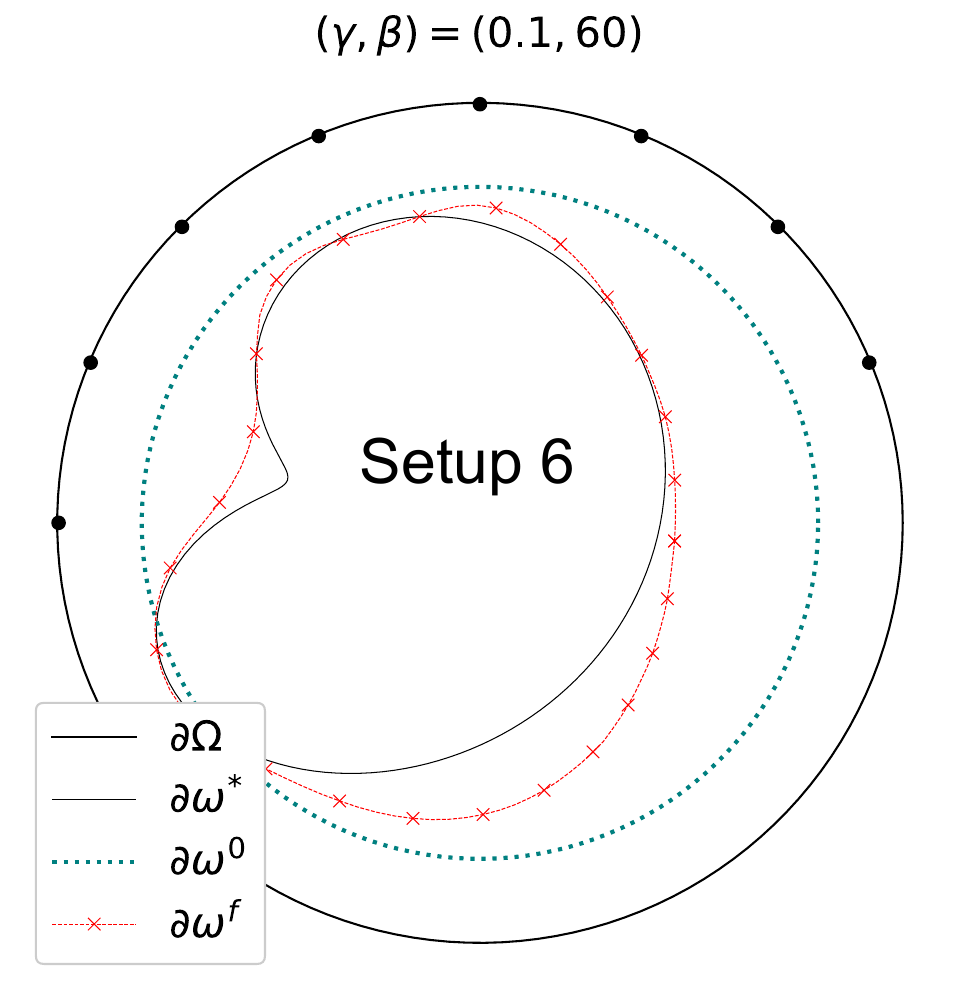}} \hfill\\
\resizebox{0.15\textwidth}{!}{\includegraphics{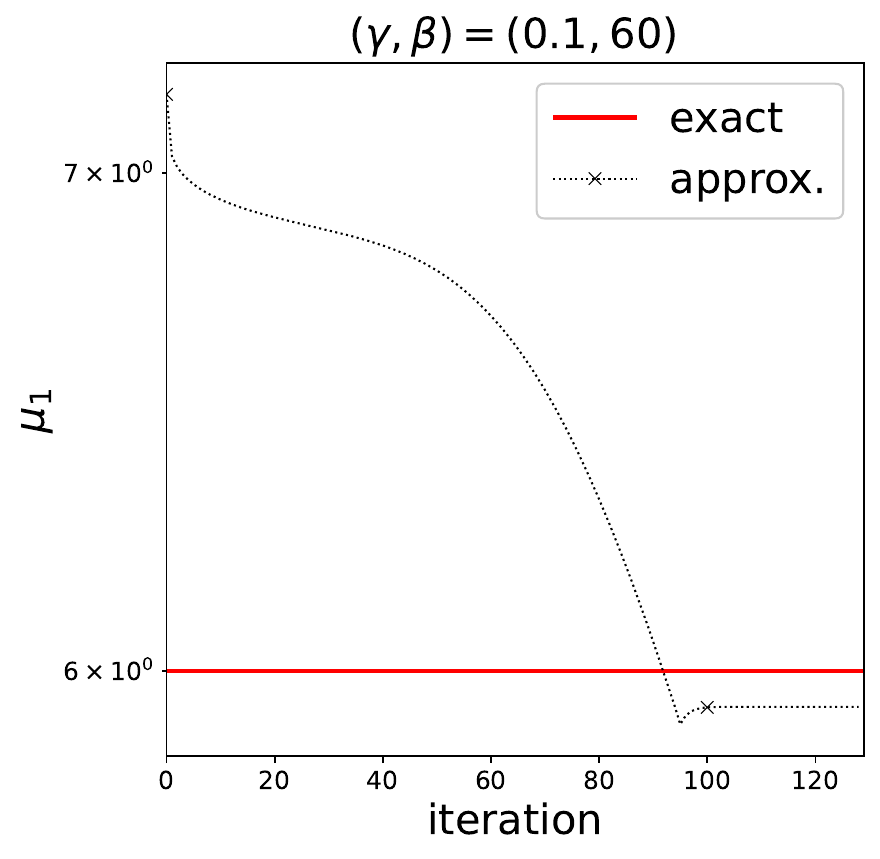}} \hfill
\resizebox{0.15\textwidth}{!}{\includegraphics{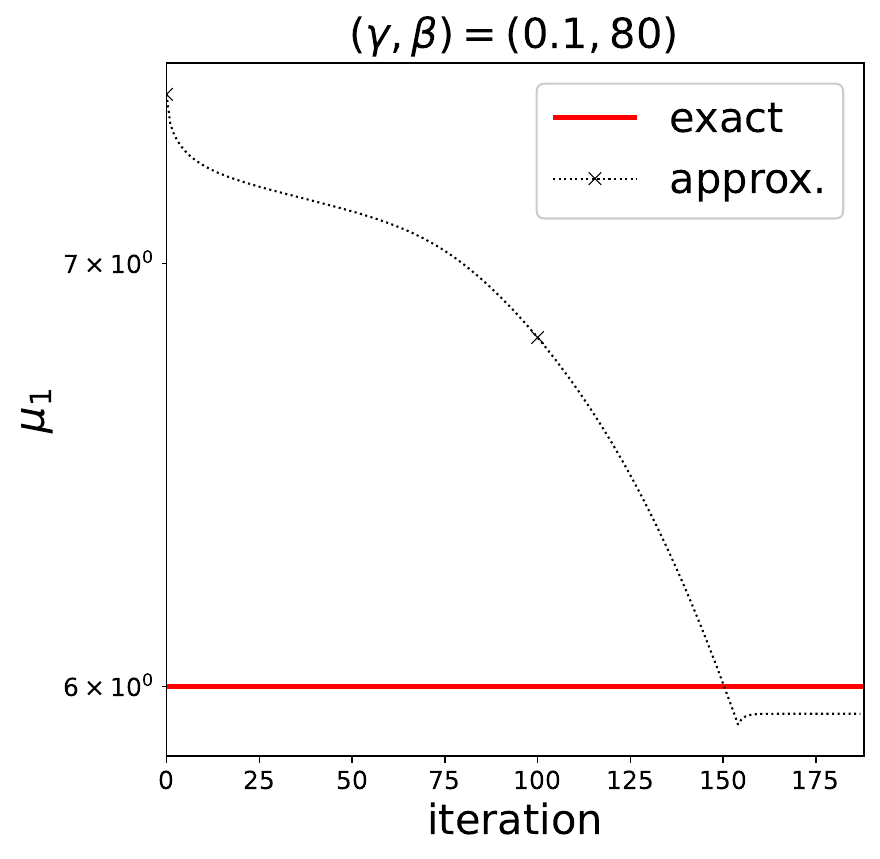}} \hfill
\resizebox{0.15\textwidth}{!}{\includegraphics{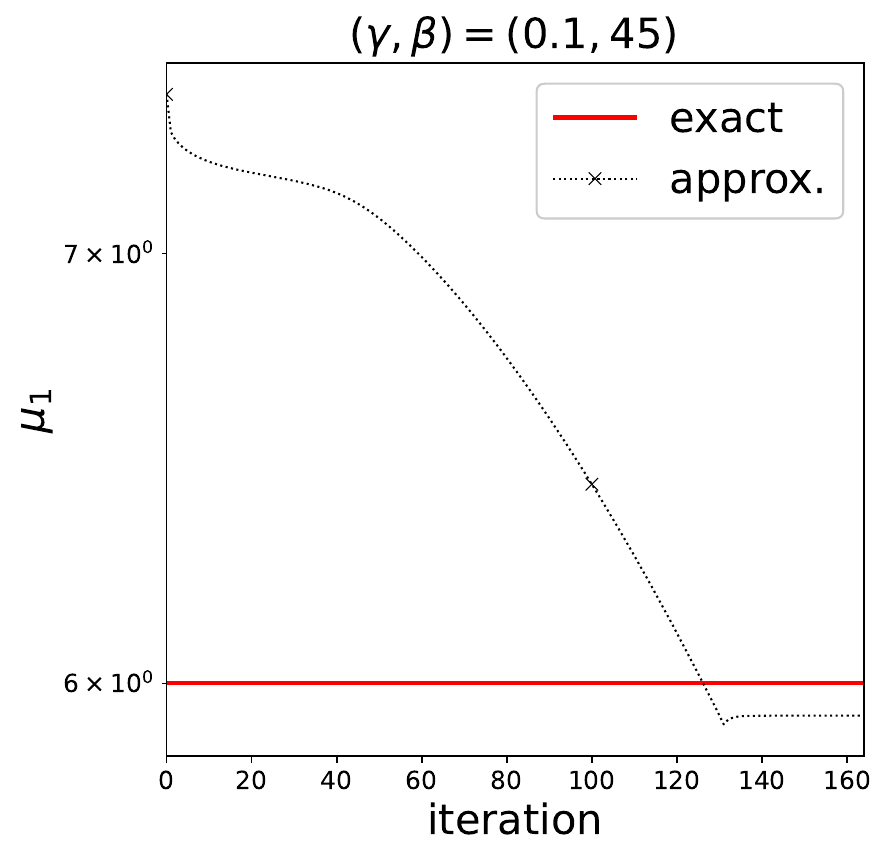}} \hfill
\resizebox{0.15\textwidth}{!}{\includegraphics{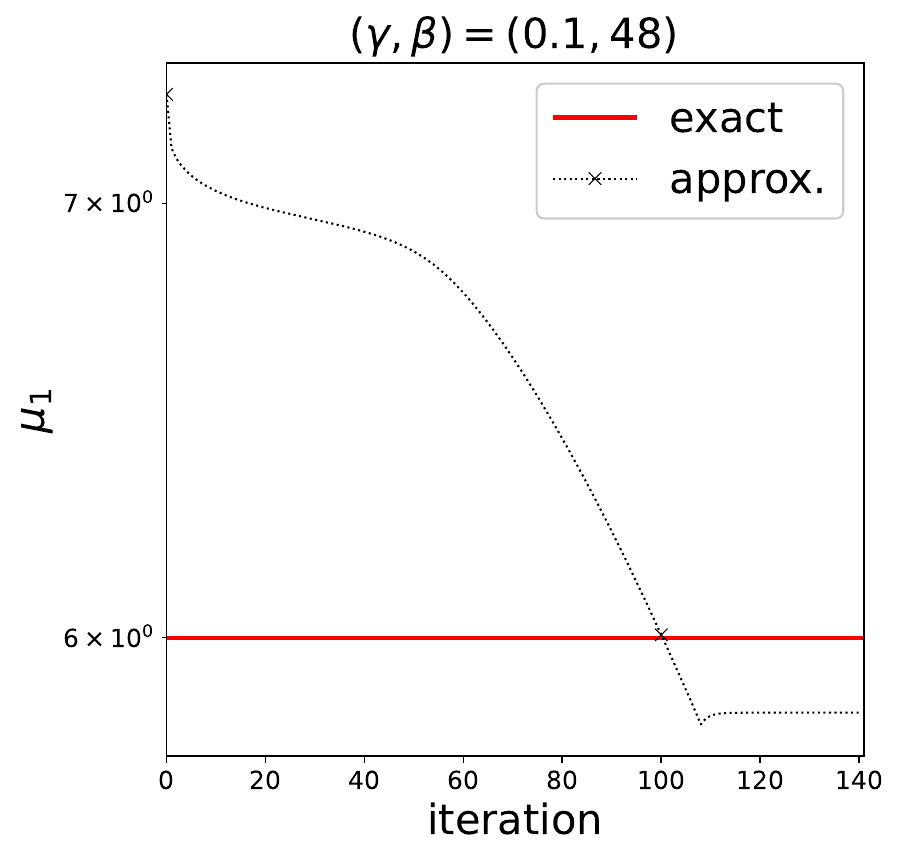}} \hfill
\resizebox{0.15\textwidth}{!}{\includegraphics{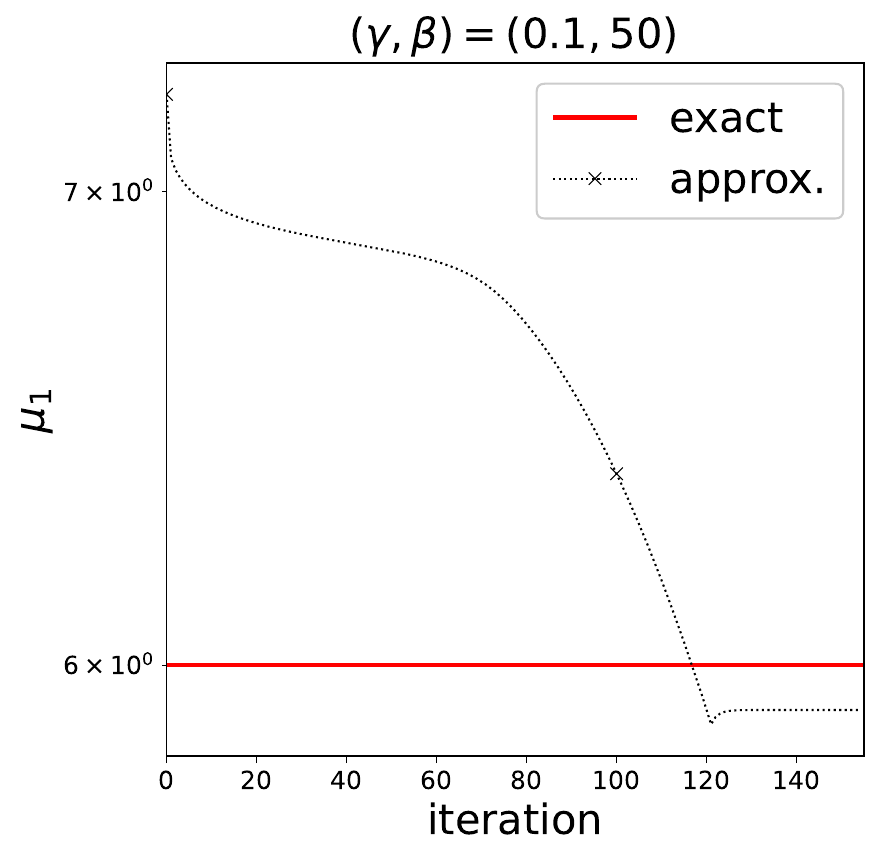}} \hfill
\resizebox{0.15\textwidth}{!}{\includegraphics{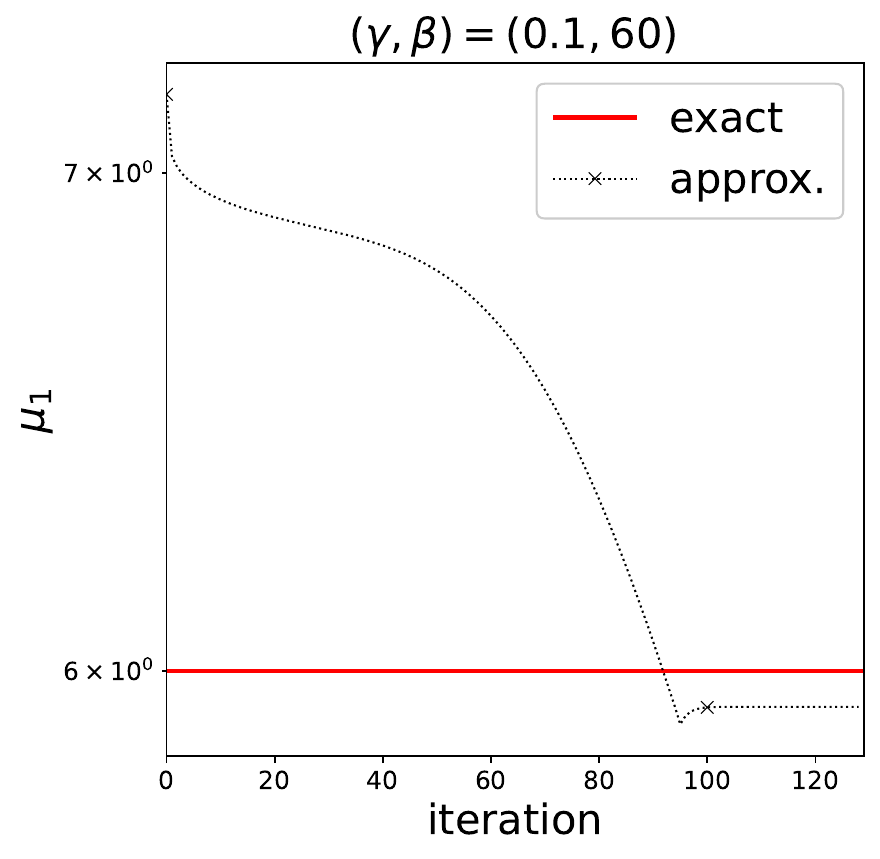}} \hfill
\\
\caption{Results for a peanut-shape boundary interface with $f$ (given by \eqref{eq:multiple_sources_near_boundary} with $\epsilon = 0.5$) of various positions and under noisy measurements.
Perimeter penalization was applied in all of the cases with $\rho_{1} = 0.000006$.
}
\label{fig:positioning_sources_results}
\end{figure}
%

{
\subsection{Discussion on the applicability of the numerical scheme}

The numerical experiments presented above primarily illustrate the performance of the proposed method
in the case of piecewise constant coefficients.
In this setting, the parameter $\mu$ is uniquely determined by the geometry of the evolving subdomains,
so that updating the interface automatically updates $\mu$.
Consequently, the state and adjoint problems remain well defined at each iteration.

For more general spatially varying coefficients, the numerical scheme can be interpreted in the following way.
The computational domain $\Omega$ is kept fixed throughout the iterations and serves as a hold-all domain,
while the interface separating the subdomains evolves.
The coefficient $\mu$ is defined on the whole of $\Omega$, and only its restriction to the subdomains
induced by the current interface is used in the state and adjoint equations.
At the discrete level, although the mesh and nodal locations may change after each interface update,
the coefficient $\mu$ remains available on the updated configuration by standard mesh-to-mesh interpolation.
Under this interpretation, the iterative scheme remains well defined numerically and is not restricted to piecewise constant coefficients.

\begin{remark}
Without the fixed-domain interpretation described above,
the iterative scheme is naturally restricted to piecewise constant coefficients,
since otherwise the coefficient $\mu$ would not be defined on the updated configuration.
\end{remark}

\begin{remark}
We have primarily used boundary-type shape gradients, which provide satisfactory reconstructions.
Accuracy could be further improved using distributed shape gradients \cite[Rem.~4.1]{Rabago2025}, which may offer better numerical accuracy in finite element implementations, particularly in challenging cases such as concave regions.
It is worth noting, however, that boundary-type shape gradients can attain comparable numerical accuracy when appropriate boundary corrections are incorporated \cite{GongZhu2021}.
Nonetheless, the current results already demonstrate the robustness of the boundary-based approach.
\end{remark}

\begin{remark}\label{rem:remeshing_in_3D}
Remeshing was not employed in our experiments, as interior subregions were sufficiently large or the interface remained near the accessible boundary.
However, in three-dimensional problems or when the initial interface is far from the true geometry, mesh quality may deteriorate, necessitating remeshing \cite{Rabago2025}.
Thus, the algorithm works without remeshing in the present cases, but remeshing should be considered in more challenging configurations, especially in 3D.
\end{remark}
}

%
%
\section{Conclusion}\label{sec:conclusion}
This study introduces a shape-optimization-based approach to tackle the complex, ill-posed problem of space-dependent parameter reconstruction in inverse {diffusion problems}.
By {reconstructing the constant} $\mu$ and the boundary interface with only one boundary measurement, {we demonstrated} the versatility and robustness {of this method}, particularly in scenarios involving non-smooth, non-convex boundaries.
Despite the difficulties in precisely capturing boundary vertices and edges, the method reliably reconstructs $\mu$ and accurately identifies concave features of the boundary interface, even under noisy conditions.
The influence of point source placement on reconstruction accuracy, especially in concave regions, highlights an expected spatial sensitivity.
Overall, the results confirm that the proposed approach is practical and effective for complex boundary interface reconstructions, emphasizing its applicability to various related reconstruction problems that focus on parameter identification with jump discontinuities.
{A key insight of the method is its non-trivial nature, as achieving an accurate reconstruction depends on a carefully chosen parameter $\beta$.}

In follow-up work, we will focus on stability analysis when the primary quantity of interest is the jump in the absorption coefficient.
Specifically, we will derive a local stability estimate for a parameterized, non-monotone family of domains and provide a quantitative stability result for the local optimal solution under perturbations of the absorption coefficient parameter.
This investigation will extend the current formulation by incorporating both the first-order and second-order {Eulerian derivative}s of the cost functional, offering a deeper understanding of the method’s robustness.
Additionally, exploring other {objective functional}s, including the well-known Kohn-Vogelius cost functional \cite{KohnVogelius1984,Meftahi2021}, will be the focus of future investigations.

{Harrach showed that two parameters can be uniquely determined using the time-independent diffusion equation \eqref{eq:main} if the diffusion coefficient is piecewise constant and the absorption coefficient is piecewise analytic \cite{Harrach2009}.
In this sense, the approach developed in this paper can be applied to the simultaneous reconstruction of two parameters.}

{In addition to DOT, the present method can potentially be applied to other optical imaging techniques, such as fluorescence DOT (FDOT) (see \cite[p.~165]{Jiang2011}, \cite{Milsteinetal2003,MycekPogue2003}, and \cite{Durduranetal2010}), ultrasound-modulated fluorescence techniques \cite{Liu2014}, and fluorescence molecular tomography (FMT) \cite{Nitziachristosetal2002}.}

\appendix
\counterwithin{theorem}{subsection}
\section{Appendices}\label{appx}
\renewcommand{\thesubsection}{\Alph{subsection}}
\renewcommand{\theequation}{\Alph{subsection}.\arabic{equation}}

\subsection{Proofs of some auxiliary results}\label{appx:proofs}
%
%
%
\subsubsection{Well-posedness of the state}
\begin{proof}[Proof of Lemma~\ref{lem:wellposedness_weak_formulation}]
	The proof follows from Lax-Milgram lemma.
	Indeed, it can be shown that the following inequalities hold:
	\begin{align}
		\abs{a(u,v)} &\leqslant \max\{{\alpha},\mu_{\max},\zeta^{-1}\} \norm{u}_{V} \norm{|v}_{V}, \quad (u, v \in V);\nonumber\\
		a(u,u) &\geqslant \min\{{\alpha}, \mu_{\min}\} \norm{u}_{V}^{2}, \quad (u \in V);\label{eq:coercivity}\\
		l(v) &\leqslant \cf \norm{v}_{V} , \quad \cf\coloneqq \norm{f}_{H^{-1}(\Omega)}, \quad (v \in V).\nonumber
	\end{align}
	The last two inequalities imply that
	\begin{equation}\label{eq:boundedness_of_u}
		\norm{u}_{V} \leqslant \cb \cf, \quad \cb\coloneqq  \frac{1}{\min\{{\alpha}, \mu_{\min}\}} > 0.
	\end{equation}
	The rest of the arguments are standard, so we omit it.
\end{proof}
\subsubsection{Continuity of the parameter-to-state map}
\begin{proof}[Proof of Proposition~\ref{prop:boundedness_of_F}]
	Let us write
	\begin{equation}\label{eq:bilinear_form}
	{a(\mu; u,v)} = \intO{( {\alpha} \nabla{u} \cdot \nabla{v} + {\mu} {u}{v})}
				+ \frac{1}{\zeta} \intG{{u}{v}}, \quad \mu\in {\mathcal{A}}, u,v \in {V}.
	\end{equation}
	Now, since $\tilde{u} = F(\tilde{\mu})$ and $\dbtilde{u} = F(\dbtilde{\mu})$,
	then, clearly, we have ${a(\tilde{\mu}; \tilde{u},v)} = l(v) = {a(\dbtilde{\mu}; \dbtilde{u},v)}$, for all $v \in V$.
	We let $w = w(x) = \tilde{u}(x) - \dbtilde{u}(x) \in V$.
	It can easily be verified that
	\[
        \left\{
        \begin{aligned}
          -\dive{\left( {\alpha} \nabla{w}\right)} + \tilde{\mu}{w} &= - (\tilde{\mu} - \dbtilde{\mu}) \dbtilde{u}  , \quad \text{in } \Omega,\\
          {\alpha} \dn{w} + \frac{1}{\zeta} w &= 0, \quad \text{on }  \bigdO.
        \end{aligned}
        \right.
	\]
	Using $a$ in \eqref{eq:bilinear_form}, we have the variational equation ${a(\tilde{\mu}; w,v)} = -( (\tilde{\mu} - \dbtilde{\mu}) \dbtilde{u} , v)_{\Omega}$, for all $v \in V$.
	We set $v = w$ and apply \eqref{eq:coercivity} and the Cauchy-Schwarz inequality to get
	\[
		\min\{{\alpha}, \mu_{\min}\} \norm{w}_{V}^{2}
		\leqslant \norm{\tilde{\mu} - \dbtilde{\mu}}_{L^{\infty}(\Omega)} \norm{\dbtilde{u}}_{L^{2}(\Omega)} \norm{w}_{L^{2}(\Omega)}
		\leqslant \norm{\tilde{\mu} - \dbtilde{\mu}}_{L^{\infty}(\Omega)} \norm{\dbtilde{u}}_{V} \norm{w}_{V}.
	\]
	Employing estimate \eqref{eq:boundedness_of_u}, we obtain
	\begin{equation}\label{eq:boundedness_of_w}
		\norm{w}_{V} \leqslant \cb^{2} \cf \norm{\tilde{\mu} - \dbtilde{\mu}}_{L^{\infty}(\Omega)}.
	\end{equation}
	Taking $c = \cb^{2} \cf \norm{\tilde{\mu} - \dbtilde{\mu}}_{L^{\infty}(\Omega)}$ concludes the proof.
\end{proof}
%
%
%
\subsubsection{Differentiability of the operator $F$}
\begin{proof}[Proof of Proposition~\ref{prop:state_derivative}]
	Let $u \in V$ be the unique solution to Problem~\ref{prob:weak_form_optimal_tomography}.
	Then, the existence of a unique weak solution to the variational equation \eqref{eq:sensitivity_equation_variational_form} can be verified easily using the Lax-Milgram lemma.
	We omit the proof since the argumentations are standard.
	
	We subtract \eqref{eq:sensitivity_equation_variational_form} from \eqref{eq:weak_form_del_u} to obtain ${a(\mu; {\wdot} - \udot,v)} = - (\nu {\wdot}, v)_{\Omega}$, for all $v \in V$.
	Taking $v= {\wdot} - \udot$, we obtain -- appealing to \eqref{eq:coercivity} and employing Cauchy-Schwarz inequality -- the estimate
	$
	\min\{{\alpha}, \mu_{\min}\} \norm{{\wdot} - \udot}_{V}^{2}
		= - (\nu {\wdot}, {\wdot} - \udot)_{\Omega}
		\leqslant \norm{\nu}_{L^{\infty}(\Omega)}\norm{{\wdot}}_{V} \norm{{\wdot} - \udot}_{V}.
	$
	{\small Consequently, we get $\norm{{\wdot} - \udot}_{V} \leqslant \cb\norm{\nu}_{L^{\infty}(\Omega)}\norm{{\wdot}}_{V}$.}
	In view of \eqref{eq:boundedness_of_w} and with respect to \eqref{eq:strong_form_of_delta_u}, we find that $\norm{{\wdot}}_{V} \leqslant \cb^{2} \cf \norm{\nu}_{L^{\infty}(\Omega)}$.
	Combining the last two estimates lead us to $\norm{{\wdot} - \udot}_{V} \leqslant \cb^{3} \cf \norm{\nu}_{L^{\infty}(\Omega)}^{2}$.
	This yields
	\begin{equation}\label{eq:Frechet_derivative_estimate}
		\dfrac{\norm{F(\mu + \nu)-F(\mu) - \udot}_{V}}{\norm{\nu}_{L^{\infty}(\Omega)}}
		\leqslant \dfrac{\norm{{\wdot} - \udot}_{V}}{\norm{\nu}_{L^{\infty}(\Omega)}}
		\leqslant  \cb^{3} \cf \norm{\nu}_{L^{\infty}(\Omega)}.
	\end{equation}
	In conclusion, $F$ is differentiable at $\mu$ and $DF(\mu)\nu = \udot$.
	
	Now, let us take $v = \udot \in {V}$ in \eqref{eq:sensitivity_equation_variational_form}.
	Then, again, in view of \eqref{eq:boundedness_of_w} and by the Cauchy-Schwarz inequality, we get
	\[
	\min\{{\alpha}, \mu_{\min}\} \norm{\udot}_{V}^{2}	
		\leqslant \abs{{a(\mu;\udot,\udot)}}
		= \abs{(\nu u, \udot)_{\Omega}}
		\leqslant \norm{\nu}_{L^{\infty}(\Omega)} \norm{u}_{V} \norm{\udot}_{V}
		\leqslant \cb \cf \norm{\nu}_{L^{\infty}(\Omega)} \norm{\udot}_{V}.
	\]
        This leads to the inequality
        \begin{equation}\label{eq:estimate_state_derivative}
            \norm{\udot}_{V} \leqslant c, \qquad c=\cb^{2} \cf \norm{\nu}_{{\infty}}.
        \end{equation}
        Thus, it follows that $DF(\mu)$ is uniformly bounded, thereby concluding the proof.
\end{proof}
%
%
%
\subsubsection{Second-order sensitivity analysis}
\begin{proof}[Proof of Proposition~\ref{eq:state_second_derivative}]
	The well-posedness of \eqref{eq:sensitivity_equation_variational_form} in Proposition~\ref{prop:state_derivative} implies the existence of unique solution $ \uddot \in V$ to \eqref{eq:second_order_sensitivity_equation_variational_form} by Lax-Milgram lemma.
	The proof is standard so we omit it.
	
	Now, we subtract \eqref{eq:second_order_sensitivity_equation_variational_form} from \eqref{eq:weak_form_del_del_u} to obtain ${a(\mu + \nu_{1};{\wddot},v) -  a(\mu;\uddot,v)}
		= -(\nu_{2} [ F(\mu+\nu_{1}) - F(\mu) - DF(\mu)\nu_{1} ]  , v)_{\Omega}$, for all $v \in V$,
	or equivalently, after rearrangement,
	\[
		{a(\mu; {\wddot} - \uddot, v)}
		= -(\nu_{2} [ F(\mu+\nu_{1}) - F(\mu) - DF(\mu)\nu_{1} ]  , v)_{\Omega} - (\nu_{1} {\wddot}, v)_{\Omega} , \quad \forall v \in V.
	\]

	By taking $ v = z \coloneqq  {\wddot} - \uddot $ and then employing the Cauchy-Schwarz inequality as well as the coercivity of the bilinear form $a$ (cf. \eqref{eq:coercivity}), we obtain
	\begin{equation}\label{eq:first_estimate}	
	\begin{aligned}
		\frac{1}{\cb} \norm{z}_{V}^{2}
		&\leqslant \norm{\nu_{2}}_{L^{\infty}(\Omega)} \norm{F(\mu+\nu_{1}) - F(\mu) - DF(\mu)\nu_{1}}_{V} \norm{z}_{V}\\
		&\quad + \norm{\nu_{1}}_{L^{\infty}(\Omega)} \norm{\wddot}_{V} \norm{z}_{V}.
	\end{aligned}
	\end{equation}
	Utilizing estimates \eqref{eq:Frechet_derivative_estimate} and \eqref{eq:estimate_state_derivative} -- employing similar argumentations while noting \eqref{eq:boundedness_of_w} -- we have
	\[
		\norm{F(\mu+\nu_{1}) - F(\mu) - DF(\mu)\nu_{1}}_{V}
			 \leqslant \cb^{3} \cf \norm{\nu_{1}}_{L^{\infty}(\Omega)}^{2}
	\]
	and
	\[
		\norm{\wddot}_{V} \leqslant \cb^{3} \cf \norm{\nu_{1}}_{L^{\infty}(\Omega)} \norm{\nu_{2}}_{L^{\infty}(\Omega)}.
	\]
	With these estimates, we deduce from \eqref{eq:first_estimate} the following bound
	\[
		\norm{z}_{V}^{2} \leqslant \cb^{3} \cf \norm{\nu_{1}}_{L^{\infty}(\Omega)}^{2} \norm{\nu_{2}}_{L^{\infty}(\Omega)},
	\]
	from which we obtain the estimate
	\begin{equation}\label{eq:second_order_Frechet_derivative_estimate}
	\begin{aligned}
		\dfrac{\norm{DF(\mu + \nu_{1})\nu_{2}-DF(\mu)\nu_{2} - \uddot}_{V}}{\norm{\nu_{1}}_{L^{\infty}(\Omega)}}
		&\leqslant \dfrac{\norm{{\wddot} - \uddot}_{V}}{\norm{\nu_{1}}_{L^{\infty}(\Omega)}}\\
		&\leqslant  \cb^{3} \cf \norm{\nu_{1}}_{L^{\infty}(\Omega)} \norm{\nu_{2}}_{L^{\infty}(\Omega)}.
	\end{aligned}
	\end{equation}
	This shows that $F$ is twice-differentiable at $\mu$ and $\uddot = D^{2}F(\mu)[\nu_{1}, \nu_{2}]$.

	Now, choosing $v = \uddot$ in \eqref{eq:second_order_sensitivity_equation_variational_form}, and then utilizing estimate \eqref{eq:estimate_state_derivative}, we obtain
	\begin{equation}\label{eq:estimate_state_second_derivative}
		\norm{\uddot}_{V} \leqslant \cb^{3} \cf \norm{\nu_{1}}_{L^{\infty}(\Omega)} \norm{\nu_{2}}_{L^{\infty}(\Omega)}.
	\end{equation}
	Choosing $c = {2} \cb^{3} \cf \norm{\nu_{1}}_{L^{\infty}(\Omega)} \norm{\nu_{2}}_{L^{\infty}(\Omega)}$, we conclude that $D^{2}F(\mu)$ is uniformly bounded.
	This proves the proposition.
\end{proof}
\subsubsection{{Strict convexity of the regularized functional}}\label{subsubsec:strict_convexity_proof}
\begin{proof}[Proof of Proposition~\ref{prop:strict_convexity_of_J}]
For brevity, we write $u=u(\mu)$.
	Observe that the first and the third term of $\Jedprime(\mu)\ddmu$ are non-negative, hence, we only need to examine the second term.
	We claim that it is positive.
	In view of \eqref{eq:boundedness_of_u} and \eqref{eq:estimate_state_second_derivative}, we get
	\begin{align*}
		\abs{\indO{u{}}{\uddot}}
		\leqslant \norm{u{}}_{L^{2}(\bigdO)}\norm{\uddot}_{L^{2}(\bigdO)}
		\lesssim \norm{u{}}_{{V}}\norm{\uddot}_{{V}}
		&\leqslant \cb^{3} \cf \norm{u{}}_{V}  \norm{\nu}_{L^{\infty}(\Omega)}^{2}\\
		&\leqslant \cb^{4} \cf^{2} \norm{\nu}_{L^{\infty}(\Omega)}^{2}.
	\end{align*}
	Let $\epszed = \cb^{4} \cf^{2} > 0$.
	Invoking our key assumption \eqref{key_assumption}, we get the following lower estimate
	{\small \[
	\Jedprime(\mu)\ddmu
	\geqslant \norm{\udot}_{L^{2}(\bigdO)}^{2} - \abs{\indO{u{}}{\uddot} } + \rho\norm{\dmu}_{L^{2}(\Omega)}^{2}\\
	\geqslant \norm{\udot}_{L^{2}(\bigdO)}^{2} + ( \rho - \epszed )\norm{\dmu}_{L^{\infty}(\Omega)}^{2}.
	\]}
	Choosing $\rho > \epszed > 0$, we get $\Jedprime(\mu)\ddmu > 0$ -- proving that $\Je$ is strictly convex.
\end{proof}
%
%
%

\subsection{Computation of the {Lagrangian} and {Eulerian derivative}s of the state}\label{appx:material_and_shape_derivative}
In this section, we use the shorthand $\int_{\Omega} = \int_{\Omega_{\pm}} = \int_{\Omega_{+}} + \int_{\Omega_{-}}$, assuming the context clarifies the division.
\begin{proof}[Proof of Theorem~\ref{thm:state_shape_derivative}]
	The proof consists of two primary steps: first, we characterize the material derivative of the state, followed by the derivation of the {Eulerian derivative} of the state; see \cite{AfraitesDambrineKateb2007} for a closely related derivation in the context of a transmission problem.
	
	\textit{First step:} Let $u_{t} = u(\omega_{t})$, where $t \in \textsf{I}$, and {$\Omega \in \mathcal{O}_{ad}^{k}$}, satisfying \eqref{prob:weak_form_optimal_tomography}.
	To prove the given proposition, we first show the existence of the material derivative $\dot{u}$ of $u$ which is defined as follows (see, e.g., \cite[Eq.~(3.38), p.~111]{SokolowskiZolesio1992}):
	\begin{equation}\label{eq:definition_of_the_material_derivative}
		\dot{u} = \dot{u}(\Omega)[\VV]  = \lim_{t \searrow 0} \frac{u(\Omega_{t}) \circ T_t - u(\Omega)}{t}
	\end{equation}
	provided the limit $\dot{u}$ exists in $H^{1}(\Omega)$ where $(u(\Omega_{t}) \circ T_t)(x) = u(\Omega_{t})(T_t(x))$, $x \in \Omega$.
	
	Let us consider $u_{t} \in V_{t} \coloneqq  H^{1}(\Omega_{t})$, the solution of the perturbed problem for a given variation {$\VV \in {{\sfTheta}^{k}}$} given by the solution of
	\begin{equation}\label{eq:perturbed_problem}
	a_{t}(u_{t},v_{t}) = l_{t}(v_{t}), \quad \forall v_{t} \in V_{t}.
	\end{equation}
	where
	\[
        \left\{
        \begin{aligned}
        	a_{t}(u_{t},v_{t}) &= \intOt{( {\alpha_{t}} \nabla{u_{t}} \cdot \nabla{v_{t}} + {\mu_{t}} {u_{t}}{v_{t}})}
        				+ \frac{1}{\zeta} \intGt{{u_{t}}{v_{t}}}, \quad \text{for } u_{t},v_{t} \in V_{t},\\
        	l_{t}(v_{t}) &= \intOt{f_{t}v_{t}}, \quad \text{for } v_{t} \in {V_{t}}.
        \end{aligned}
        \right.
        \]
        Here, $\alpha_{t}$, $\mu_{t}$, and $f_{t}$ are defined as $\alpha$, $\mu$, and $f$ but replacing $\Omega$ by the perturbed domain $\Omega_{t}$, and the gradient, $\nabla$, is taken with respect to the spatial variable $x \in \Omega$.

        By applying the change of variables (cf. \cite[subsec.~9.4.2--9.4.3, {pages}.~482--484]{DelfourZolesio2011}), one can write equation \eqref{eq:perturbed_problem} as follows:
	\begin{equation}\label{eq:transformed_problem}
		a^{t}(\ut,v) = l^{t}(v), \quad \forall v \in V,
	\end{equation}
	where
	\[
        \left\{
        \begin{aligned}
        	a^{t}(\ut,v) &= \intO{( {\alphat} \At \nabla{\ut} \cdot \nabla{v} + \dett {\mut} {\ut}{v})}
        				+ \frac{1}{\zeta} \intG{\bt {\ut}{v}}, \quad \text{for } \ut,v \in V,\\
        	l^{t}(v) &= \intO{\dett f^{t}v}, \quad \text{for } v \in {V}, \quad (\varphi^{t} = \varphi_{t} \circ T_{t} : \Omega \to \mathbb{R}).
        \end{aligned}
        \right.
        \]
        Here, observe that $\bt = \dett \abs{({D}T_t)^{-\top} \nn} = 1$ because $\VV$ vanishes on $\partial\Omega$.

        Now, for all $t \in [0, t_{0})$, with $t_{0}$ sufficiently small, one can show that $\wt = \ut - u \in V$ is a unique solution to the variational equation $a^{t}(\ut,v) - a(u,v) = l^{t}(v) - l(v)$, for all $v \in V$, which can equivalently be written as
        \begin{equation}\label{eq:difference_equation}
        		\tilde{a}(\wt, v) = \tilde{l}(v), \quad \forall v \in V,
        \end{equation}
	where
	\begin{equation}\label{eq:difference_equation_for_material_derivative}
        \left\{
        \begin{aligned}
        	\tilde{a}(\wt, v) &= \intO{( {\alphat} \nabla{\wt} \cdot \nabla{v} + {\mut} {\wt}{v})} + {\frac{1}{\zeta} \intG{{\wt}{v}}}, \quad \text{for } \wt, v \in V,\\
        	\tilde{l}(v) &= -\intO{(\alphat - \alpha)  \At \nabla{\ut} \cdot \nabla{v}}
				-\intO{ \alpha (\At - id)  \nabla{\ut} \cdot \nabla{v}}\\
			&\quad -\intO{ \dett (\mut - \mu) {\ut} {v}}
				-\intO{ (\dett - 1) \mu {\ut} {v}}
				\\
			&\quad +\intO{ \dett (f^{t} - f) {v}}
				+\intO{ (\dett - 1) f {v}}, \quad \text{for } \ut, v \in V.\\
        \end{aligned}
        \right.
        \end{equation}
	The well-posedness of \eqref{eq:difference_equation} essentially follows from the Lax-Milgram theorem, by applying standard arguments and noting that $\lim_{t \searrow 0} \At = \text{id}$ and $\lim_{t \searrow 0} \dett = 1$ uniformly on {${\Omega}$}, as well as the regularity assumptions on $\mu$ and $f$ given in Assumption~\ref{assumption:weak_assumptions} .
	As a consequence, one can deduce that $\norm{\wt}_{V} \lesssim \norm{u}_{V}$ ($t \in\textsf{I}=[0,t_{0})$).
	This means that the set $\{\wt \mid t \in \textsf{I}\}$ is bounded in $V$ for sufficiently small $t_{0}$.
	
	Let us define $\zt = \frac{1}{t} \wt$ for $t \in (0,t_{0})$ which also belongs to $V$.
	Then, we have
	\begin{equation}\label{eq:tilde_alpha}
	\begin{aligned}
		  	\tilde{a}(\zt, v)
			&=  -\intO{\left(\dfrac{\alphat - \alpha}{t} \right)  \At \nabla{\ut} \cdot \nabla{v}}
				-\intO{ \alpha \left(\dfrac{\At - id}{t} \right)  \nabla{\ut} \cdot \nabla{v}}\\
			&\quad -\intO{ \dett \left(\dfrac{\mut - \mu}{t} \right) {\ut} {v}}
				-\intO{ \left(\dfrac{\dett - 1}{t} \right) \mu {\ut} {v}}
				\\
			&\quad +\intO{ \dett \left(\dfrac{f^{t} - f}{t} \right) {v}}
				+\intO{ \left(\dfrac{\dett - 1}{t} \right) f {v}}\\
			&= \frac{1}{t} \tilde{l}(v) =: l_{t}(v), \quad (\forall v \in V).
	\end{aligned}
	\end{equation}
	By selecting $v = \zt$ as the test function in the equation above, we can infer the boundedness of the sequence $\{ \zt \}$ in $V$.
	Specifically, we consider a sequence $\{ t_n \}$ such that $\lim_{n \to \infty} t_n = 0$, and our goal is to demonstrate that $\lim_{n \to \infty} z^{t_n}$ exists.
        The properties of the transformation $T_{t}$ given in \eqref{eq:regular_maps}, together with the boundedness of $\wt$ in $V$ implies that $\nabla \zt$ is bounded in $L^{2}(\Omega)^{d}$, equivalently the sequence $\{z^{t_n}\}$ is bounded in $V$.
        Thus, there is a subsequence, which we still denote by $t_{n}$ with $t_{n} \searrow 0$ and an element $z \in V$ such that $z^{t_n} \rightharpoonup z$ weakly in $V$.
        Since $\nabla u^{t_n} \to \nabla u$ in $L^{2}(\Omega)^{d}$, $\lim_{t_n \searrow 0} {I}_{t_n} = 1$ and $\lim_{t_{n} \searrow 0} A_{t_n} = id$ uniformly on $\Omega$, and the derivatives of the maps $[t \mapsto \dett]$ and $[t \mapsto \At]$ given in \eqref{eq:derivatives_of_maps} we get
	\begin{equation}\label{eq:material_derivative_first_form}
	\begin{aligned}
		  	a_{0}(z,v) &\coloneqq  \intO{( {\alpha\nabla{z} \cdot \nabla{v} + {\mu} {z} {v})} }
						+ {\frac{1}{\zeta} \intG{ {z}{v}}}\\
			&=  -\intO{ \left( \nabla{\alpha} \cdot \VV (\nabla{u} \cdot \nabla{v}) + \alpha A \nabla{u} \cdot \nabla{v} \right) }\\
			&\quad - \intO{ \left( \nabla{\mu} \cdot \VV {u} {v} + \operatorname{div}{\VV} \mu {u} {v} \right)}\\
			&\quad + \intO{ \left( \nabla{f}\cdot\VV {v} + \operatorname{div}{\VV} f {v} \right) }\\
			&=: j_{1}(v) + j_{2}(v) + j_{3}(v) =: l_{0}(v), \qquad (\forall v \in V).
	\end{aligned}
	\end{equation}
	In above, the limit equation
	\begin{equation}\label{eq:divergence_expansion}
	\lim_{t \to 0} \frac{1}{t}\left( \dett \varphi^{t} - \varphi\right)
	= \operatorname{div}(\varphi\VV)
	= \varphi \operatorname{div}{\VV} + \nabla{\varphi} \cdot \VV
	\end{equation}
	was used which holds for any differentiable mapping $t \mapsto \dett \varphi^{t}$ from interval $\textsf{I}$ to $L^{2}(\Omega)$ with $\varphi \in V$ and $\VV \in {{\sfTheta}^{k}}$ (cf. \cite[Cor.~3.1]{IKP2006}).
	Since this equation has a unique solution, we deduce the weak convergence $z^{t_{n}} \rightharpoonup z$ in $V$ for any sequence $\{t_{n}\}$.
	Meanwhile, the strong convergence follows from the fact that
	$
		  	a_{0}(z,z)
			= \lim_{t_{n} \searrow 0}\tilde{a}({z^{t_{n}}} , {z^{t_{n}}})
			=  \lim_{t_{n} \searrow 0} l_{t_{n}}({z^{t_{n}}})
			= l_{0}(v).
	$ together with the weak convergence previously shown.
	This proves the characterization of the (unique) material derivative $z = \dot{u} \in V$ of $u \in V$ given in equation \eqref{eq:material_derivative}.

	\textit{Second step:}  Next, we shall derive the structure of the {Eulerian derivative} of the state.
	First, we recall that the function $u$ has a \textit{shape} derivative $\uprime$ at $0$ in the direction of the vector field $\VV \in {{\sfTheta}^{k}}$ if the limit
	\[
		\uprime = \lim_{t \searrow 0} \frac{u(\Omega_{t}) - u(\Omega)}{t},
	\]
	exist.
	This expression and the material derivative $\dot{u}$ are related by $\uprime = \dot{u} - (\nabla u \cdot \VV)$ provided that $\nabla u \cdot \VV$ exists in some appropriate function space \cite[Eq.~(3.38), p.~111]{SokolowskiZolesio1992}.
	We comment that the {Eulerian derivative} $\uprime$ of \eqref{eq:main} is not continuous across the interface $\domega$.
	As a consequence, $\uprime$ cannot be in $H^{1}(\Omega)$.
	Nonetheless, it belongs to $H^{1}(\Omega_{+}) \cup H^{1}(\Omega_{-})$.

	To proceed with the derivation of $\uprime$, we rewrite equation \eqref{eq:material_derivative_first_form} in another form.
	To do so, we observe that by applying the chain rule in conjunction with \eqref{eq:derivatives_of_maps}, the following expansions and identities hold (here $u$ is restricted to $\Omega_{\pm}$):
	\begin{equation}\label{eq:nabla_expansions}
	\begin{aligned}
		&- \left( \nabla{\alpha} \cdot \VV (\nabla{u} \cdot \nabla{v})
		+ \alpha A \nabla{u} \cdot \nabla{v} \right)\\
		& \qquad \qquad =
		\nabla{u} \cdot \left( \alpha (D\VV + D\VV^{\top}) - \operatorname{div}{(\alpha \VV)} id \right) \nabla{v} \\
		& \qquad \qquad = \operatorname{div}\left( \alpha (\VV \cdot \nabla{u}) \nabla{v}
				+ {\alpha (\VV \cdot \nabla{v}) \nabla{u} - \alpha (\nabla{u} \cdot \nabla{v}) \VV} \right)\\
		& \qquad \qquad - (\VV \cdot \nabla{u}) \operatorname{div}{(\alpha \nabla{v})}
			- (\VV \cdot \nabla{v}) \operatorname{div}{(\alpha \nabla{u})},
		\\	
		&\alpha \nabla(\nabla{u} \cdot \VV) \cdot \nabla{v} \\
		& \qquad \qquad = \alpha \nabla^{2}{u} \VV \cdot \nabla{v} + \alpha \nabla{\VV} \nabla{u} \cdot \nabla{v}\\
		& \qquad \qquad = \alpha (\nabla{v})^{\top} \nabla^{2}{u} \VV + \alpha D{\VV}\nabla{v} \cdot \nabla{u}\\
		& \qquad \qquad = \operatorname{div}{(\alpha(\VV \cdot \nabla{u}) \nabla{v})} - (\VV \cdot \nabla{u}) \operatorname{div}{(\alpha \nabla{v})},
		%
		%
	\end{aligned}
	\end{equation}
	where $\nabla{\VV} = (D\VV)^{\top} = {\partial{\theta}_{j}}/{\partial{x}_{i}}$ and $\nabla^{2}{u}$ denotes the Hessian of ${u}$ which is symmetric.
	On the other hand, taking $\nabla{v}_{\pm} \cdot \VV \in H^{1}(\Omega_{\pm})$, where $v_{\pm} \in V \cap H^{2}(\Omega_{\pm})$, $\VV \in {{\sfTheta}^{k}}$ (i.e., $\VV = \vect{0}$ in $\domega$), as a test function in Problem~\ref{prob:weak_form_optimal_tomography} yields the following equation
	\begin{equation}\label{eq:weak_form_substitution}
		\intO{( {\alpha} \nabla{u} \cdot \nabla{(\nabla{v} \cdot \VV)} + {\mu} {u}{(\nabla{v} \cdot \VV)})}
		= \intO{f(\nabla{v} \cdot \VV)}.
	\end{equation}
	Moreover, we have the following equivalent expressions
	\begin{align*}
		j_{2}(v) &= - \intO{ \operatorname{div}{(\mu{u}{v} \VV)} } + \intO{ \left( \mu (\VV \cdot \nabla{u}) {v} + \mu{u} (\VV \cdot \nabla{v}) \right)},\\
		j_{3}(v) &= \intO{ \operatorname{div}{(fv \VV)} } - \intO{f (\nabla{v} \cdot \VV)}.
	\end{align*}
	Utilizing the above identities in \eqref{eq:material_derivative_first_form} with $z$ replaced by $\dot{u}_{\pm} = {u^{\prime}_{\pm}} + \nabla{{u}_{\pm}} \cdot \VV \in H^{1}({\Omega}_{\pm})$ (observe that $(\nabla{u} \cdot \VV) \not\in H^{1}(\Omega)$ but ${u}_{\pm} \in H^{k+1}({\Omega}_{\pm})$, $k \geqslant 2$), we get
	\begin{equation}\label{eq:weak_form_full_expression}
	\begin{aligned}
		&a_{0}(\uprime, v) + \intO{( {\alpha\nabla{(\nabla{u} \cdot \VV)} \cdot \nabla{v} + {\mu} {(\nabla{u} \cdot \VV)} {v})} }\\
		&  = \intO{( {\alpha\nabla{(\nabla{u} \cdot \VV)} \cdot \nabla{v} + {\mu} {(\nabla{u} \cdot \VV)} {v})} } \\
		& \quad - \intO{ \left[ (\VV \cdot \nabla{v}) \operatorname{div}{(\alpha \nabla{u})} - \mu{u} (\VV \cdot \nabla{v}) + f (\nabla{v} \cdot \VV) \right]}\\
		& \quad + \intO{ \operatorname{div}{\left( \alpha (\VV \cdot \nabla{v}) \nabla{u} - \alpha (\nabla{u} \cdot \nabla{v}) \VV \right) }}\\
		& \quad - \intO{ \left[ \operatorname{div}{(\mu{u}{v} \VV)} - \operatorname{div}{(fv \VV)}  \right]}.
	\end{aligned}
	\end{equation}
	%
	
	For $\VV \in {{\sfTheta}^{k}}$, observe that by using  integration by parts, we have
	\[
		-\intO{  (\VV \cdot \nabla{v}) \operatorname{div}{(\alpha \nabla{u})}  } = \intO{ \alpha \nabla{u} \cdot \nabla{(\VV \cdot \nabla{v})}}.
	\]
	 Then, in view of \eqref{eq:weak_form_substitution}, equation \eqref{eq:weak_form_full_expression} can be simplified as follows
	\begin{equation}\label{eq:shape_derivative_of_the_state_initial_form}
	\begin{aligned}
		a_{0}(\uprime, v)
		 &= \intO{ \left[ \operatorname{div}{\left( \alpha (\VV \cdot \nabla{v}) \nabla{u} - \alpha (\nabla{u} \cdot \nabla{v}) \VV \right) }  \right] }\\
		 &\quad - \intO{ \left[ \operatorname{div}{(\mu{u}{v} \VV)}
			- \operatorname{div}{({f}{v} \VV)}  \right] },
	\end{aligned}
	\end{equation}
	which provides an inititial characterization of the {Eulerian derivative} of the state.
	
        To complete the proof, we express the right side of \eqref{eq:shape_derivative_of_the_state_initial_form} as a boundary integral over $\domega$.
        This is accomplished by applying the divergence theorem after partitioning the integral into two domains of integration: $\Omega \setminus \baromega$ and $\omega$. We then utilize the notation $\jump{\cdot}$, which denotes the difference between the traces of a function at the boundary interface $\domega$ as we approach from $\Omega \setminus \baromega$ and $\omega$, respectively.
	Specifically, by applying the divergence theorem in both $\Omega$ and $\Omega \setminus \baromega$, followed by integration by parts, we obtain
	\begin{equation}\label{eq:after_IBP}
	\begin{aligned}
		&\intO{( - \operatorname{div}{( \alpha\nabla{\uprime})} + {\mu} {\uprime} ) {v}  } + {\intG{ \left( \alpha \dn{\uprime} + \frac{1}{\zeta} {\uprime} \right) {v}}
						- \intdomega{\jump{ \alpha\dfrac{\partial{\uprime}}{\partial{\nn}} } {v} } }\\
		& = a_{0}(\uprime, v)\\
		& = \intO{( {\alpha\nabla{\uprime} \cdot \nabla{v} + {\mu} {\uprime} {v})} }
						+ {\frac{1}{\zeta} \intG{ {\uprime}{v}}}\\
		& = - \intdomega{ \left\{  (\VV \cdot \nabla{v})  \jump{\alpha \nabla{u}} \cdot \nn - \jump{\alpha \nabla{u}} \cdot \nabla{v} \Vn \right\} }\\
		&\quad   + \intdomega{ \left\{  \jump{\mu{u}} {v} \Vn  - \jump{f} {v} \Vn  \right\} },
	\end{aligned}
	\end{equation}
	where $\Vn = \VV \cdot \nn$.
	By comparing the left-most and right-most sides of the equation, while varying $v$ (which we assume to be sufficiently smooth--at least in $H^2(\Omega)$) over $\Omega= (\Omega\setminus{\baromega}) \cup {\baromega}$ and the over $\bigdO$, we deduce that the following equations hold at least in distributional sense:
	\[
        \left\{
        \begin{aligned}
          -\dive{\left( {\alpha} \nabla {\uprime} \right)} + \mu {\uprime} &= 0, \quad \text{in $\Omega \setminus \baromega$ and in $\omega$},\\
          {\alpha} \dn{{\uprime}} + \frac{1}{\zeta} {\uprime} &= 0, \quad \text{on } \bigdO.
        \end{aligned}
        \right.
	\]
	We next derive the equation for $\uprime$ on $\domega$.
	First, let us note that, by elliptic regularity result, ${u}_{\pm} \in H^{k+1}({\Omega}_{\pm})$ (for $d \in \{2,3\}$).
	Then, for some $k \in \mathbb{N}$, $k \geqslant 2$, we have $u \in {{C}}^{1,\alpha}(\overline{\Omega}_{\pm})$, $0 < \alpha \leqslant k - {d}/2$, $d \in \{2,3\}$, because of the Sobolev embedding $H^{k+1}({\Omega}_{\pm}) \hookrightarrow {{C}}^{1,\alpha}(\overline{\Omega}_{\pm})$ (see, e.g., \cite[Thm.~2.84, p.~98]{DemengelDemengel2012} or \cite[Thm.~4.12, p.~85]{AdamsFournier2003}).
	Now, because $\jump{u} = 0$ on $\domega$, we have $ \jump{ \nabla{u} } = \jump{ (\partial{u}/\partial{\nn})\nn }$ on $\domega$; that is, $\jump{\nabla_{\tau}{u}} = 0$ on $\domega$.
	Hence, $\jump{\dot{u}} = 0$ on $\domega$, and so $\jump{\uprime} = \jump{\dot{u}} - \jump{\nabla{u} \cdot \VV} = - \jump{\nabla{u} \cdot \VV}$ on $\domega$.
	By these equations, we deduce that
	\[
		\jump{\uprime} = - \Vn \jump{ \dfrac{\partial{u}}{\partial{\nn}} } \quad \text{on }\domega.
 	\]

	Next, we note that, from tangential Stokes' formula \cite{MuratSimon1976}, we have 
	\[
		\intdomega{ \VV \cdot \nabla_{\tau}{\varphi}} = - \intdomega{ \varphi \operatorname{div}_{\tau}{\VV}},
	\]
	when $\VV \cdot \nn = 0$ (i.e., $\VV$ is a tangential field).
	Here, the operators $\nabla_{\tau}$ and $\operatorname{div}_{\tau}$ are respectively the tangential gradient and tangential divergence operators (see, e.g., \cite{DelfourZolesio2011,HenrotPierre2018,SokolowskiZolesio1992}).
	We observe that $( (\jump{\alpha \nabla{u}} \cdot \nn)\VV - \Vn \jump{\alpha \nabla{u}} )\cdot \nn = 0$ on $\domega$.
		Hence, we can replace $\nabla{v}$ by $\nabla_{\tau}{v}$.
		In addition, we know that $\jump{\nabla_{\tau}{u}} = 0$ on $\domega$ which implies that
		\begin{align*}
		- \intdomega{ \Vn  \jump{\alpha} \nabla_{\tau}{u} \cdot \nabla{v}   }
		& = - \intdomega{ \Vn  \jump{\alpha \nabla{u}} \cdot \nabla{v}   } \\
		& = \intdomega{ \left\{  (\VV \cdot \nabla{v})  \jump{\alpha \nabla{u}} \cdot \nn
						- \left( \jump{\alpha \nabla{u}} \cdot \nabla{v} \right) \Vn   \right\} }\\
		& = \intdomega{ {v} \operatorname{div}_{\tau}( \Vn \jump{\alpha} \nabla_{\tau}{u} ) }.
		\end{align*}
	Using this identity, we arrive at the following equation
	\[
	 \intdomega{ \jump{\alpha\dfrac{\partial{\uprime}}{\partial{\nn}} } {v} }
		= \intdomega{ \left( \operatorname{div}_{\tau}{( \Vn \jump{\alpha} \nabla_{\tau}{u} )}
		 	-   \jump{\mu{u}}  \Vn
			+  \jump{f}  \Vn \right)  {v} },
	\]
	which holds for all $v \in V$.
	By varying $v$, we deduce that
	\[
		\jump{ \alpha\dfrac{\partial{\uprime}}{\partial{\nn}} }
		= K(u)[\VV]
		= \operatorname{div}_{\tau}{( \Vn \jump{\alpha} \nabla_{\tau}{u} )}
		 	-   \jump{\mu{u}}  \Vn
			+  \jump{f}  \Vn, \qquad \text{on } \domega.
	\]	
	This finally establishes the structure of the {Eulerian derivative} of the state given in equation \eqref{eq:state_shape_derivative}.
	For the more general structure of the {Eulerian derivative} without the aforementioned continuity conditions, see Theorem~\ref{thm:state_shape_derivative_weak_assumptions}.
\end{proof}

\subsection{{Existence of a shape solution}}\label{subsec:existence_of_optimal_shape_solution}
In this appendix, we address the question of the existence of an optimal solution to the optimization problem
\begin{equation}\label{eq:shape_optimization_problem}
	\min_{\omega \in \mathcal{O}_{\circ}^{1},\ \Omega \in \mathcal{O}_{ad}^{1}} J({\omega}).
\end{equation} 	
To establish this existence, we must impose a key assumption regarding the regularity of the boundary interface, which is fortunately a consequence of the definition of the set of admissible domains $ \mathcal{O}_{ad}^{1} $ provided in \eqref{eq:admissible_domains}.
For the purpose of our analysis, it suffices to assume that $ \Omega $ is Lipschitz continuous, which allows us to establish the desired existence result (refer to Proposition~\ref{prop:ensuring_epsilon_cone_property}).
{In this section, we assume that $f \in H^{-1}(\Omega)$ and that $\alpha$ exhibits jump discontinuities at the boundary interface.
Therefore, without further notice, we consider Problem~\ref{prob:weak_form_optimal_tomography} with $\alpha$ defined according to Assumption~\ref{assumption:weak_assumptions}.}

Because Problem~\ref{prob:weak_form_optimal_tomography} admits a unique weak solution by Lemma~\ref{lem:wellposedness_weak_formulation}, we can define the map $\omega \longmapsto {u}\coloneqq {u}(\omega)$, or equivalently, $\Omega \longmapsto {u}\coloneqq {u}(\Omega)$ (note that ${\Omega = (\Omega\setminus{\baromega}) \cup \baromega}$), and denote its graph by
\begin{align*}
	\mathcal{G}&=\{(\omega,{u}): \text{ $\omega \in {\mathcal{O}_{\circ}^{1}}$ and ${u}$ solves Problem~\ref{prob:weak_form_optimal_tomography}}\},
\end{align*}

The primary result we aim to establish is as follows:
\begin{theorem}\label{theorem:existence_of_optimal_shape_solution}
    The minimization problem \eqref{eq:shape_optimization_problem} admits at least one solution in $\mathcal{G}$.
\end{theorem}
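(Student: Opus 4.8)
The plan is to apply the direct method of the calculus of variations, combining the geometric compactness of the admissible interfaces with the uniform energy bound for the state. First I would note that, since the deformation fields in ${\sfTheta}^{1}$ are supported in $\overline{\Omega}_{\circ}$, the outer boundary $\bigdO$ stays fixed and the minimization is genuinely over $\omega \in \mathcal{O}_{\circ}^{1}$. As $J(\omega) \geqslant 0$, the value $m := \inf_{\omega \in \mathcal{O}_{\circ}^{1}} J(\omega)$ is finite and nonnegative, so I may choose a minimizing sequence $(\omega_{n}, u_{n}) \in \mathcal{G}$ with $J(\omega_{n}) \to m$. By Lemma~\ref{lem:wellposedness_weak_formulation} the states satisfy the uniform bound $\norm{u_{n}}_{V} \leqslant \cb\,\cf$, the right-hand side being independent of $n$ because the coercivity constant $\min\{\alpha,\mu_{\min}\}$ and the load $f$ do not depend on $\omega_{n}$. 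Hence, along a subsequence (not relabelled), $u_{n} \rightharpoonup \staru$ weakly in $V = H^{1}(\Omega)$, and by the Rellich--Kondrachov theorem together with compactness of the trace operator on the Lipschitz domain $\Omega$, also $u_{n} \to \staru$ strongly in $L^{2}(\Omega)$ and in $L^{2}(\bigdO)$.

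Next I would invoke the compactness of $\mathcal{O}_{\circ}^{1}$. The uniform $C^{1,1}$ regularity (equivalently, the uniform cone property secured in Proposition~\ref{prop:ensuring_epsilon_cone_property}) together with the built-in separation $\operatorname{dist}(\cdot,\bigdO) > \dzero$ lets me apply the classical compactness theorem for domains satisfying a uniform cone condition (see, e.g., \cite{HenrotPierre2018}): after passing to a further subsequence, $\omega_{n} \to \staromega$ in the complementary Hausdorff topology with $\staromega \in \mathcal{O}_{\circ}^{1}$, and the characteristic functions converge, $\chi_{\omega_{n}} \to \chi_{\staromega}$ in $L^{1}(\Omega)$. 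Since the piecewise-defined coefficients are bounded in $L^{\infty}$, this upgrades to $\alpha_{n} \to \alpha^{\star}$ and $\mu_{n} \to \mu^{\star}$ in every $L^{p}(\Omega)$, $p < \infty$, where $\alpha^{\star},\mu^{\star}$ denote the coefficients induced by $\staromega$.

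I would then pass to the limit in the weak formulation $a(\mu_{n}; u_{n}, v) = l(v)$. For a smooth test function $v$, the product $\alpha_{n}\nabla v$ converges strongly in $L^{2}(\Omega)^{d}$ (as $\alpha_{n} \to \alpha^{\star}$ in $L^{2}$ and $\nabla v$ is bounded), so pairing against $\nabla u_{n} \rightharpoonup \nabla \staru$ gives $\intO{\alpha_{n}\nabla u_{n}\cdot\nabla v} \to \intO{\alpha^{\star}\nabla \staru \cdot \nabla v}$ by weak--strong convergence. The reaction term passes since $\mu_{n}u_{n} \to \mu^{\star}\staru$ strongly in $L^{2}(\Omega)$ (split $\mu_{n}u_{n} - \mu^{\star}\staru = \mu_{n}(u_{n}-\staru) + (\mu_{n}-\mu^{\star})\staru$ and apply dominated convergence to the second term), the Robin boundary term converges by the strong trace convergence already obtained, and the load $l(v) = \intO{fv}$ is $\omega$-independent. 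A density argument extends the limit identity to all $v \in V$, so $\staru$ is the weak solution of the state problem on $\staromega$; by uniqueness $\staru = u(\staromega)$ and $(\staromega,\staru) \in \mathcal{G}$.

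Finally, continuity of the cost follows from the strong trace convergence, $J(\omega_{n}) = \intG{\abs{u_{n}-h}^{2}} \to \intG{\abs{\staru - h}^{2}} = J(\staromega)$, whence $J(\staromega) = m$ and $(\staromega,\staru)$ solves \eqref{eq:shape_optimization_problem}. The main obstacle is precisely the geometric compactness step: one must ensure the minimizing interfaces do not degenerate---collapsing, touching $\bigdO$, or losing connectedness of the complement---and that the Hausdorff limit remains in the admissible class $\mathcal{O}_{\circ}^{1}$ with the $L^{1}$ convergence of characteristic functions needed to identify the limiting coefficients. This is exactly what the uniform cone property of Proposition~\ref{prop:ensuring_epsilon_cone_property} and the separation $\operatorname{dist}(\cdot,\bigdO) > \dzero$ are designed to guarantee; without such uniform regularity the coefficient convergence, and hence the identification of the limit state, would break down.
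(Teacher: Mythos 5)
Your proposal is correct and follows essentially the same route as the paper: a minimizing sequence, geometric compactness of $\mathcal{O}_{\circ}^{1}$ in the Hausdorff and characteristic-function senses (via the uniform cone property of Proposition~\ref{prop:ensuring_epsilon_cone_property}), uniform energy bounds yielding weak $H^{1}$ and strong $L^{2}$ convergence of the states, weak--strong limit passage in the variational formulation to identify the limit state by uniqueness, and continuity of $J$ via trace compactness. The only structural difference is that the paper packages the state-convergence argument as a separate result (Proposition~\ref{proposition:convergence_to_solution}) and then cites it, whereas you carry out that argument inline.
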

To demonstrate the validity of this assertion, we first need to endow the set $\mathcal{G}$ with a topology that ensures its compactness and the lower semi-continuity of the functional ${J}$.
To achieve this, we introduce a topology on $\mathcal{G}$ induced by the Hausdorff convergence, denoted as $\Omega^{(n)} \stackrel{\text{H}}{\longrightarrow} \Omega$.
This framework enables us to prove the existence of the optimal solution to \eqref{eq:shape_optimization_problem} across arbitrary dimensions ($d \in \{2, 3\}$).
To prepare for our discussion, we will briefly review the definitions of Hausdorff distance, Hausdorff convergence, and the $\varepsilon$-cone property.
For further elaboration on these concepts, readers are referred to \cite[Ch.~3]{Pironneau1984}.
\begin{definition}
[{\cite[Def.~2.2.7, p.~30]{HenrotPierre2018}}]
\label{def:Hausdorff_distance}
	Let ${{\omega}}_{1}$ and ${{\omega}}_{2}$ be two (compact) subsets of $\mathbb{R}^{d}$, $d \geqslant 2$.
	The Hausdorff distance $\operatorname{dist}_{\text{H}}({{\omega}}_{1},{{\omega}}_{2})$ between ${{\omega}}_{1}$ and ${{\omega}}_{2}$ is defined as follows $\operatorname{dist}_{\text{H}}({{\omega}}_{1},{{\omega}}_{2}) = \max\{\rho({{\omega}}_{1},{{\omega}}_{2}),\rho({{\omega}}_{2},{{\omega}}_{1})\}$
	where $\rho({{\omega}}_{1},{{\omega}}_{2}) = \sup_{s\in{{\omega}}_{1}} \operatorname{dist}(x, {{\omega}}_{2})$ and $\operatorname{dist}(x,{{\omega}}_{2}) = \inf_{y \in {{\omega}}_{2}} \abs{x-y}$.
	Note that $\operatorname{dist}_{\text{H}}$ defines a topology on the closed bounded sets of $\mathbb{R}^{d}$.
\end{definition}
\begin{definition}[{\cite[Def.~2.2.8, p.~30]{HenrotPierre2018}}]
\label{def:Hausdorff_convergence}
	Let $\{{{\omega}}^{(n)} \}$ and ${{\omega}}$ be open sets included in $\Omega \subset \mathbb{R}^{d}$, $d \geqslant 2$.
	We say that the sequence ${{\omega}}^{(n)}$  converges in the sense of Hausdorff to ${{\omega}}$ if $\operatorname{dist}_{\text{H}}(\Omega\setminus{{\omega}}^{(n)}, \Omega\setminus{{\omega}}) \longrightarrow 0$ as $n \longrightarrow \infty$.
	We will denote this convergence by ${{\omega}}^{(n)} \stackrel{\text{H}}{\longrightarrow} {{\omega}}$ or simply by ${{\omega}}^{(n)} \longrightarrow {{\omega}}$ when there is no confusion.
\end{definition}
\begin{definition}[{\cite[Def.~2.4.1, p.~54]{HenrotPierre2018}}]
\label{definition:cone_property}
Let $\xi$ be a unitary vector in $\mathbb{R}^{d}$, $d \geqslant 2$, $\varepsilon > 0$ be a real number, and $y \in \mathbb{R}^{d}$.
A cone $C$ with vertex $y$, direction $\xi$, and dimension $\varepsilon$ is the set defined by
\[
	C(y,\xi,\varepsilon) = \{ x\in \mathbb{R}^{d} \mid \langle x-y, \xi\rangle_{\mathbb{R}^{d}} \geqslant \cos(\varepsilon) \|x-y\|_{\mathbb{R}^{d}} \ \ \text{and} \ \ 0 < \|x-y\|_{\mathbb{R}^{d}} < \varepsilon \},
\]
where $\langle \cdot,\cdot\rangle_{\mathbb{R}^{d}}$ is the Euclidean scalar product of $\mathbb{R}^{d}$ and $\|\cdot\|_{\mathbb{R}^{d}}$ is the associated {Euclidean norm}.

An open bounded set $\Omega \subset \mathbb{R}^{d}$ satisfies the $\varepsilon$-cone property, if for $x \in \bigdO$, there exists a unitary vector $\xi_{x} \in \mathbb{R}^{d}$ such that for all $y \in \overline{\Omega} \cap B_{\varepsilon}(x)$, we have $C(y,\xi,\varepsilon) \subset \Omega$, where $B_{\varepsilon}(x)$ denotes the open ball with center $x$ and radius $\varepsilon$.
\end{definition}
Given the definitions provided above, we hereby assert the ensuing proposition, pivotal in substantiating the proof of Theorem~\ref{theorem:existence_of_optimal_shape_solution}.
\begin{proposition}[{\cite[Thm.~2.4.7, p.~56]{HenrotPierre2018}}]\label{prop:ensuring_epsilon_cone_property}
	An open bounded set $\Omega \subset \mathbb{R}^{d}$ has the $\varepsilon$-cone property if and only if it has a Lipschitz boundary.
\end{proposition}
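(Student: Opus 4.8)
The plan is to prove the equivalence by establishing each implication separately, with the quantitative relationship between the cone half-angle $\varepsilon$ and the Lipschitz constant $L$ serving as the bridge. Both directions reduce to an elementary planar-angle computation, localized via charts and then made uniform using the compactness of $\partial\Omega$.

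First, suppose $\Omega$ satisfies the $\varepsilon$-cone property. Fix $x_0 \in \partial\Omega$, let $\xi = \xi_{x_0}$ be the associated direction, and choose orthonormal coordinates $x = (x', x_d)$ with the $x_d$-axis along $\xi$. I would first argue that, on a ball $B_\varepsilon(x_0)$, the filled interior cones $C(y,\xi,\varepsilon) \subset \Omega$ force $\partial\Omega$ to be the graph of a single-valued function $\varphi$ over the hyperplane $\{x_d = 0\}$, with $\Omega$ locally equal to $\{x_d > \varphi(x')\}$; the upward-opening cones prevent the boundary from doubling back and fill the region above it, while openness of $\Omega$ keeps boundary points out of any interior cone. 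Granting this graph structure, the Lipschitz bound follows from a single angle estimate: for two nearby boundary points $y_1 = (a',\varphi(a'))$ and $y_2 = (b',\varphi(b'))$ with $\varphi(b') > \varphi(a')$, the point $y_2$ cannot lie in $C(y_1,\xi,\varepsilon)$ (otherwise it would be interior), so the angle between $y_2 - y_1$ and $\xi$ exceeds $\varepsilon$. Writing $h = \varphi(b') - \varphi(a')$ and $r = |a' - b'|$, this reads $h < \cos\varepsilon\,\sqrt{r^2 + h^2}$, which rearranges to $h < \cot(\varepsilon)\,r$. Hence $\varphi$ is Lipschitz with constant $\cot\varepsilon$, and $\partial\Omega$ is Lipschitz.

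Conversely, suppose $\partial\Omega$ is Lipschitz, so that locally $\Omega = \{x_d > \varphi(x')\}$ for a Lipschitz $\varphi$ of constant $L$ in suitable coordinates. I would take $\xi = e_d$ and verify directly that $C(y,e_d,\varepsilon) \subset \Omega$ for every $y \in \overline\Omega$ near the chart center, provided $\varepsilon$ is small. For $z \in C(y,e_d,\varepsilon)$ one has $z_d - y_d \ge \cos\varepsilon\,\|z-y\|$ and $|z' - y'| \le \sin\varepsilon\,\|z-y\|$; combining these with $y_d \ge \varphi(y')$ and the Lipschitz estimate $\varphi(y') - \varphi(z') \ge -L\,|y'-z'|$ gives
\begin{equation*}
z_d - \varphi(z') \ge (\cos\varepsilon - L\sin\varepsilon)\,\|z - y\|.
\end{equation*}
Choosing $\varepsilon < \arctan(1/L)$ makes the right-hand side positive, so $z \in \Omega$, confirming the interior-cone inclusion.

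Finally, the passage from these local statements to the global property is handled by compactness: cover $\partial\Omega$ by finitely many charts with Lipschitz constants $L_1,\dots,L_N$, set $L = \max_i L_i$, and take a single $\varepsilon$ smaller than $\arctan(1/L)$ and than all the chart radii. The same $\varepsilon$ then works at every boundary point, yielding the uniform $\varepsilon$-cone property; running the forward argument in reverse likewise produces a global Lipschitz atlas. I expect the main obstacle to be the forward direction's geometric bookkeeping---rigorously extracting the single-valued graph representation and the ``$\Omega$ lies above'' structure from the interior-cone condition alone---rather than the angle computations, which are routine once the coordinates are fixed.
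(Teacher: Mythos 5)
The paper does not prove this proposition: it is quoted verbatim as \cite[Thm.~2.4.7, p.~56]{HenrotPierre2018} and used as an imported fact, so there is no in-paper proof to compare against. Your argument is essentially the standard textbook proof of that theorem, and both quantitative computations are correct: the converse direction's estimate $z_d - \varphi(z') \geq (\cos\varepsilon - L\sin\varepsilon)\|z-y\|$ with $\varepsilon < \arctan(1/L)$, and the forward direction's bound $h < r\cot\varepsilon$ obtained from $h < \cos\varepsilon\sqrt{r^2+h^2}$ (valid only for $\|y_2-y_1\|<\varepsilon$, since cone membership also carries the distance restriction --- but local Lipschitz bounds with a uniform constant over a convex projected neighborhood do globalize, so this is harmless). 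The one genuine gap is the one you flag yourself: extracting the single-valued graph structure in the forward direction. It is real but closes with two short observations from the definition. First, if $y_1,y_2\in\partial\Omega$ lie on the same vertical line with $0<\|y_2-y_1\|<\varepsilon$ and $y_2$ above $y_1$, then $y_2\in C(y_1,\xi,\varepsilon)\subset\Omega$, contradicting $y_2\in\partial\Omega$; this gives uniqueness of the boundary point on each short vertical segment. Second, if $z\in\partial\Omega$ and $y\in C(z,-\xi,\varepsilon)\cap\overline{\Omega}$, then $z\in C(y,\xi,\varepsilon)\subset\Omega$, again a contradiction; hence the downward cone from a boundary point misses $\overline{\Omega}$, which supplies the exterior points needed to show every nearby vertical line actually crosses $\partial\Omega$ and that $\Omega$ is locally the epigraph. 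With those two remarks inserted, your proof is complete and matches the argument in Henrot--Pierre.
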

Proposition~\ref{prop:ensuring_epsilon_cone_property} guarantees that each admissible subdomain $\omega \in \mathcal{O}_{\circ}^{1}$ satisfies the $\varepsilon$-cone property, which is sufficient to establish Theorem~\ref{theorem:existence_of_optimal_shape_solution}.
We emphasize that given a sequence of open sets $\{\omega^{(n)}\}$ in $\mathcal{O}_{\circ}^{1}$, there exists an open set $\omega \in \mathcal{O}_{\circ}^{1}$ and a subsequence $\{\omega^{{(m)}}\}$ such that $\omega^{{(m)}} \to \omega$.
This convergence implies $\partial \omega^{{(m)}} \to \domega$.
These convergences also hold for characteristic functions and compact sets, as shown in \cite[Thm.~2.4.10, p.~59]{HenrotPierre2018}.
Moreover, the implied convergence ``$\omega^{(n)} \to \omega$ implies $\partial \omega^{(n)} \to \partial \omega$'' holds in the Hausdorff sense for domains with Lipschitz boundaries \cite[Ex.~3.2]{Holzleitner2001} or satisfying the cone property \cite{Chenais1975}.
For a detailed discussion on Hausdorff convergence, see \cite[Sec.~2.2.3, Def.~2.2.8, p.~30]{HenrotPierre2018}.
It is also worth noting that for a sequence of measurable sets $\{\omega^{(n)}\}$, the corresponding sequence of characteristic functions $\chi_{\omega^{(n)}}$ is weakly-${\ast}$ relatively compact in $L^{\infty}(\mathbb{R}^{d})$.
This means that we can find an element $\chi \in L^{\infty}(\mathbb{R}^{d})$ and a subsequence $\{\omega^{{(m)}}\}_{k \geqslant 0} \subset \{\omega^{(n)}\}_{n \geqslant 0}$ such that (cf.  \cite[Eq.~(2.3), p.~27]{HenrotPierre2018})
\begin{equation}\label{eq:weak_star_convergence_of_characteristic_functions}
	\text{for all $\psi \in L^{1}(\mathbb{R}^{d})$},
	\quad \lim_{{m \to \infty}} \int_{\mathbb{R}^{d}} \chi_{\omega^{{(m)}}} \psi \, d{x}
	=  \int_{\mathbb{R}^{d}} \chi_{\omega} \psi \, d{x}.
\end{equation}
In the above, the limit $\chi$ is generally not a characteristic function, as it takes values in $(0,1)$ \cite[Prop.~2.2.28, p.~45]{HenrotPierre2018}.
However, if the convergence occurs ``strongly'' in the sense of $L_{loc}^{p}$ for some $p \in [1, \infty)$, then $\chi$ becomes a characteristic function in the limit.
In this case, a subsequence can be extracted that converges almost everywhere, implying that $\chi$ takes on only the values $0$ and $1$, coinciding with the characteristic function of the set where it equals $1$ \cite[p.~27]{HenrotPierre2018}.
This remark is precisely stated in the following proposition.
\begin{proposition}[{\cite[Prop.~2.2.1, p.~27]{HenrotPierre2018}}]\label{prop:local_convergence_of_characteristic_functions_in_Lp}
If $\{\omega^{(n)}\}_{n\geqslant 0}$ and $\omega$ are measurable sets in $\mathbb{R}^{d}$ such that $\chi_{\omega^{(n)}}$
weakly-$\ast$ converges in $L^{\infty}(\mathbb{R}^{d}$) in the sense of \eqref{eq:weak_star_convergence_of_characteristic_functions} to $\chi_{\omega}$, then $\chi_{\omega^{(n)}} \longrightarrow \chi_{\omega}$  in $L_{loc}^{p}(\mathbb{R}^{d})$ for any $p<+\infty$ and almost everywhere.
\end{proposition}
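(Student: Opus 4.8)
The plan is to exploit the one algebraic feature that distinguishes characteristic functions from arbitrary elements of the unit ball of $L^{\infty}$, namely their idempotency $\chi_{\omega^{(n)}}^{2} = \chi_{\omega^{(n)}}$, in order to linearize the squared $L^{2}$-distance and thereby reduce everything to the weak-$\ast$ hypothesis \eqref{eq:weak_star_convergence_of_characteristic_functions}. First I would fix an arbitrary compact set $K \subset \mathbb{R}^{d}$ and, using $\chi_{\omega^{(n)}}^{2} = \chi_{\omega^{(n)}}$ together with $\chi_{\omega}^{2} = \chi_{\omega}$, expand the local $L^{2}$-discrepancy as
\[
	\int_{K} \abs{\chi_{\omega^{(n)}} - \chi_{\omega}}^{2} \, dx
	= \int_{K} \chi_{\omega^{(n)}} \, dx
	- 2\int_{K} \chi_{\omega^{(n)}}\chi_{\omega}\, dx
	+ \int_{K}\chi_{\omega}\, dx.
\]

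Next I would pass to the limit term by term. Because $K$ is bounded, both $\chi_{K}$ and $\chi_{K}\chi_{\omega}$ lie in $L^{1}(\mathbb{R}^{d})$ and hence are admissible test functions in \eqref{eq:weak_star_convergence_of_characteristic_functions}. Choosing $\psi = \chi_{K}$ gives $\int_{K}\chi_{\omega^{(n)}}\,dx \to \int_{K}\chi_{\omega}\,dx$, while choosing $\psi = \chi_{K}\chi_{\omega}$ gives $\int_{K}\chi_{\omega^{(n)}}\chi_{\omega}\,dx \to \int_{K}\chi_{\omega}^{2}\,dx = \int_{K}\chi_{\omega}\,dx$, where I again invoke idempotency of $\chi_{\omega}$. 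The three contributions therefore telescope to $\int_{K}\chi_{\omega} - 2\int_{K}\chi_{\omega} + \int_{K}\chi_{\omega} = 0$, so $\chi_{\omega^{(n)}} \to \chi_{\omega}$ in $L^{2}(K)$ for every compact $K$, that is, in $L^{2}_{loc}(\mathbb{R}^{d})$.

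To upgrade to $L^{p}_{loc}$ for arbitrary $p<+\infty$ I would use that all functions in sight take values in $[0,1]$, whence $\abs{\chi_{\omega^{(n)}} - \chi_{\omega}} \le 1$ pointwise. For $p \ge 2$ this yields $\abs{\chi_{\omega^{(n)}} - \chi_{\omega}}^{p} \le \abs{\chi_{\omega^{(n)}} - \chi_{\omega}}^{2}$, so the $L^{p}(K)$-norms are dominated by the $L^{2}(K)$-norms and vanish; for $1 \le p < 2$ the continuous inclusion $L^{2}(K) \hookrightarrow L^{p}(K)$, valid since $\abs{K}<\infty$ by H\"{o}lder's inequality, transfers the convergence. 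Finally, the almost-everywhere assertion follows by extracting, from the $L^{1}_{loc}$-convergent sequence, a subsequence converging pointwise a.e.\ on $\mathbb{R}^{d}$ (exhausting $\mathbb{R}^{d}$ by countably many compact sets and diagonalizing).

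The single genuinely nontrivial step is the first one: observing that idempotency collapses the quadratic distance into expressions linear in $\chi_{\omega^{(n)}}$, and that the particular test functions $\chi_{K}$ and $\chi_{K}\chi_{\omega}$ are legitimate in the weak-$\ast$ limit. Everything afterward is a routine norm comparison. I would stress that the argument relies essentially on the limit $\chi_{\omega}$ being itself a characteristic function, so that $\chi_{\omega}^{2} = \chi_{\omega}$; for a general weak-$\ast$ limit taking intermediate values in $(0,1)$ the cancellation breaks down and one cannot improve weak convergence to strong.
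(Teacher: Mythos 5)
Your proof is correct. The paper states this proposition without proof, citing \cite{HenrotPierre2018}, and your argument---expanding the local $L^{2}$ distance via the idempotency $\chi^{2}=\chi$, testing the weak-$\ast$ hypothesis against $\chi_{K}$ and $\chi_{K}\chi_{\omega}$, and then upgrading to $L^{p}_{loc}$ by boundedness and H\"{o}lder---is precisely the standard proof in that reference, including the essential observation that the cancellation requires the limit itself to be a characteristic function. One remark: as literally worded, almost-everywhere convergence of the \emph{full} sequence is false (the ``typewriter'' sequence of shrinking dyadic intervals converges weakly-$\ast$ and in $L^{p}_{loc}$ to $\chi_{\emptyset}=0$ but at no point), so your reading of the a.e.\ claim as holding along a subsequence is the correct one and agrees with the paper's own remark immediately preceding the proposition.
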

Now, with the previous results at our disposal, we can easily prove the following proposition.

\begin{proposition}\label{proposition:convergence_to_solution}
	Let the following assumptions be satisfied:
	\begin{itemize}
		\item
		$\{\omega^{(n)}\} \subset {\mathcal{O}_{\circ}^{1}}$ is a sequence that converges to $\staromega \in {\mathcal{O}_{\circ}^{1}}$ in the Hausdorff sense and in the sense of characteristic functions;
		\item for each $n\in\mathbb{N}$, $\Omega^{(n)} \in {\mathcal{O}_{ad}^{1}}$, $\Omega^{(n)} \coloneqq  (\Omega \setminus{\baromega^{(n)}}) \cup {\baromega}^{(n)}$, and $\un \in H^{1}(\Omega^{(n)})$ solves Problem~\ref{prob:weak_form_optimal_tomography} with $\Omega = (\Omega \setminus{\baromega^{(n)}}) \cup {\baromega}^{(n)}$.
	\end{itemize}
	Then, the sequence $\un \in H^{1}(\Omega)$ converges (up to a subsequence) to a function $\staru$ in $H^{1}(\Omega)$-weak and in $L^{2}(\Omega)$-strong such that $\staru = {u}$ solves Problem~\ref{prob:weak_form_optimal_tomography} in $\Omega = (\Omega \setminus{\baromega}) \cup \baromega$ with $\omega = \staromega$.
	Moreover, $\chi_{\omega^{(n)}} \nabla \un$ converges strongly in $L^{2}(\omega)^{d}$ to $\chi_{\omega}\nabla{u}$.
	In addition, if the following compatibility conditions $\chi_{\Omega\setminus{\baromega}^{(n)}} {u} \longrightarrow {u}|_{\Omega\setminus{\baromega}}$ and $\chi_{\omega^{(n)}} {u} \longrightarrow {u}|_{\omega}$ strongly in $H^{1}({\Omega\setminus{\baromega}})$ and in $H^{1}(\omega)$, respectively, then the convergence $\un \longrightarrow {u}$ also holds strongly in $H^{1}(\Omega)$.
\end{proposition}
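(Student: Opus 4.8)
The plan is to run the standard direct-method/compactness argument: obtain an $n$-independent energy bound, extract a weakly $H^1$-convergent subsequence, upgrade to strong $L^2$ convergence (interior and trace) by compact embeddings, use Proposition~\ref{prop:local_convergence_of_characteristic_functions_in_Lp} to get pointwise convergence of the moving coefficients, pass to the limit in the weak form, and finally promote weak to strong convergence via a Dirichlet-energy identity. First I would record what is uniform in $n$. Writing $\alpha^{(n)} = \alpha_{0}\chi_{\Omega\setminus\omega^{(n)}} + \alpha_{1}\chi_{\omega^{(n)}}$ and $\mu^{(n)} = \mu_{0}\chi_{\Omega\setminus\omega^{(n)}} + \mu_{1}\chi_{\omega^{(n)}}$, the ellipticity constant $\alpha_{\min}:=\min\{\alpha_{0},\alpha_{1}\}$ together with $\mu_{\min}$ makes the coercivity \eqref{eq:coercivity} and the a priori bound \eqref{eq:boundedness_of_u} hold uniformly, so $\norm{\un}_{V}\leqslant \cb\cf$ for all $n$. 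Since $\Omega^{(n)}=\Omega$ as a set and $\un$ matches traces across $\domega^{(n)}$, each $\un\in V=H^{1}(\Omega)$. By reflexivity a subsequence satisfies $\un\rightharpoonup\staru$ in $H^{1}(\Omega)$, and by Rellich--Kondrachov $\un\to\staru$ strongly in $L^{2}(\Omega)$ and, by compactness of the trace, in $L^{2}(\bigdO)$. Under the hypothesis $\omega^{(n)}\to\staromega$ in the sense of characteristic functions, Proposition~\ref{prop:local_convergence_of_characteristic_functions_in_Lp} gives $\chi_{\omega^{(n)}}\to\chi_{\staromega}$ almost everywhere and in every $L^{p}(\Omega)$, $p<\infty$, whence $\alpha^{(n)}\to\alpha^{\star}$ and $\mu^{(n)}\to\mu^{\star}$ a.e. and uniformly bounded, where $\alpha^{\star},\mu^{\star}$ are the coefficients induced by $\staromega$.

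The crux is the limit passage in $a^{(n)}(\un,v)=l(v)$ for fixed $v\in V$. For the principal part I would transfer the coefficient onto the test function, writing $\int_{\Omega}\alpha^{(n)}\nabla\un\cdot\nabla v\,dx=\int_{\Omega}\nabla\un\cdot(\alpha^{(n)}\nabla v)\,dx$, and observe that $\alpha^{(n)}\nabla v\to\alpha^{\star}\nabla v$ strongly in $L^{2}(\Omega)^{d}$ by dominated convergence while $\nabla\un\rightharpoonup\nabla\staru$ weakly; the weak--strong pairing then yields convergence to $\int_{\Omega}\alpha^{\star}\nabla\staru\cdot\nabla v\,dx$. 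The reaction term $\int_{\Omega}\mu^{(n)}\un v\,dx$ converges by the same mechanism, now using the strong $L^{2}(\Omega)$ convergence of $\un$, and the Robin term converges by strong $L^{2}(\bigdO)$ trace convergence. Hence $\staru$ solves Problem~\ref{prob:weak_form_optimal_tomography} with $\omega=\staromega$; uniqueness from Lemma~\ref{lem:wellposedness_weak_formulation} identifies $\staru=u$ and shows the limit is independent of the subsequence.

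To reach strong convergence I would test the $n$-th equation with $\un$ to get the energy identity, whose right-hand side $\int_{\Omega}f\un\,dx\to\int_{\Omega}fu\,dx$. Because $\un\to u$ strongly in $L^{2}(\Omega)$ and in $L^{2}(\bigdO)$ and $\mu^{(n)}\to\mu^{\star}$ a.e. boundedly, the reaction and boundary energies converge, which isolates $\int_{\Omega}\alpha^{(n)}\abs{\nabla\un}^{2}\,dx\to\int_{\Omega}\alpha^{\star}\abs{\nabla u}^{2}\,dx$. Expanding $\int_{\Omega}\alpha^{(n)}\abs{\nabla\un-\nabla u}^{2}\,dx$ and handling its three terms by this energy limit, by dominated convergence, and by the weak--strong pairing shows it tends to $0$; the uniform ellipticity $\alpha^{(n)}\geqslant\alpha_{\min}>0$ then forces $\nabla\un\to\nabla u$ strongly in $L^{2}(\Omega)^{d}$, which in particular gives $\chi_{\omega^{(n)}}\nabla\un\to\chi_{\omega}\nabla u$ strongly in $L^{2}(\omega)^{d}$. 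Finally, the stated compatibility conditions, which reconcile the cut-offs $\chi_{\Omega\setminus\baromega^{(n)}}u$ and $\chi_{\omega^{(n)}}u$ with the restrictions of $u$ to $\Omega\setminus\baromega$ and $\omega$, are exactly what is needed to glue the interior and exterior contributions and conclude $\un\to u$ strongly in $H^{1}(\Omega)$.

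The main obstacle is the limit passage in the principal part: the gradients converge only weakly, while the diffusion coefficient—being tied to the moving interface $\domega^{(n)}$—converges only a.e. and in $L^{p}$, $p<\infty$, so the product cannot be passed to the limit directly and one must exploit the weak--strong pairing after moving $\alpha^{(n)}$ onto the fixed test gradient. The same weak--strong structure, combined with convergence of the full Dirichlet-type energy, is the mechanism that then upgrades weak convergence to the strong gradient convergence claimed in the last two assertions.
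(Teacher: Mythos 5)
Your proof is correct and, for the main assertion, follows the same route as the paper: a uniform coercivity bound giving $\norm{\un}_{V}\lesssim\norm{f}_{H^{-1}(\Omega)}$, extraction of a weakly $H^{1}$-convergent and strongly $L^{2}$-convergent subsequence, a.e.\ and $L^{p}$ convergence of $\chi_{\omega^{(n)}}$ via Proposition~\ref{prop:local_convergence_of_characteristic_functions_in_Lp}, and the weak--strong pairing obtained by moving the piecewise-constant coefficient onto the fixed test function (this is exactly the paper's display \eqref{equa:L2D_convergence}), followed by identification of the limit through uniqueness in Lemma~\ref{lem:wellposedness_weak_formulation}. Where you genuinely add something is in the last two assertions, whose proofs the paper omits with a pointer to \cite{AfraitesRabago2024} and \cite[Cor.~3.7.4]{HenrotPierre2018}: your energy-identity argument --- testing the $n$-th equation with $\un$, passing to the limit in the reaction and Robin terms by strong $L^{2}(\Omega)$ and $L^{2}(\bigdO)$ convergence, concluding $\int_{\Omega}\alpha^{(n)}\abs{\nabla\un}^{2}\,dx\to\int_{\Omega}\alpha^{\star}\abs{\nabla u}^{2}\,dx$, and then expanding $\int_{\Omega}\alpha^{(n)}\abs{\nabla\un-\nabla u}^{2}\,dx$ and using uniform ellipticity --- is complete and correct, and it in fact delivers strong $H^{1}(\Omega)$ convergence of the gradients \emph{without} invoking the stated compatibility conditions (which is consistent with, but sharper than, the conditional formulation in the proposition; the unconditional convergence of $\chi_{\omega^{(n)}}\nabla\un$ then follows as you indicate by splitting off $(\chi_{\omega^{(n)}}-\chi_{\staromega})\nabla u$). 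The only cosmetic point is that with $f\in H^{-1}(\Omega)$ the source term should be read as a duality pairing, for which weak $H^{1}$ convergence of $\un$ already suffices.
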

\begin{proof}
    Let the given assumptions be satisfied.
    To prove this proposition, we adapt the argument structure used in the proof of \cite[Prop.~2.2.3]{AfraitesRabago2024}, reproducing key analytical steps where appropriate.

    By definition of $\un$, we have
	\[
    \left\{
    \begin{aligned}
	\mathcal{E}^{(n)}
	&\coloneqq  \intO{ {\alpha_{0}} \chi_{\Omega\setminus\baromega^{(n)}} \nabla{\un} \cdot \nabla{v} }
		+ \intO{ {\alpha_{1}}  \chi_{\omega}^{(n)} \nabla{\un} \cdot \nabla{v} } \\
	&  \quad	+ \intO{ \chi_{\Omega\setminus\baromega^{(n)}} {\mu_{0}} {\un}{v} }
		+ \intO{ \chi_{\omega}^{(n)} {\mu_{1}} {\un}{v} }
				+ \frac{1}{\zeta} \intG{{\un}{v}}\\
	& = \intO{fv}, \ \text{for all } v \in {V}.
    \end{aligned}
    \right.
	\]
    Taking ${v} = \un \in H^{1}(\Omega)$ and using the equivalence between the norm {$\vertiii{v}_{\Omega}\coloneqq   ( \norm{\nabla{v}}_{L^{2}(\Omega)}^{2} + \norm{v}_{L^{2}(\bigdO)}^{2})^{1/2}$} and the usual $H^{1}(\Omega)$-Sobolev norm, we obtain the inequality $\norm{\un}_{H^{1}(\Omega)} \lesssim \norm{f}_{H^{-1}(\Omega)}$.
  Hence, $\{\un\}$ is bounded in $H^{1}(\Omega)$.
    By the Rellich-Kondrachov and Banach-Alaoglu theorems, we may extract a subsequence $\{{{\uk}}\} \subset \{\un\}$ such that we have weak convergence ${{\uk}} \rightharpoonup \staru$ in $H^{1}(\Omega)$ and strong convergence ${{\uk}} \rightarrow \staru$ in $L^{2}(\Omega)$, for some element $\staru \in H^{1}(\Omega)$.

	We next show that the limit point $\staru \in H^{1}(\Omega)$ actually solves Problem~\ref{prob:weak_form_optimal_tomography} in ${\Omega = (\Omega\setminus{\baromega}) \cup \baromega}$ (i.e., $\staru = {u}$ where ${u}$ solves Problem~\ref{prob:weak_form_optimal_tomography}) by passing through the limit and using the pointwise almost everywhere convergence of the characteristic functions $\chi_{\Omega\setminus{\baromega^{(n)}}}$ to $\chi_{\Omega\setminus{\barstaromega}}$ and $\chi_{\omega^{(n)}}$ to $\chi_{\staromega}$.
	From Proposition \ref{prop:local_convergence_of_characteristic_functions_in_Lp}, we know that $\chi_{\omega^{(n)}}$ almost everywhere converges to $\chi_{\staromega}$ in $L^{1}(\Omega)$.
	As a consequence, we get (cf. \cite[p.~130]{HenrotPierre2018})
\begin{equation}\label{equa:L2D_convergence}
\left.
\begin{aligned}
\chi_{\Omega\setminus\baromega^{(n)}} \nabla\psi
&\longrightarrow
\chi_{\Omega\setminus\barstaromega}\nabla\psi\\[0.5em]
\chi_{\omega^{(n)}} \nabla\psi
&\longrightarrow
\chi_{\staromega}\nabla\psi
\end{aligned}
\quad \right\}
\quad
\text{strongly in }L^2(\Omega).
\end{equation}
	We show that $\staru = {u}$ actually solves Problem~\ref{prob:weak_form_optimal_tomography} by proving that $\mathcal{E}^{(n)}  \longrightarrow \mathcal{E}^{(\infty)}$ as $n \longrightarrow \infty$ where
    \[
    \left\{
    \begin{aligned}
	\mathcal{E}^{(\infty)}
	&\coloneqq  \intO{ {\alpha_{0}} \chi_{\Omega\setminus\baromega} \nabla{u} \cdot \nabla{v} }
	 	+ \intO{ {\alpha_{1}} \chi_{\omega} \nabla{u} \cdot \nabla{v} } \\
	&  \quad + \intO{ \chi_{\Omega\setminus\baromega} {\mu_{0}} {u}{v} }
		+ \intO{ \chi_{\omega} {\mu_{1}} {u}{v} }
				+ \frac{1}{\zeta} \intG{{u}{v}} \\
	& = \intO{fv}, \quad \text{for all } v \in {V}.
    \end{aligned}
    \right.
    \]
	\sloppy Using \eqref{equa:L2D_convergence}, the weak convergence $\un \rightharpoonup {\staru}$ in $H^{1}(\Omega)$, and the weak-$^{\ast}$ convergences  $\chi_{\Omega\setminus{\baromega^{(n)}}} {\rightharpoonup} \chi_{\Omega\setminus{\barstaromega}}$ and $\chi_{\omega^{(n)}} \stackrel{\ast}{\rightharpoonup} \chi_{\staromega}$ in $L^{\infty}(\Omega)$, we see that
    \[
    \left\{
    \begin{aligned}
	&\intO{ {\alpha_{0}} \chi_{\Omega\setminus\barstaromega} \nabla{\staru} \cdot \nabla{v} }
	 	+ \intO{ {\alpha_{1}} \chi_{\staromega} \nabla{\staru} \cdot \nabla{v} } \\
	&  + \intO{ \chi_{\Omega\setminus\barstaromega} {\mu_{0}} {\staru}{v} }
		+ \intO{ \chi_{\staromega} {\mu_{1}} {\staru}{v} }
				+ \frac{1}{\zeta} \intG{{\staru}{v}} \\
	& = \intO{fv}, \quad \text{for all } v \in {V}.
    \end{aligned}
    \right.
    \]
	By the uniqueness of the limits (see Lemma~\ref{lem:wellposedness_weak_formulation}), we deduce that $\mathcal{E}^{(n)} \longrightarrow \mathcal{E}^{(\infty)}$.
	Thus, we conclude that $\staru = u((\Omega \setminus {\barstaromega) \cup \barstaromega})$ -- recovering Problem~\ref{prob:weak_form_optimal_tomography}.

	The proofs of the final two statements Proposition~\ref{proposition:convergence_to_solution} follow a similar approach as the proof of the last part of Proposition 2.2.3 in \cite{AfraitesRabago2024} (see also \cite[Proof of Cor.~3.7.4., p.~130]{HenrotPierre2018}), and are therefore omitted.
	This completes the proof of the proposition.
\end{proof}
To close out this appendix, we provide the proof of Theorem~\ref{theorem:existence_of_optimal_shape_solution}.
\begin{proof}[Proof of Theorem~\ref{theorem:existence_of_optimal_shape_solution}]
	Observe that the infimum of ${J}(\omega)$ is finite.
	Hence, we can find a minimizing sequence $\{\omega^{(n)}\} \subset {\mathcal{O}_{\circ}^{1}}$ which is bounded such that $\lim_{n\to\infty} {J}(\omega^{(n)}) \\= \inf_{\omega \in {\mathcal{O}_{\circ}^{1}}} {J}(\omega)$.
	By \cite[Thm.~2.4.10, p.~59]{HenrotPierre2018}, there exists ${{\staromega}} \in {\mathcal{O}_{\circ}^{1}}$, and a subsequence $\{\omega^{{(m)}}\} \subset \{\omega^{(n)}\}$ such that $\omega^{{(m)}}$ converges to ${{\staromega}}$ in the sense of Hausdorff (Definition~\ref{def:Hausdorff_convergence}) and also in the sense of characteristic functions.
	This implies that the first assumption in Proposition~\ref{proposition:convergence_to_solution} is satisfied.
	With the second premise of Proposition~\ref{proposition:convergence_to_solution}, we know that $\un \in H^{1}(\Omega)$ (of functions $\un \in H^{1}(\Omega)$ which solves Problem~\ref{prob:weak_form_optimal_tomography} on each of its respective domain $\Omega = (\Omega\setminus{\baromega^{(n)}}) \cup {\baromega}^{(n)}$) -- taking a further subsequence if necessary -- converges to (the unique limit) ${{\staru}} \in H^{1}(\Omega)$ where ${{\staru}} = {u}((\Omega \setminus {\barstaromega) \cup \barstaromega})$ solves Problem~\ref{prob:weak_form_optimal_tomography} in ${{\Omega}=(\Omega \setminus {\barstaromega) \cup \barstaromega}}$.
	Now, to conclude, it is left to show that the shape functional ${J}(\omega)$ is lower-semicontinuous; that is, we have ${J}({{\staromega}}) \leqslant \lim_{{m \to \infty}} {J}(\omega^{{(m)}}) = \inf_{{{\widehat{\omega}}} \in {\mathcal{O}_{\circ}^{1}}} {J}({{\widehat{\omega}}}) \leqslant {J}({{\omega}})$.
	From Proposition~\ref{proposition:convergence_to_solution}, we know that the maps $(\Omega\setminus{\baromega}) \mapsto {u}(\Omega\setminus{\baromega} )$ and $\omega \mapsto {u}(\omega)$ are continuous.
	Therefore, the map $\omega \mapsto {J}(\omega)$ is also continuous, in particular, it is lower-semicontinuous.	
	This proves Theorem~\ref{theorem:existence_of_optimal_shape_solution}.
\end{proof}
\section*{Acknowledgments}
The authors sincerely thank the anonymous referees for their careful reading of the manuscript and for their insightful and constructive comments, which have significantly improved the clarity and overall quality of this paper. They are also grateful for the referees' suggestions of the related references \cite{Arridgeetal2008,BalRen2006,BelhachmiDhifMeftahi2023}, which have strengthened the discussion and context of the work. The authors further warmly thank Lekbir Afraites (Sultan Moulay Slimane University, Morocco) for his valuable comments, fruitful discussions, and for bringing reference \cite{AfraitesDambrineKateb2007} to their attention, which contributed to improving the first draft of this paper.

JFTR is supported by the JSPS Postdoctoral Fellowships for Research in Japan and partially by the JSPS Grant-in-Aid for Early-Career Scientists under Japan Grant Number JP23K13012.
HN is partially supported by JSPS Grants-in-Aid for Scientific Research under Grant Numbers JP26H02185, JP25K00920, JP24H00188, and JP21H04431.
JFTR and HN are also partially supported by the JST CREST Grant Number JPMJCR2014.










\bigskip

\medskip

\end{document}